\newtheorem{theorem}{Theorem}[section]
\newtheorem{lemma}[theorem]{Lemma}
\newtheorem{corollary}[theorem]{Corollary}
\newtheorem{example}[theorem]{Example}
\newtheorem{examples}[theorem]{Examples}
\newtheorem{proposition}[theorem]{Proposition}
\newtheorem{remark}[theorem]{Remark}
\newcommand{\dom}{\mathbf{d}}
\newcommand{\ran}{\mathbf{r}}
\title{Non-commutative Stone duality}
\author{Mark V. Lawson}
\address{Mark V. Lawson, Department of Mathematics
and the
Maxwell Institute for Mathematical Sciences,
Heriot-Watt University,
Riccarton,
Edinburgh EH14 4AS,
UNITED KINGDOM}
\email{m.v.lawson@hw.ac.uk}
\begin{document}
\dedicatory{This paper is dedicated to the memory of my friend and colleague Iain Currie}

\begin{abstract}
We show explicitly that Boolean inverse semigroups are in duality with what we term Boolean groupoids.
This generalizes classical Stone duality, which we refer to as commutative Stone duality,
between generalized Boolean algebras and locally compact Hausdorff $0$-dimensional spaces.
\end{abstract}
\maketitle

\section{Introduction}

The theory of what we term {\em non-commutative Stone duality} grew out of the work of a number of authors 
\cite{K, Renault, Paterson},
\cite{Kell1, Kell2, Lenz} 
and 
\cite{LMS, Resende}.
In this paper, I shall concentrate on one aspect of that duality:
namely, how Boolean inverse semigroups are in duality with a class of \'etale groupoids 
called Boolean groupoids.
Specifically, we shall prove the following theorem and discuss some special cases (proved as Theorem~\ref{them:non-com-stone}):

\begin{theorem}[Non-commutative Stone duality] 
The category of Boolean inverse semigroups and callitic morphisms
is dually equivalent to the category of Boolean groupoids and coherent, continuous, covering functors.
\end{theorem}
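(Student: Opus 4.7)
The plan is to establish the dual equivalence by constructing a contravariant spectrum functor $\mathsf{G}$ from Boolean inverse semigroups to Boolean groupoids and a contravariant bisection functor $\mathsf{B}$ in the opposite direction, and then producing natural isomorphisms $\mathrm{id} \cong \mathsf{B}\circ\mathsf{G}$ and $\mathrm{id} \cong \mathsf{G}\circ\mathsf{B}$ at the level of both objects and morphisms. This generalizes classical Stone duality, which should be recovered by specializing to commutative Boolean inverse semigroups (generalized Boolean algebras) whose associated groupoid is reduced to its unit space; the commutative theory will serve as a template and as a useful sanity check throughout.

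For the spectrum functor, given a Boolean inverse semigroup $S$, I would define $\mathsf{G}(S)$ to be the space of (ultra)filters of $S$ with groupoid structure inherited from the inverse semigroup operation: the unit space consists of filters on the generalized Boolean algebra $E(S)$ of idempotents, the domain and range of an arrow $F$ are obtained by restricting to the idempotent filters generated by $\dom$ and $\ran$ of elements of $F$, and composition is setwise product followed by filter closure. The topology is generated by cylinder sets $D_s = \{F : s \in F\}$, and classical Stone duality for the Boolean algebra $E(S)$ furnishes a $0$-dimensional, locally compact, Hausdorff unit space; one then checks that each $D_s$ is a compact open bisection and that these form a basis, making $\mathsf{G}(S)$ étale and Boolean. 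Conversely, for a Boolean groupoid $G$, let $\mathsf{B}(G)$ be the set of compact open bisections of $G$ under pointwise groupoid composition and inversion; this is an inverse semigroup with idempotent semilattice the compact open subsets of the unit space (a generalized Boolean algebra by hypothesis), and compactness together with $0$-dimensionality ensure the compatible joins required for $\mathsf{B}(G)$ to be Boolean.

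Morphisms are transported by pullback: a callitic morphism $\varphi\colon S \to T$ induces $\mathsf{G}(\varphi)\colon \mathsf{G}(T) \to \mathsf{G}(S)$ by filter pullback, while a coherent, continuous, covering functor $F\colon G \to H$ induces $\mathsf{B}(F)\colon \mathsf{B}(H) \to \mathsf{B}(G)$ by bisection pullback, with continuity ensuring that pullback bisections are open, coherence that they are compact, and covering that the induced semigroup map preserves the compatible joins that callitic morphisms are required to preserve. The unit of the equivalence $\eta_S\colon S \to \mathsf{B}(\mathsf{G}(S))$ sends $s$ to $D_s$: injectivity follows from a non-commutative Stone representation theorem asserting that there are enough ultrafilters to separate elements of $S$, while surjectivity follows from the fact that every compact open bisection of $\mathsf{G}(S)$ is a finite compatible join of cylinders $D_{s_i}$ and therefore of the form $D_{s_1 \vee \cdots \vee s_n}$. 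Dually, $\epsilon_G\colon G \to \mathsf{G}(\mathsf{B}(G))$ sends $g \in G$ to the ultrafilter of compact open bisections containing $g$.

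The main obstacle will be the careful matching of morphism classes and the verification that the three clauses making up ``coherent, continuous, covering'' for functors correspond precisely to the axioms constituting ``callitic'' for semigroup homomorphisms. At the object level, the duality largely reduces to classical Stone duality on the idempotent semilattice plus routine verification that the groupoid multiplication and topology interact correctly, but at the morphism level the non-commutative structure is essential, and it is here that the specific formulation of each morphism class must be tracked with care so that the correspondence is bijective and respects composition. A subsidiary but non-trivial difficulty is proving the non-commutative Stone representation theorem itself, which requires a Zorn-style argument producing sufficiently many ultrafilters in an arbitrary Boolean inverse semigroup, extending the classical Boolean-algebra construction through the added complication of partial products.
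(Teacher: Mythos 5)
Your proposal follows essentially the same route as the paper: the spectrum functor built from (ultra)filters with product $(FG)^{\uparrow}$ and cylinder-set topology, the inverse functor given by compact-open local bisections, pullback of morphisms in both directions, and a Zorn-type separation theorem for ultrafilters to get injectivity of $s \mapsto \mathscr{U}_{s}$ with compactness giving surjectivity via finite compatible joins. The difficulties you flag (the ultrafilter existence argument via prime additive order-ideals, and the careful matching of ``proper + weakly-meet-preserving'' against ``coherent + continuous + covering'') are exactly where the paper spends its effort, so the plan is sound as it stands.
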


This theorem generalizes what you will find in \cite{Lawson3} 
since we shall not assume that our topological groupoids are Hausdorff.
Although this theorem can be gleaned from our papers \cite{Lawson3, Lawson5, Lawson2016, Lawson2017, LL},
what I describe here has not been reported in one place before.

You might think that this duality is of merely parochial interest.
It isn't.
The work of Matui \cite{Matui12, Matui13} deals with \'etale groupoids of just the kind that figure in our duality theorem.
In addition, Matui refers constantly to `compact open $\mathcal{G}$-sets'.
These are precisely what we call `compact-open local bisections'
and are elements of the inverse semigroup associated with the \'etale groupoid.
Thus, even though Matui is not explicitly interested in Boolean inverse semigroups,
they are there implicitly.

Sections 3 to 7 are devoted to proving the above theorem
whereas in Section 2 classical Stone duality is decribed since this sets the scene for our generalization.
Thus the reader familiar with classical Stone duality can start reading at Section 3.
In Section 3, we describe the theory of Boolean inverse semigroups needed:
these should be regarded as the non-commutative generalized Boolean algebras.
In Section 4, we describe Boolean groupoids; these are a class of \'etale topological groupoids 
and play the role of `non-commutative topological spaces'.  
In Section 5, we show how to pass from Boolean groupoids to Boolean inverse semigroups,
where a key role is played by the compact-open local bisections of a Boolean groupoid.
This is a comparatively easy construction.
In Section 6, we show how to pass from Boolean inverse semigroups to Boolean groupoids by using ultrafilters.
This is quite technical.
In Section 7, we show, amongst other things, that the above two constructions are the inverse of each other.
We also bring on board morphisms and establish our main duality theorem which generalizes  the classical theory
described in Section 2.
Section 8 is dedicated to special cases; for example, those Boolean inverse semigroups
that have all binary meets correspond to Hausdorff Boolean groupoids.
There is one new application, in Section 9, which shows how our theory can be used to give an account 
of {\em unitization} first described in \cite[Definition 6.6.1]{W}.
In Section~10, I shall show how the above theory can be derived within a more general framework
using pseudogroups and arbitrary \'etale groupoids.

I shall assume you are conversant with the theory of inverse semigroups \cite{Lawson1998};
I will simply remind the reader of notation and terminology as we go along.
Observe that whenever I refer to an order on an inverse semigroup, 
I mean the {\em natural partial order} defined on every inverse semigroup.
I shall develop the theory of Boolean inverse semigroups from scratch
but this is no subsitute for a proper development of that theory as in \cite{W}.

We shall also need some terminology from the theory of posets throughout.
Let $P$ be a poset with a minimum (or bottom) element denoted by zero $0$.
In this context, singleton sets such as $\{x\}$ will be written simply as $x$.
If $X \subseteq P$, define
$$X^{\downarrow} = \{y \in P \colon y \leq x \text{ for some } x \in X \}
\text{ and } 
X^{\uparrow} = \{y \in P \colon x \leq y \text{ for some } x \in X\}.$$
If $X = X^{\downarrow}$ we say that $X$ is an {\em order-ideal}.
An order-ideal of the form $a^{\downarrow}$ is said to be {\em principal}. 
If for any $x,y \in X$ there exists $z \in X$ such that $z \leq x,y$,
we say that $X$ is {\em downwardly directed}.
If $X = X^{\uparrow}$ we say that $X$ is {\em upwardly closed}.

Let $X$ and $Y$ be posets.
A function $\theta \colon X \rightarrow Y$ is said to be {\em order-preserving}
if $x_{1} \leq x_{2}$ in $X$ implies that $\theta (x_{1}) \leq \theta (x_{2})$.
An {\em order-isomorphism} is a bijection which is order-preserving and whose inverse is 
order-preserving.
Let $F$ be a {\em non-empty}\footnote{Throughout this paper, filters will be assumed non-empty.} subset of $X$.
We say that it is a {\em filter} if it is downwardly directed and upwardly closed.
We say that it is {\em proper} if it does not contain the zero.
A maximal proper filter is called an {\em ultrafilter}.
Ultrafilters play an important role in this paper.

A {\em meet semilattice} is a poset in which each pair of elements has a greatest lower bound (or meet);
we write $x \wedge y$ for the  meet of $x$ and $y$.
The following was proved as \cite[Lemma 12.3]{Exel} and is very useful.

\begin{lemma}\label{lem:exel} Let $P$ be a meet semilattice with bottom element $0$,
and let $A$ be a proper filter in $P$.
Then $A$ is an ultrafilter if and only if the following holds:
if $x \wedge y \neq 0$ for all $x \in A$ then $y \in A$.
\end{lemma}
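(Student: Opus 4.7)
The plan is to prove both directions by constructing or analysing a filter that witnesses the relevant property.

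For the forward direction, assume $A$ is an ultrafilter and suppose $y \in P$ satisfies $x \wedge y \neq 0$ for every $x \in A$. I would form the set
$$B = \{ z \in P \colon a \wedge y \leq z \text{ for some } a \in A\}$$
and show it is a proper filter containing both $A$ and $y$. Upward closure is immediate from the definition. Downward directedness uses that $A$ is itself downwardly directed: given $z_1, z_2 \in B$ with $a_i \wedge y \leq z_i$, pick $a \in A$ with $a \leq a_1, a_2$; then $a \wedge y$ lies in $B$ and is below both $z_i$. Properness is exactly the hypothesis: if $0 \in B$ then $a \wedge y = 0$ for some $a \in A$. Containment of $A$ follows from $a \geq a \wedge y$, and $y \in B$ because $A$ is non-empty so we may pick any $a \in A$ and note $a \wedge y \leq y$. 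Maximality of $A$ then forces $B = A$, giving $y \in A$.

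For the reverse direction, assume the stated condition holds for $A$, and suppose towards a contradiction that $A \subsetneq C$ for some proper filter $C$. Pick $y \in C \setminus A$. For any $x \in A$ we have $x, y \in C$, so downward directedness of $C$ yields $z \in C$ with $z \leq x$ and $z \leq y$; properness of $C$ gives $z \neq 0$, hence $x \wedge y \geq z \neq 0$. The assumed condition therefore forces $y \in A$, contradicting the choice of $y$. Thus $A$ is maximal, i.e., an ultrafilter.

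The only step that requires a moment of care is the verification that the auxiliary set $B$ in the forward direction is downwardly directed; everything else is essentially a bookkeeping check of the filter axioms, and the key non-trivial ingredient — that $B$ is proper — is precisely where the hypothesis on $y$ enters.
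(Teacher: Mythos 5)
Your proof is correct: the forward direction correctly builds the filter generated by $A \cup \{y\}$, verifies it is proper using the hypothesis on $y$, and invokes maximality, while the reverse direction correctly uses downward directedness and properness of a putative larger filter. The paper itself gives no proof of this lemma (it cites Exel, Lemma 12.3), and your argument is the standard one found there, so there is nothing to compare against.
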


Let $Y$ be a meet semilattice with bottom element $0$.
Let $X \subseteq Y$ be any subset.
Define $X^{\wedge}$ to be the set of all finite meets of elements of $X$;
we say that $X$ has the {\em finite intersection property} if $0 \notin X^{\wedge}$.

Our references for topology are \cite{Simmons} and \cite{Willard}.
The following contains the main results we need.
For the proof of (1), see \cite[Section 23, Theorem A]{Simmons};
for (2), see \cite[Section 26, Theorem D]{Simmons};
for (3), see \cite[Section 21, Theorem A]{Simmons};
for (4), see \cite[Section 21, Theorem D]{Simmons};
for (5), see \cite[Theorem 18.2]{Willard}

\begin{lemma}\label{lem:topology-needed}\mbox{}
\begin{enumerate}

\item The product of any non-empty set of compact spaces is compact.

\item Every compact subspace of a Hausdorff space is closed.

\item Every closed subspace of a compact space is compact.

\item A topological space is compact if and only if every set of closed sets with the finite intersection property has a non-empty intersection

\item A Hausdorff space is locally compact if for every point $x$ there is a compact set $U$ such that $x \in U^{\circ}$, the union of all the open sets contained in $U$.

\end{enumerate}
\end{lemma}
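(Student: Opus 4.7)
The plan is to treat each of the five items separately, since they are classical facts of point-set topology and the only common thread is that all are needed later. I would assume only the basic definitions of compactness, closed sets, and Hausdorffness, and proceed by fairly direct manipulations, invoking Zorn's lemma exactly once.

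Items (2), (3), (4) are elementary. For (2), given a compact subspace $K$ of a Hausdorff space $X$ and a point $x \notin K$, Hausdorffness supplies, for each $k \in K$, disjoint open sets $U_k \ni x$ and $V_k \ni k$; the cover $\{V_k\}$ of $K$ admits a finite subcover $V_{k_1}, \ldots, V_{k_n}$, whose matching intersection $U_{k_1} \cap \cdots \cap U_{k_n}$ is an open neighbourhood of $x$ disjoint from $K$, proving $X \setminus K$ open. For (3), given a closed $F$ in compact $X$ and an open cover $\{U_\alpha\}$ of $F$, the family $\{U_\alpha\} \cup \{X \setminus F\}$ covers $X$, so a finite subcover exists; discarding $X \setminus F$ yields a finite subcover of $F$. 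For (4), de Morgan's laws turn open covers into families of closed sets with empty intersection and conversely; compactness is the statement that every such family has a finite subfamily with empty intersection, which contrapositively says that every family of closed sets with the finite intersection property has non-empty intersection.

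For (1), Tychonoff's theorem, I would use the ultrafilter characterization of compactness, equivalent to (4): a space is compact if and only if every ultrafilter on it converges. Given an ultrafilter $\mathcal{U}$ on the product $\prod_{i} X_i$, each projection image $\pi_i(\mathcal{U})$ generates an ultrafilter on $X_i$, which by compactness converges to some $x_i$; the tuple $(x_i)$ is then a limit point of $\mathcal{U}$ in the product topology, whence the product is compact. The single genuine obstacle here is the use of Zorn's lemma (to extend the pushforward filter base to an ultrafilter), which is unavoidable since the result is equivalent to the axiom of choice.

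For (5), the characterization of local compactness in Hausdorff spaces, I would show the stated condition is equivalent to the usual definition (every point has a compact neighbourhood). One direction is trivial since $U^\circ$ is open and $U$ compact. For the converse, if $x$ lies in the interior of a compact $K$, then in a Hausdorff space one may shrink inside $K$ using parts (2) and (3): closed subsets of $K$ are compact, so by choosing a closed neighbourhood of $x$ inside $K$ (possible because $K$ is regular, being compact Hausdorff) one obtains a compact neighbourhood of $x$ whose interior contains $x$. The technical point to watch is that Hausdorffness is essential both to apply (2) and to guarantee that compact sets behave like closed neighbourhoods.
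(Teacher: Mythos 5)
The paper does not prove this lemma at all: it is a compendium of standard point-set topology facts, and the author simply cites Simmons (items (1)--(4)) and Willard (item (5)) for the proofs. Your arguments are the standard textbook ones and are correct, so in substance you have reproduced what those references contain: the separation argument for (2), the ``add the complement to the cover'' argument for (3), de Morgan duality for (4), the ultrafilter-convergence proof of Tychonoff for (1), and the regularity of compact Hausdorff spaces for (5). Two small remarks. First, in your proof of (1) the axiom of choice is actually invoked twice, not once: once (in the guise of the ultrafilter lemma, which is strictly weaker than full AC) to extend the pushed-forward filter base to an ultrafilter, and once more to choose simultaneously a limit point $x_i$ in each factor --- it is this second use that accounts for Tychonoff's theorem being equivalent to full AC rather than to the Boolean prime ideal theorem. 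Second, item (5) as stated in the paper is really just Willard's characterization that, for Hausdorff spaces, the existence of a single compact set $U$ with $x \in U^{\circ}$ at each point already yields local compactness in the stronger sense (a neighbourhood base of compact sets); your shrinking argument via (2), (3) and the regularity of compact Hausdorff spaces is exactly the right way to see this, and it is the only item among the five where Hausdorffness does genuine work.
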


Let $\mathscr{B}$ be any set of subsets of a set $X$.
We say that $\mathscr{B}$ is a {\em base} if the union of all elements in $\mathscr{B}$ is $X$
and if $x \in B \cap C$ where $B,C \in \mathscr{B}$ then there is a $D \in \mathscr{B}$ such that
$x \in D \subseteq B \cap C$.
Given a base $\mathscr{B}$, we may define as a topology all those sets which are unions of subsets of $\mathscr{B}$;
where the empty set is the union of the empty subset of $\mathscr{B}$.
See \cite[Theorem 5.3]{Willard}.
A space with a countable base is said to be {\em second countable}.
A topological space is {\em discrete} if every susbet is open.
A subset of a topological space is said to be {\em clopen}
if it is both open and closed.
A topological space is said to be {\em $0$-dimensional} if it has a base of clopen sets.
If $X$ is a topological space we denote its set of open subsets by $\Omega (X)$.\\

\noindent
{\bf Acknowledgements. }Some of the work for this paper was carried out at LaBRI, Universit\'e de Bordeaux during April 2018 whilst visiting David Janin.
I am also grateful to Phil Scott for alerting me to typos.
None of this work would have been possible without my collaboration with Daniel Lenz, 
and some very timely conversations with Pedro Resende.

\section{Classical Stone duality}

In this section, we shall describe classical Stone duality which relates generalized Boolean algebras
to locally compact Hausdorff $0$-dimensional spaces.
There are no new results in this section --- the theory is classical --- but it provides essential motivation for what we do
when we come to study Boolean inverse semigroups.
In Section 2.1, we shall recall the definition and first properties of Boolean algebras;
in Section 2.2, we shall describe the structure of finite Boolean algebras;
in Section 2.3, we describe classical Stone duality which deals with the relationship between Boolean algebras and compact Boolean spaces
---
this is the version of Stone duality that you will find well represented in the textbooks;
finally, in Section 2.4, we describe the extension of classical Stone duality to generalized Boolean algebras and locally compact Boolean spaces.

\subsection{Boolean algebras}
Boolean algebras may not rank highly in the pantheon of algebraic structures but they are, in fact, both mathematically interesting and remarkably useful; for example, 
they form the basis of measure theory.

Formally, a {\em Boolean algebra} is a $6$-tuple 
$(B,\vee,\wedge, \,' \, \, , 0,1)$
consisting of a set $B$, two binary operations $\vee$, called {\em join}, and $\wedge$, called {\em meet},
one unary operation $a \mapsto a'$, called {\em complementation},
and two constants $0$ and $1$
satisfying the following ten axioms:
$$
\begin{array}{ll}

{\rm (B1) }\, (x \vee y) \vee z = x \vee (y \vee z). &{\rm (B6) }\, x \wedge 1 = x.\\

{\rm (B2) }\, x \vee y = y \vee x. &{\rm (B7) }\, x \wedge (y \vee z) = (x \wedge y) \vee (x \wedge z).\\

{\rm (B3) }\, x \vee 0 = x. &{\rm (B8) }\, x \vee (y \wedge z) = (x \vee y) \wedge (x \vee z).\\

{\rm (B4) }\, (x \wedge y) \wedge z = x \wedge (y \wedge z). &{\rm (B9) }\, x \vee x' = 1.\\ 

{\rm (B5) }\, x \wedge y = y \wedge x. &{\rm (B10) }\, x \wedge x' = 0.\\

\end{array}
$$
The element $0$ is called the {\em bottom}  and $1$ is called the {\em top}.
A function $\theta \colon B \rightarrow C$ between two Boolean algebras
is called a {\em homomorphism} if it
preserves the two binary operations, the unary operation and maps constants to corresponding constants.
Boolean algebras and their homomorphisms form a category.
Observe that for a function to be a homomorphism of Boolean algebras it is enough that it preserves the constants
and maps meets and joins to meets and joins, respectively;
this is because the complement of $x$ is the unique element $y$ such that $1 = x \vee y$ and $0 = x \wedge y$.
The following lemma summarizes some important properties of Boolean algebras that readily follow from these axioms.

\begin{lemma}\label{lem:basic-properties} In a Boolean algebra $B$, the following hold for all $x,y \in B$:
$$\begin{array}{ll}
(1)\, x \vee x = x \text{ and } x \wedge x = x. &(5)\, x \vee y = x \vee (y \wedge x').\\
(2)\, x \wedge 0 = 0.                           &(6)\, x'' = x.\\
(3)\, 1 \vee x = 1.                             &(7)\, (x \vee y)' = x' \wedge y'.\\
(4)\, x = x \vee (x \wedge y).                  &(8)\, (x \wedge y)' = x' \vee y'. \\
\end{array}
$$
\end{lemma}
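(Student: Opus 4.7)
The plan is to derive each identity by direct manipulation of the axioms (B1)--(B10), exploiting duality between $\vee$ and $\wedge$ to cut the work roughly in half, and to introduce a uniqueness-of-complements lemma to handle the last three identities uniformly.

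First I would prove (1) by the standard idempotency trick: $x \vee x = (x \vee x) \wedge 1 = (x \vee x) \wedge (x \vee x') = x \vee (x \wedge x') = x \vee 0 = x$, using (B6), (B9), (B8), (B10) and (B3). The identity $x \wedge x = x$ follows dually. For (2), the computation $x \wedge 0 = (x \wedge 0) \vee 0 = (x \wedge 0) \vee (x \wedge x') = x \wedge (0 \vee x') = x \wedge x' = 0$ works, and (3) is the dual. The absorption law (4) then falls out from (3): $x \vee (x \wedge y) = (x \wedge 1) \vee (x \wedge y) = x \wedge (1 \vee y) = x \wedge 1 = x$. Identity (5) is a direct distribution, $x \vee (y \wedge x') = (x \vee y) \wedge (x \vee x') = (x \vee y) \wedge 1 = x \vee y$.

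For (6), (7) and (8) I would first establish uniqueness of complements: if $x \vee a = 1$ and $x \wedge a = 0$, then $a = a \wedge 1 = a \wedge (x \vee x') = (a \wedge x) \vee (a \wedge x') = 0 \vee (a \wedge x') = a \wedge x'$, and symmetrically $x' = x' \wedge a$, so $a = x'$. With this tool, (6) is immediate because (B9) and (B10) show that $x$ is a complement of $x'$. For the De Morgan laws (7) and (8), I would verify that $x' \wedge y'$ is a complement of $x \vee y$ by computing $(x \vee y) \vee (x' \wedge y')$ and $(x \vee y) \wedge (x' \wedge y')$ using distributivity, (B9), (B10) and parts (2)--(5) already proved, obtaining $1$ and $0$ respectively; uniqueness of complements then yields (7), and (8) follows dually (or by applying (7) to $x'$ and $y'$ and then using (6)).

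The main obstacle is simply bookkeeping: one has to be careful that the derivations of (2), (4) and the uniqueness lemma do not silently invoke later identities, and one must commit to a fixed dependency order in which duality arguments are genuinely symmetric. Once uniqueness of complements is in place, identities (6)--(8) are really one argument applied three times, so the hard work is concentrated in getting (1)--(5) cleanly from the axioms.
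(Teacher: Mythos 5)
Your proposal is correct, and the dependency order you fix (axioms only for (1)--(3) and (5), then (3) feeding into (4), then the uniqueness-of-complements lemma driving (6)--(8)) avoids any circularity. The paper itself states this lemma without proof, merely remarking that the properties ``readily follow from these axioms,'' so your derivation is simply the standard argument filled in, with the uniqueness-of-complements observation being exactly the right tool for the last three identities.
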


The theory of Boolean algebras is described in an elementary fashion in \cite{GH}
and from a more advanced standpoint in \cite{Koppelberg} and \cite{Sikorski}.
The first two chapters of \cite{Johnstone} approach the subject of Stone duality from the perspective of frame theory.

\begin{example} \mbox{}
{\em
\begin{enumerate}

\item The basic example of a Boolean algebra is the 
{\em power set Boolean algebra} which consists of the set of all subsets, $\mathsf{P}(X)$, of the non-empty set $X$
with the operations $\cup$, $\cap$ and $\overline{A} = X \setminus A$ and the two constants $\varnothing$, $X$.

\item We denote by $\mathbb{B}$ the unique two-element Boolean algebra.

\item Let $A$ be any finite non-empty set.
We call $A$ an {\em alphabet}.
Denote the free monoid on $A$ by $A^{\ast}$;
observe that $A^{\ast}$ is countably infinite.
Elements of the free monoid are called {\em strings}.
The {\em length} of a string $x$ is denoted by $|x|$.
The empty string is denoted by $\varepsilon$.
A subset of $A^{\ast}$ is called a {\em language} over $A$.
Recall that a language over an alphabet $A$ is said to be {\em recognizable} if there is a finite-state
automaton that accepts it. By Kleene's theorem \cite{Lawson2003}, the set of recognizable languages over $A$ is equal to the set of regular languages over $A$.
Denote the set of regular languages over $A$ by $\mbox{\rm Reg}(A)$.
This set is a Boolean algebra (with extra operations).
This Boolean structure can be exploited to provide a sophisticated way of studying families of regular languages \cite{Pipp, G}.

\end{enumerate}
}
\end{example}

Boolean algebras have their roots in the work of George Boole, though the definition of Boolean algebras seems to have been inspired by his work rather than originating there \cite{Burris, TH}.
Until the 1930s, research on Boolean algebras was essentially about axiomatics.
However, with Stone's paper \cite{Stone1936}, stability in the definition of Boolean algebras emerges because he showed that each Boolean algebra
could be regarded as a (unital) ring in which each element was idempotent;
rings such as this are called {\em Boolean rings}.
In the language of category theory, his result shows that the category of unital Boolean algebras is isomorphic to the category of unital Boolean rings
(where I have stressed the fact that the Boolean algebra has a top element and the ring has an identity).
The following result describes how the correspondence between Boolean algebras and Boolean rings works at the algebraic level.

\begin{theorem}\label{them:bool-ring} \mbox{}
\begin{enumerate}

\item Let $B$ be a Boolean algebra. Define $a + b = (a \wedge b') \vee (a' \wedge b)$ (the symmetric difference)
and $a \cdot b = a \wedge b$.
Then $(B, +, \cdot, 1)$ is a Boolean ring.

\item Let $(R, +, \cdot, 1)$ be a Boolean ring.
Define  $a \vee b = a + b + a \cdot b$, $a \wedge b = a \cdot b$ and $a' = 1 - a$.
Then $(R, \vee, \wedge, \, ' \, , 0,1)$ is a Boolean algebra

\item The constructions (1) and (2) above are mutually inverse. 

\end{enumerate}
\end{theorem}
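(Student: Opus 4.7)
The plan is to prove the three parts in sequence, treating (1) and (2) as systematic axiom-checks and reserving the real content for (3). Throughout, I will lean on Lemma~\ref{lem:basic-properties}, in particular the idempotence laws (1), the double-negation (6), and the de Morgan identities (7) and (8).

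For part (1), I would first verify that $(B,+)$ is an abelian group. Commutativity of $+$ is immediate from the symmetry of the defining formula $a + b = (a \wedge b') \vee (a' \wedge b)$. Noting that $0' = 1$ and $1' = 0$, the identity $a + 0 = a$ reduces to $(a \wedge 1) \vee (a' \wedge 0) = a$. Every element is its own additive inverse, since $a + a = (a \wedge a') \vee (a' \wedge a) = 0 \vee 0 = 0$. Associativity of $+$ is the first substantive computation; I would expand both $(a+b)+c$ and $a+(b+c)$ using distributivity and de Morgan, aiming for the common symmetric expression $(a \wedge b \wedge c) \vee (a \wedge b' \wedge c') \vee (a' \wedge b \wedge c') \vee (a' \wedge b' \wedge c)$. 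Multiplication is just $\wedge$, so associativity, commutativity, idempotence and the identity $a \cdot 1 = a$ come directly from B4, B5, B6 and Lemma~\ref{lem:basic-properties}(1). Distributivity $a \cdot (b + c) = ab + ac$ unpacks by B7 together with $a \wedge a' = 0$.

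For part (2), I would work in the ring $(R,+,\cdot,1)$ and first establish the preliminary that $2a = 0$ for every $a$: since $(a+a)^2 = a + a$ and every element is idempotent, expanding yields $4a = 2a$, hence $2a = 0$. This forces commutativity of multiplication, since $(a+b)^2 = a + b$ gives $ab + ba = 0 = ab + ab$. I would then verify B1--B10 for the defined operations one by one. The tamer axioms (idempotence of $\cdot$ giving B4, B5, B6, commutativity giving B2, neutrality of $0$ and $1$) are immediate. The distributive laws B7 and B8 require a short ring-theoretic expansion using $a^2 = a$, and the complementation identities B9, B10 follow from $x(1-x) = 0$ and $x + (1-x) + x(1-x) = 1$.

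Part (3) is where actual verification, not just rewriting, is needed. Starting from a Boolean algebra, passing to the ring and back, the meet is trivially recovered as $a \wedge b = a \cdot b = a \wedge b$; the join is recovered as $a + b + a \cdot b = ((a \wedge b') \vee (a' \wedge b)) \vee (a \wedge b)$, which by absorption collapses to $a \vee b$; and the complement is recovered since $1 + a = (1 \wedge a') \vee (1' \wedge a) = a'$. In the other direction, starting from a Boolean ring, passing to the algebra and back, the product is trivially $a \cdot b$ again, while the recovered sum $(a \cdot (1+b)) + ((1+a) \cdot b) + (a \cdot (1+b)) \cdot ((1+a) \cdot b)$, after applying $2xy = 0$ and $x^2 = x$ repeatedly, simplifies to $a + b$ as required. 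The main obstacle throughout is bookkeeping rather than insight: keeping the expansions in parts (1) and (3) organised so that the symmetric normal form, or the cancellation via $2x = 0$, really does fall out cleanly.
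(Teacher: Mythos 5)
Your proposal is correct. Note, however, that the paper states Theorem~\ref{them:bool-ring} without proof --- it is presented as a classical result of Stone, with the surrounding text only discussing its significance --- so there is no argument in the paper to compare yours against. Your direct verification is the standard one: the symmetric three-variable normal form for associativity of $+$ in part (1), the derivation of $2a=0$ and commutativity from idempotency in part (2), and the two round-trip computations in part (3) are all sound. The only step you gloss is in the forward direction of (3): the expression $a+b+a\cdot b$ is literally the symmetric difference $(a+b)+(a\wedge b)$, and rewriting it as the join $((a\wedge b')\vee(a'\wedge b))\vee(a\wedge b)$ uses the fact that $(a+b)\wedge(a\wedge b)=0$, since the symmetric difference of disjoint elements is their join; this is a one-line check but should be said. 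With that remark added, the argument is complete.
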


The above result is satisfying since the definition of Boolean rings could hardly be simpler
but also raises the interesting question of why Marshall H. Stone (1903--1989), a functional analyst, 
should have been interested in Boolean algebras in the first place.
The reason is that Stone worked on the spectral theory of symmetric operators 
and this led to an interest in algebras of commuting projections.
Such algebras are naturally Boolean algebras.
The connection between Boolean inverse semigroups and $C^{\ast}$-algebras
continues this tradition.
The following theorem \cite{Foster} puts this connection in a slightly wider context.
If $R$ is a ring denote its set of idempotents by $\mathsf{E}(R)$.
 
\begin{theorem}\label{them:foster} Let $R$ be a unital commutative ring.
Then the set $\mathsf{E}(R)$ is a Boolean algebra when we define $e \vee f = e + f - ef$, $e \wedge f = e \cdot f$
and $e' = 1 - e$.
\end{theorem}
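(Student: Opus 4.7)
The plan is to verify directly that $(\mathsf{E}(R), \vee, \wedge, {}', 0, 1)$ satisfies the ten axioms (B1)--(B10). The first task is to check that the three operations actually land in $\mathsf{E}(R)$. Idempotence of $ef$ uses commutativity: $(ef)^2 = e^2 f^2 = ef$. Idempotence of $1 - e$ is immediate from $e^2 = e$. Idempotence of $e + f - ef$ is a short expansion in which the cross terms collapse after applying $e^2 = e$, $f^2 = f$, and the commutativity of $R$. At every stage, commutativity of $R$ is essential.

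With closure in hand, most of the axioms are immediate: (B2) and (B5) are commutativity of $R$; (B4) is associativity of multiplication; (B3) and (B6) are trivial; and (B9), (B10) unwind to $e + (1-e) - e(1-e) = 1$ and $e - e^2 = 0$. The remaining work consists of three identities. Associativity of $\vee$ (B1) follows from the observation that $(e \vee f) \vee g$ expands to a polynomial in $e, f, g, ef, eg, fg, efg$ which is manifestly symmetric in the three variables. Distributivity of $\wedge$ over $\vee$ (B7) reduces, after applying $e^2 = e$, to the identity $ef + eg - efg$ on both sides. The dual distributive law (B8) is a slightly longer polynomial expansion that collapses, by repeated use of $e^2 = e$, to $e + fg - efg$.

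The proof therefore amounts to careful bookkeeping of polynomial identities in a commutative ring, and there is no conceptual obstacle. The most error-prone point is the verification of (B8), where one must track several cancellations via $e^2 = e$. A tidier alternative is to define $e \oplus f = e + f - 2ef$, verify that $(\mathsf{E}(R), \oplus, \cdot, 1)$ is a Boolean ring --- the ring axioms reduce to symmetric polynomial identities, and $e \oplus e = 2e - 2e^2 = 0$ supplies additive inverses --- and then invoke Theorem \ref{them:bool-ring}(2), checking finally that the Boolean algebra operations it produces match the stated $\vee$, $\wedge$, ${}'$.
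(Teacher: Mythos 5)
Your proposal is correct. Note, however, that the paper itself gives no proof of this statement: it is quoted from Foster's 1945 paper and left unproved, so there is no argument in the text to compare against. Your direct verification is complete and sound --- the closure computations (in particular $(e+f-ef)^2 = e+f-ef$), the symmetric expansion $e+f+g-ef-eg-fg+efg$ for (B1), and the reduction of both sides of (B8) to $x+yz-xyz$ all check out, with commutativity and $e^2=e$ used exactly where you say they are. Your alternative route via the Boolean ring $(\mathsf{E}(R),\oplus,\cdot,1)$ with $e \oplus f = e+f-2ef$ is also valid and is arguably the better-organised argument in the context of this paper, since Theorem~\ref{them:bool-ring} is already in place: one verifies that $\oplus$ is closed, associative, has $e$ as its own inverse, and that multiplication distributes over it, and then checks that the induced join $e \oplus f \oplus ef$ simplifies to $e+f-ef$ and the induced complement $1 \oplus e$ to $1-e$. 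Either way the content is routine polynomial bookkeeping, and you have identified the one genuinely error-prone spot, namely (B8).
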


By Theorem~\ref{them:bool-ring} and Theorem~\ref{them:foster}, it is immediate that each Boolean algebra
arises as the Boolean algebra of idempotents of a commutative ring.

So far we have viewed Boolean algebras as purely algebraic objects,
but in fact they come equipped with a partial order that underpins this algebraic structure. 
Let $B$ be a Boolean algebra.
For $x,y \in B$, define $x \leq y$ if and only if $x = x \wedge y$;
we say that $y$ {\em lies above} $x$.
The proofs of the following are routine.

\begin{lemma}\label{lem:order} With the above definition, we have the following:
\begin{enumerate}
\item $\leq$ is a partial order on $B$.

\item $x \leq y$ if and only if $y = x \vee y$.

\item $a \wedge b = \mbox{\rm glb}\{a,b\}$  and $a \vee b = \mbox{\rm lub}\{a,b\}$.

\end{enumerate}
\end{lemma}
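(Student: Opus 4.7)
The plan is to verify the three clauses in order, using only the axioms (B1)--(B10) together with the identities collected in Lemma~\ref{lem:basic-properties}. None of this should require any extra machinery; the only thing to be careful about is invoking commutativity (B2), (B5) and associativity (B1), (B4) in the right places, and remembering that absorption $x = x \vee (x \wedge y)$ is Lemma~\ref{lem:basic-properties}(4), while $x \wedge x = x$ is Lemma~\ref{lem:basic-properties}(1).

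For (1), I would check the three axioms of a partial order one at a time. Reflexivity is immediate: $x \wedge x = x$ gives $x \leq x$. For antisymmetry, suppose $x \leq y$ and $y \leq x$, so that $x = x \wedge y$ and $y = y \wedge x$; by (B5) the right-hand sides are equal, hence $x = y$. For transitivity, from $x = x \wedge y$ and $y = y \wedge z$ one substitutes to get $x = x \wedge (y \wedge z) = (x \wedge y) \wedge z = x \wedge z$ via (B4), which is exactly $x \leq z$.

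For (2), I would prove each implication by a single absorption step. If $x = x \wedge y$, then applying Lemma~\ref{lem:basic-properties}(4) to the pair $(y,x)$ (and using (B2), (B5)) gives $y = y \vee (y \wedge x) = y \vee x = x \vee y$. Conversely, if $y = x \vee y$, then $x \wedge y = x \wedge (x \vee y) = x$ by the dual absorption identity (which follows from (B8) with $y = 0$, or equivalently from (4) by $\vee$--$\wedge$ duality).

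For (3), I would first show $a \wedge b$ is a lower bound of $\{a,b\}$: $(a \wedge b) \wedge a = a \wedge b$ by commutativity, associativity and idempotency, so $a \wedge b \leq a$, and symmetrically $a \wedge b \leq b$. If $c$ is any common lower bound, i.e.\ $c = c \wedge a$ and $c = c \wedge b$, then $c = (c \wedge a) \wedge b = c \wedge (a \wedge b)$ by (B4), giving $c \leq a \wedge b$. The statement for $a \vee b$ is the exact dual: using part~(2), $a \leq a \vee b$ because $(a \vee b) = a \vee (a \vee b)$ by (B1) and Lemma~\ref{lem:basic-properties}(1); symmetrically $b \leq a \vee b$; and if $a \leq c$, $b \leq c$, then $c = a \vee c$ and $c = b \vee c$, whence $c = a \vee (b \vee c) = (a \vee b) \vee c$, i.e.\ $a \vee b \leq c$.

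There is no real obstacle here---every step is a one-line algebraic manipulation. The mildest pitfall is making sure that in (2) one invokes both absorption laws (for $\wedge$ absorbing $\vee$ and for $\vee$ absorbing $\wedge$), and in (3) one uses part (2) to translate between the two descriptions of $\leq$ when switching from the $\wedge$-half to the $\vee$-half of the argument.
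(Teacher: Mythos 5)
Your proof is correct; the paper gives no argument here, simply declaring the proofs routine, and your verification is exactly the standard one from the axioms and Lemma~\ref{lem:basic-properties}. The one point you rightly flag --- that the dual absorption law $x = x \wedge (x \vee y)$ is not listed in Lemma~\ref{lem:basic-properties} and must be derived (e.g.\ from (B8) with $y=0$, or from (B7) with idempotency and absorption) --- is handled properly.
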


\subsection{Finite Boolean algebras}

In this section, we describe the structure of all finite Boolean algebras.
It is only included to provide motivation for the sections that follow.
The crucial idea is that of an atom.
A non-zero element $x \in B$ of a Boolean algebra is called an {\em atom} if $y \leq x$ implies that either $x = y$ or $y = 0$.
We denote the set of atoms of the Boolean algebra $B$ by $\mathsf{at}(B)$.  
The proof of the following is immediate from the definition.

\begin{lemma}\label{lem:jojo} 
Let $x$ and $y$ be atoms. Then either $x = y$ or $x \wedge y = 0$.
\end{lemma}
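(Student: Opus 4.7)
The plan is a short direct argument from the definition of an atom, using the order-theoretic interpretation of meet supplied by Lemma~\ref{lem:order}.

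First, I would consider the element $x \wedge y$ and compare it to $x$. By Lemma~\ref{lem:order}(3), $x \wedge y$ is the greatest lower bound of $\{x,y\}$, so in particular $x \wedge y \leq x$. Since $x$ is an atom, the definition gives two alternatives: either $x \wedge y = 0$, in which case we are done, or $x \wedge y = x$.

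Next, I would handle the remaining case $x \wedge y = x$. By definition of the order on $B$, the equation $x = x \wedge y$ means exactly $x \leq y$. Now I invoke the fact that $y$ is an atom: this forces either $x = 0$ or $x = y$. But $x$ is an atom and therefore nonzero, so we must have $x = y$.

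Combining the two cases yields the dichotomy in the statement. There is no real obstacle here; the argument is a one-line unfolding of the definition of atom applied symmetrically to $x$ and $y$, with Lemma~\ref{lem:order} providing the translation between the meet operation and the partial order.
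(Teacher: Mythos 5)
Your argument is correct and is precisely the routine unfolding of the definition of atom that the paper has in mind when it declares the proof ``immediate from the definition'' and omits it. Nothing further is needed.
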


The proof of the following is immediate.

\begin{lemma}\label{lem:atoms} let $B$ be a finite Boolean algebra.
Then every non-zero element lies above an atom.
\end{lemma}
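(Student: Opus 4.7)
The plan is to exploit finiteness directly via a minimality argument on the set of nonzero elements lying below $x$. Given a nonzero $x \in B$, I would form the set
$$S = \{y \in B \colon 0 \neq y \leq x\}.$$
This set is nonempty, since $x$ itself belongs to $S$, and finite, since $B$ is finite. Because the natural order $\leq$ (as in Lemma~\ref{lem:order}) is a partial order on the finite set $S$, I can pick a minimal element $a \in S$.

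The claim is that this $a$ is the required atom. By construction $a \leq x$ and $a \neq 0$, so it only remains to verify atomicity. Suppose $y \leq a$. If $y \neq 0$, then transitivity of $\leq$ gives $y \leq x$, so $y \in S$; minimality of $a$ forces $y = a$. Thus every element below $a$ is either $0$ or $a$, which is exactly the definition of an atom.

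There is no real obstacle here: the argument is a standard finiteness-plus-minimality trick, and the only mild point to check is that one correctly invokes transitivity so as to land back inside $S$ when testing minimality. I would present the proof in just a few lines, matching the author's claim that it is immediate.
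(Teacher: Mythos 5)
Your proof is correct: the finiteness-plus-minimality argument is exactly the standard one, and the paper itself gives no proof, simply declaring the statement immediate. Your write-up supplies precisely the argument the author is implicitly relying on, so there is nothing to compare or correct.
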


The following lemma will be useful.

\begin{lemma}\label{lem:useful-one} Let $B$ be a Boolean algebra.
If $a \nleq b$ then $a \wedge b' \neq 0$.
\end{lemma}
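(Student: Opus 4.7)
The plan is to argue by contrapositive: I will assume $a \wedge b' = 0$ and then derive $a \leq b$. Since the order on $B$ was defined by $x \leq y \Leftrightarrow x = x \wedge y$, this is exactly what needs to be shown.

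The key identity to exploit is $b \vee b' = 1$ from axiom (B9), together with (B6) which says that $1$ is an identity for $\wedge$. First I would write $a = a \wedge 1 = a \wedge (b \vee b')$. Then applying the distributive law (B7) converts this into $a = (a \wedge b) \vee (a \wedge b')$. By the standing assumption $a \wedge b' = 0$, and using (B3) that $0$ is an identity for $\vee$, this collapses to $a = a \wedge b$, which is precisely the statement that $a \leq b$.

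The lemma is essentially a one-line computation once distributivity is invoked, so there is no real obstacle; the main point is just to recognise that the contrapositive is the clean way to go, because it lets us use the defining equation of the order directly rather than reasoning about strict non-comparability. No properties beyond the basic axioms (B3), (B6), (B7), (B9) and the definition of $\leq$ recorded in Lemma~\ref{lem:order} are needed.
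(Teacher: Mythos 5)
Your proposal is correct and is essentially identical to the paper's own argument: the paper also assumes $a \wedge b' = 0$ and computes $a = a \wedge 1 = a \wedge (b \vee b') = a \wedge b$, concluding $a \leq b$ (phrased as a contradiction rather than a contrapositive, which is an immaterial difference). Your version just spells out the intermediate distributivity step more explicitly.
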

\begin{proof}
Suppose that $a \wedge b' = 0$.
Then $a = a \wedge 1 = a \wedge (b \vee b') = a \wedge b$ from which we get that $a \leq b$, which is a contradiction.
\end{proof}

Let $B$ be a finite Boolean algebra.
For each $a \in B$, denote by $\mathscr{U}_{a}$ the set of all atoms in $B$ below $a$.
By Lemma~\ref{lem:atoms}, 
it follows that $a \neq 0$ implies that $\mathscr{U}_{a} \neq \varnothing$.
The important properties of the sets $\mathscr{U}_{a}$ we shall need are listed below.

\begin{lemma}\label{lem:germany} Let $B$ be a finite Boolean algebra.
\begin{enumerate}

\item $\mathscr{U}_{0} = \varnothing$. 

\item $\mathscr{U}_{1} = \mathsf{at}(B)$.

\item $\mathscr{U}_{a} \cap \mathscr{U}_{b} = \mathscr{U}_{a \wedge b}$.

\item $\mathscr{U}_{a} \cup \mathscr{U}_{b} = \mathscr{U}_{a \vee b}$.

\item $\mathscr{U}_{a'} = \overline{\mathscr{U}_{a}}$.

\item $\mathscr{U}_{a} \subseteq \mathscr{U}_{b}$ if and only if $a \leq b$.

\item  $\mathscr{U}_{a} = \mathscr{U}_{b}$ if and only if $a = b$.

\end{enumerate}
\end{lemma}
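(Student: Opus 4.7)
The plan is to work through the seven parts roughly in order, using Lemma~\ref{lem:atoms} (every non-zero element dominates an atom), Lemma~\ref{lem:jojo} (distinct atoms meet to $0$), and Lemma~\ref{lem:useful-one} ($a \nleq b \Rightarrow a \wedge b' \neq 0$) as the main tools. Parts (1) and (2) are immediate from the definitions: no atom lies below $0$ since atoms are non-zero, and every element, in particular every atom, lies below the top $1$.

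Part (3) follows at once from the characterization of the meet as a greatest lower bound (Lemma~\ref{lem:order}(3)): an atom $x$ sits below both $a$ and $b$ if and only if it sits below $a \wedge b$. Part (4) is the one place where a small argument is needed. One inclusion is trivial, since $a,b \leq a \vee b$. For the converse, take an atom $x \leq a \vee b$ and consider $x \wedge a$. This is below $x$, and since $x$ is an atom it is either $0$ or $x$. In the latter case $x \leq a$, and we are done. In the former case, distributivity (axiom B7) gives $x = x \wedge (a \vee b) = (x \wedge a) \vee (x \wedge b) = x \wedge b$, so $x \leq b$. Either way $x \in \mathscr{U}_a \cup \mathscr{U}_b$.

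Part (5) uses Lemma~\ref{lem:useful-one}: an atom $x$ cannot lie below both $a$ and $a'$ (else $x \leq a \wedge a' = 0$), so $\mathscr{U}_{a'} \cap \mathscr{U}_a = \varnothing$. Conversely, if $x$ is an atom with $x \nleq a$, then $x \wedge a' \neq 0$ by Lemma~\ref{lem:useful-one}, and since $x \wedge a' \leq x$ and $x$ is an atom, we must have $x \wedge a' = x$, i.e.\ $x \leq a'$. Thus the atoms not in $\mathscr{U}_a$ are exactly those in $\mathscr{U}_{a'}$.

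Part (6) is the real content and where Lemma~\ref{lem:atoms} plays its role: the reverse direction is clear, while for the forward direction, if $a \nleq b$ then Lemma~\ref{lem:useful-one} gives $a \wedge b' \neq 0$, so by Lemma~\ref{lem:atoms} there is an atom $x \leq a \wedge b'$; this $x$ lies in $\mathscr{U}_a$ but not in $\mathscr{U}_b$ (by part (5)), contradicting the hypothesis. Part (7) is then immediate from (6) by antisymmetry. The main obstacle, such as it is, is arranging (4) and (6) so they make genuine use of the atom-existence lemma and of distributivity rather than collapsing to a vacuous statement; everything else is bookkeeping.
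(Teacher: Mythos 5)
Your proof is correct and follows essentially the same route as the paper: parts (1)--(3) from the order definitions, part (4) via distributivity and the atom dichotomy, part (5) via Lemma~\ref{lem:useful-one}, and part (6) by producing an atom below $a \wedge b'$ when $a \nleq b$. The only cosmetic difference is that in (5) you prove both inclusions directly, whereas the paper derives the reverse inclusion from the identity $a'' = a$.
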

\begin{proof} 
(1) Immediate.

(2) Immediate.

(3) Let $x \in \mathscr{U}_{a} \cap \mathscr{U}_{b}$.
Then $x \leq a$ and $x \leq b$.
It follows that $x \leq a \wedge b$ and so $x \in \mathscr{U}_{a \wedge b}$.
Now suppose that $x \in \mathscr{U}_{a \wedge b}$.
Then $x \leq a \wedge b$.
But $a \wedge b \leq a,b$.
It follows that $x \in \mathscr{U}_{a} \cap \mathscr{U}_{b}$.

(4) Let $x \in \mathscr{U}_{a} \cup \mathscr{U}_{b}$.
Then $x \leq a$ or $x \leq b$.
In either case, $x \leq a \vee b$.
It follows that $x \in \mathscr{U}_{a \vee b}$.
Conversely, suppose that $x \in \mathscr{U}_{a \vee b}$.
Then $x \leq a \vee b$.
It follows that $x = (a \wedge x) \vee (b \wedge x)$.
But $x$ is an atom.
So, either $a \wedge x = x$ or $b \wedge x = x$; 
that is either $x \leq a$ or $x \leq b$.
Hence $x \in \mathscr{U}_{a} \cup \mathscr{U}_{b}$.

(5) Suppose that $x \in \overline{\mathscr{U}_{a}}$.
Then $x \nleq a$. 
It follows that $x \wedge a' \neq 0$ by Lemma~\ref{lem:useful-one}.
But $x$ is an atom, and so $x \leq a'$.
The proof of the reverse inclusion follows by 
part (6) of Lemma~\ref{lem:basic-properties},

(6) Suppose that $\mathscr{U}_{a} \subseteq \mathscr{U}_{b}$. 
If $a \nleq b$ then $a \wedge b' \neq 0$
by Lemma~\ref{lem:useful-one}.
By Lemma~\ref{lem:atoms}, there is an atom $x \leq a \wedge b'$.
It follows that $x \leq a$ and $x \leq b'$ and so $x \nleq b$ by part (5) above.
This contradicts our assumption.
It follows that $a \leq b$.
The proof of the converse is immediate.

(7) This is immediate by part (6).
\end{proof}

\begin{proposition}\label{prop:finite} 
Every finite Boolean algebra is isomorphic to the Boolean algebra of subsets of a finite set.
\end{proposition}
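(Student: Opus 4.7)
The plan is to use the assignment $a \mapsto \mathscr{U}_{a}$ established in the preceding lemma as the candidate isomorphism, and show that it is a bijective Boolean homomorphism from $B$ onto the full power set $\mathsf{P}(\mathsf{at}(B))$. Since $B$ is finite, $\mathsf{at}(B)$ is a finite set, so the target will indeed be the Boolean algebra of subsets of a finite set.

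First I would define $\varphi \colon B \rightarrow \mathsf{P}(\mathsf{at}(B))$ by $\varphi(a) = \mathscr{U}_{a}$. Parts (1)--(5) of Lemma~\ref{lem:germany} say precisely that $\varphi$ preserves $0$, $1$, meet, join, and complementation; it is therefore a homomorphism of Boolean algebras. Part (7) of the same lemma says that $\varphi$ is injective. So the only remaining task is to prove that $\varphi$ is surjective.

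For surjectivity, let $S \subseteq \mathsf{at}(B)$ be arbitrary. Since $B$ is finite the set $S$ is finite, so we may form $a = \bigvee_{x \in S} x$ in $B$ (with the convention that the empty join is $0$, which handles $S = \varnothing$). I claim that $\mathscr{U}_{a} = S$. The inclusion $S \subseteq \mathscr{U}_{a}$ is immediate because each element of $S$ is an atom lying below $a$. For the reverse inclusion, suppose $y$ is an atom with $y \leq a$. Using distributivity we write
\[
y = y \wedge a = \bigvee_{x \in S} (y \wedge x).
\]
By Lemma~\ref{lem:jojo}, for each $x \in S$ either $y \wedge x = y$ (when $x = y$) or $y \wedge x = 0$ (when $x \neq y$). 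Since $y \neq 0$, at least one term in the join must equal $y$, and hence $y \in S$.

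The main obstacle is this last step: ensuring that the join $\bigvee_{x \in S} x$ really recovers $S$ under $\varphi$. This is where finiteness is essential, because it lets us form the join over $S$ inside $B$, and then the disjointness of distinct atoms (Lemma~\ref{lem:jojo}) together with distributivity forces an atom below the join to lie in $S$. Once surjectivity is established, $\varphi$ is the required isomorphism onto $\mathsf{P}(\mathsf{at}(B))$.
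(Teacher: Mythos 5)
Your proposal is correct and follows essentially the same route as the paper: the map $a \mapsto \mathscr{U}_{a}$, injectivity and the homomorphism property from Lemma~\ref{lem:germany}, and surjectivity by taking the join of a set of atoms and using distributivity together with Lemma~\ref{lem:jojo}. Your explicit handling of the empty set via the empty join is a minor tidying of a case the paper leaves implicit.
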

\begin{proof} Let $B$ be a finite Boolean algebra.
As pour set, we take $\mathsf{at}(B))$, the set of atoms of $B$.
Define a function $B \rightarrow \mathsf{P}(\mathsf{at}(B))$, the set of all subsets of $\mathsf{at}(B)$, by $a \mapsto \mathscr{U}_{a}$.
By part (7) of  Lemma~\ref{lem:germany}, this is an injective morphism of Boolean algebras.
It remains only to prove that it is surjective.
Let $A = \{x_{1}, \ldots, x_{n}\}$ be any non-empty set of atoms.
Put $a = \bigvee_{i=1}^{n} x_{i}$.
We shall prove that $\mathscr{U}_{a} = A$.
Let $x$ be any atom such that $x \leq a$.
Then $x = (x \wedge x_{1}) \vee \ldots \vee (x \wedge x_{n})$.
By Lemma~\ref{lem:jojo}, it follows that $x = x_{i}$ for some $i$.
\end{proof}

We now place the above result in its proper categorical context.

\begin{theorem}[Stone duality for finite Boolean algebras]\label{them:finite-duality} 
The category of finite Boolean algebras and homomorphisms between them
is dually equivalent to the category of finite sets and functions between them. 
\end{theorem}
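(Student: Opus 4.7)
The plan is to upgrade the object-level bijection from Proposition~\ref{prop:finite} to a full dual equivalence by constructing a pair of contravariant functors and exhibiting natural isomorphisms for both round-trips.

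First, I would define the contravariant functor $G \colon \mathbf{FinSet} \to \mathbf{FinBool}$ on objects by $G(X) = \mathsf{P}(X)$ and on a morphism $f \colon Y \to X$ by $G(f) = f^{-1} \colon \mathsf{P}(X) \to \mathsf{P}(Y)$. Contravariant functoriality and the fact that $f^{-1}$ is a Boolean homomorphism are routine, since preimages preserve finite unions, finite intersections, complements, and the distinguished elements.

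Second, and more substantively, I would define the contravariant functor $F \colon \mathbf{FinBool} \to \mathbf{FinSet}$ on objects by $F(B) = \mathsf{at}(B)$, and on a Boolean homomorphism $\theta \colon B \to C$ by sending each atom $y \in \mathsf{at}(C)$ to the unique atom $x \in \mathsf{at}(B)$ with $y \leq \theta(x)$. For existence, I would first observe that $1 = \bigvee \mathsf{at}(B)$ in any finite Boolean algebra (otherwise its complement would be a nonzero element which, by Lemma~\ref{lem:atoms}, sits above an atom, forcing that atom both below and below the complement of the join---a contradiction). Applying $\theta$ and meeting with $y$ yields $y = \bigvee_{x \in \mathsf{at}(B)} (y \wedge \theta(x))$ by distributivity, so since $y$ is an atom exactly one summand equals $y$. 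Uniqueness follows from Lemma~\ref{lem:jojo}: any two distinct atoms $x_{1}, x_{2}$ of $B$ satisfy $\theta(x_{1}) \wedge \theta(x_{2}) = \theta(0) = 0$, which is incompatible with $0 \neq y \leq \theta(x_{1}) \wedge \theta(x_{2})$. Contravariant functoriality (preservation of identities and $F(\phi \theta) = F(\theta) F(\phi)$) then follows by a short chase of this defining condition.

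Third, I would exhibit the natural isomorphisms witnessing the duality. The component $\eta_{B} \colon B \to \mathsf{P}(\mathsf{at}(B))$ given by $\eta_{B}(a) = \mathscr{U}_{a}$ is a Boolean isomorphism by Proposition~\ref{prop:finite}. The component $\epsilon_{X} \colon X \to \mathsf{at}(\mathsf{P}(X))$ given by $\epsilon_{X}(x) = \{x\}$ is a bijection, since the atoms of $\mathsf{P}(X)$ are precisely the singletons. Naturality of $\eta$ reduces to $\mathscr{U}_{\theta(a)} = F(\theta)^{-1}(\mathscr{U}_{a})$; unwinding, for an atom $y$ of $C$, $y \leq \theta(a)$ iff the unique atom $x$ of $B$ with $y \leq \theta(x)$ satisfies $x \leq a$, which is immediate from expanding $a$ as a join of atoms. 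Naturality of $\epsilon$ is direct: for $f \colon X \to Y$, the atom $F(f^{-1})(\{x\})$ is by definition the unique singleton $\{z\}$ of $Y$ with $\{x\} \subseteq f^{-1}(\{z\})$, forcing $z = f(x)$.

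The main obstacle is the construction of $F$ on morphisms: everything hinges on atoms pulling back uniquely along Boolean homomorphisms in the finite setting, which in turn rests on $1$ being the join of all atoms---a property special to finite Boolean algebras, and one that will have to be replaced by ultrafilter considerations in the general theory developed later in the paper.
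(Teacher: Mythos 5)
Your proposal is correct and follows essentially the same route as the paper: your construction of $F$ on morphisms (sending an atom $y$ of $C$ to the unique atom $x$ of $B$ with $y \leq \theta(x)$, justified by $1 = \bigvee \mathsf{at}(B)$ together with Lemma~\ref{lem:jojo}) is exactly the paper's map $\theta^{\sharp}$, with the same existence and uniqueness arguments. The only difference is packaging: the paper exhibits the operation $\flat$ as an explicit inverse on hom-sets and leaves the remaining categorical bookkeeping as routine, whereas you verify the dual equivalence via the unit and counit natural isomorphisms $a \mapsto \mathscr{U}_{a}$ and $x \mapsto \{x\}$.
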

\begin{proof}

Let $\theta \colon B \rightarrow C$ be a homomorphism between finite Boolean algebras.
Define $\theta^{\sharp} \colon \mathsf{at}(C) \rightarrow \mathsf{at}(B)$ by
$\theta^{\sharp}(f) = e$ if $f \leq \theta (e)$ and $e$ is an atom.
We shall prove that $\theta^{\sharp}$ really is a function.
Suppose that $e$ and $e'$ are distinct atoms such that $\theta (e) \geq f$ and $\theta (e') \geq f$.
Then $\theta (e \wedge e') \geq f$ but $e \wedge e' = 0$, since $e$ and $e$ are distinct atoms, which implies that $f = 0$.
This is a contradiction.
Therefore, $\theta^{\sharp}$ is a partial function.
We have that $1 = \bigvee_{e \in \mathsf{at}(B)} \theta (e)$ inside $B$.
Thus $1 = \theta (1) =  \bigvee_{\theta (e) \in \mathsf{at}(B)} \theta (e)$.
But $1 \geq f$.
It follows that $f =  \bigvee_{\theta (e) \in \mathsf{at}(B)} \theta (e) \wedge f$.
But $f$ is an atom, so that either $\theta (e) \wedge f = 0$ or $\theta (e) \wedge f = f$.
In the latter case, $f \leq \theta (e)$.
We have therefore proved that $\theta^{\sharp}$ is a function.

Let $\alpha \colon \mathsf{at}(C) \rightarrow \mathsf{at}(B)$ be any function.
Define $\alpha^{\flat} \colon B \rightarrow C$ by
$$\alpha^{\flat}(e) = \bigvee_{\alpha (f) \leq e} f.$$
We prove that $\alpha^{\flat}$ is a homomorphism of Boolean algebras.
The join of the empty set is $0$ and so $\alpha^{\flat}(0) = 0$.
Similarly, $\alpha^{\flat}(1) = 1$.
The fact that $\alpha^{\flat}$ preserves joins follows from  Lemma~\ref{lem:germany}.
The fact that $\alpha^{\flat}$ preserves meets is straightforward.
It follows that $\alpha^{\flat}$ is a homomorphism of Boolean algebras.

Let $\theta \colon B \rightarrow C$ be a homomorphism of Boolean algebras.
Then $\theta^{\sharp} \colon \mathsf{at}(C) \rightarrow \mathsf{at}(B)$ is a well-defined function.
We calculate the effect of $(\theta^{\sharp})^{\flat}$ on atoms.
Let $e$ be an atom of $B$.
Then by definition
$$(\theta^{\sharp})^{\flat}(e) = \bigvee_{\theta^{\sharp} (f) \leq e} f.$$
But $e$ is an atom and so $\theta^{\sharp}(f) = e$.
It follows that $f \leq \theta (e)$.
We are therefore looking at the join of all atoms below $\theta (e)$, which is exactly $\theta (e)$.
We have therefore proved that $(\theta^{\sharp})^{\flat} = \theta$ on atoms.
It follows that $(\theta^{\sharp})^{\flat} = \theta$ as functions.

Let $\alpha \colon \mathsf{at}(C) \rightarrow \mathsf{at}(B)$ be any function.
Let $f$ be any atom of $C$.
Then $(\alpha^{\flat})^{\sharp}(f) = e$ if and only if $f \leq \alpha^{\flat} (e)$ and $e$ is an atom.
By definition $$\alpha^{\flat}(e) = \bigvee_{\alpha (i) \leq e} i = \bigvee_{\alpha (i) = e}i.$$
It follows that $f = i$ for some $i$.
Thus $\alpha (f) = e$.
It follows that $(\alpha^{\flat})^{\sharp} = \alpha$.

It is now routine to check that we have defined a dual equivalence of categories.
\end{proof}

\subsection{Arbitrary Boolean algebras}

In the light of Proposition~\ref{prop:finite},
it is tempting to conjecture that every Boolean algebra is isomorphic to a powerset Boolean algebra.
However, this turns out to be false;
powerset Boolean algebras always have atoms but there are Boolean algebras
that have no atoms at all 
(the {\em atomless} Boolean algebras. See \cite[page 118, Chapter 4]{GH}. The Lindenbaum-Tarski Boolean algebra constructed from classical propositional logic is another example).
To describe arbitrary Boolean algebras, we have to adopt a different approach, and this was just what Stone did \cite{Stone1937}.
You will find classical Stone duality described in the following references \cite{BS, GH, Johnstone, Koppelberg}.
The approach is symbolized below and can be viewed as a generalization of the finite case described in the previous section:

$$
\begin{array}{ccc}
\text{atom} & \stackrel{\text{\tiny replaced by}}{\xrightarrow{\hspace*{2cm}}} & \text{ultrafilter} \\
\text{powerset} & \stackrel{\text{\tiny replaced by}}{ \xrightarrow{\hspace*{2cm}}} & \text{topological space}\\
\text{atom }x \leq a        &\stackrel{\text{\tiny replaced by}}{\xrightarrow{\hspace*{2cm}}} & a \in F \text{ ultrafilter}\\
\end{array}
$$

Let $B$ be an arbitrary Boolean algebra.
There is no reason for $B$ to have atoms, so we have to find `atom substitutes' that do always exist.
This is the role of the ultrafilters.
A non-empty subset $F$ of a Boolean algebra $B$ is called a {\em filter} if it is closed under meets and upwardly closed.
The filter $F$ is said to be {\em proper} if $0 \notin F$.

\begin{example}\label{ex:needed}
{\em Let $a \in B$.
Then $a^{\uparrow}$ is a filter called the {\em principal filter generated by $a$}
}.
\end{example}



A proper filter $F$ is said to be {\em prime} if $a \vee b \in F$ implies that $a \in F$ or $b \in F$. 
A maximal proper filter is called an {\em ultrafilter}.

\begin{lemma}\label{lem:france-two} The following are equivalent for a proper filter $F$ in a Boolean algebra $B$.
\begin{enumerate}

\item $F$ is an ultrafilter.

\item For each non-zero $a \in B$ either $a \in F$ or $a' \in F$.

\item $F$ is a prime filter.

\end{enumerate}
\end{lemma}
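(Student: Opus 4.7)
The plan is to prove the chain of implications $(1) \Rightarrow (2) \Rightarrow (3) \Rightarrow (1)$, since each individual step is short and the cycle gives the full equivalence. All three arguments rely only on elementary Boolean algebra together with the characterization of ultrafilters already recorded in Lemma~\ref{lem:exel}.

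For $(1) \Rightarrow (2)$, suppose $F$ is an ultrafilter and let $a \in B$ be non-zero with $a \notin F$. By the contrapositive of Lemma~\ref{lem:exel}, there exists $x \in F$ with $x \wedge a = 0$. Inside a Boolean algebra, $x \wedge a = 0$ forces $x \leq a'$: indeed, $x = x \wedge 1 = x \wedge (a \vee a') = (x \wedge a) \vee (x \wedge a') = x \wedge a'$. Upward closure of $F$ then gives $a' \in F$.

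For $(2) \Rightarrow (3)$, assume (2) and suppose $a \vee b \in F$ but, for contradiction, that neither $a$ nor $b$ lies in $F$. If $a$ or $b$ were zero the other would equal $a \vee b$ and lie in $F$, so both are non-zero, and (2) supplies $a', b' \in F$. Closure under meets yields $a' \wedge b' \in F$, and by the De Morgan law (Lemma~\ref{lem:basic-properties}(7)) this element equals $(a \vee b)'$. Then $(a \vee b) \wedge (a \vee b)' = 0 \in F$, contradicting properness.

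For $(3) \Rightarrow (1)$, assume $F$ is prime and suppose $F \subsetneq G$ with $G$ a proper filter. Pick $a \in G \setminus F$. Since $F$ is non-empty and upwardly closed, $1 \in F$, and $1 = a \vee a'$ by (B9). Primeness then forces $a \in F$ or $a' \in F$; as $a \notin F$, we must have $a' \in F \subseteq G$. But now $a$ and $a'$ both lie in the filter $G$, hence so does $a \wedge a' = 0$, contradicting that $G$ is proper. Therefore $F$ is maximal, i.e.\ an ultrafilter.

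There is no real obstacle here; the only subtlety is handling the zero case in $(2) \Rightarrow (3)$ (since condition (2) is only stated for non-zero $a$) and being careful that $F$ non-empty and upwardly closed really does put $1$ into $F$ for the last step.
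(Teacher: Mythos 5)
Your proof is correct, and the first two implications follow the paper's argument essentially verbatim: $(1)\Rightarrow(2)$ extracts $x\in F$ with $x\wedge a=0$ from Lemma~\ref{lem:exel} and uses distributivity to get $x\leq a'$, and $(2)\Rightarrow(3)$ runs the De Morgan argument to put $(a\vee b)'$ into $F$ alongside $a\vee b$. Two small points of divergence are worth noting. First, in $(2)\Rightarrow(3)$ you explicitly dispose of the case where $a$ or $b$ is zero; the paper's proof silently applies condition (2), which is stated only for non-zero elements, so your version is actually the more careful one. Second, for $(3)\Rightarrow(1)$ the paper verifies the criterion of Lemma~\ref{lem:exel} (assume $a\wedge b\neq 0$ for all $b\in F$ and deduce $a\in F$), whereas you argue maximality directly: any proper filter $G$ strictly containing $F$ would have to contain some $a\notin F$ together with $a'$ (forced by primeness applied to $1=a\vee a'\in F$), hence would contain $0$. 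Both routes are sound; yours avoids invoking Lemma~\ref{lem:exel} for this direction and goes straight to the definition of ultrafilter as a maximal proper filter, at the cost of introducing the auxiliary filter $G$. No gaps.
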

\begin{proof} (1)$\Rightarrow$(2).
Let $F$ be an ultrafilter.
Suppose that $a \notin F$.
Then by Lemma~\ref{lem:exel}, there exists $b \in F$ such that $a \wedge b = 0$.
Now $1 = a \vee a'$.
Thus $b = (b \wedge a) \vee (b \wedge a')$.
So, $b = b \wedge a'$.
It follows that $b \leq a'$ giving $a' \in F$, as required.

(2)$\Rightarrow$(3).
We prove that $F$ is a prime filter.
Let $a \vee b \in F$.
Suppose that $a \notin F$ and $b \notin F$.
Then, by assumption, $a' \in F$ and $b' \in F$ so that $a' \wedge b' \in F$.
Thus $(a \vee b)' \in F$ which is a contradiction.

(3)$\Rightarrow$(1). Let $F$ be a prime filter.
We prove that $F$ is an ultrafilter.
We shall use Lemma~\ref{lem:exel}.
Assume that $a \in B$ is such that $a \wedge b \neq 0$ for all $b \in F$.
We shall prove that $a \in A$.
Suppose for the sake of argument, that $a \notin F$.
Now, $1 = a \vee a' \in F$ and so either $a \in F$ or $a' \in F$.
It follows that $a' \in F$ but $a \wedge a' = 0$
which contradicts our assumption about $a$.
Thus $a \in F$.
We have therefore proved that $F$ is an ultrafilter.
\end{proof}

\begin{remark}\label{rem:ba-prime-equal-uf}
{\em 
In the light of the above result, the terms `prime filter' and `ultrafilter' are interchangeable in a Boolean algebra.}
\end{remark}

Ultrafilters are connected with homomorphisms to the two-element Boolean algebra, $\mathbb{B}$.
The following is the ultrafilter version of \cite[Proposition 2.2]{Johnstone}.

\begin{lemma}\label{lem:hm} Let $B$ be a Boolean algebra.
\begin{enumerate} 
\item Let $\theta \colon B \rightarrow \mathbb{B}$ be a homomorphism of Boolean algebras.
Then $F = \theta^{-1}(1)$ is an ultrafilter.
\item Let $F$ be an ultrafilter. Then the characteristic function $\chi_{F} \colon B \rightarrow \mathbb{B}$
is a homomorphism of Boolean algebras.
\end{enumerate}
\end{lemma}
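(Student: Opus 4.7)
The plan is to verify each direction by routine checking of the Boolean-algebra axioms, leveraging Lemma~\ref{lem:france-two} (which lets me pass freely between being an ultrafilter and being a prime filter).

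For (1), I would first check that $F = \theta^{-1}(1)$ is a proper filter. Non-emptiness and properness follow from $\theta(1) = 1$ and $\theta(0) = 0$. Closure under meets is immediate from $\theta(a \wedge b) = \theta(a) \wedge \theta(b)$, and upward closure follows from $\theta$ preserving $\vee$: if $a \in F$ and $a \leq b$, then $\theta(b) = \theta(a \vee b) = 1 \vee \theta(b) = 1$. To upgrade $F$ to an ultrafilter I would verify the prime condition: if $a \vee b \in F$ then $\theta(a) \vee \theta(b) = 1$ in $\mathbb{B}$, which forces $\theta(a) = 1$ or $\theta(b) = 1$, hence $a \in F$ or $b \in F$. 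Lemma~\ref{lem:france-two} then converts primeness into the ultrafilter property.

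For (2), given an ultrafilter $F$, I want to show that the characteristic function $\chi_F$ is a homomorphism. Preservation of the two constants is immediate: $\chi_F(0) = 0$ since $F$ is proper, and $\chi_F(1) = 1$ since any element of $F$ lies below $1$ and $F$ is upwardly closed. For $\wedge$: if $a, b \in F$ then $a \wedge b \in F$ by closure under meets, so both sides equal $1$; conversely if either $a \notin F$ or $b \notin F$, then $a \wedge b \notin F$ because $a \wedge b \leq a, b$ and $F$ is upwardly closed, so both sides equal $0$. For $\vee$: if $a \in F$ or $b \in F$, upward closure gives $a \vee b \in F$ (both sides equal $1$); if neither is in $F$, primeness (via Lemma~\ref{lem:france-two}) forbids $a \vee b \in F$, so both sides equal $0$. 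As the paper already observed after defining Boolean-algebra homomorphisms, preservation of $0$, $1$, $\wedge$, $\vee$ automatically forces preservation of complementation, so $\chi_F$ is a homomorphism.

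There is no real obstacle here: the proof is an unpacking of definitions once Lemma~\ref{lem:france-two} is available. The only step that genuinely requires the full strength of the ultrafilter hypothesis (as opposed to merely that $F$ is a proper filter) is showing that $\chi_F$ preserves joins in the case where neither $a$ nor $b$ lies in $F$; this is precisely the content of primeness, which is why the characterization in Lemma~\ref{lem:france-two} is the key input.
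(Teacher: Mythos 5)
Your proof is correct and follows exactly the route the paper intends: the paper's own proof simply says both parts are straightforward consequences of Lemma~\ref{lem:france-two}, and your argument is the natural unpacking of that, using primeness as the bridge to the ultrafilter property and the paper's earlier observation that preserving $0$, $1$, $\wedge$, $\vee$ suffices for a homomorphism. Nothing to correct.
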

\begin{proof} 
These are both straightforward to prove using Lemma~\ref{lem:france-two}.
\end{proof}

The above lemma actually establishes a bijection between ultrafilters in $B$ and homomorphisms
from $B$ to the two-element Boolean algebra $\mathbb{B}$.
We can connect atoms with special kinds of ultrafilters;
this enables us to link what we are doing in this section with what we did previously.

\begin{lemma}\label{lem:green} Let $B$ be a Boolean algebra.
The principal filter $F = a^{\uparrow}$ is a prime filter if and only if $a$ is an atom.
\end{lemma}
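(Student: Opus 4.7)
The plan is to prove both directions directly from the definitions, using the basic identities of Lemma~\ref{lem:basic-properties} and the characterization of atoms. Throughout I will use that $a^{\uparrow}$ is proper precisely when $a \neq 0$.

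For the forward direction, I assume $a$ is an atom and verify primeness of $F = a^{\uparrow}$. Since atoms are non-zero by definition, $F$ is proper. Suppose $x \vee y \in F$, i.e.\ $a \leq x \vee y$. The key step is the distributive identity $a = a \wedge (x \vee y) = (a \wedge x) \vee (a \wedge y)$ from (B7). Since $a \wedge x$ and $a \wedge y$ both lie below the atom $a$, each is either $0$ or $a$. They cannot both be $0$ (otherwise $a = 0 \vee 0 = 0$), so at least one equals $a$, giving $a \leq x$ or $a \leq y$; equivalently $x \in F$ or $y \in F$. This shows $F$ is prime.

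For the converse, I assume $F = a^{\uparrow}$ is prime and show $a$ is an atom. Properness forces $a \neq 0$. Suppose for contradiction that $a$ is not an atom, so there exists $b$ with $0 < b < a$. Set $c = a \wedge b'$. A short calculation using (B8) together with $b \leq a$ shows $b \vee c = (b \vee a) \wedge (b \vee b') = a \wedge 1 = a$. Moreover $c \neq 0$: otherwise $a = b \vee 0 = b$, contradicting $b < a$. And $c \neq a$: otherwise $a \leq b'$, so $b \leq a \leq b'$, forcing $b = b \wedge b' = 0$, again a contradiction. Thus $c < a$ as well, so neither $b$ nor $c$ belongs to $F$. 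But $a = b \vee c \in F$, contradicting primeness. Hence no such $b$ exists and $a$ is an atom.

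The argument is elementary and there is no real obstacle; the only mildly delicate point is setting up the witness $c = a \wedge b'$ and checking that it lies strictly between $0$ and $a$, which is why I isolated that step. Note that one could alternatively invoke Lemma~\ref{lem:france-two} to replace `prime filter' by `ultrafilter' and then apply Lemma~\ref{lem:exel}, but the direct approach above is more transparent.
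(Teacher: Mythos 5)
Your proof is correct. The forward direction is essentially identical to the paper's: both write $a = (a \wedge x) \vee (a \wedge y)$ via distributivity and use atomicity to force one of the meets to equal $a$. The converse, however, takes a genuinely different route. The paper silently invokes the equivalence of prime filters and ultrafilters (Lemma~\ref{lem:france-two}) and then argues by maximality: if $0 < b < a$ then $b^{\uparrow}$ is a proper filter strictly containing $a^{\uparrow}$, contradicting the assumption that $a^{\uparrow}$ is an ultrafilter. You instead stay entirely with the definition of primeness and produce an explicit witness: the decomposition $a = b \vee (a \wedge b')$ with both joinands strictly below $a$, hence outside $a^{\uparrow}$, directly violates primeness. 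Your version is self-contained and does not need the prime-filter/ultrafilter equivalence (nor Lemma~\ref{lem:exel}); the paper's is shorter but leans on earlier machinery. The care you took to verify $0 < a \wedge b' < a$ is exactly the point that needs checking, and your verification is correct; you could have shortened it slightly by citing Lemma~\ref{lem:useful-one} for $a \wedge b' \neq 0$, but the direct computation is equally valid.
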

\begin{proof} Let $a$ be an atom. From Example~\ref{ex:needed}, we know that $a^{\uparrow}$ is a filter.
We shall prove that it is a prime filter.
Suppose that $b \vee c \in F$.
Then $a \leq b \vee c$.
Thus $a = (a \wedge b) \vee (a \wedge c)$.
It cannot happen that both $a \wedge b = 0$ and $a \wedge c = 0$.
Also $a \wedge b \leq a$ and $a \wedge c \leq a$.
But $a$ is an atom.
If $a \wedge b = a$ then $a \leq b$ and $b \in F$;
if $a \wedge b = 0$ then $a \wedge c = a$ implying that $a \leq c$ and so $c \in F$.
This proves that $a^{\uparrow}$ is a prime filter.
We now prove the converse.
Suppose that $F$ is an ultrafilter.
We prove that $a$ is an atom.
Suppose not.
Then there is $0 < b < a$.
Then $b^{\uparrow}$ is a filter and $F \subset b^{\uparrow}$.
But this contradicts the assumption that $F$ is an ultrafilter.
It follows that $a$ must be an atom.
\end{proof}

The above lemma is only interesting in the light of the following result.
The routine proof uses Zorn's Lemma\footnote{The fairy godmother of mathematics.} or see \cite[Chapter 1, Proposition~2.16]{Koppelberg}.

\begin{lemma}[Boolean Prime Ideal Theorem]\label{lem:BIG}
A non-empty subset of a Boolean algebra is contained in an ultrafilter if and only if it has the finite intersection property.
\end{lemma}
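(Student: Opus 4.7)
The plan is to prove the two directions separately, with the nontrivial direction being a standard Zorn's Lemma argument.

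For the easy direction, suppose $X$ is contained in an ultrafilter $F$. Since $F$ is closed under binary meets and upwardly closed, it contains every finite meet of elements of $X$; that is, $X^{\wedge} \subseteq F$. Since $F$ is proper, $0 \notin F$, hence $0 \notin X^{\wedge}$, so $X$ has the finite intersection property.

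For the converse, assume $X$ has the finite intersection property. First I would build a proper filter containing $X$: let $F_{0} = (X^{\wedge})^{\uparrow}$. One checks routinely that $F_{0}$ is upwardly closed by construction, is closed under binary meets (the meet of any two finite meets of elements of $X$ is again a finite meet of elements of $X$), hence is a filter; and since $0 \notin X^{\wedge}$ by hypothesis, and $0$ lies above nothing except itself, we have $0 \notin F_{0}$, so $F_{0}$ is proper. Plainly $X \subseteq F_{0}$.

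Now I would apply Zorn's Lemma to the poset $\mathscr{F}$ of proper filters of $B$ containing $F_{0}$, ordered by inclusion. This poset is non-empty since $F_{0} \in \mathscr{F}$. Given a chain $\{F_{i}\}_{i \in I}$ in $\mathscr{F}$, the union $U = \bigcup_{i} F_{i}$ is again a proper filter containing $F_{0}$: upward closure is immediate from the union; downward directedness follows because any two elements of $U$ lie in a common $F_{i}$ by the chain condition, which is itself downwardly directed; and $0 \notin U$ because $0 \notin F_{i}$ for any $i$. Hence $U$ is an upper bound for the chain in $\mathscr{F}$. By Zorn, $\mathscr{F}$ has a maximal element $F$, which by definition is a proper filter, and by maximality among all proper filters (a proper filter properly containing $F$ would lie in $\mathscr{F}$) is an ultrafilter containing $X$.

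The argument is routine; the only point that requires care is verifying that the union of a chain of proper filters is again a proper filter, which follows immediately from the chain hypothesis together with the fact that $0$ is excluded from every member of the chain.
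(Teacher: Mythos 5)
Your proof is correct and is exactly the argument the paper has in mind: the paper gives no details, saying only that ``the routine proof uses Zorn's Lemma,'' and your write-up supplies precisely that routine Zorn's Lemma argument (generate the filter $(X^{\wedge})^{\uparrow}$, check it is proper, take a maximal proper filter above it).
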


The first corollary below is the analogue of the result for finite Boolean algebras that every non-zero element
is above an atom.

\begin{corollary}\label{cor:latvia} 
Every non-zero element of a Boolean algebra is contained in an ultrafilter.
\end{corollary}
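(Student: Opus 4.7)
The plan is a one-line deduction from the Boolean Prime Ideal Theorem (Lemma~\ref{lem:BIG}).

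Let $a$ be a non-zero element of the Boolean algebra $B$. Consider the singleton subset $X = \{a\}$. The set $X^{\wedge}$ of finite meets of elements of $X$ consists only of $a$ itself (together with the top element $1$ if we count the empty meet), and by hypothesis $a \neq 0$, so $0 \notin X^{\wedge}$. Hence $\{a\}$ has the finite intersection property.

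By Lemma~\ref{lem:BIG}, $\{a\}$ is therefore contained in some ultrafilter $F$ of $B$, and in particular $a \in F$. There is no real obstacle here; the work has already been done in proving the Boolean Prime Ideal Theorem. The only thing worth flagging is to make explicit why a single non-zero element automatically satisfies the finite intersection property, which is immediate from the definition given just before Lemma~\ref{lem:topology-needed}.
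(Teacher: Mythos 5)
Your proof is correct and is exactly the argument the paper intends: the corollary is stated as an immediate consequence of the Boolean Prime Ideal Theorem (Lemma~\ref{lem:BIG}), applied to the singleton $\{a\}$, which trivially has the finite intersection property when $a \neq 0$. Nothing is missing.
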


The second corollary says that there are enough ultrafilters to separate points;
this is the analogue of the result that says in a finite Boolean algebra each element is a join of the atoms below it. 

\begin{corollary}\label{cor:germany-two} Let $a$ and $b$ be distinct non-zero elements of a Boolean algebra.
Then there is an ultrafilter that contains one of the elements and omits the other.
\end{corollary}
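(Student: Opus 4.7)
The plan is to reduce to Corollary~\ref{cor:latvia} after constructing a single non-zero element that lies below $a$ and below $b'$ (or symmetrically, below $b$ and below $a'$).

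Since $a$ and $b$ are distinct, they cannot satisfy both $a \leq b$ and $b \leq a$. Without loss of generality I would assume $a \nleq b$. Then by Lemma~\ref{lem:useful-one}, the element $c = a \wedge b'$ is non-zero. Apply Corollary~\ref{cor:latvia} to $c$ to obtain an ultrafilter $F$ containing $c$.

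It remains to show that $a \in F$ and $b \notin F$. The first is immediate since $c \leq a$ and $F$ is upwardly closed. For the second, note that $c \leq b'$ gives $b' \in F$; if we also had $b \in F$, then (since $F$ is closed under finite meets) we would get $b \wedge b' = 0 \in F$, contradicting that $F$ is a proper filter. Hence $b \notin F$, as required.

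The argument has no real obstacle: all the work has been done by Lemma~\ref{lem:useful-one} and Corollary~\ref{cor:latvia}. The only point that demands a moment's care is recognising that distinctness of $a$ and $b$ forces one of $a \nleq b$ or $b \nleq a$, so that Lemma~\ref{lem:useful-one} can be invoked.
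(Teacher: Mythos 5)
Your proof is correct and follows exactly the paper's own argument: reduce to the case $a \nleq b$, use Lemma~\ref{lem:useful-one} to get $a \wedge b' \neq 0$, and apply Corollary~\ref{cor:latvia} to an ultrafilter containing $a \wedge b'$. The only difference is that you spell out the final verification that $a \in F$ and $b \notin F$, which the paper leaves implicit.
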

\begin{proof} Since $a \neq b$ then either $a \nleq b$ or $b \nleq a$.
Suppose that $a \nleq b$.
Then $a \wedge b' \neq 0$ by Lemma~\ref{lem:useful-one}. 
Thus by Corollary~\ref{cor:latvia} there is an ultrafilter $F$ that contains $a \wedge b'$.
It follows that $a \in F$ and $b \notin F$.
\end{proof}

Ultrafilters are the first step in generalizing the theory of finite Boolean algebras to arbitrary Boolean algebras.
The second is to introduce topological spaces to repace powersets.
A compact Hausdorff space which is $0$-dimensional is called a {\em Boolean space};
for emphasis, these will also be referred to in this paper as {\em compact Boolean spaces}.

\begin{lemma}\label{lem:poland} 
The clopen subsets of a Boolean space form a Boolean algebra.
\end{lemma}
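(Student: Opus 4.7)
The plan is to exhibit the clopen subsets of $X$ as a Boolean subalgebra of the powerset Boolean algebra $\mathsf{P}(X)$. Since $\mathsf{P}(X)$ is already known (by the first example following the axioms) to satisfy axioms (B1)--(B10) with operations $\cup$, $\cap$, set-theoretic complement $\overline{(\,\cdot\,)}$, and constants $\varnothing$ and $X$, it is enough to verify that the collection of clopen sets contains the two constants and is closed under the three operations; the axioms themselves then transfer automatically to the subset.

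First I would note that $\varnothing$ and $X$ are both open and closed in any topological space, so they are clopen. Next I would check closure under binary unions and intersections: if $A$ and $B$ are each simultaneously open and closed, then $A \cup B$ is open as a finite union of open sets and closed as a finite union of closed sets, and dually $A \cap B$ is both open and closed. Finally, if $A$ is clopen, then $X \setminus A$ is open (because $A$ is closed) and closed (because $A$ is open), so it is clopen.

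Once these closure properties are in hand, the identities (B1)--(B10), being universally quantified equations, hold in the sublattice because they hold in $\mathsf{P}(X)$, so the clopens form a Boolean algebra in their own right. There is no real obstacle in the argument: the hypotheses that $X$ be compact, Hausdorff, and $0$-dimensional are not actually used in proving this particular closure statement, and will only become essential later when one needs to show that this Boolean algebra faithfully records the space $X$ (via, e.g., Corollary~\ref{cor:germany-two}-style separation by ultrafilters and the base-of-clopens property from $0$-dimensionality).
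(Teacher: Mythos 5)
Your proof is correct and follows essentially the same route as the paper: one simply observes that $\varnothing$ and $X$ are clopen and that the clopen sets are closed under finite unions, finite intersections, and complements, so they form a subalgebra of the powerset Boolean algebra $\mathsf{P}(X)$. Your added remark that compactness, Hausdorffness, and $0$-dimensionality are not needed here is accurate and matches the paper, which also does not use them in this proof.
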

\begin{proof} Let $X$ be a Boolean space and denote by $\mathsf{B}(X)$ the set of all clopen subsets of $X$.
Observe that $\varnothing, X \in \mathsf{B}(X)$.
If $A,B \in \mathsf{B}(X)$ then $A \cap B, A \cup B \in \mathsf{B}(X)$. 
Finally, if $A \in \mathsf{B}(X)$ then $\overline{A} \in \mathsf{B}(X)$. 
\end{proof}

Let $B$ be a Boolean algebra.
Define $\mathsf{X}(B)$ to be the set of ultrafilters on $B$.
If $a \in B$ denote by $\mathscr{U}_{a}$ the set of ultrafilters containing $a$.

\begin{remark}{\em By Lemma~\ref{lem:green}, the above notation is consistent
with that introduced in Section~2.2.}
\end{remark}

\begin{lemma}\label{lem:estonia} Let $B$ be a Boolean algebra.
\begin{enumerate}
\item $\mathscr{U}_{0} = \varnothing$.
\item $\mathscr{U}_{1} = \mathsf{X}(B)$.
\item $\mathscr{U}_{a} \cap \mathscr{U}_{b} = \mathscr{U}_{a \wedge b}$.
\item $\mathscr{U}_{a} \cup \mathscr{U}_{b} = \mathscr{U}_{a \vee b}$.
\item $\mathscr{U}_{a'} =  \overline{\mathscr{U}_{a}}$.
\end{enumerate}
\end{lemma}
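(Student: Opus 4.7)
The plan is to verify each of the five clauses directly from the definitions, leaning on Lemma~\ref{lem:france-two} for anything that requires more than the filter axioms. Clauses (1) and (2) are essentially tautologies: no proper filter contains $0$, so $\mathscr{U}_{0} = \varnothing$; and every ultrafilter is a non-empty upwardly closed set in $B$, hence contains the top $1$, so $\mathscr{U}_{1} = \mathsf{X}(B)$.

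For (3) I would argue by double inclusion. If $F$ is an ultrafilter containing both $a$ and $b$, then $a \wedge b \in F$ because filters are closed under meets, so $F \in \mathscr{U}_{a \wedge b}$. Conversely, if $a \wedge b \in F$, then since $a \wedge b \leq a$ and $a \wedge b \leq b$ and $F$ is upwardly closed, both $a$ and $b$ lie in $F$, so $F \in \mathscr{U}_{a} \cap \mathscr{U}_{b}$.

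Clause (4) uses the primeness characterization of ultrafilters. The forward inclusion is immediate from upward closure: if $a \in F$ (or $b \in F$), then $a \vee b \geq a$ lies in $F$. For the reverse inclusion, suppose $a \vee b \in F$; by Lemma~\ref{lem:france-two}, every ultrafilter is a prime filter, so $a \in F$ or $b \in F$, giving $F \in \mathscr{U}_{a} \cup \mathscr{U}_{b}$.

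Finally, for (5), I would invoke part (2) of Lemma~\ref{lem:france-two}: for any $a$ and any ultrafilter $F$, exactly one of $a, a'$ belongs to $F$ (at most one, because $a \wedge a' = 0$ is excluded from any proper filter; at least one, by the lemma, handling also the degenerate case $a = 0$ via $a' = 1 \in F$). Hence $F \in \mathscr{U}_{a'}$ if and only if $F \notin \mathscr{U}_{a}$, which is exactly the equality $\mathscr{U}_{a'} = \overline{\mathscr{U}_{a}}$. There is no real obstacle here; the only subtlety is remembering to check the boundary case $a = 0$ in (5), which is covered automatically by Lemma~\ref{lem:france-two}(2).
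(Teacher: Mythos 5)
Your proof is correct and follows exactly the route the paper sketches: (3) from closure of filters under meets and upward closure, (4) from primeness of ultrafilters via Lemma~\ref{lem:france-two}, and (5) from $a \wedge a' = 0$ together with part (2) of Lemma~\ref{lem:france-two}. You have merely filled in the details the paper leaves to the reader, including the harmless boundary case $a = 0$ in (5).
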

\begin{proof} The proofs of (1) and (2) are straightforward.
The proof of (3) follows fom the fact that filters are closed under meets.
The proof of (4) follows from the fact that ultrafilters are prime filters.
The proof of (5) follows from the fact that $a \wedge a' = 0$
and the fact that ultrafilters are proper filters, 
and part (2) of Lemma~\ref{lem:france-two}.
\end{proof}

The above lemma tells us that the collection of sets  $\mathscr{U}_{a}$, where $a \in B$,
forms the base for a topology on $\mathsf{X}(B)$.
We shall first of all determine the salient properties of this topological space. 

\begin{lemma}\label{lem:Boolean-space} 
For each Boolean algebra $B$, the topological space $\mathsf{X}(B)$ is Boolean.
\end{lemma}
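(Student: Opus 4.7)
The plan is to verify, in turn, that $\mathsf{X}(B)$ is $0$-dimensional, Hausdorff, and compact, so that it fits the definition of a compact Boolean space.

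For $0$-dimensionality, the sets $\mathscr{U}_{a}$ with $a \in B$ form a base for the topology by Lemma~\ref{lem:estonia}. By part (5) of that same lemma, $\mathscr{U}_{a'} = \overline{\mathscr{U}_{a}}$, so each basic open set is closed as well, i.e.\ clopen. Hence the space has a base of clopen sets.

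For Hausdorffness, take distinct ultrafilters $F \neq G$. Without loss of generality there is some $a \in F \setminus G$. Since $G$ is an ultrafilter and $a \notin G$, Lemma~\ref{lem:france-two} yields $a' \in G$. Then $F \in \mathscr{U}_{a}$, $G \in \mathscr{U}_{a'}$, and by Lemma~\ref{lem:estonia}(3), $\mathscr{U}_{a} \cap \mathscr{U}_{a'} = \mathscr{U}_{a \wedge a'} = \mathscr{U}_{0} = \varnothing$. This separates $F$ and $G$ by disjoint basic opens.

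The main work is compactness, and here the key input is the Boolean Prime Ideal Theorem (Lemma~\ref{lem:BIG}). I would reduce to a basic open cover: suppose $\{\mathscr{U}_{a_i}\}_{i \in I}$ covers $\mathsf{X}(B)$ and, for contradiction, that no finite subfamily does. For any finite $\{a_{i_1},\ldots,a_{i_n}\}$, the complement of $\mathscr{U}_{a_{i_1}} \cup \cdots \cup \mathscr{U}_{a_{i_n}}$ is non-empty, and by Lemma~\ref{lem:estonia} parts (4) and (5) this complement equals $\mathscr{U}_{(a_{i_1} \vee \cdots \vee a_{i_n})'} = \mathscr{U}_{a_{i_1}' \wedge \cdots \wedge a_{i_n}'}$. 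Its non-emptiness forces $a_{i_1}' \wedge \cdots \wedge a_{i_n}' \neq 0$. Thus the set $\{a_i' : i \in I\}$ has the finite intersection property in $B$, so by Lemma~\ref{lem:BIG} it lies in some ultrafilter $F_0$. Then $a_i \notin F_0$ for every $i$ (because $a_i' \in F_0$ and $F_0$ is proper), so $F_0$ belongs to none of the $\mathscr{U}_{a_i}$, contradicting the covering assumption.

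The main obstacle is really just locating the right statement to invoke for compactness; once one recognises that closed-set FIP on $\mathsf{X}(B)$ corresponds, via complementation, to FIP on $B$ itself, the Prime Ideal Theorem closes the argument. The other two properties are immediate consequences of Lemma~\ref{lem:estonia} and Lemma~\ref{lem:france-two}.
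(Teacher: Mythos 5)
Your proof is correct and follows essentially the same route as the paper: clopenness of the basic sets via Lemma~\ref{lem:estonia}(5), separation via Lemma~\ref{lem:france-two}, and compactness by passing to the complements $a_i'$, invoking the finite intersection property and the Boolean Prime Ideal Theorem to produce a contradictory ultrafilter. The only cosmetic difference is that you phrase the final contradiction as $F_0$ lying in none of the $\mathscr{U}_{a_i}$, whereas the paper notes $a, a' \in F$ for some $a$; these are the same argument.
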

\begin{proof} The base of the topology consists of sets of the form $\mathscr{U}_{a}$.
These are open by fiat.
But by  part (5) of Lemma~\ref{lem:estonia} they are also closed.
It follows that $\mathsf{X}(B)$ is $0$-dimensional.
We prove that this space is Hausdorff.
Let $A$ and $B$ be distinct ultrafilters.
Then there exists $a \in A \setminus B$, and so $a \notin B$.
We now use Lemma~\ref{lem:france-two} to deduce that $a' \in B$. 
It follows that $A \in \mathscr{U}_{a}$ and $B \in \mathscr{U}_{a'}$.
By part (3) of Lemma~\ref{lem:estonia}, 
we have that $\mathscr{U}_{a} \cap \mathscr{U}_{a'} = \varnothing$.
Thus, the space $\mathsf{X}(B)$ is Hausdorff.
Finally, we prove that the space $\mathsf{X}(B)$ is compact.
Let $\mathscr{C} = \{\mathscr{U}_{a} \colon a \in I \}$ be a cover of $\mathsf{X}(B)$.
Suppose that no finite subset of $\mathscr{C}$ covers $\mathsf{X}(B)$.
Then for any $a_{1}, \ldots, a_{n} \in I$ we have that $\mathscr{U}_{a_{1}} \cup \ldots \cup \mathscr{U}_{a_{n}} \neq \mathsf{X}(B)$.
It follows that $a_{1} \vee \ldots \vee a_{n} \neq 1$ and so $a_{1}' \wedge \ldots \wedge a_{n}' \neq 0$.
Thus the set $I' = \{a' \colon a \in I\}$ has the finite intersection property.
By Lemma~\ref{lem:BIG}, there is an ultrafilter $F$ such that $I' \subseteq F$.
By assumption, $F \in \mathscr{U}_{a}$ for some $a \in I$ and so $a,a' \in F$, which is a contradiction.
\end{proof}

The topological space $\mathsf{X}(B)$ is called the {\em Stone space} of the Boolean algebra $B$.

We can now assemble Lemma~\ref{lem:poland} and Lemma~\ref{lem:Boolean-space} into the first main result.

\begin{proposition}\label{prop:big-theorem}\mbox{}
\begin{enumerate}

\item Let $B$ be a Boolean algebra.
Then $B \cong \mathsf{B} (\mathsf{X}(B))$, where here $\cong$ means an isomorphism of Boolean algebras.

\item Let $X$ be a Boolean space.
Then $X \cong \mathsf{X} (\mathsf{B}(X))$, where here $\cong$ means a homeomorphism of topological spaces.

\end{enumerate}
\end{proposition}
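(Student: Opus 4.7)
Proof plan for Proposition~\ref{prop:big-theorem}.

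For part (1), the plan is to consider the map $\eta_B \colon B \to \mathsf{B}(\mathsf{X}(B))$ given by $a \mapsto \mathscr{U}_a$. Lemma~\ref{lem:estonia} shows $\mathscr{U}_a$ is clopen and that $\eta_B$ preserves $0$, $1$, $\wedge$, $\vee$ and complementation, so it is a Boolean algebra homomorphism. Injectivity follows from Corollary~\ref{cor:germany-two}, which guarantees enough ultrafilters to separate any two distinct elements (so $a \neq b$ forces $\mathscr{U}_a \neq \mathscr{U}_b$). For surjectivity, let $C$ be a clopen subset of $\mathsf{X}(B)$. As an open set, $C$ is a union of basic clopens $\mathscr{U}_a$; as a closed subset of the compact space $\mathsf{X}(B)$ (Lemma~\ref{lem:Boolean-space} together with Lemma~\ref{lem:topology-needed}(3)) it is itself compact, so finitely many of these basic opens suffice, say $C = \mathscr{U}_{a_1} \cup \cdots \cup \mathscr{U}_{a_n}$. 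By Lemma~\ref{lem:estonia}(4) this equals $\mathscr{U}_{a_1 \vee \cdots \vee a_n}$, proving surjectivity.

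For part (2), define $\varphi \colon X \to \mathsf{X}(\mathsf{B}(X))$ by $\varphi(x) = F_x := \{A \in \mathsf{B}(X) : x \in A\}$. First I would verify $F_x$ is an ultrafilter: it is clearly a proper filter in $\mathsf{B}(X)$, and if $A \in \mathsf{B}(X)$ satisfies $A \cap B \neq \varnothing$ for every $B \in F_x$, then in particular this holds for $B$ ranging over a clopen neighborhood base at $x$ — since $X$ is $0$-dimensional and Hausdorff, this forces $x \in A$, hence $A \in F_x$; by Lemma~\ref{lem:exel}, $F_x$ is an ultrafilter. Injectivity of $\varphi$ uses Hausdorffness plus $0$-dimensionality: for $x \neq y$ there is a clopen $A$ with $x \in A$ and $y \notin A$, separating $F_x$ from $F_y$.

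The main work is surjectivity, where I would use compactness. Given an ultrafilter $F$ on $\mathsf{B}(X)$, it has the finite intersection property as a collection of closed subsets of $X$, so by Lemma~\ref{lem:topology-needed}(4) the intersection $\bigcap F$ is non-empty; pick $x \in \bigcap F$. Then $F \subseteq F_x$, and since $F$ is a maximal proper filter and $F_x$ is proper, $F = F_x$. Finally, for the topological part, note that $\varphi^{-1}(\mathscr{U}_A) = \{x : x \in A\} = A$, which is clopen in $X$, so $\varphi$ is continuous; moreover the same identity shows $\varphi(A) = \mathscr{U}_A \cap \varphi(X)$, which together with surjectivity shows $\varphi$ sends basic opens to basic opens. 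Since $\varphi$ is a continuous bijection from the compact space $X$ to the Hausdorff space $\mathsf{X}(\mathsf{B}(X))$, standard topology (Lemma~\ref{lem:topology-needed}(2),(3)) yields that $\varphi$ is a homeomorphism.

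The principal obstacle is surjectivity in part (2): one must locate a point of $X$ realising an abstractly given ultrafilter of clopens, and this is exactly where compactness of $X$ is indispensable. The rest of the argument is bookkeeping with the lemmas already established.
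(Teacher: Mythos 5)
Your proposal is correct and follows essentially the same route as the paper: the map $a \mapsto \mathscr{U}_a$ with injectivity from Corollary~\ref{cor:germany-two} and surjectivity via compactness for part (1), and the point-to-ultrafilter map $x \mapsto F_x$ with surjectivity via the finite intersection property and the continuous-bijection-from-compact-to-Hausdorff argument for part (2). The only (harmless) variation is that you verify $F_x$ is an ultrafilter directly via Lemma~\ref{lem:exel}, where the paper checks it is a prime filter and invokes the equivalence of prime filters and ultrafilters in a Boolean algebra.
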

\begin{proof} (1) Define $\alpha \colon B \rightarrow \mathsf{B} \mathsf{X}(B)$ by $a \mapsto \mathscr{U}_{a}$.
By Lemma~\ref{lem:estonia} this is a homomorphism of Boolean algebras.
It is injective by Corollary~\ref{cor:germany-two}.
We prove that it is surjective.
An element of $\mathsf{B} \mathsf{X}(B)$ is a clopen subset of $\mathsf{X}(B)$.
Since it is open, it is a union of open sets of the form $\mathscr{U}_{a}$,
but closed subsets of compact spaces are compact by part (3) of Lemma~\ref{lem:topology-needed}.
It follows that it is a union of a finite number of sets of the form $\mathscr{U}_{a}$
and so must itself be of that form.  

(2) Let $x \in X$. Define $O_{x}$ to be the set of all clopen sets that contain $x$.
It is easy to check that this is a prime filter in $\mathsf{B}(X)$ and so $O_{x} \in \mathsf{X} (\mathsf{B}(X))$.
Define $\beta \colon X \rightarrow \mathsf{X} (\mathsf{B}(X))$ by $x \mapsto O_{x}$.
Since both domain and codomain spaces are compact and Hausdorff, to prove that $\beta$ is a homeomorphism it
is enough to prove that it is bijective and continuous.
Suppose that $O_{x} = O_{y}$.
If $x \neq y$ then by the fact that $X$ is Hausdorff we can find disjoint open sets $U$ and $V$ such that
$x \in U$ and $y \in V$.
But $X$ is $0$-dimensional and so we can assume, without loss of generality, that $U$ and $V$ are clopen
from which we deduce that $O_{x} \neq O_{y}$.
It follows that $\beta$ is injective.
Next, let $F$ be any ultrafilter in $\mathsf{B}(X)$.
Then this is an ultrafilter consisting of clopen subsets of a compact space.
Since $F$ is a filter, it has the finite intersection property.
By part (4) of Lemma~\ref{lem:topology-needed}, it follows that there is an element $x$ in the intersection of all the elements of $F$.
Thus $F \subseteq O_{x}$.
But $F$ is an ultrafilter and so $F = O_{x}$.
We have therefore proved our function is a bijection.
Finally, we prove continuity.
Let $U$ be an open subset of $\mathsf{X} (\mathsf{B}(X))$.
Then $U$ is a union of the basic open sets which are clopen.
These have the form $\mathscr{U}_{A}$ where $A$ is a clopen subset of $X$.
Thus it is enough to calculate $\beta^{-1}(\mathscr{U}_{A})$.
But $O_{x} \in \mathscr{U}_{A}$ if and only if $x \in A$.
Thus $\beta^{-1}(\mathscr{U}_{A}) = A$.
\end{proof}

We can extend the above result to maps to obtain the following:

\begin{theorem}[Classical Stone duality I]\label{them:classical-stone-duality} 
The category of Boolean algebras and their homomorphisms is dually equivalent
to the category of Boolean spaces and continuous functions between them.
\end{theorem}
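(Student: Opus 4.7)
The plan is to extend Proposition~\ref{prop:big-theorem} to morphisms by defining two contravariant functors $\mathsf{X}$ and $\mathsf{B}$ between the two categories and then promoting the bijections $\alpha_B \colon B \to \mathsf{B}(\mathsf{X}(B))$ and $\beta_X \colon X \to \mathsf{X}(\mathsf{B}(X))$ constructed there to natural isomorphisms.

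First, I would define $\mathsf{X}$ on morphisms: given a Boolean algebra homomorphism $\theta \colon B \to C$, set $\mathsf{X}(\theta)(F) = \theta^{-1}(F)$. To check that $\theta^{-1}(F)$ is an ultrafilter in $B$ when $F$ is an ultrafilter in $C$, closure under meets and upward closure are immediate (since $\theta$ preserves meets and the order), while properness follows from $\theta(0) = 0 \notin F$. For the ultrafilter property I would use the prime filter characterization (Lemma~\ref{lem:france-two}): if $a \vee b \in \theta^{-1}(F)$ then $\theta(a) \vee \theta(b) \in F$, so by primality of $F$ one of $a,b$ lies in $\theta^{-1}(F)$. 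Continuity of $\mathsf{X}(\theta)$ follows from the identity
\[
\mathsf{X}(\theta)^{-1}(\mathscr{U}_a) \;=\; \mathscr{U}_{\theta(a)},
\]
since $F \in \mathsf{X}(\theta)^{-1}(\mathscr{U}_a)$ iff $a \in \theta^{-1}(F)$ iff $\theta(a) \in F$. Functoriality of $\mathsf{X}$ is clear from $(\psi\theta)^{-1} = \theta^{-1}\psi^{-1}$ and $\mathrm{id}^{-1} = \mathrm{id}$.

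Next, I would define $\mathsf{B}$ on morphisms: given a continuous map $f \colon X \to Y$ of Boolean spaces, set $\mathsf{B}(f)(A) = f^{-1}(A)$. Continuity of $f$ and the identity $f^{-1}(Y \setminus A) = X \setminus f^{-1}(A)$ show that the preimage of a clopen set is clopen, and $f^{-1}$ preserves finite unions, intersections, complements and the top and bottom elements, so $\mathsf{B}(f)$ is a Boolean algebra homomorphism. Functoriality is again routine.

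Finally I would verify naturality. For $\alpha$, given $\theta \colon B \to C$, the square $\mathsf{B}(\mathsf{X}(\theta)) \circ \alpha_B = \alpha_C \circ \theta$ reduces precisely to the identity $\mathsf{X}(\theta)^{-1}(\mathscr{U}_a) = \mathscr{U}_{\theta(a)}$ established above. For $\beta$, given $f \colon X \to Y$, the square $\mathsf{X}(\mathsf{B}(f)) \circ \beta_X = \beta_Y \circ f$ amounts to $\mathsf{B}(f)^{-1}(O_x) = O_{f(x)}$, which follows from the chain $A \in \mathsf{B}(f)^{-1}(O_x) \Leftrightarrow x \in f^{-1}(A) \Leftrightarrow f(x) \in A \Leftrightarrow A \in O_{f(x)}$. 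Combined with Proposition~\ref{prop:big-theorem}, this gives the dual equivalence. No single step is deep; the only point that needs a small amount of care is the well-definedness and continuity of $\mathsf{X}(\theta)$, and this is exactly where the prime filter reformulation of ultrafilters does the work.
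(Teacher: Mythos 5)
Your proposal is correct and follows essentially the same route as the paper: both define the contravariant functors by pulling back ultrafilters along homomorphisms and clopen sets along continuous maps, and then reduce the equivalence to Proposition~\ref{prop:big-theorem} (the paper phrases the ultrafilter pullback via the characteristic function $\chi_{F}\theta$, which is the same as your $\theta^{-1}(F)$). You simply make explicit the routine verifications — well-definedness, continuity, and the naturality squares — that the paper leaves to the reader.
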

\begin{proof} In Lemma~\ref{lem:hm}, we proved that there is
a bijective map between the ultrafilters in $B$ and the Boolean algebra homomorphisms from $B$ to $\mathbb{B}$,
the $2$-element Boolean algebra.
This bijection associates with the ultrafilter $F$ its characteristic function $\chi_{F}$.
Let $\theta \colon B_{1} \rightarrow B_{2}$ be a homomorphism between Boolean algebras.
Let $F \in \mathsf{X}(B_{2})$ be an ultrafilter.
Then $\chi_{F} \theta$ is the characteristic function of an ultrafilter in $B_{1}$.
In this way, we can map homomomorphisms $B_{1} \rightarrow B_{2}$ to continuous functions $\mathsf{X}(B_{1}) \leftarrow \mathsf{X}(B_{2})$
with a consequent reversal of arrows.
In the other direction, let $\phi \colon X_{1} \rightarrow X_{2}$ be a continuous function.
Then $\phi^{-1}$ maps clopen sets to clopen sets.
In this way, we can map continuous functions $X_{1} \rightarrow X_{2}$ to homomorphisms $\mathsf{B}(X_{1}) \leftarrow \mathsf{B}(X_{2})$.
The result now follows from Proposition~\ref{prop:big-theorem}.
\end{proof}

\begin{example}\label{ex:some-stone-spaces} 
{\em Here are some examples of classical Stone duality.

\begin{enumerate}

\item Let $B$ be a finite Boolean algebra.
By Lemma~\ref{lem:green}, the ultrafilters of $B$ are in bijective correspondence with the atoms of $B$.
We may therefore identify the elements of $\mathsf{X}(B)$ with the set of atoms of $B$.
Let $a \in B$. We describe the set $\mathscr{U}_{a}$ in terms of atoms.
The ultrafilter $b^{\uparrow} \in \mathscr{U}_{a}$ if and only is $b \leq a$.
So, the set $\mathscr{U}_{a}$ is in bijective correspondence with the set of atoms below $a$.
It follows that the Boolean space $\mathsf{X}(B)$ is homeomorphic with the the discrete space of atoms of $B$.
Let $\theta \colon B \rightarrow C$ be a homomorphism of finite Boolean algebras.
If $c \in C$ is an atom then $c^{\uparrow}$ is an ultrafilter in $C$ and so $\theta^{-1}(c^{\uparrow})$ is an ultrafilter in $B$.
It follows that $\theta^{-1}(c^{\uparrow}) = b^{\uparrow}$, where $b$ is an atom in $B$.
Thus $x \geq b$ if and only if $\theta (x) \geq c$.
We therefore have that $\theta (b) \geq c$.
But $b$ is the only atom of $B$ which has this property.
In this way, the classical theory of finite Boolean algebras can be derived from Stone duality;
that is, Theorem~\ref{them:finite-duality} is a special case of Theorem~\ref{them:classical-stone-duality}.

\item Tarski proved that any two atomless, countably infinite Boolean algebras are isomorphic \cite[Chapter 16, Theorem 10]{GH}.
It makes sense, therefore, to define the {\em Tarski algebra}\footnote{Not an established term.} to be an atomless, countably infinite Boolean algebra.
We describe the Stone space of the Tarski algebra.
An element $x$ of a topological space is said to be {\em isolated} if $\{x\}$ is open.
Suppose that $a$ is an atom of the Boolean algebra $B$.
By definition $\mathscr{U}_{a}$ is the set of all ultrafilters that contain $a$.
But $a^{\uparrow}$ is an ultrafilter containing $a$ by Lemma~\ref{lem:green} and, evidently, the only one.
Thus $\mathscr{U}_{a}$ is an open set containing one point and so the point $a^{\uparrow}$ is isolated.
Suppose that $\mathscr{U}_{a}$ contains exactly one point $F$.
Then $F$ is the only ultrafilter containing $a$.
Suppose that $a$ were not an atom.
Then we could find $0 \neq b < a$.
Thus $a = b \vee (a \wedge b')$.
By Corollary~\ref{cor:latvia}, 
there is an ultrafiler $F_{1}$ containing $b$,
and there is an ultrafilter $F_{2}$ containing $a \wedge b'$.
Then $F_{1} \neq F_{2}$ but both contain $a$.
This is a contradiction.
It follows that $a$ is an atom.
We deduce that the atoms of the Boolean algebra determine
the isolated points of the associated Stone space.
It follows that a Boolean algebra has the property that every element is above an atom (that is, it is {\em atomic})
if and only if the isolated points in its Stone space form a dense subset.
We deduce that the Stone space associated with an atomless Boolean algebra has no isolated points.
If $B$ is countable then its Stone space is second-countable since
$B$ is isomorphic to the set of all clopen subsets of the Stone space of $B$.
The Stone space of the Tarski algebra is therefore  
a $0$-dimensional, second countable, compact, Hausdorff space with no isolated points.
Observe by \cite[Theorem 9.5.10]{Vickers} that such a space is metrizable.
It follows by Brouwer's theorem, \cite[Theorem 30.3]{Willard}, that the Stone space of the Tarski algebra
is the {\em Cantor space}.

\item The Cantor space described in Example~2 above often appears in disguise.
Let $A$ be any finite set with at least two elements.
Denote by $X = A^{\omega}$ the set of all right-infinite strings of elements over $A$.
We can regard this set as the product space $A^{\mathbb{N}}$ which is compact since $A$ is finite
by part (1) of Lemma~\ref{lem:topology-needed}.
For each finite string $x \in A^{\ast}$ denote by $xX$ the subset of $X$ that consists of  
all elements of $X$ that begin with the finite string $x$.
This is an open set of $X$.
If $a \in A$ denote by $\hat{a} = A \setminus \{a\}$. 
Let $x = x_{1} \ldots x_{n}$ have length $n \geq 1$.
Then $\overline{xX} = \hat{x}_{1}X \cup x_{1}\hat{x}_{2}X \cup \ldots x_{1}\ldots x_{n-1}\hat{x}_{n}X$.
It follows that if $xX$ is open then $\overline{xX}$ is open.
Thus the sets $xX$ are clopen.
The set $A^{\ast}$ is countably infinite and so the number of clopen subsets is countably infinite.
If $x,y \in A^{\ast}$ then there are a few possibilities.
If neither $x$ nor $y$ is the prefix of the other then $xX \cap yX = \varnothing$.
Now, suppose that $x = yu$.
Then $xX = yuX \subseteq yX$ from which it follows that $xX \cap yX = xX$.
An open subset $U$ of $X$ has the form $U = X_{1} \ldots X_{n}X$,
where the $X_{i}$ are subsets of $A$.
This is a union of sets of the form $xX$ where $x \in X_{1} \ldots X_{n}$.
It follows that the sets $\varnothing$ and $xX$, where $x \in A^{\ast}$, form a clopen base for the topology on $X$.
If $w$ and $w'$ are distinct elements of $X$ then they will differ in the $n$th position and so belong to disjoint sets of the form $xX$.
It follows that $X$ is a second-countable Boolean space.
This space cannot have any isolated points:
if $\{w\}$ is an open subset then it must be a union of sets of the form $xX$
but this is impossible,
It follows that $X$ is the Cantor space.
We refer the reader to \cite{Lawson2007, Lawson2007b} and \cite[Section 5]{LS} for more on this topological space.

\item We construct the Stone spaces of the powerset Boolean algebras $\mathsf{P}(X)$.
The isolated points of the Stone space of $\mathsf{P}(X)$ form a dense subset of the Stone space which is homeomorphic to
the discrete space $X$. Thus the Stone space of $\mathsf{P}(X)$ is a compact Hausdorff space that contains a copy of the discrete space $X$.
In fact, the Stone-\v{C}ech compactification of the discrete space $X$ is precisely the Stone space of $\mathsf{P}(X)$.
See \cite[Section 30, Theorem A]{Simmons} and \cite[Section 75]{Simmons}.

\item Let $A$ be any finite alphabet.
We shall use notation from the theory of regular expressions so that $L + M$ means $L \cup M$ and $x$ can mean $\{x \}$ (but also the string $x$ in a different context).
A language $L$ over $A$ is said to be {\em definite}\footnote{Strictly speaking, `reverse definite'.}
if $L = X + YA^{\ast}$ where $X,Y \subseteq A^{\ast}$ are finite languages. 
Denote the set of definite languages by $\mathscr{D}$.
This forms a Boolean algebra.
The Stone space of the Boolean algebra $\mathscr{D}$ can be described as follows.
Put $\mathscr{X} = A^{\ast} + A^{\omega}$.
If $x$ and $y$ are distinct elements of $\mathscr{X}$, 
define $x \wedge y$ to be the largest common prefix of $x$ and $y$.
Define $d(x,y) = 0$ if $x = y$ else $d(x,y) = \exp(-|x \wedge y|)$.
Then $d$ is an ultrametric on $\mathscr{X}$ and $\mathscr{X}$ is a complete metric space with respect to this ultrametric.
The open balls are of the form $\{x\}$ or $xA^{\ast} + xA^{\omega}$ where $x \in A^{\ast}$ and form a basis for the topology.
Thus $\mathscr{X}$ is a Boolean space.
It can be proved that the Stone space of $\mathscr{D}$ is this ultrametric space $\mathscr{X}$.
See \cite{Pin} for more on this example.

\end{enumerate}
}
\end{example}

\subsection{Generalized Boolean algebras}

There is a generalization of classical Stone duality,
Theorem~\ref{them:classical-stone-duality}, 
that relates what are termed generalized Boolean algebras to locally compact Boolean spaces.
At the level of objects, this was described in \cite[Theorem 4]{Stone1937} and at the level of homomorphisms in \cite{Doctor}.

In elementary work \cite{Johnstone}, Boolean algebras are usually defined with a top element and globally defined complements.
However, this is too restrictive for the applications we have in mind; it corresponds in topological language
to only looking at compact spaces even though many mathematically interesting spaces are locally compact.
In this section, we shall study what are termed `generalized Boolean algebras' or, to adapt terminology current in ring theory,
{\em non-unital Boolean algebras}.
Similarly, in this section, a {\em distributive lattice} will always have a bottom but not necessarily a top.
Let $D$ be a distributive lattice equipped with a binary operation $\setminus$ such that for all $x,y \in D$ we have that
$0 = y \wedge (x \setminus y)$ and $x = (x \wedge y) \vee (x \setminus y)$.
We say that such a distributive lattice is a {\em generalized Boolean algebra}.
Let $B$ be a generalized Boolean algebra.
A subset $C \subseteq B$ is said to be a {\em subalgebra} if it contains the bottom element of $B$,
is closed under meets and joins and is closed under the operation $\setminus$.
Such a $C$ is a generalized Boolean algebra in its own right.

If $b \leq a$ in a lattice then $[b,a]$ denotes the set of all elements $x$ of the lattice such that $b \leq x \leq a$.
We call the set $[b,a]$ an {\em interval}.
If $c \in [b,a]$ then a {\em complement} of $c$ is an element $d \in [b,a]$ such
that $c \wedge d = b$ and $c \vee d = a$.
We say that $[b,a]$ is {\em complemented} if every element has a complement.
Let $D$ be a distributive lattice.
We say it is {\em relatively complemented} if for every pair $b \leq a$,
the interval $[b,a]$ is complemented.

\begin{lemma}\label{lem:gen-ba} 
Let $D$ be a distributive lattice.
Then the following are equivalent:
\begin{enumerate}
\item $D$ is a generalized Boolean algebra.
\item Each non-zero principal order-ideal of $D$ is a unital Boolean algebra.
\item $D$ is relatively complemented.
\end{enumerate}
\end{lemma}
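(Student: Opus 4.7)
The plan is to establish the cycle (1)$\Rightarrow$(2)$\Rightarrow$(3)$\Rightarrow$(1), using distributivity throughout and the fact that complements in a distributive lattice interval are unique when they exist.

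For (1)$\Rightarrow$(2), fix a non-zero $a \in D$ and consider the principal order-ideal $a^{\downarrow}$. This is a bounded distributive lattice with bottom $0$ and top $a$. I would propose $a \setminus x$ as the complement of any $x \in a^{\downarrow}$: since $x \leq a$ we have $x \wedge a = x$, so the defining axioms of $\setminus$ give $x \wedge (a \setminus x) = 0$ and $x \vee (a \setminus x) = (a \wedge x) \vee (a \setminus x) = a$. Also $a \setminus x \leq a$ (this is a small point worth checking: from $a = (a \wedge x) \vee (a \setminus x)$ one can deduce $a \setminus x \leq a$, since $a \wedge (a \setminus x) = [(a \wedge x) \vee (a \setminus x)] \wedge (a \setminus x) = a \setminus x$). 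Hence $a^{\downarrow}$ is a bounded distributive lattice with complements, i.e.\ a unital Boolean algebra.

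For (2)$\Rightarrow$(3), take $b \leq a$ in $D$ and $c \in [b,a]$. If $a = 0$ the statement is trivial, so assume $a \neq 0$; then $a^{\downarrow}$ is a Boolean algebra by (2). Let $c^{\ast}$ be the complement of $c$ in $a^{\downarrow}$, so $c \wedge c^{\ast} = 0$ and $c \vee c^{\ast} = a$. Set $d = c^{\ast} \vee b$. Clearly $b \leq d \leq a$. A short distributive calculation gives $c \wedge d = (c \wedge c^{\ast}) \vee (c \wedge b) = 0 \vee b = b$ (using $b \leq c$) and $c \vee d = c \vee c^{\ast} \vee b = a \vee b = a$, so $d$ is a complement of $c$ in $[b,a]$.

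For (3)$\Rightarrow$(1), given $x,y \in D$ I would define $x \setminus y$ to be the (necessarily unique) complement of $x \wedge y$ in the interval $[0,x]$, which exists by hypothesis since $x \wedge y \leq x$. By construction $(x \wedge y) \wedge (x \setminus y) = 0$ and $(x \wedge y) \vee (x \setminus y) = x$, which is exactly the second required axiom. For the first axiom, note $x \setminus y \leq x$, so $y \wedge (x \setminus y) = y \wedge x \wedge (x \setminus y) = (x \wedge y) \wedge (x \setminus y) = 0$. Thus $\setminus$ satisfies both axioms and $D$ is a generalized Boolean algebra.

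No single step looks technically deep; the only mild trap is verifying that complements behave well under restriction to sub-intervals in (2)$\Rightarrow$(3), and that the candidate $x \setminus y$ constructed in (3)$\Rightarrow$(1) is genuinely determined by the relative complementation hypothesis (unique in a distributive lattice, which legitimises the notation $x \setminus y$ as a binary operation rather than a relation).
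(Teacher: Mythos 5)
Your proof is correct and follows essentially the same route as the paper: the complement of $x$ in $a^{\downarrow}$ is $a \setminus x$, the relative complement in $[b,a]$ is obtained by joining $b$ onto a complement taken in $a^{\downarrow}$, and $x \setminus y$ is recovered as the complement of $x \wedge y$ in $x^{\downarrow}$. The only difference is organizational — you argue around the cycle $(1)\Rightarrow(2)\Rightarrow(3)\Rightarrow(1)$ rather than proving the two equivalences $(1)\Leftrightarrow(2)$ and $(1)\Leftrightarrow(3)$ separately — and you usefully spell out the step the paper dismisses as immediate, including the uniqueness of complements that makes $\setminus$ a well-defined operation.
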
 
\begin{proof} 
(1)$\Rightarrow$(2). Let $a \in B$ be non-zero. 
Then $a^{\downarrow}$ is a distribitive lattice with bottom element $0$ and top element $a$.
Let $b \leq a$.
Then $b \wedge (a \setminus b) = 0$ and $b \vee (a \setminus b) = a$.
It follows that within $a^{\downarrow}$ we should define $b' = a \setminus b$.
Thus each non-zero principal order ideal is a unital Boolean algebra. 

(2)$\Rightarrow$(1).
Let $x,y \in D$
where $x \neq 0$.
Then $x \wedge y \leq x$.
Define $x \setminus y = (x \wedge y)'$ where $(x \wedge y)'$ is the complement of $x \wedge y$ in the Boolean algebra $x^{\downarrow}$.
By definition $(x \wedge y) \vee (x \setminus y) = x$
and $y \wedge (x \setminus y) = y \wedge ((x \setminus y) \wedge x = (x \wedge y) \wedge (x \setminus y) = 0$.
If $x = 0$ then define $0 = (0 \setminus 0)$.

(1)$\Rightarrow$(3).
Let $b \leq a$.
Let $x \in [b,a]$.
Put $y = (a \setminus x) \vee b$.
Then $x \wedge y = b$ and $x \vee y = a$.
We have proved that in each interval, every element has a complement.

(3)$\Rightarrow$(1). Immediate.
\end{proof}

In the light of the above result, we shall regard generalized Boolean algebras as
distributive lattices with zero in which each non-zero principal order-ideal
is a Boolean algebra.

\begin{example}{\em Let $B$ be the set of all finite subsets of $\mathbb{N}$.
Then $B$ is a generalized Boolean algebra but not a (unital) Boolean algebra.}
\end{example}

We may define ultrafilters and prime filters in generalized Boolean algebras just as we defined them in unital Boolean algebras.
Let $B$ be a generalized Boolean algebra.
Define $\mathsf{X}(B)$ to be the set of ultrafilters on $B$.
If $a \in B$ denote by $\mathscr{U}_{a}$ the set of ultrafilters containing $a$.

\begin{lemma}\label{lem:bijection} Let $B$ be a generalized Boolean algebra and let $a$ be any non-zero element.
Then there is an order-isomorphism between the filters in the Boolean algebra $a^{\downarrow}$ and 
the filters in $B$ that contain $a$.
Under this order-isomorphism, proper filters correspond to proper filters,
and ultrafilters to ultrafilters.
\end{lemma}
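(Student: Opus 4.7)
My plan is to exhibit an explicit pair of mutually inverse, order-preserving maps between the two collections of filters and then read off the preservation of properness and, as a consequence, of ultrafilters.

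First, by Lemma~\ref{lem:gen-ba}, the principal order-ideal $a^{\downarrow}$ is a unital Boolean algebra with top $a$ and bottom $0$, so the notion of filter in $a^{\downarrow}$ is well defined. Let $\mathcal{F}_1$ be the set of filters in $a^{\downarrow}$ and $\mathcal{F}_2$ the set of filters in $B$ containing $a$. I define
\[
\Phi \colon \mathcal{F}_1 \to \mathcal{F}_2, \quad \Phi(F) = F^{\uparrow} \text{ (upward closure in } B\text{)},
\]
\[
\Psi \colon \mathcal{F}_2 \to \mathcal{F}_1, \quad \Psi(G) = G \cap a^{\downarrow}.
\]

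Next I would verify that these maps land where claimed. For $\Phi$: any filter $F$ in $a^{\downarrow}$ is non-empty and upward closed in $a^{\downarrow}$, so in particular $a \in F$, hence $a \in \Phi(F)$; the set $F^{\uparrow}$ is trivially upward closed in $B$, and closure under binary meets follows from the fact that if $x \geq f_1$ and $y \geq f_2$ with $f_i \in F$, then $x \wedge y \geq f_1 \wedge f_2 \in F$. For $\Psi$: $a \in G \cap a^{\downarrow}$ so it is non-empty, closure under meets is inherited from $G$ (and $x \wedge y \leq a$ when $x,y \leq a$), and upward closure inside $a^{\downarrow}$ is immediate.

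The core step is checking $\Psi \circ \Phi = \mathrm{id}$ and $\Phi \circ \Psi = \mathrm{id}$. For $\Psi \Phi(F) = F$: the inclusion $F \subseteq F^{\uparrow} \cap a^{\downarrow}$ is clear; conversely, if $x \in a^{\downarrow}$ and $x \geq f$ for some $f \in F$, then $x \in F$ by upward closure of $F$ inside $a^{\downarrow}$. For $\Phi \Psi(G) = G$: the inclusion $(G \cap a^{\downarrow})^{\uparrow} \subseteq G$ is immediate from upward closure of $G$; conversely, for any $g \in G$, the meet $g \wedge a$ lies in $G$ (since $a \in G$), lies in $a^{\downarrow}$, and satisfies $g \geq g \wedge a$, placing $g$ in $(G \cap a^{\downarrow})^{\uparrow}$. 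The use of $a \in G$ here is the only reason we restrict to filters of $B$ that contain $a$, and it is the one place where the argument is not quite automatic.

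Finally, $\Phi$ and $\Psi$ are order-preserving essentially by inspection, so they form an order-isomorphism. Properness is preserved in both directions since $0 \in F^{\uparrow}$ forces $0 \in F$ (the only element below $0$ is $0$), and clearly $0 \in G \cap a^{\downarrow}$ iff $0 \in G$. Because an ultrafilter is just a maximal element of the poset of proper filters (ordered by inclusion), and because our correspondence is an order-isomorphism on proper filters, ultrafilters on one side correspond exactly to ultrafilters on the other. I expect the only mildly delicate point to be the verification that $\Phi(F)$ is closed under meets and that $\Phi\Psi = \mathrm{id}$, both of which hinge on the trick $g = g \vee (g \wedge a)$'s companion $g \geq g \wedge a \in G$.
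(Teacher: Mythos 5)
Your proposal is correct and follows essentially the same route as the paper: the mutually inverse maps $F \mapsto F^{\uparrow}$ and $G \mapsto G \cap a^{\downarrow}$ are exactly the ones used there, with properness and maximality transported along the resulting order-isomorphism. You have simply written out in full the verifications that the paper labels ``straightforward'' or ``routine'', including the key observation that $g \geq g \wedge a \in G \cap a^{\downarrow}$ for $g \in G$.
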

\begin{proof} Let $F$ be a filter of $B$ that contains $a$.
Put $F_{\downarrow} = F \cap a^{\downarrow}$.
Then $F_{\downarrow}$ is non-empty and it is straightforward to show that it is a filter.
Observe that if $F_{1} \subseteq F_{2}$ are filters of $B$ that contain $a$
then $(F_{1})_{\downarrow} \subseteq (F_{2})_{\downarrow}$.

Let $G$ be a filter of $a^{\downarrow}$.
The proof that $G^{\uparrow}$, taken in $B$, is a filter of $B$ that contains $a$ is straightforward.
Observe that if $G_{1} \subseteq G_{2}$ are both filters of $a^{\downarrow}$
then $G_{1}^{\uparrow} \subseteq G_{2}^{\uparrow}$.

It is now routine to check that
$F = (F_{\downarrow})^{\uparrow}$ and $G = (G^{\uparrow})_{\downarrow}$.
We have therefore established our order-isomorphism.
Since $B$ and $a^{\downarrow}$ have the same bottom element
it is routine to check that proper filters in $a^{\downarrow}$ are mapped
to proper filters in $B$,
and that ultrafilters in $a^{\downarrow}$ are mapped to ultrafilters in $B$.
\end{proof}

Part (1) below was proved as \cite[Theorem 3]{Stone1937b}
and 
part (2) below was proved as \cite[Proposition 1.6]{LL} and uses Lemma~\ref{lem:bijection}.

\begin{lemma}\label{lem:ll} \mbox{}
\begin{enumerate}
\item In a distributive lattice every ultrafilter is a prime filter.
\item A distributive lattice is a generalized Boolean algebra if and only if every prime filter is an ultrafilter.
\end{enumerate}
\end{lemma}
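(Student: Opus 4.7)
For part (1), I would apply Lemma~\ref{lem:exel} (Exel's characterization of ultrafilters in meet semilattices). Suppose $F$ is an ultrafilter with $a \vee b \in F$; if, for contradiction, neither $a$ nor $b$ lies in $F$, then Exel's lemma produces $f_1, f_2 \in F$ with $a \wedge f_1 = 0$ and $b \wedge f_2 = 0$. Setting $f = f_1 \wedge f_2 \in F$ and using distributivity,
$$(a \vee b) \wedge f = (a \wedge f) \vee (b \wedge f) = 0,$$
contradicting properness of $F$.

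For the forward direction of part (2), let $D$ be a generalized Boolean algebra and $F$ a prime filter of $D$. Pick any $a \in F$ (necessarily non-zero). By Lemma~\ref{lem:gen-ba}, $a^{\downarrow}$ is a unital Boolean algebra, and a routine restriction argument shows that $F \cap a^{\downarrow}$ is a prime filter of $a^{\downarrow}$. Lemma~\ref{lem:france-two} promotes it to an ultrafilter of $a^{\downarrow}$, and then the order-isomorphism of Lemma~\ref{lem:bijection} identifies $F$ as an ultrafilter of $D$.

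The converse direction of part (2) is the main work. Assuming every prime filter of $D$ is an ultrafilter, by Lemma~\ref{lem:gen-ba} it suffices to produce, for each $a \neq 0$ and each $c \leq a$, a complement of $c$ inside $a^{\downarrow}$. After handling the trivial cases $c \in \{0, a\}$, I would consider the filter and ideal
$$F = \{x \in D \colon c \vee x \geq a\}, \qquad I = c^{\downarrow}.$$
A short check (using distributivity to close $F$ under meets) shows that $F$ is a proper filter, $I$ is an ideal, and $F \cap I = \varnothing$ (else $c \geq a$, which contradicts $c < a$). The prime filter separation theorem for distributive lattices (a standard Zorn's lemma argument, parallel to Lemma~\ref{lem:BIG}) produces a prime filter $P \supseteq F$ with $P \cap I = \varnothing$. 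By hypothesis $P$ is an ultrafilter, and since $c \notin P$, Lemma~\ref{lem:exel} supplies $p \in P$ with $p \wedge c = 0$. Setting $d = p \wedge a$, which lies in $a^{\downarrow}$ and in $P$ because $a \in F \subseteq P$, distributivity together with $p \vee c \geq a$ (from $p \in F$) yields $d \wedge c = 0$ and
$$d \vee c = (p \vee c) \wedge (a \vee c) = (p \vee c) \wedge a = a,$$
so $d$ is the desired complement of $c$ in $a^{\downarrow}$.

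The main obstacle is isolating the filter--ideal pair $(F, I)$ used in the converse direction: with this choice, the hypothesis that the separating prime filter is automatically an ultrafilter delivers the complement almost for free via Exel's lemma and one distributivity calculation. The subtle point is recognizing that the complement is $p \wedge a$ (not $p$ itself), and that the prime filter separation theorem for distributive lattices---though not stated explicitly in the paper---is a standard application of Zorn's lemma.
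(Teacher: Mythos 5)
Your part (1) and the forward half of part (2) are correct. (For comparison: the paper offers no proof of this lemma at all, citing Stone for (1) and \cite[Proposition 1.6]{LL} for (2), so your argument is necessarily a reconstruction; the tools you use --- Lemma~\ref{lem:exel}, Lemma~\ref{lem:gen-ba}, Lemma~\ref{lem:france-two} and Lemma~\ref{lem:bijection} --- are exactly the ones the paper signals.)

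The converse half of part (2) has a genuine gap. Exel's lemma applied to the ultrafilter $P$ produces $p \in P$ with $p \wedge c = 0$; it does not produce $p \in F$, and your parenthetical ``(from $p \in F$)'' uses the containment $F \subseteq P$ in the wrong direction. Without $p \vee c \geq a$ the element $d = p \wedge a$ need not be a complement. Concretely, take $D = \mathsf{P}(\{1,2,3\})$, $a = \{1,2,3\}$, $c = \{1\}$: then $F = \{\{2,3\},\{1,2,3\}\}$, and $P = \{2\}^{\uparrow}$ is a prime (hence ultra) filter containing $F$ and disjoint from $c^{\downarrow}$, but Exel's lemma may return $p = \{2\}$, giving $d = \{2\}$ and $d \vee c = \{1,2\} \neq a$. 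The repair needs one more idea: set $I_{c} = \{x \colon x \wedge c = 0\}$ (an ideal by distributivity) and let $J = \{y \colon y \leq c \vee x \text{ for some } x \in I_{c}\}$ be the ideal generated by $c^{\downarrow} \cup I_{c}$. If $a \in J$, say $a \leq c \vee x$ with $x \wedge c = 0$, then $d = a \wedge x$ is the desired complement, since $c \vee (a \wedge x) = a \wedge (c \vee x) = a$ and $c \wedge a \wedge x = c \wedge x = 0$. If $a \notin J$, then $a^{\uparrow} \cap J = \varnothing$, so the separation theorem gives a prime filter $R \ni a$ with $R \cap J = \varnothing$; by hypothesis $R$ is an ultrafilter, $c \notin R$ (as $c \in J$), so Exel's lemma yields $r \in R$ with $r \wedge c = 0$, i.e.\ $r \in I_{c} \subseteq J$, contradicting $R \cap J = \varnothing$. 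The point is that the separating prime filter must be forced to avoid both $c$ and everything orthogonal to $c$ simultaneously; your choice $I = c^{\downarrow}$ enforces only the first condition, and the needed element of $F \cap I_{c}$ is precisely the complement you are trying to construct, so it cannot simply be ``chosen''.
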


\begin{remark}\label{rem:prime-equal-uf}
{\em 
It follows that in a generalized Boolean algebra, prime filters and ultrafilters are the same.
}
\end{remark}

Let $X$ be a Hausdorff space.
Then $X$ is {\em locally compact} if each point of $X$ is contained in the interior of a compact subset \cite[Theorem 18.2]{Willard}.

\begin{lemma}\label{lem:boolean-space} Let $X$ be a Hausdorff space.
Then the following are equivalent.
\begin{enumerate}
\item $X$ is locally compact and $0$-dimensional.
\item $X$ has a base of compact-open sets.
\end{enumerate}
\end{lemma}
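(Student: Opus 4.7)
The plan is to prove the two implications separately, using exactly the topological lemmas collected in Lemma~\ref{lem:topology-needed}.

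For $(2) \Rightarrow (1)$, I would argue as follows. Assume $X$ has a base $\mathscr{B}$ of compact-open sets. Since $X$ is Hausdorff, every compact subset is closed by part (2) of Lemma~\ref{lem:topology-needed}, so each $B \in \mathscr{B}$ is in fact clopen. This immediately gives a base of clopen sets, so $X$ is $0$-dimensional. For local compactness, take any $x \in X$; by the base property, $x$ lies in some $B \in \mathscr{B}$, and $B$ is itself a compact set with $x \in B = B^{\circ}$. Part (5) of Lemma~\ref{lem:topology-needed} then delivers local compactness.

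For $(1) \Rightarrow (2)$, I would aim to show that for every point $x$ and every open $U$ containing $x$, one can find a compact-open set $V$ with $x \in V \subseteq U$. By local compactness there exists a compact $K$ with $x \in K^{\circ}$. Shrink to the open set $W = U \cap K^{\circ}$, which still contains $x$. Now invoke $0$-dimensionality: there is a clopen set $V$ with $x \in V \subseteq W$. The key observation is that $V$ is closed in $X$ and contained in $K$, hence closed as a subspace of $K$; since $K$ is compact, part (3) of Lemma~\ref{lem:topology-needed} tells us $V$ is compact. Thus $V$ is the desired compact-open neighbourhood, and the collection of all such $V$'s forms the required base.

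The main technical subtlety is the direction $(1) \Rightarrow (2)$: one must not merely produce \emph{some} compact-open neighbourhood of each point, but ensure that the compact-open sets actually form a base refining every open cover. This is handled by intersecting the compact neighbourhood with an arbitrary given open $U$ before cutting down by $0$-dimensionality, so that the clopen set obtained automatically sits inside $U$. The rest is routine once one remembers that ``clopen $+$ contained in a compact set'' forces compactness via the closed-subspace argument.
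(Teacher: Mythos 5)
Your proof is correct and follows essentially the same route as the paper's: for $(1)\Rightarrow(2)$ you intersect the given open set with the interior of a compact neighbourhood, cut down by a clopen set, and conclude compactness from part (3) of Lemma~\ref{lem:topology-needed}; for $(2)\Rightarrow(1)$ you use part (2) of that lemma to see the basic compact-open sets are clopen. The only cosmetic difference is that you start from an arbitrary open $U$ rather than a clopen one, which is if anything slightly cleaner.
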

\begin{proof} (1)$\Rightarrow$(2).
Let $U$ be a clopen set (since the space is $0$-dimensional).
Let $x \in U$.
Since $X$ is locally compact, there exists a compact set $V$ such that $x \in V^{\circ}$.
Now $x \in U \cap V^{\circ}$ is open and $X$ has a basis of clopen sets.
In particular, we can find a clopen set $W$ such that $x \in W \subseteq U \cap V^{\circ} \subseteq V$.
By part (3) of Lemma~\ref{lem:topology-needed},
$W$ is a closed subset of the compact set $V$ and so $W$ is compact.
It follows that $x \in W \subseteq U$ where $W$ is compact-open.
Thus $X$ has a base of compact-open sets.

(2)$\Rightarrow$(1).
By part (2) of Lemma~\ref{lem:topology-needed},
every compact subset of a Hausdorff space is closed.
It follows that $X$ has a basis of clopen subset.
It is immediate that the space is locally compact.
\end{proof}

We define a {\em locally compact Boolean space} to be a $0$-dimensional, locally compact Hausdorff space.
Let $X$ be a locally compact Boolean space.
Denote by $\mathsf{B}(X)$ the set of all compact-open subsets of $X$.
The proof of the following is straightforward using Lemma~\ref{lem:topology-needed}.

\begin{lemma}\label{lem:compact-open} Let $X$ be a locally compact Boolean space.
Then under the usual operations of union and intersection, 
the poset $\mathsf{B}(X)$ is a generalized Boolean algebra.
\end{lemma}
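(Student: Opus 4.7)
The plan is to verify directly that $\mathsf{B}(X)$, equipped with $\cup$ and $\cap$, is a distributive lattice with bottom in which every non-zero principal order-ideal is a Boolean algebra; by Lemma~\ref{lem:gen-ba} this will suffice. Since $\mathsf{B}(X)$ will turn out to be a sublattice of the powerset Boolean algebra $\mathsf{P}(X)$, distributivity, commutativity, associativity, and absorption are automatic, so the only real content is stability of $\mathsf{B}(X)$ under the relevant operations.

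First I would check the easy closure facts. The empty set is compact-open and serves as the bottom element. Given $A, B \in \mathsf{B}(X)$, the union $A \cup B$ is open and is compact as a finite union of compact sets, so $A \cup B \in \mathsf{B}(X)$. For intersections, $A \cap B$ is open; since $X$ is Hausdorff, part~(2) of Lemma~\ref{lem:topology-needed} gives that $B$ is closed, hence $A \cap B$ is a closed subset of the compact set $A$, and part~(3) of Lemma~\ref{lem:topology-needed} then yields compactness. Thus $\mathsf{B}(X)$ is a sublattice of $\mathsf{P}(X)$ containing $\varnothing$.

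Next I would verify that each non-zero principal order-ideal is a Boolean algebra. Fix a non-zero $A \in \mathsf{B}(X)$ and consider $A^{\downarrow} = \{C \in \mathsf{B}(X) : C \subseteq A\}$. I claim that for $C \in A^{\downarrow}$ the set-theoretic complement in $A$, namely $A \setminus C = A \cap (X \setminus C)$, again lies in $A^{\downarrow}$. Openness comes from $C$ being closed (compact plus Hausdorff, Lemma~\ref{lem:topology-needed}(2)) so that $X \setminus C$ is open. Compactness follows because $A \setminus C$ is a closed subset of the compact set $A$: it is the complement, relative to $A$, of the open subset $C$ of $A$, so it is closed in $A$ and therefore compact by Lemma~\ref{lem:topology-needed}(3). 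Clearly $C \cap (A \setminus C) = \varnothing$ and $C \cup (A \setminus C) = A$, so $A \setminus C$ is a complement of $C$ in the interval $[\varnothing, A]$. Hence $A^{\downarrow}$ is a unital Boolean algebra with top $A$.

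Finally, I would invoke Lemma~\ref{lem:gen-ba}: since $\mathsf{B}(X)$ is a distributive lattice with zero whose every non-zero principal order-ideal is Boolean, $\mathsf{B}(X)$ is a generalized Boolean algebra, with $\setminus$ given by ordinary set-theoretic difference. The only real obstacle is the compactness of the difference set, and this is handled uniformly by the Hausdorff hypothesis via parts~(2) and~(3) of Lemma~\ref{lem:topology-needed}; without Hausdorff-ness, compact sets need not be closed and the argument would break down.
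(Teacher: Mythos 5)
Your proof is correct and uses exactly the ingredients the paper points to: the paper leaves this as "straightforward using Lemma~\ref{lem:topology-needed}", and your verification via parts (2) and (3) of that lemma, combined with the characterization of generalized Boolean algebras through principal order-ideals in Lemma~\ref{lem:gen-ba}, is the intended argument. Your closing observation that only the Hausdorff hypothesis is really needed for this particular lemma is also accurate.
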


The proof of the following lemma is routine, once you recall that ultrafilters and prime filters
are the same thing in generalized Boolean algebras.

\begin{lemma}\label{lem:new-estonia} Let $B$ be a generalized Boolean algebra.
\begin{enumerate}
\item $\mathscr{U}_{0} = \varnothing$.
\item $\mathscr{U}_{a} \cap \mathscr{U}_{b} = \mathscr{U}_{a \wedge b}$.
\item $\mathscr{U}_{a} \cup \mathscr{U}_{b} = \mathscr{U}_{a \vee b}$.
\end{enumerate}
\end{lemma}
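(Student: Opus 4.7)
The proof will follow the pattern of Lemma~\ref{lem:estonia} from the unital Boolean algebra case, with the only delicate point being part~(3), where we must invoke the fact that in a generalized Boolean algebra ultrafilters and prime filters coincide (Lemma~\ref{lem:ll}(2) and Remark~\ref{rem:prime-equal-uf}). So the plan is to verify each item from the definitions, using only that ultrafilters are (i) non-empty, (ii) downward directed, (iii) upwardly closed, (iv) proper, and (v) prime.

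For (1): any ultrafilter is by definition proper, so it does not contain $0$; hence $\mathscr{U}_{0}$ is empty. For (2): if an ultrafilter $F$ contains both $a$ and $b$, then since filters are downward directed (equivalently, closed under meets in a meet semilattice), $a \wedge b \in F$, so $F \in \mathscr{U}_{a \wedge b}$. Conversely, if $a \wedge b \in F$, then from $a \wedge b \leq a$ and $a \wedge b \leq b$ together with upward closure we obtain $a,b \in F$, so $F \in \mathscr{U}_{a} \cap \mathscr{U}_{b}$. Note that if $a \wedge b = 0$ both sides are automatically empty by (1), so the argument is uniform.

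For (3): the inclusion $\mathscr{U}_{a} \cup \mathscr{U}_{b} \subseteq \mathscr{U}_{a \vee b}$ is immediate, since $a \leq a \vee b$ and $b \leq a \vee b$ combined with upward closure mean that any filter containing $a$ or $b$ contains $a \vee b$. The reverse inclusion is the only step that uses more than being a filter: if $a \vee b \in F$, we invoke Lemma~\ref{lem:ll}(2) which tells us that in the generalized Boolean algebra $B$ the ultrafilter $F$ is a prime filter, whence $a \in F$ or $b \in F$, giving $F \in \mathscr{U}_{a} \cup \mathscr{U}_{b}$.

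There is no real obstacle: the whole statement is routine once Lemma~\ref{lem:ll} is in hand. The only thing to flag is that the non-unital setting forces us to go via Lemma~\ref{lem:ll} rather than via part~(2) of Lemma~\ref{lem:france-two} as in the unital case (since in a generalized Boolean algebra complements are only relative, so the direct argument from Lemma~\ref{lem:france-two} is not available). Once that substitution is made, the proof is essentially identical to that of Lemma~\ref{lem:estonia}.
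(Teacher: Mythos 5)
Your proof is correct and follows exactly the route the paper intends: the paper gives no written proof, saying only that the result is ``routine, once you recall that ultrafilters and prime filters are the same thing in generalized Boolean algebras,'' and your argument fills in precisely those details (properness for (1), downward directedness and upward closure for (2), upward closure plus primeness for (3)). One tiny point: the direction you actually need in part (3) --- that every ultrafilter is a prime filter --- is part (1) of Lemma~\ref{lem:ll} (valid in any distributive lattice), not part (2), although Remark~\ref{rem:prime-equal-uf} makes the two interchangeable here in any case.
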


The above lemma tells us that the collection of all sets of the form $\mathscr{U}_{a}$, where $a \in B$,
is the base for a topology on $\mathsf{X}(B)$.
We shall first of all determine the salient properties of the topological space $\mathsf{X}(B)$.

\begin{lemma}\label{lem:lc-Boolean-space} 
For each generalized Boolean algebra $B$, the topological space $\mathsf{X}(B)$ is a locally compact Boolean space.
\end{lemma}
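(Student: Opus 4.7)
The plan is to verify the three requirements: Hausdorff, $0$-dimensional, and locally compact. The central difficulty compared with the unital case is that we no longer have a global complementation, so the argument that $\mathscr{U}_{a'} = \overline{\mathscr{U}_a}$ (used in Lemma~\ref{lem:Boolean-space}) is unavailable. The substitute throughout will be Lemma~\ref{lem:exel}: if $a \notin F$ for an ultrafilter $F$, then some $b \in F$ satisfies $a \wedge b = 0$.

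First I would verify that the $\mathscr{U}_a$ really are a base. Covering of $\mathsf{X}(B)$ is automatic, because every ultrafilter is non-empty, and the intersection property is part (2) of Lemma~\ref{lem:new-estonia}. For Hausdorff, take distinct ultrafilters $F \ne G$ and pick $a \in F \setminus G$. Applying Lemma~\ref{lem:exel} to $G$ yields some $c \in G$ with $a \wedge c = 0$; then $F \in \mathscr{U}_a$, $G \in \mathscr{U}_c$, and $\mathscr{U}_a \cap \mathscr{U}_c = \mathscr{U}_{a \wedge c} = \mathscr{U}_0 = \varnothing$ by Lemma~\ref{lem:new-estonia}. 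The same trick shows each basic open $\mathscr{U}_a$ is also closed: if $F \notin \mathscr{U}_a$, pick $b \in F$ with $a \wedge b = 0$; then $F \in \mathscr{U}_b$ and $\mathscr{U}_b \cap \mathscr{U}_a = \varnothing$, so the complement of $\mathscr{U}_a$ is open. This gives $0$-dimensionality.

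The main step, and the real obstacle, is showing local compactness, since the space itself need not be compact. I would prove that each basic open set $\mathscr{U}_a$ (with $a \ne 0$) is already compact, which combined with Lemma~\ref{lem:boolean-space} will finish the proof. For this I appeal to Lemma~\ref{lem:bijection}: the map $F \mapsto F \cap a^{\downarrow}$ is a bijection between the ultrafilters of $B$ containing $a$ and the ultrafilters of the unital Boolean algebra $a^{\downarrow}$. This bijection carries the subspace topology on $\mathscr{U}_a$ to the Stone topology on $\mathsf{X}(a^{\downarrow})$, because the basic open $\mathscr{U}_a \cap \mathscr{U}_b = \mathscr{U}_{a \wedge b}$ corresponds exactly to the basic open determined by $a \wedge b \in a^{\downarrow}$; the bijection and its inverse both respect the defining membership relation. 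Hence $\mathscr{U}_a$ is homeomorphic to $\mathsf{X}(a^{\downarrow})$, which is compact by Lemma~\ref{lem:Boolean-space}.

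Finally, any $F \in \mathsf{X}(B)$ contains some $a \ne 0$, so $F \in \mathscr{U}_a$ where $\mathscr{U}_a$ is a compact-open neighbourhood. Hence $\mathsf{X}(B)$ has a base of compact-open sets, and Lemma~\ref{lem:boolean-space} identifies it as a locally compact Boolean space.
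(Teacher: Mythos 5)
Your proof is correct and follows essentially the same route as the paper's: Hausdorffness via Lemma~\ref{lem:exel}, compactness of each $\mathscr{U}_{a}$ by transporting the problem to the unital Boolean algebra $a^{\downarrow}$ through Lemma~\ref{lem:bijection} and invoking Lemma~\ref{lem:Boolean-space}, then concluding with Lemma~\ref{lem:boolean-space}. The only difference is that you spell out the clopenness of the $\mathscr{U}_{a}$ directly, which is in fact already delivered by the base-of-compact-opens criterion once Hausdorffness is in hand.
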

\begin{proof} Let $A$ and $B$ be distinct ultrafilters.
Let $a \in A \setminus B$;
such an element exists since we cannot have that $A$ is a proper subset of $B$ since both are ultrafilters.
By Lemma~\ref{lem:exel}, there exists $b \in B$ such that $a \wedge b = 0$.
Observe that $\mathscr{U}_{a} \cap \mathscr{U}_{b} = \varnothing$ and $A \in \mathscr{U}_{a}$ and $B \in \mathscr{U}_{b}$.
We have proved that $\mathsf{X}(B)$ is Hausdorff.
It only remains to prove that each of the sets $\mathscr{U}_{a}$ is compact.
Suppose that $\mathscr{U}_{a} \subseteq \bigcup_{i \in I} \mathscr{U}_{b_{i}}$.
Observe that $\mathscr{U}_{a} = \bigcup_{i \in I} \mathscr{U}_{a \wedge b_{i}}$.
So, without loss of generality, we can assume that $b_{i} \leq a$.
Thus we are given that $\mathscr{U}_{a} = \bigcup_{i \in I} \mathscr{U}_{b_{i}}$
where $b_{i} \leq a$.
By assumption, $a^{\downarrow}$ is a unital Boolean algebra.
The result now follows by Lemma~\ref{lem:bijection} and 
Lemma~\ref{lem:Boolean-space}. 
\end{proof}

Just as before, the topological space $\mathsf{X}(B)$ is called the {\em Stone space} of the generalized Boolean algebra $B$.
We can now assemble Lemma~\ref{lem:compact-open}  and Lemma~\ref{lem:lc-Boolean-space} into the following result:

\begin{proposition}\label{prop:second-big-theorem}\mbox{}
\begin{enumerate}

\item Let $B$ be a generalized Boolean algebra.
Then $B \cong \mathsf{B} (\mathsf{X}(B))$, where here $\cong$ means an isomorphism of generalized Boolean algebras.

\item Let $X$ be a locally compact Boolean space.
Then $X \cong \mathsf{X} (\mathsf{B}(X))$, where here $\cong$ means a homeomorphism of topological spaces.

\end{enumerate}
\end{proposition}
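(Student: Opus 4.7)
The plan is to mimic the proof of Proposition~\ref{prop:big-theorem}, but replace `clopen' by `compact-open' throughout and localize the compactness arguments to a principal downset.

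For part (1), define $\alpha \colon B \rightarrow \mathsf{B}(\mathsf{X}(B))$ by $a \mapsto \mathscr{U}_{a}$. Each $\mathscr{U}_{a}$ is open by construction of the topology and is compact by the argument embedded in the proof of Lemma~\ref{lem:lc-Boolean-space} (reduce to the compact Boolean algebra $a^{\downarrow}$ and quote Lemma~\ref{lem:bijection} plus Lemma~\ref{lem:Boolean-space}), so $\alpha$ lands in $\mathsf{B}(\mathsf{X}(B))$. That $\alpha$ preserves $0$, $\wedge$ and $\vee$ is exactly Lemma~\ref{lem:new-estonia}, and this suffices for a morphism of generalized Boolean algebras. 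Injectivity follows from the separation property: if $a \nleq b$, work inside the unital Boolean algebra $(a \vee b)^{\downarrow}$, use Lemma~\ref{lem:useful-one} to get $a \wedge (a\vee b \setminus b) \neq 0$, and apply Lemma~\ref{lem:bijection} together with Corollary~\ref{cor:latvia} to produce an ultrafilter of $B$ separating $a$ from $b$. For surjectivity, any compact-open $K$ is a union $\bigcup_{i \in I} \mathscr{U}_{a_{i}}$ of basic opens, and compactness reduces this to a finite union, which by Lemma~\ref{lem:new-estonia}(3) equals $\mathscr{U}_{a_{1} \vee \dots \vee a_{n}}$.

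For part (2), define $\beta \colon X \rightarrow \mathsf{X}(\mathsf{B}(X))$ by $x \mapsto O_{x}$, where $O_{x}$ is the collection of compact-open subsets of $X$ containing $x$. To check $O_{x}$ is a (proper) filter in $\mathsf{B}(X)$, note that Lemma~\ref{lem:boolean-space} gives a compact-open neighbourhood of $x$, so $O_{x}$ is non-empty, and the intersection of two members of $O_{x}$ is clopen (hence closed in a compact set, hence compact) and still contains $x$. It is a prime filter because $x \in A \cup B$ forces $x \in A$ or $x \in B$, and by Lemma~\ref{lem:ll} it is therefore an ultrafilter. Injectivity of $\beta$: if $x \neq y$, Hausdorffness plus Lemma~\ref{lem:boolean-space} produces disjoint compact-open sets separating $x$ and $y$, so $O_{x} \neq O_{y}$.

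The main obstacle is surjectivity of $\beta$, since $X$ itself need not be compact and we cannot directly intersect all members of an ultrafilter as in Proposition~\ref{prop:big-theorem}(2). The remedy is to pin down to a single compact-open. Given an ultrafilter $F$ of $\mathsf{B}(X)$, pick any $a_{0} \in F$; the members of $F$ that lie below $a_{0}$ form a family of closed subsets of the compact Hausdorff space $a_{0}$ with the finite intersection property, so by Lemma~\ref{lem:topology-needed}(4) their intersection contains some point $x$. Then $F \subseteq O_{x}$: any $a \in F$ satisfies $a \wedge a_{0} \in F$, hence $x \in a \wedge a_{0} \subseteq a$. Maximality of $F$ now forces $F = O_{x}$.

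Finally, for the topology, a direct computation gives $\beta^{-1}(\mathscr{U}_{A}) = \{x \colon A \in O_{x}\} = A$ for every $A \in \mathsf{B}(X)$, which shows $\beta$ is continuous; combined with the bijectivity just established, the same identity yields $\beta(A) = \mathscr{U}_{A}$, so $\beta$ sends basic opens to basic opens and is therefore an open map. Hence $\beta$ is a homeomorphism, completing the proof.
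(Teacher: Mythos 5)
Your proposal is correct and follows essentially the same route as the paper: the same maps $a \mapsto \mathscr{U}_{a}$ and $x \mapsto O_{x}$, injectivity of $\alpha$ by localizing to the unital Boolean algebra $(a \vee b)^{\downarrow}$ via Lemma~\ref{lem:bijection}, surjectivity of $\alpha$ by compactness reducing a union of basic opens to a finite one, and surjectivity of $\beta$ by applying the finite intersection property inside a single compact member of the ultrafilter. The only difference is that you spell out a few steps the paper leaves to citation (the separation argument for injectivity and the explicit reduction to $a_{0}^{\downarrow}$), which is harmless.
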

\begin{proof} (1) Define $\alpha \colon B \rightarrow \mathsf{B} (\mathsf{X}(B))$ by $a \mapsto \mathscr{U}_{a}$.
Let $a$ and $b$ be elements of $B$.
Suppose that $\mathscr{U}_{a} = \mathscr{U}_{b}$.
Then both $a$ and $b$ are in the order ideal $(a \vee b)^{\downarrow}$.
It follows by Lemma~\ref{lem:bijection} and Proposition~\ref{prop:big-theorem} that $a = b$.
By Lemma~\ref{lem:new-estonia}, the bottom element is mapped to the bottom element
and binary meets and binary joins are preserved.
It remains to show that it is surjective.
Let $U$ be a compact-open set of $\mathsf{B} (\mathsf{X}(B))$.
Since it is open it is a union of basic open sets and since it is compact it is 
a union of a finite number of basic open sets.
But this implies that $U$ is a basic open set and so $U = \mathscr{U}_{a}$ for some $a \in B$.

(2) For each $x \in X$ denote by $O_{x}$ the set of all compact-open sets that contain $x$.
It is easy to check that $O_{x}$ is a prime filter in $\mathsf{B}(X)$
and so we have defined a map from $X$ to $\mathsf{X} (\mathsf{B}(X))$.
This map is injective because locally compact Boolean spaces are Hausdorff.
Let $F$ be an arbitrary ultrafilter in $\mathsf{B}(X)$.
This is therefore an ultrafilter whose elements are compact-open.
Let $V \in F$.
We now use part (2) of Lemma~\ref{lem:topology-needed}:
compact subsets of Hausdorff spaces are closed.
Since $F$ is an ultrafilter, 
all intersections of elements of $F$ with $V$ are non-empty and the set of sets so formed has the finite 
intersection property.
It follows that there is a point $x$ that belongs to them all.
Thus $x$ belongs to every element of $F$.
Thus $F \subseteq O_{x}$ from which we get equality since we are dealing with ultrafilters.
We have therefore established that we have a bijection.
In particular, every ultrafilter in $\mathsf{B}(X)$ is of the form $O_{x}$.
We prove that this function and its inverse are continuous.
Let $V$ be a compact-open set in $X$.
Then the image of $V$ under our map is the set
$S = \{O_{x} \colon x \in V\}$.
But $V$ is an element of $\mathsf{B}(X)$.
The set $S$ is just $\mathscr{U}_{V}$.
We now show that the inverse function is continuous.
An element of a base for the topology on $\mathsf{B}(X)$ is of the form $\mathscr{U}_{a}$
where $a \in \mathsf{B}(X)$.
Let $a = V$ a compact-open subset of $X$.
A typical element of $\mathscr{U}_{V}$ is $O_{x}$ where $x \in V$.
It follows that the inverse image of $\mathscr{U}_{V}$ is $V$.
\end{proof}

Let $\theta \colon B \rightarrow C$ be a homomorphism of generalized Boolean algebras.
We say it is {\em proper} if $C = \mbox{\rm im}(\theta)^{\downarrow}$;
in other words, each element of $C$ is below an element of the image.
A continuous map between topological spaces is said to be {\em proper}
if the inverse images of compact sets are compact.

\begin{theorem}[Commutative Stone duality II]\label{them:classical-stone-dualityII} 
The category of generalized Bool\-ean algebras and proper homomorphisms
is dually equivalent to the category of locally compact Boolean spaces and proper continuous homomorphisms.
\end{theorem}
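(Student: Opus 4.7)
The plan is to extend the object-level bijection from Proposition~\ref{prop:second-big-theorem} to a full contravariant equivalence. On objects nothing remains to do; I need only to construct mutually inverse (up to natural isomorphism) contravariant assignments on morphisms and verify functoriality.

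For the forward direction, let $\theta \colon B \to C$ be a proper homomorphism of generalized Boolean algebras. I would define $\phi_{\theta} \colon \mathsf{X}(C) \to \mathsf{X}(B)$ by $\phi_{\theta}(F) = \theta^{-1}(F)$. The central point is that $\theta^{-1}(F)$ is actually an ultrafilter of $B$. Non-emptiness uses properness: any $c \in F$ satisfies $c \leq \theta(b)$ for some $b \in B$, and upward closure of $F$ forces $\theta(b) \in F$, hence $b \in \theta^{-1}(F)$. That $\theta^{-1}(F)$ is a proper filter then follows routinely from $\theta$ being a homomorphism and from $\theta(0)=0 \notin F$. Primeness of $\theta^{-1}(F)$ follows from primeness of $F$, which every ultrafilter enjoys by Lemma~\ref{lem:ll}(1); primeness then upgrades to ultrafilter-hood by Lemma~\ref{lem:ll}(2). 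The key identity $\phi_{\theta}^{-1}(\mathscr{U}_{a}) = \mathscr{U}_{\theta(a)}$ is immediate from the definitions, and since $\mathscr{U}_{\theta(a)}$ is a basic compact-open set of $\mathsf{X}(C)$, this single computation gives both continuity and properness of $\phi_{\theta}$.

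For the backward direction, let $\psi \colon X \to Y$ be a proper continuous map between locally compact Boolean spaces. Define $\theta_{\psi} \colon \mathsf{B}(Y) \to \mathsf{B}(X)$ by $\theta_{\psi}(V) = \psi^{-1}(V)$. This is well-defined: $\psi^{-1}(V)$ is open by continuity and compact by properness. Preservation of $\varnothing$, finite unions, and finite intersections is standard for preimages. Properness of $\theta_{\psi}$ uses that compact-open sets form a base of $Y$ by Lemma~\ref{lem:boolean-space}: given $W \in \mathsf{B}(X)$, the image $\psi(W)$ is compact in $Y$, so can be covered by finitely many compact-opens $V_{1}, \ldots, V_{n}$, whence $W \subseteq \psi^{-1}(V_{1} \cup \cdots \cup V_{n})$ with $V_{1} \cup \cdots \cup V_{n} \in \mathsf{B}(Y)$.

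Functoriality of both assignments follows from $(\theta_{2}\theta_{1})^{-1} = \theta_{1}^{-1}\theta_{2}^{-1}$ and the analogous identity for preimages of maps. That the two constructions are mutually inverse reduces to the natural isomorphisms $a \leftrightarrow \mathscr{U}_{a}$ and $x \leftrightarrow O_{x}$ from Proposition~\ref{prop:second-big-theorem}: tracing through the definitions shows that $\theta_{\phi_{\theta}} = \theta$ and $\phi_{\theta_{\psi}} = \psi$ modulo those identifications. The main obstacle, to my mind, is the check that $\theta^{-1}(F)$ is an ultrafilter in the non-unital setting; this is exactly where properness of $\theta$ is required (to guarantee non-emptiness) and where the equivalence prime $=$ ultrafilter from Lemma~\ref{lem:ll}(2) is required (to upgrade the obvious primeness to ultrafilter-hood), neither of which was an issue in the unital Theorem~\ref{them:classical-stone-duality}.
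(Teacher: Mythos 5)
Your proposal is correct and follows essentially the same route as the paper: ultrafilters are pulled back along proper homomorphisms (properness supplying non-emptiness, with the prime-equals-ultrafilter equivalence of Lemma~\ref{lem:ll} doing the rest), compact-open sets are pulled back along proper continuous maps, and the object-level isomorphisms of Proposition~\ref{prop:second-big-theorem} complete the duality. The paper leaves these verifications as ``routine''; your write-up simply supplies the details, including the useful identity $\phi_{\theta}^{-1}(\mathscr{U}_{a}) = \mathscr{U}_{\theta(a)}$.
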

\begin{proof} Let $\theta \colon B_{1} \rightarrow B_{2}$ be a proper homomorphism between generalized Boolean algebras
and let $F$ be an ultrafilter in $B_{2}$.
Then $\theta^{-1}(F)$ is non-empty because the homomorphism is proper and it is an ultrafilter in $B_{1}$.
We therefore have a map $\theta^{-1} \colon \mathsf{X}(B_{2}) \rightarrow \mathsf{X}(B_{1})$.
Let $\phi \colon X_{1} \rightarrow X_{2}$ be a proper continuous map between locally compact Boolean spaces
and  let $U$ be a compact-open subset of $X_{2}$. 
Then $\phi^{-1}(U)$ is also compact-open.
We therefore have a map $\phi^{-1} \colon \mathsf{B}(X_{2}) \rightarrow \mathsf{B}(X_{1})$.
It is now routine to check using Proposition~\ref{prop:second-big-theorem},
that we have a duality between categories.
\end{proof}

The above theorem generalizes Theorem~\ref{them:classical-stone-duality}
since homomorphisms between unital Boolean algebras are automatically proper,
and in a Hausdorff space compact sets are closed and closed subsets of compact spaces are themselves compact by Lemma~\ref{lem:topology-needed}
and so continuous maps between Boolean spaces are automatically proper.
We shall now generealize the above theorem in the sections that follow.

Every locally compact Hausdoff space admits a one-point compactification \cite[Section 37]{Simmons}
in which the resulting space is compact Hausdorff.
If the original space if $0$-dimensional so, too, is its one-point compactification;
see \cite[Exercise 43.19]{GH} and \cite[page 387]{Stone1937}.

\begin{lemma}\label{lem:one-point}
The one-point compactification of a locally compact Boolean space is a compact Boolean space.
\end{lemma}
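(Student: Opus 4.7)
The plan is as follows. Write $X^+ = X \cup \{\infty\}$ for the one-point compactification, where the open sets are precisely the open sets of $X$ together with the sets of the form $\{\infty\} \cup (X \setminus K)$ with $K$ compact in $X$. That $X^+$ is compact and Hausdorff is the content of the references cited immediately before the lemma, so I shall take this for granted. What remains is to exhibit a base of clopen sets for $X^+$.

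For a clopen base, I would split into two cases according to whether the point under consideration lies in $X$ or is $\infty$. First, observe the following bridge between the two topologies: if $W \subseteq X$ is compact-open in $X$, then $W$ is clopen in $X^+$. Indeed, $W$ is open in $X$, hence open in $X^+$; and since $W$ is compact in $X$, the set $\{\infty\} \cup (X \setminus W)$ is open in $X^+$ by definition of the Alexandroff topology, so its complement $W$ is closed in $X^+$. This observation is the engine of the proof.

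Now fix a point $x \in X^+$ and an open neighbourhood $U$ of $x$ in $X^+$; I want a clopen $V$ in $X^+$ with $x \in V \subseteq U$. If $x \in X$, then without loss of generality $U \subseteq X$ (intersect with $X$ if necessary). By Lemma~\ref{lem:boolean-space}, $X$ has a base of compact-open sets, so I can find a compact-open $V \subseteq U$ with $x \in V$; by the bridge observation, $V$ is clopen in $X^+$. If $x = \infty$, then $U$ has the form $\{\infty\} \cup (X \setminus K)$ for some compact $K \subseteq X$. Using Lemma~\ref{lem:boolean-space} again, I cover $K$ by finitely many compact-open sets $W_1, \ldots, W_n$ of $X$ and set $W = W_1 \cup \cdots \cup W_n$, which is compact-open in $X$. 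Let $V = X^+ \setminus W = \{\infty\} \cup (X \setminus W)$. By the bridge observation $W$ is clopen in $X^+$, so $V$ is clopen in $X^+$; clearly $\infty \in V$, and $K \subseteq W$ gives $V \subseteq \{\infty\} \cup (X \setminus K) = U$.

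The main obstacle, such as it is, sits in the $x = \infty$ case: one cannot just complement a single compact-open set, because a typical compact $K$ appearing in a basic neighbourhood of $\infty$ need not itself be open. Resolving this requires sandwiching $K$ between a finite union of compact-open sets, which is where Lemma~\ref{lem:boolean-space} is used a second time. Once this step is in place, combining the two cases produces a base of clopen sets for $X^+$, so $X^+$ is $0$-dimensional and therefore a compact Boolean space.
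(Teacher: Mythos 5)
Your proof is correct, and its overall shape matches the paper's (the argument appears as Proposition~\ref{prop:one-point-c} in Section~9): both identify the clopen subsets of $X^{\infty}$ as the compact-open subsets of $X$ together with the complements in $X^{\infty}$ of compact-open subsets of $X$, and both invoke Lemma~\ref{lem:boolean-space} to handle points of $X$. Where you differ is in the neighbourhoods of $\infty$. The paper works with the ``every open set is a union of basic sets'' formulation, so for $U = X^{\infty} \setminus K$ it must show that $K$ equals the intersection of \emph{all} compact-open sets containing it; this is done by separating each point of $X \setminus K$ from $K$ with disjoint open sets (Hausdorff separation of a point from a compact set) and then refining. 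You instead use the pointwise formulation of a base, so you only need \emph{one} compact-open $W \supseteq K$, obtained by taking a finite subcover of $K$ from the compact-open base of $X$ and then complementing. Your route is the more economical one: it avoids the separation argument entirely and uses nothing beyond compactness of $K$, Lemma~\ref{lem:boolean-space}, and the fact that compact subsets of Hausdorff spaces are closed (part (2) of Lemma~\ref{lem:topology-needed}), at the cost of proving a slightly weaker intermediate statement than the paper's characterization of $K$ as an intersection of compact-open sets.
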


We shall return to this lemma later in Section~9.

\section{Boolean inverse semigroups}

Let $S$ be an inverse semigroup.
We denote its semilattice of idempotents by $\mathsf{E}(S)$.
If $X \subseteq S$, define $\mathsf{E}(X) = X \cap \mathsf{E}(S)$.
If $a \in S$, write $\mathbf{d}(a) = a^{-1}a$ and $\mathbf{r}(a) = aa^{-1}$.
We say that $a,b \in S$ are {\em compatible}, written $a \sim b$, if $a^{-1}b$ and $ab^{-1}$ are both idempotents.
A pair of elements being compatible is a necessary condition for them to have an upper bound.
A subset is said to be {\em compatible} if every pair of elements in that subset is compatible.
The following was proved in \cite[Lemma 1.4.11, Lemma 1.4.12]{Lawson1998}.

\begin{lemma}\label{lem:buffs1}\mbox{}
\begin{enumerate} 

\item $s \sim t$ if and only if $s \wedge t$ exists and 
$\dom (s \wedge t) = \dom (s) \wedge \dom (t)$
and
$\ran (s \wedge t) = \ran (s) \wedge \ran (t)$.

\item If $a \sim b$ then
$$a \wedge b = ab^{-1}b = bb^{-1}a = aa^{-1}b = ba^{-1}a.$$

\end{enumerate}
\end{lemma}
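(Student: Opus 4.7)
The plan is to give a single construction that handles both directions of (1) and yields (2) as a byproduct. The candidate for the meet is $w := st^{-1}t$, and the argument rests on the natural-partial-order absorption identity $s^{-1}s \cdot w^{-1}w = w^{-1}w$ (and its dual $ss^{-1} \cdot ww^{-1} = ww^{-1}$) whenever $w \leq s$, together with the standard commutativity of idempotents.

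For the forward direction, assume $s \sim t$ and set $w = st^{-1}t$. Then $w \leq s$ since $w = s \cdot (t^{-1}t)$ with $t^{-1}t$ idempotent, and $w \leq t$ since $w = (st^{-1}) \cdot t$ with $st^{-1}$ idempotent by hypothesis. For the greatest-lower-bound property, any $v \leq s, t$ satisfies $v = vv^{-1}s$ and $v = vt^{-1}t$, so a short calculation gives $(vv^{-1})(st^{-1}t) = v$, whence $v \leq w$. Expanding $(st^{-1}t)^{-1}(st^{-1}t) = t^{-1}t\,s^{-1}s\,t^{-1}t$ and collapsing by idempotent commutation yields $\mathbf{d}(w) = s^{-1}s \cdot t^{-1}t = \mathbf{d}(s) \wedge \mathbf{d}(t)$; the analogous expansion of $ww^{-1}$ gives $\mathbf{r}(w) = ss^{-1} \cdot tt^{-1}$.

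For the converse, suppose $w = s \wedge t$ exists with the stated domain and range. The substantive step is to extract explicit formulas for $w$ from the abstract hypothesis. Starting from $w = ww^{-1}t$ (valid since $w \leq t$) and substituting $ww^{-1} = ss^{-1} \cdot tt^{-1}$, idempotent commutation together with $tt^{-1}t = t$ produces $w = ss^{-1}t$; starting from $w = sw^{-1}w$ (valid since $w \leq s$) and substituting $w^{-1}w = s^{-1}s \cdot t^{-1}t$, the identity $ss^{-1}s = s$ produces $w = st^{-1}t$. Multiplying the first on the left by $s^{-1}$ gives $s^{-1}w = s^{-1}t$, while absorption gives $s^{-1}w = w^{-1}w$; hence $s^{-1}t = w^{-1}w$, an idempotent. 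Multiplying the second on the right by $t^{-1}$ gives $wt^{-1} = st^{-1}$, while dual absorption gives $wt^{-1} = ww^{-1}$; hence $st^{-1} = ww^{-1}$, also an idempotent. Thus $s \sim t$. Interchanging the roles of $s$ and $t$ in these reconstructions further yields $w = tt^{-1}s$ and $w = ts^{-1}s$, completing the four-way equality in part (2).

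The main obstacle is precisely this reconstruction in the converse direction: without the prescribed equalities on $\mathbf{d}(w)$ and $\mathbf{r}(w)$, the meet could be strictly smaller than $ss^{-1}t$ (for instance, when $s$ and $t$ have only the trivial common lower bound), and the idempotency deductions would break. Once the explicit formulas for $w$ are in hand, the remainder of the argument is routine manipulation using the natural partial order and commutativity of idempotents.
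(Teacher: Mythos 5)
Your proof is correct, and it is essentially the standard argument: the paper does not prove this lemma itself but cites it from the literature (\cite[Lemma 1.4.11, Lemma 1.4.12]{Lawson1998}), where the same computation with the candidate meet $st^{-1}t$ and the same reconstruction of $s\wedge t$ from its domain and range is carried out. One small point in your forward direction deserves a word: the identity $\mathbf{r}(w)=ss^{-1}tt^{-1}$ is \emph{not} obtained by idempotent commutation alone (unlike the $\mathbf{d}(w)$ computation), since $ww^{-1}=st^{-1}ts^{-1}$ has the idempotent $t^{-1}t$ trapped between $s$ and $s^{-1}$; you need the compatibility hypothesis here, e.g. $ww^{-1}=(st^{-1})(st^{-1})^{-1}=st^{-1}$ because $st^{-1}$ is idempotent, while $ss^{-1}tt^{-1}=s(s^{-1}t)t^{-1}=s(t^{-1}s)t^{-1}=(st^{-1})^{2}=st^{-1}$ because $s^{-1}t$ is idempotent and hence self-inverse. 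With that one-line insertion the argument is complete, and your derivation of part (2) from the two reconstructions in the converse direction is a tidy way to get all four expressions at once.
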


We now suppose our inverse semigroup contains a zero.
If $e$ and $f$ are idempotents then we say they are {\em orthogonal}, written $e \perp f$, if $ef = 0$.
If $a$ and $b$ are elements of an inverse semigroup with zero we say that they are {\em orthogonal},
written $a \perp b$, if $\mathbf{d}(a) \perp \mathbf{d}(b)$ and $\mathbf{r}(a) \perp \mathbf{r}(b)$.
If $a \perp b$ then $a \sim b$;
in this case, if $a \vee b$ exists we often write $a \oplus b$ and talk about {\em orthogonal joins}.
This terminology can be extended to any finite set.

An inverse semigroup with zero is said to be {\em distributive} if it has all binary compatible joins
and multiplication distributes overs such joins.
The semilattice of idempotents of a distributive inverse semigroup
is a distributive lattice.

An inverse semigroup is a {\em meet-semigroup} if it has all binary meets.
Let $S$ be an arbitrary inverse semigroup.
A function $\phi \colon S \rightarrow \mathsf{E}(S)$ is called a 
{\em fixed-point operator} if for each $a \in S$ the element $\phi (a)$ is the largest
idempotent less than or equal to $a$.
The proofs of the following can be found in \cite{Leech} or follows from the definition.

\begin{lemma}\label{lem:meets}
Let $S$ be an inverse semigroup.
\begin{enumerate}
\item $S$ is a meet-semigroup if and only if it has a fixed-point operator.
\item If $S$ is a meet-semigroup, then we may define $\phi$ by $\phi (a) = a \wedge \mathbf{d}(a) (= a \wedge \mathbf{r}(a)$).
\item If $\phi$ is a fixed-point operator then $\phi (ae) = \phi (a)e$ and $\phi (ea) = e \phi (a)$ for all $e \in \mathsf{E}(S)$.
\item If $\phi$ is a fixed-point operator then $a \wedge b = \phi (ab^{-1})b$.
\end{enumerate}
\end{lemma}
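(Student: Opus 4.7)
The plan is to establish (2) first, which immediately yields the forward direction of (1); then prove (3) and (4); and finally deduce the reverse direction of (1) from the argument of (4).

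For (2), I rely on the standard observation that $\mathbf{d}$ is order-preserving, so any idempotent $e\le a$ satisfies $e=\mathbf{d}(e)\le\mathbf{d}(a)$, and symmetrically $e\le\mathbf{r}(a)$. Consequently the largest idempotent below $a$ coincides with the largest idempotent below both $a$ and $\mathbf{d}(a)$. Since $a\wedge\mathbf{d}(a)$ exists by hypothesis and lies below the idempotent $\mathbf{d}(a)$, it is itself idempotent and is therefore the largest idempotent below $a$. Taking $\phi(a)=a\wedge\mathbf{d}(a)=a\wedge\mathbf{r}(a)$ thus defines a fixed-point operator, giving the forward direction of (1).

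For (3), I would prove the two inequalities $\phi(ae)\le\phi(a)e$ and $\phi(a)e\le\phi(ae)$ separately. For the first, $\phi(ae)\le ae\le a$ gives $\phi(ae)\le\phi(a)$ by maximality, and $\phi(ae)\le\mathbf{d}(ae)=e\mathbf{d}(a)\le e$; the meet of the commuting idempotents $\phi(a)$ and $e$ in $\mathsf{E}(S)$ is their product $\phi(a)e$, so $\phi(ae)\le\phi(a)e$. For the second, $\phi(a)e$ is a product of two commuting idempotents and hence idempotent, while $\phi(a)\le a$ gives $\phi(a)e\le ae$; maximality of $\phi(ae)$ finishes the job. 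The identity $\phi(ea)=e\phi(a)$ is symmetric.

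For (4), set $f=\phi(ab^{-1})$. Then $fb\le ab^{-1}b\le a$ (because $b^{-1}b$ is idempotent) and $fb\le b$ (because $f$ is idempotent). For maximality, let $c$ be any common lower bound of $a$ and $b$. From $c\le b$ we get $cb^{-1}\le bb^{-1}$, so $cb^{-1}$ is idempotent; from $c\le a$ we get $cb^{-1}\le ab^{-1}$. Hence $cb^{-1}$ is an idempotent below $ab^{-1}$, giving $cb^{-1}\le f$. Multiplying on the right by $b$ yields $cb^{-1}b\le fb$, and $cb^{-1}b=c$ follows from $c\le b$ via the identity $c=\mathbf{r}(c)b$. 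For the reverse direction of (1), I then simply \emph{define} $a\wedge b:=\phi(ab^{-1})b$ and observe that the argument just given uses only the fixed-point operator, not the a priori existence of meets. The main obstacle I anticipate is the bookkeeping in (4), in particular the small but crucial identity $cb^{-1}b=c$ when $c\le b$, together with the standard fact that $\mathbf{d}$ is order-preserving that underwrites the idempotent-passes-to-domain arguments; once these are in hand, everything else reduces to order-preservingness of multiplication combined with the maximality clause defining $\phi$.
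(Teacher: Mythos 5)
Your proof is correct. The paper itself gives no argument for this lemma (it delegates to Leech's paper or declares it to follow from the definitions), so there is nothing to compare against; your write-up is the standard verification one would expect. All the key points check out: $a\wedge\mathbf{d}(a)$ is idempotent because idempotents form an order-ideal, and it dominates every idempotent below $a$ because $\mathbf{d}$ is order-preserving; the two-inequality argument for $\phi(ae)=\phi(a)e$ is sound since $\phi(a)e$ is the meet of the commuting idempotents $\phi(a)$ and $e$; and in (4) the identities $fb\le ab^{-1}b\le a$ and $cb^{-1}b=\mathbf{r}(c)bb^{-1}b=c$ for $c\le b$ do exactly the work needed, with the observation that the maximality argument never presupposes existence of meets correctly delivering the converse of (1).
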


It is important to be able to manipulate meets and joins in a distributive inverse semigroup.
The following result tells us how.
Part (1) was proved as \cite[Proposition~1.4.17]{Lawson1998},
part (2) was proved as  \cite[Proposition 1.4.9]{Lawson1998},
and parts (3), (4) and (5) were proved as \cite[Lemma 2.5]{Lawson3}.

\begin{lemma}\label{lem:meets-joins} In a distributive inverse semigroup, the following hold.
\begin{enumerate}

\item In a distributive inverse monoid, if $a \vee b$ exists then
$$\mathbf{d}(a \vee b) = \mathbf{d}(a) \vee \mathbf{d}(b)
\mbox{ and }
\mathbf{r}(a \vee b) = \mathbf{r}(a) \vee \mathbf{r}(b).$$

\item If $a \wedge b$ exists then $ac \wedge bc$ exists and $(a \wedge b)c = ac \wedge bc$, and dually.

\item Suppose that $\bigvee_{i=1}^{m} a_{i}$ and $c \wedge \left( \bigvee_{i=1}^{m} a_{i} \right)$ exist.
Then all the meets $c \wedge a_{i }$ exist as does the join $\bigvee_{i=1}^{m} c \wedge a_{i}$
and we have that
$$
c \wedge \left( \bigvee_{i=1}^{m} a_{i} \right)
= 
\bigvee_{i=1}^{m} c \wedge a_{i}.
$$

\item Suppose that $a$ and $b = \bigvee_{j=1}^{n} b_{j}$ are such that  all the meets $a \wedge b_{j}$ exist.
Then $a \wedge b$ exists and is equal to $\bigvee_{j} a \wedge b_{j}$.

\item Let $a = \bigvee_{i=1}^{m} a_{i}$ and $b = \bigvee_{j=1}^{n} b_{j}$ and suppose that all meets $a_{i} \wedge b_{j}$ exist.
Then $\bigvee_{i,j} a_{i} \wedge b_{j}$ exists as does $a \wedge b$ and we have that $a \wedge b = \bigvee_{i,j} a_{i} \wedge b_{j}$.

\end{enumerate}
\end{lemma}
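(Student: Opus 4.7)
The plan is to prove the five parts in order, using freely: distributivity of product over joins, the identity $(x \vee y)^{-1} = x^{-1} \vee y^{-1}$, monotonicity of the product with respect to the natural order, and the fact that $s \sim t$ implies $s^{-1}t$ and $st^{-1}$ are idempotents bounded above by the corresponding $\mathbf{d}$'s and $\mathbf{r}$'s of $s$ and $t$. Part (1) follows by expanding $(a \vee b)^{-1}(a \vee b) = (a^{-1} \vee b^{-1})(a \vee b)$ by distributivity into a join of four terms: the diagonal $\mathbf{d}(a)$ and $\mathbf{d}(b)$, and the off-diagonal $a^{-1}b$ and $b^{-1}a$, which are idempotents bounded above by $\mathbf{d}(a) \wedge \mathbf{d}(b)$ and are therefore absorbed. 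The $\mathbf{r}$ case is dual.

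For part (2), $(a \wedge b)c$ is a lower bound of $ac$ and $bc$ by monotonicity. Conversely, if $d \leq ac, bc$, then $\mathbf{d}(d) \leq \mathbf{d}(ac) \leq \mathbf{d}(c)$, so $d = dc^{-1}c$. Since $dc^{-1} \leq acc^{-1} \leq a$ and likewise $dc^{-1} \leq b$, we get $dc^{-1} \leq a \wedge b$; multiplying by $c$ on the right gives $d \leq (a \wedge b)c$. The dual statement has the symmetric proof.

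Part (3) is the crux. Write $m := c \wedge \bigvee_i a_i$ and set $m_i := m \, \mathbf{d}(a_i)$. Since $m \leq \bigvee_i a_i$, part (1) and distributivity yield $m = m \, \mathbf{d}(\bigvee_i a_i) = \bigvee_i m_i$. Each $m_i$ lies below $c$; to see $m_i \leq a_i$, compute $m_i \leq (\bigvee_j a_j) \mathbf{d}(a_i) = \bigvee_j a_j a_i^{-1} a_i$, and use that each $a_j a_i^{-1}$ is an idempotent (by compatibility of $a_i$ with $a_j$), so each summand is below $a_i$. For universality, if $d \leq c, a_i$ then $d \leq m$ and $\mathbf{d}(d) \leq \mathbf{d}(a_i)$, giving $d = d \, \mathbf{d}(a_i) \leq m \, \mathbf{d}(a_i) = m_i$. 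Thus $c \wedge a_i$ exists and equals $m_i$, and their join is $m$.

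Parts (4) and (5) then follow. For (4), set $c_j := a \wedge b_j$; being below $a$, these are pairwise compatible and their join $c$ exists, with $c \leq a, b$. Given $d \leq a, b$, the slicing $d = \bigvee_j d \, \mathbf{d}(b_j)$ (justified exactly as in part (3), since $\mathbf{d}(d) \leq \mathbf{d}(b) = \bigvee_j \mathbf{d}(b_j)$) together with $d \, \mathbf{d}(b_j) \leq a$ and $d \, \mathbf{d}(b_j) \leq b_j$ gives $d \, \mathbf{d}(b_j) \leq c_j$, so $d \leq c$. Part (5) combines (3) and (4): for each $j$, $a \wedge b_j = \bigvee_i (a_i \wedge b_j)$ by (3) applied to $a = \bigvee_i a_i$, and then $a \wedge b = \bigvee_j (a \wedge b_j)$ by (4). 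The main obstacle is part (3), because existence of each individual meet $c \wedge a_i$ is not hypothesised but must be extracted from existence of the joint meet $m$; once the slicing trick $m \mapsto m \, \mathbf{d}(a_i)$ is in hand, the remaining parts follow by routine manipulation.
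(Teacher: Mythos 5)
Your proof is correct, and it is worth noting that the paper does not prove this lemma at all: it simply cites \cite[Propositions 1.4.9 and 1.4.17]{Lawson1998} and \cite[Lemma 2.5]{Lawson3}, and the arguments given there are essentially the ones you have reconstructed (in particular, the ``slicing'' device $m \mapsto m\,\mathbf{d}(a_{i})$, justified by $\mathbf{d}(m) \leq \bigvee_{i}\mathbf{d}(a_{i})$ and distributivity, is exactly the standard mechanism for part (3), and your treatment of the off-diagonal terms $a^{-1}b$, $b^{-1}a$ in part (1) via $a^{-1}b = a^{-1}(bb^{-1})a \leq \mathbf{d}(a)$ is the usual one). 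The only slip is in part (5): the first step, establishing that $a \wedge b_{j}$ exists and equals $\bigvee_{i}(a_{i}\wedge b_{j})$, is an application of part (4) (with $b_{j}$ in the role of the single element and $a = \bigvee_{i}a_{i}$ in the role of the join), not of part (3) --- part (3) takes the existence of the composite meet as a \emph{hypothesis}, whereas here that existence is precisely what must be derived from the existence of the individual meets $a_{i}\wedge b_{j}$. With that citation corrected, part (5) is two successive applications of part (4) and everything goes through.
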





A distributive inverse semigroup is said to be {\em Boolean} if its semilattice of idempotents is a generalized Boolean algebra.

\begin{examples}
{\em Here are some examples of Boolean inverse semigroups.
\begin{enumerate}

\item Let $X$ be an infinite set. Denote by $\mathcal{I}^{\rm fin}(X)$ the set of all partial bijections of the set $X$
with finite domains. Then this is a generalized Boolean inverse semigroup that is not a Boolean inverse monoid.

\item Symmetric inverse monoids $\mathcal{I}(X)$ are Boolean inverse monoids. Its Boolean algebra of idempotents 
is isomorphic to the power set Boolean algebra $\mathsf{P}(X)$. If $X$ is finite with $n$ elements then we denote the symmetric inverse monoid
on an $n$-element set by $\mathcal{I}_{n}$. 

\item Groups with zero adjoined, denoted by $G^{0}$. These may look like chimeras but they are honest-to-goodness Boolean inverse monoids
whose Boolean algebra of idempotents is isomorphic to the 2-element Boolean algebra.

\item $R_{n}$ the set of all $n \times n$ rook matrices \cite{Solomon}. These are all $n \times n$ matrices over the numbers $0$ and $1$ such that
each row and each column contains at most one non-zero entry.  In fact, $R_{n}$ is isomorphic to $\mathcal{I}_{n}$.

\item $R_{n}(G^{0})$ the set of all  $n \times n$ rook matrices over a group with zero \cite{Malandro}. These are all $n \times n$ matrices over the group with zero $G^{0}$
in which each row and each column contains at most one non-zero entry.

\item Let $S$ be a Boolean inverse semigroup.
Then the set $M_{\omega}(S)$ of all $\omega \times \omega$ generalized rook matrices over $S$ is a Boolean inverse semigroup.
See \cite{KLLR}.

\end{enumerate}
}
\end{examples}



\begin{lemma}\label{lem:complement} Let $S$ be a Boolean inverse semigroup.
Let $b \leq a$.
Then there is a unique element, denoted by $(a \setminus b)$, such that the following properties hold:
$(a \setminus b) \leq a$, 
$b \perp (a \setminus b)$,  
and $a = b \vee (a \setminus b)$.
\end{lemma}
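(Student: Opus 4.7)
The plan is to lift the relative complement from the idempotent semilattice $\mathsf{E}(S)$ (which is a generalized Boolean algebra by definition) up to $S$ itself, using the restriction $a \mapsto ae$ for an appropriate idempotent $e$.

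More concretely, since $b \leq a$ we have $\dom(b) \leq \dom(a)$, so inside the Boolean algebra $\dom(a)^{\downarrow} \subseteq \mathsf{E}(S)$ the element $\dom(b)$ has a unique complement $e$ satisfying $e \wedge \dom(b) = 0$ and $e \vee \dom(b) = \dom(a)$. I then set $c := ae$ and claim $c$ is the desired element $a \setminus b$. That $c \leq a$ is immediate from $e \leq \dom(a)$. For orthogonality, $\dom(c) = e$ so $\dom(b) \wedge \dom(c) = 0$; for the range side, using $b = a\dom(b)$ (because $b \leq a$) one has $\ran(b) = a\dom(b)a^{-1}$ and $\ran(c) = aea^{-1}$, and the computation
\[
\ran(b)\ran(c) = a\dom(b)a^{-1} \cdot aea^{-1} = a\dom(b)\dom(a)ea^{-1} = a(\dom(b)\wedge e)a^{-1} = 0
\]
gives $b \perp c$. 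Since $b \perp c$ implies $b \sim c$, the join $b \vee c$ exists in the distributive inverse semigroup $S$. By Lemma~\ref{lem:meets-joins}(1), $\dom(b \vee c) = \dom(b) \vee e = \dom(a)$, and the analogous calculation using $a(\dom(b) \vee e)a^{-1} = aa^{-1}$ shows $\ran(b \vee c) = \ran(a)$. Since $b, c \leq a$ we have $b \vee c \leq a$, and any element beneath $a$ with the same domain as $a$ must equal $a$ (because $x \leq a$ gives $x = a\dom(x)$); hence $b \vee c = a$.

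For uniqueness, suppose $c'$ also satisfies the three conditions. From $a = b \vee c'$ and $b \perp c'$, Lemma~\ref{lem:meets-joins}(1) gives $\dom(a) = \dom(b) \vee \dom(c')$ together with $\dom(b) \wedge \dom(c') = 0$, so $\dom(c')$ is a complement of $\dom(b)$ inside $\dom(a)^{\downarrow}$; by uniqueness of complements in a Boolean algebra, $\dom(c') = e$. Since also $c' \leq a$, we get $c' = a\dom(c') = ae = c$.

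I expect no genuine obstacle here: the only place that requires some care is verifying the range-side orthogonality, which hinges on the identity $b = a\dom(b)$ and the idempotent calculation above. Everything else is a bookkeeping consequence of the definition of a Boolean inverse semigroup together with Lemma~\ref{lem:meets-joins}(1) and the fact that the natural partial order satisfies $x \leq y \Leftrightarrow x = y\dom(x)$.
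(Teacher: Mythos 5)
Your proof is correct and is essentially the paper's argument: the paper defines $a\setminus b$ as the element of $a^{\downarrow}$ corresponding to $\mathbf{d}(a)\setminus\mathbf{d}(b)$ under the order-isomorphism $x\mapsto\mathbf{d}(x)$ from $a^{\downarrow}$ to $\mathbf{d}(a)^{\downarrow}$, whose inverse is exactly your map $e\mapsto ae$. You simply carry out explicitly the verifications (orthogonality of ranges, the join being $a$, and uniqueness) that the paper leaves to the reader.
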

\begin{proof} Observe that there is an order-isomorphism from $a^{\downarrow}$ to $\mathbf{d}(a)^{\downarrow}$ under the map $x \mapsto \mathbf{d}(x)$.
If $b \leq a$ then define $(a \setminus b)$ to be the unique element below $a$ that corresponds to $\mathbf{d}(a) \setminus \mathbf{d}(b)$ under the above order-isomorphism. 
\end{proof}

Let $S$ be a Boolean inverse semigroup.
If $X \subseteq S$ is any non-empty subset, then $X^{\vee}$ denotes the set 
of all binary joins of compatible pairs of elements of $X$.
Clearly, $X \subseteq X^{\vee}$.
A subset $I$ of $S$ is said to be an {\em additive ideal}
if $I$ is a {\em semigroup ideal} of $S$ (that is, $SI \subseteq I$ and $IS \subseteq I$)
and $I$ is closed under binary compatible joins.
If $\theta \colon S \rightarrow T$ is a morphism of Boolean inverse semigroups
then the set $K = \{s \in S \colon \theta (s) = 0\}$ is called the {\em kernel} of $\theta$
and is clearly an additive ideal of $S$.
We say that $S$ is {\em $0$-simplifying} if $S \neq \{0\}$
and the only additive ideals are $\{0\}$ and $S$ itself.
Let $e$ and $f$ be any idempotents.
We say that a non-empty finite set $X = \{x_{1},\ldots, x_{n}\}$ is a {\em pencil from $e$ to $f$}
if $e = \bigvee_{i=1}^{n} \mathbf{d}(x_{i})$ and $\mathbf{r}(x_{i}) \leq f$.
Suppose that $e = 0$.
Then all of the $x_{i} = 0$.
It follows that there is always a pencil from $0$ to any idempotent $f$.
On the other hand if $f = 0$ then $\mathbf{r}(x_{i}) = 0$ and so $x_{i} = 0$ and it follows that $e = 0$.
It is easy to check that if $I$ is an additive ideal and $f \in I$, where $f$ is an idempotent,
and there is a pencil from $e$ to $f$, where $e$ is an idempotent, then $e \in I$.

\begin{lemma}\label{lem:zero-simplifying} 
Let $S$ be a Boolean inverse semigroup not equal to zero.
Then $S$ is $0$-simplifying if and only if whenever $e$ and $f$ are non-zero idempotents
there is a pencil from $e$ to $f$.
\end{lemma}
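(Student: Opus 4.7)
I will prove both implications by exhibiting, in each case, a suitably engineered additive ideal and then invoking the $0$-simplifying hypothesis or its consequence via the hint stated just before the lemma.

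For the forward direction, fix a non-zero idempotent $f$ and define
\[
I_{f} \;=\; \{\, s \in S \colon \text{there is a pencil from } \mathbf{r}(s) \text{ to } f\,\}.
\]
First, $f \in I_{f}$ via the one-element pencil $\{f\}$, and $0 \in I_{f}$ via the trivial pencil. I then have to verify that $I_{f}$ is an additive ideal. Closure under compatible binary joins follows from part~(1) of Lemma~\ref{lem:meets-joins}: if $\{x_{1},\ldots,x_{n}\}$ and $\{y_{1},\ldots,y_{m}\}$ are pencils witnessing $s, s' \in I_{f}$ (with $s \sim s'$), then the union is a pencil witnessing $s \vee s' \in I_{f}$, because $\bigvee_{i}\mathbf{d}(x_{i}) \vee \bigvee_{j}\mathbf{d}(y_{j}) = \mathbf{r}(s) \vee \mathbf{r}(s') = \mathbf{r}(s \vee s')$. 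For the semigroup-ideal property $ts, st \in I_{f}$ when $s \in I_{f}$, I multiply the pencil on the appropriate side: for $ts$ the set $\{x_{i}t^{-1}\}$ is a pencil from $\mathbf{r}(ts) = t\mathbf{r}(s)t^{-1}$ to $f$, using distributivity of multiplication over compatible joins (part of the definition of distributive inverse semigroup) to compute $\bigvee_{i} \mathbf{d}(x_{i}t^{-1}) = \bigvee_{i} t\,\mathbf{d}(x_{i})\,t^{-1} = t\,\mathbf{r}(s)\,t^{-1}$; for $st$ I replace the pencil by its restrictions $x_{i}(\mathbf{d}(x_{i}) \wedge \mathbf{r}(st))$, which still have range below $f$ and whose domains join to $\mathbf{r}(st)$ by Lemma~\ref{lem:meets-joins}(3). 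Since $S$ is $0$-simplifying and $f \neq 0$ gives $I_{f} \neq \{0\}$, we conclude $I_{f} = S$, so in particular $e \in I_{f}$, yielding the desired pencil from $e = \mathbf{r}(e)$ to $f$.

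For the reverse direction, let $I$ be a non-zero additive ideal of $S$; I must show $I = S$. Pick any non-zero $s \in I$. Since $I$ is a semigroup ideal, $\mathbf{r}(s) = ss^{-1} \in I$, giving a non-zero idempotent $e_{0} \in I$. Let $f$ be an arbitrary non-zero idempotent of $S$; by hypothesis there is a pencil $\{x_{1},\ldots,x_{n}\}$ from $f$ to $e_{0}$, i.e.\ $f = \bigvee_{i}\mathbf{d}(x_{i})$ and $\mathbf{r}(x_{i}) \leq e_{0}$. Then $x_{i} = \mathbf{r}(x_{i})x_{i} \in SI \subseteq I$, so $\mathbf{d}(x_{i}) = x_{i}^{-1}x_{i} \in I$, and hence $f = \bigvee_{i}\mathbf{d}(x_{i}) \in I$ (idempotents are pairwise compatible, so this compatible join is closed for $I$). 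Thus every non-zero idempotent lies in $I$; for any $0 \neq t \in S$ we get $\mathbf{r}(t) \in I$ and therefore $t = \mathbf{r}(t)\,t \in I$. This establishes $I = S$, as required.

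The main obstacle is the forward direction, specifically verifying that $I_{f}$ is a semigroup ideal on both sides. The right-sided case $st$ is almost immediate once one restricts the pencil, but the left-sided case $ts$ requires recognizing that conjugation by $t$ carries the join $\bigvee\mathbf{d}(x_{i}) = \mathbf{r}(s)$ to $\mathbf{r}(ts)$, which depends on distributivity of multiplication over compatible joins; the bookkeeping that $\{x_{i}t^{-1}\}$ is the right pencil (range $\leq f$, domains joining to $\mathbf{r}(ts)$) is the subtle point. Once this is in place, everything else is a direct application of the hint immediately preceding the lemma statement.
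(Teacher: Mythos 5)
Your proof is correct. The converse direction is essentially the paper's own argument: obtain a non-zero idempotent in $I$, pull the hypothesized pencil back into $I$, and conclude $I=S$. In the forward direction you take a mildly but genuinely different route. The paper forms the additive ideal $(SfS)^{\vee}$ generated by $f$, invokes the $0$-simplifying hypothesis to get $e \in (SfS)^{\vee}$, and leaves as ``routine'' the extraction of a pencil from a representation of $e$ as a finite compatible join of elements of $SfS$. You instead define $I_{f}$ as the set of elements whose range admits a pencil to $f$ and verify directly that it is an additive ideal, after which the conclusion is immediate. The two ideals in fact coincide: every element of $(SfS)^{\vee}$ admits a pencil from its range to $f$, and conversely any $s \in I_{f}$ satisfies $s = \bigvee_{i}\mathbf{d}(x_{i})s$ with $\mathbf{d}(x_{i}) = x_{i}^{-1}\mathbf{r}(x_{i})f\mathbf{r}(x_{i})x_{i} \in SfS$, so the two arguments are two sides of the same coin. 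Yours has the merit of making the paper's ``routine'' step explicit; the cost is exactly the bookkeeping you identify for the translates $\{x_{i}t^{-1}\}$ and the restrictions $\{x_{i}(\mathbf{d}(x_{i})\wedge\mathbf{r}(st))\}$, and both of those computations check out (conjugation distributes over the compatible join of the $\mathbf{d}(x_{i})$, and $\mathbf{r}(st)\leq\mathbf{r}(s)$ lets Lemma~\ref{lem:meets-joins}(3) do the rest). One cosmetic slip in the converse: you write $\mathbf{r}(x_{i})x_{i}\in SI$ where you mean $IS$; this is harmless since $I$ is a two-sided ideal.
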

\begin{proof} Suppose that $S$ is $0$-simplifying, and let $e$ and $f$ be nonzero idempotents.
Then $(SfS)^{\vee}$ is a nonzero additive ideal of $S$.
By assumption $S = (SfS)^{\vee}$.
It follows that $e \in (SfS)^{\vee}$.
It is now routine to check that there is a pencil from $e$ to $f$.
We now prove the converse.
Let $I$ be a nonzero additive ideal of $S$ and let $s \in S$ be arbitrary and non-zero.
Let $f \in I$ be any non-zero idempotent.
Then there is a pencil from $s^{-1}s$ to $f$.
It follows that $s^{-1}s \in I$.
But $I$ is a semigroup ideal and so $s \in I$.
We have proved that $I = S$.
\end{proof}

\begin{remark}
{\em Let $X$ be a pencil from $e$ to $f$ in a Boolean inverse semigroup.
We can always assume, for any distinct $x,y \in X$, 
that $\mathbf{d}(x) \perp \mathbf{d}(y)$.}
\end{remark}

Let $T$ be a Boolean inverse semigroup.
An inverse subsemigroup $S$ of $T$ is said to be a {\em subalgebra}
if $S$ is closed under binary compatible joins taken in $T$
and if $e,f \in \mathsf{E}(S)$ then $e \setminus f \in \mathsf{E}(S)$.
Observe that $S$ is a Boolean inverse semigroup for the induced operations;
observe that $\mathsf{E}(S)$ is a subalgebra of the generalized Boolean algebra $\mathsf{E}(T)$.

\begin{remark}{\em We should note that Wehrung \cite[Definition 3.1.17]{W} uses the term
{\em additive inverse subsemigroup} for what we have termed a subalgebra.}
\end{remark}

\begin{quote}
{\em Our perspective is that Boolean inverse semigroups are non-commutative generalizations of generalized Boolean algebras.}
\end{quote}

\section{Boolean groupoids}

We shall assume that the reader is familiar with the basic ideas and definitions of category theory
as described in the first few chapters of Mac~Lane \cite{Maclane}.
Our goal is just to present the perspective on categories needed in this paper.

A category is usually regarded as a `category of structures' of some kind, such as the category of sets or the category of groups.
A (small) category can, however, also be regarded as an algebraic structure; that is, as a set equipped with a partially defined binary operation satisfying certain axioms.
We shall need both perspectives in this paper, but the latter perspective will be foremost.
This algebraic approach to categories was an important ingredient in Ehresmann's work \cite{E} and applied by Philip Higgins to prove some basic results in group theory \cite{Higgins}.

To define the algebraic notion of a category, we begin with a set $C$ equipped with a partial binary operation which we denote by concatenation.
We write $\exists ab$ to mean that the product $ab$ is defined.
An {\em identity} in such a structure is an element $e$ such that if $\exists ae$ then $ae = a$ and if $\exists ea$ then $ea = a$.
A {\em category} is a set equipped with a partial binary operation satisfying the following axioms:
\begin{description}
\item[{\rm (C1)}] $\exists a(bc)$ if and only if $\exists (ab)c$ and when one is defined so is the other and they are equal.
\item[{\rm (C2)}] $\exists abc$ if and only if $\exists ab$ and $\exists bc$.
\item[{\rm (C3)}] For each $a$ there is an identity $e$, perforce unique, such that $\exists ae$,
and there exists an identity $f$, perforce unique, such that $\exists fa$.
I shall write $\mathbf{d}(a) = e$ and $\mathbf{r}(a) = f$ and draw the picture
$$f \stackrel{a}{\longleftarrow} e.$$
The set of all elements from $e$ to $f$ is called a {\em hom-set} and denoted $\mbox{hom}(e,f)$.
\end{description}
You can check that $\exists ab$ if and only if $\mathbf{d}(a) = \mathbf{r}(b)$.

\begin{example}{\em A category with one identity is a monoid.
Thus, viewed in this light, categories are `monoids with many identities'.}
\end{example}

The morphisms of categories are called {\em functors}; we shall think of functors as generalizations of monoid homomorphisms.

Let $C$ be a category.
If $A,B \subseteq C$ then we may define $AB$ to be that subset of $C$
which consists of all products $ab$ where $a \in A$, $b \in B$ and $ab$ is defined in the category.
We call this {\em subset multiplication}. 

We now define groupoids.
An element $a$ of a category is said to be {\em invertible} if there exists an element $b$ such that $ab$ and $ba$ are identities.
If such an element $b$ exists it is unique and is called the {\em inverse} of $a$;
we denote the inverse of $a$ when it exists by $a^{-1}$.
A category in which every element is invertible is called a {\em groupoid}.

\begin{example}{\em \mbox{}
\begin{enumerate}
\item A groupoid with one identity is a group.
Thus groupoids are `groups with many identities'.

\item A set can be regarded as a groupoid in which every element is an identity.

\item Equivalence relations can be regarded as groupoids.
They correspond to {\em principal groupoids}; that is, those groupoids in which given any identities $e$ and $f$ there is at most one element $g$ of the groupoid
such that $f \stackrel{g}{\longleftarrow} e$.
A special case of such groupoids are the {\em pair groupoids}, $X \times X$, which correspond
to equivalence relations having exactly one equivalence class.


\end{enumerate}
}
\end{example}

If $G$ is a groupoid and $A \subseteq G$ then $A^{-1}$ is the set
of all inverses of elements of $A$. 

We shall need the following notation for the maps involved in defining a groupoid (not entirely standard).
Define $\mathbf{d}(g) = g^{-1}g$ and $\mathbf{r}(g) = gg^{-1}$.
If $g \in G$ define $\mathbf{i}(g) = g^{-1}$.
Put 
$$G \ast G = \{ (g,h) \in G \times G \colon \mathbf{d}(g) = \mathbf{r}(h) \},$$
and define $\mathbf{m} \colon G \ast G \rightarrow G$ by $(g,h) \mapsto gh$.
If $U,V \subseteq G$, define $U \ast V = (U \times V) \cap (G \ast G)$.
The set of identities of $G$ is denoted by $G_{o}$.
If $e$ is an identity in $G$ then $G_{e}$ is the set of all elements $a$ such that
$a^{-1}a = e = aa^{-1}$.
We call this the {\em local group at $e$}.
Put $\mbox{Iso}(G) = \bigcup_{e \in G_{o}} G_{e}$.
This is called the {\em isotropy groupoid} of $G$.

We now show how to construct all groupoids.
Let $G$ be a groupoid.
We say that elements $g,h \in G$ are {\em connected}, denoted $g \equiv h$, if there is an element $x \in G$ such that
$\mathbf{d}(x) = \mathbf{d}(h)$ and $\mathbf{r}(x) = \mathbf{d}(g)$.
The $\equiv$-equivalence classes are called the {\em connected components of the groupoid}.
If $\exists gh$ then necessarily $g \equiv h$.
It follows that $G = \coprod_{i \in I} G_{i}$ where the $G_{i}$ are the connected components of $G$.
Each $G_{i}$ is a connected groupoid.
So, it remains to describe the structure of all connected groupoids.
Let $X$ be a non-empty set and let $H$ be a group.
The set of triples $X \times H \times X$ becomes a groupoid when we define
$(x,h,x')(x',h',x'') = (x,hh',x'')$ and $(x,h,y)^{-1} = (y,h^{-1},x)$.
It is easy to check that $X \times H \times X$ is a connected groupoid.
Now let $G$ be an arbitrary groupoid.
Choose, and fix, an identity $e$ in $G$.
Denote the local group at $e$ by $H$.
For each identity $f$ in $G$ choose an element $x_{f}$ such that $\mathbf{d}(x_{f}) = e$ and $\mathbf{r}(x_{f}) = f$.
Put $X = \{x_{f} \colon f \in G_{o} \}$.
We prove that $G$ is isomorphic to $X \times H \times X$.
Let $g \in G$.
Then $x_{\mathbf{r}(g)}^{-1}gx_{\mathbf{d}(g)} \in H$.
Define a map from $G$ to $X \times H \times X$ by $g \mapsto (x_{\mathbf{r}(g)}, x_{\mathbf{r}(g)}^{-1}gx_{\mathbf{d}(g)}, x_{\mathbf{d}(g)})$.
It is easy to show that this is a bijective functor.

We shall need some special kinds of functors in our duality theory called
covering functors.
These we define now.
Let $G$ be any groupoid and $e$ any identity.
The {\em star of $e$}, denoted by $\mbox{St}_{e}$, 
consists of all elements $g \in G$ such that $\mathbf{d}(g) = e$.
Let $\theta \colon G \rightarrow H$ be a functor between groupoids.
Then for each identity $e \in G$, the functor $\theta$ induces a function
$\theta_{e}$ mapping $\mbox{St}_{e}$ to $\mbox{St}_{\theta (e)}$.
If all these functions are injective (respectively, surjective)
then we say that $\theta$ is {\em star-injective} (respectively, {\em star-surjective}).
A {\em covering functor} is a functor which is star-bijective.
The following was proved as \cite[Lemma 2.26]{Lawson3}.

\begin{lemma}\label{lem:former} Let $\theta \colon G \rightarrow H$ be a covering functor
between groupoids. Suppose that the product $ab$ is defined in $H$ and that $\theta (x) = ab$.
Then there the exist $u,v \in G$ such that $x = uv$ and $\theta (u) = a$ and $\theta (v) = b$.
\end{lemma}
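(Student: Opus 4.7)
The plan is to produce the factor $v$ using star-surjectivity of $\theta$, and then to define $u$ algebraically so that the equality $x = uv$ is automatic. The point is that a covering functor behaves well with respect to stars and, because we are in a groupoid, any factorization in $H$ can be pulled back along any chosen preimage of one of the factors.

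First, I would unwind the identity data. Since $ab$ is defined in $H$ we have $\mathbf{d}(a) = \mathbf{r}(b)$, and since $\theta$ is a functor,
$$\theta(\mathbf{d}(x)) \;=\; \mathbf{d}(\theta(x)) \;=\; \mathbf{d}(ab) \;=\; \mathbf{d}(b).$$
Now consider the star $\mathrm{St}_{\mathbf{d}(x)}$ in $G$. By the hypothesis that $\theta$ is a covering functor, the induced map
$$\theta_{\mathbf{d}(x)} \colon \mathrm{St}_{\mathbf{d}(x)} \longrightarrow \mathrm{St}_{\theta(\mathbf{d}(x))} = \mathrm{St}_{\mathbf{d}(b)}$$
is a bijection. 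Since $b \in \mathrm{St}_{\mathbf{d}(b)}$, there exists a (unique) element $v \in \mathrm{St}_{\mathbf{d}(x)}$ such that $\theta(v) = b$; in particular $\mathbf{d}(v) = \mathbf{d}(x)$.

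Now define $u = xv^{-1}$. This product is defined in $G$ because $\mathbf{d}(x) = \mathbf{d}(v) = \mathbf{r}(v^{-1})$. Then
$$uv \;=\; (xv^{-1})v \;=\; x\, \mathbf{d}(v) \;=\; x\, \mathbf{d}(x) \;=\; x,$$
and, using that $\theta$ is a functor on groupoids (so it respects inverses),
$$\theta(u) \;=\; \theta(x)\theta(v)^{-1} \;=\; (ab)b^{-1} \;=\; a\, \mathbf{r}(b) \;=\; a\, \mathbf{d}(a) \;=\; a.$$
This yields the required factorization.

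There is no real obstacle here; the only thing that requires any care is making sure the identities line up so that $\mathrm{St}_{\mathbf{d}(x)}$ is genuinely the right star from which to lift $b$, and checking the composability conditions for $xv^{-1}$ and for $uv$. Everything else is forced by the groupoid identities and the fact that a covering functor is star-surjective — star-injectivity is not even needed for existence, though it guarantees uniqueness of $v$ (and hence of the factorization once one of the factors has been pinned down).
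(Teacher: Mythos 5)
Your proof is correct and takes the expected route: the paper gives no proof of this lemma itself, deferring to \cite[Lemma~2.26]{Lawson3}, and the argument there is essentially yours --- use star-surjectivity at $\mathbf{d}(x)$ to lift $b$ to an element $v$ with $\mathbf{d}(v)=\mathbf{d}(x)$ and then set $u=xv^{-1}$, with the groupoid identities forcing $x=uv$ and $\theta(u)=a$. Your remark that only star-surjectivity is needed for existence (star-injectivity giving uniqueness of the factorization) is also accurate.
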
  

The key definition needed to relate groupoids and inverse semigroups in our non-commutative generalization of Stone duality is the following.
A subset $A \subseteq G$ is called a {\em local bisection} if $A^{-1}A,AA^{-1} \subseteq G_{o}$.

\begin{lemma}\label{lem:water} 
A subset $A \subseteq G$ is a local bisection if and only if $a,b \in A$ and $\mathbf{d}(a) = \mathbf{d}(b)$ implies that $a = b$,
and $\mathbf{r}(a) = \mathbf{r}(b)$ implies that $a = b$.
\end{lemma}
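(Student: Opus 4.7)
The plan is to prove both directions by carefully unpacking the defining conditions $A^{-1}A \subseteq G_{o}$ and $AA^{-1} \subseteq G_{o}$ using the groupoid identities $\mathbf{d}(g) = g^{-1}g$, $\mathbf{r}(g) = gg^{-1}$, together with the composability rule $\exists gh \iff \mathbf{d}(g) = \mathbf{r}(h)$ already established in the excerpt.

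For the forward direction, suppose $A$ is a local bisection and $a,b \in A$ satisfy $\mathbf{d}(a) = \mathbf{d}(b)$. I would first note that since $\mathbf{r}(b^{-1}) = b^{-1}b = \mathbf{d}(b) = \mathbf{d}(a)$, the product $ab^{-1}$ is defined, hence belongs to $AA^{-1} \subseteq G_{o}$ and is thus an identity $e$. Since any identity satisfies $\mathbf{d}(e) = \mathbf{r}(e) = e$, comparing with $\mathbf{d}(ab^{-1}) = bb^{-1}$ and $\mathbf{r}(ab^{-1}) = aa^{-1}$ forces $e = aa^{-1} = bb^{-1} = \mathbf{r}(a) = \mathbf{r}(b)$. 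Now I would exploit associativity: $(ab^{-1})b = eb = b$ since $\mathbf{d}(e) = \mathbf{r}(b)$, while $a(b^{-1}b) = a\,\mathbf{d}(a) = a$. Hence $a = b$. The other implication, $\mathbf{r}(a) = \mathbf{r}(b) \Rightarrow a = b$, is entirely symmetric: one observes that $a^{-1}b$ is defined because $\mathbf{d}(a^{-1}) = aa^{-1} = bb^{-1} = \mathbf{r}(b)$, lies in $A^{-1}A \subseteq G_{o}$, and then runs the same identity-plus-associativity argument to conclude $a = b$.

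For the converse, suppose the stated condition holds on $A$. Take any element $a^{-1}b$ of $A^{-1}A$, meaning the product is defined; then $\mathbf{d}(a^{-1}) = \mathbf{r}(b)$ translates to $aa^{-1} = bb^{-1}$, i.e.\ $\mathbf{r}(a) = \mathbf{r}(b)$, so by hypothesis $a = b$ and therefore $a^{-1}b = a^{-1}a = \mathbf{d}(a) \in G_{o}$. An identical argument applied to an arbitrary $ab^{-1} \in AA^{-1}$ uses $\mathbf{d}(a) = \mathbf{r}(b^{-1}) = \mathbf{d}(b)$ together with the second hypothesis to conclude $ab^{-1} = aa^{-1} \in G_{o}$. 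Thus $A$ is a local bisection.

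The only potential pitfall is bookkeeping: one must remember that membership in $A^{-1}A$ or $AA^{-1}$ implicitly requires the relevant composition to be defined in the groupoid (per the definition of subset multiplication), and translate this requirement correctly into an equation between ranges and domains. Once that is done cleanly, the proof reduces to a couple of applications of associativity, so I do not expect any substantive obstacle.
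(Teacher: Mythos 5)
Your proof is correct and follows essentially the same route as the paper: for the forward direction you form $ab^{-1}$ (resp.\ $a^{-1}b$), observe it is an identity by the definition of local bisection, and cancel to get $a=b$; for the converse you note that definedness of $a^{-1}b$ forces $\mathbf{r}(a)=\mathbf{r}(b)$ and apply the hypothesis. You merely spell out the associativity computation that the paper leaves implicit.
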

\begin{proof} Suppose that $A$ is a local bisection.
Let $a,b \in A$ such that $\mathbf{d}(a) = \mathbf{d}(b)$.
Then the product $ab^{-1}$ exists and, by assumption, is an identity.
It follows that $a = b$.
A similar argument shows that if $a,b \in A$ are such that $\mathbf{r}(a) = \mathbf{r}(b)$
then $a = b$.
We now prove the converse.
We prove that $A^{-1}A \subseteq G_{o}$.
Let $a,b \in A$ and suppose that $a^{-1}b$ is exists.
Then $\mathbf{r}(a) = \mathbf{r}(b)$.
By assumption, $a = b$ and so $a^{-1}b$ is an identity.
The proof that $AA^{-1} \subseteq G_{o}$ is similar.
\end{proof}

What follows is based on \cite{Resende, Sims}.

Just as we can study topological groups, so we can study topological groupoids.
A {\em topological groupoid} is a groupoid $G$ equipped with a topology, and $G_{o}$ is equipped with the subspace topology, 
such that the maps $\mathbf{d}, \mathbf{r}, \mathbf{m}, \mathbf{i}$ are all continuous functions
where $\mathbf{d}, \mathbf{r} \colon G \rightarrow G$ and $\mathbf{m} \colon G \ast G \rightarrow G$.
Clearly, it is just enough to require that $\mathbf{m}$ and $\mathbf{i}$ are continuous.

A topological groupoid is said to be {\em open} if the map $\mathbf{d}$ is an open map;
it is said to be {\em \'etale} if the map $\mathbf{d}$ is a local homeomorphism.

\begin{remark}{\em It is worth observing (see \cite[page~12]{Sims}) that
the definition of \'etale is based on the function $\mathbf{d} \colon G \rightarrow G$.}
\end{remark}





In this paper, we shall focus on \'etale topological groupoids.
The obvious question is why should \'etale groupoids be regarded as a nice class of topological groupoids?
The following result due to Pedro Resende provides us with one reason.
For the following see \cite[Exercises I.1.8]{Resende}.

\begin{lemma}\label{lem:rally} Let $G$ be a topological groupoid.
Then $G$ is \'etale if and only if $\Omega (G)$, the set of all open subsets of $G$, is a monoid under subset multiplication with $G_{o}$ as the identity.
\end{lemma}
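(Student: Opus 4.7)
The plan is to prove the two implications separately; the substantive content lies in $(\Leftarrow)$, while $(\Rightarrow)$ reduces to standard verification. Under subset multiplication $G_{o}U = U = UG_{o}$ holds automatically for any $U \subseteq G$ (since $eu$ is defined only when $e = \mathbf{r}(u)$, yielding $eu = u$), and associativity is inherited from composition in $G$. So the monoid condition collapses to two requirements: $G_{o} \in \Omega(G)$, and $\Omega(G)$ is closed under subset multiplication.

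For $(\Rightarrow)$, assume $G$ is \'etale. For each $e \in G_{o}$, pick an open $U \ni e$ on which $\mathbf{d}|_{U}$ is a homeomorphism onto an open subset of $G$; that image lies in $G_{o}$ and contains $\mathbf{d}(e) = e$, so $G_{o}$ is open. To show $UV$ is open for open $U, V$, I would pick $g = uv \in UV$, shrink $U$ and $V$ to open bisections $A \ni u$ and $B \ni v$ (\'etale groupoids have a neighborhood basis of open bisections), and verify that $AB$ is open via the standard argument using $(\mathbf{d}|_{B})^{-1}$ to continuously recover the factors of a product. Then $g \in AB \subseteq UV$ with $AB$ open, so $UV$ is open.

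For $(\Leftarrow)$, fix $g \in G$; I will show $\mathbf{d}$ is a local homeomorphism at $g$. Continuity of $\mathbf{m}$ at $(g, g^{-1})$, whose image $\mathbf{r}(g)$ lies in the open set $G_{o}$, yields open $A_{1} \ni g$ and $B_{1} \ni g^{-1}$ with $\mathbf{m}((A_{1} \times B_{1}) \cap (G \ast G)) \subseteq G_{o}$; applied at $(g^{-1}, g)$ it similarly produces open $A_{2} \ni g^{-1}$ and $B_{2} \ni g$ with the analogous inclusion. Since $\mathbf{i}$ is continuous and self-inverse, hence a homeomorphism, the set $A := A_{1} \cap B_{2} \cap B_{1}^{-1} \cap A_{2}^{-1}$ is an open neighborhood of $g$ satisfying both $AA^{-1} \subseteq G_{o}$ and $A^{-1}A \subseteq G_{o}$. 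Whenever $a, b \in A$ have $\mathbf{d}(a) = \mathbf{d}(b)$, the product $ab^{-1}$ is defined and lies in $G_{o}$, forcing $\mathbf{r}(a) = \mathbf{r}(b)$ and $ab^{-1} = \mathbf{r}(a)$, whence $a = ab^{-1}b = \mathbf{r}(a)b = b$; thus $\mathbf{d}|_{A}$ is injective, and symmetrically so is $\mathbf{r}|_{A}$. These injectivities give $\mathbf{d}(A) = A^{-1}A$, since any $a^{-1}b \in A^{-1}A$ requires $\mathbf{r}(a) = \mathbf{r}(b)$, hence $a = b$ and $a^{-1}b = \mathbf{d}(a) \in \mathbf{d}(A)$. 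By hypothesis $A^{-1}A$ is open, and the same reasoning applied to any open $A' \subseteq A$ shows $\mathbf{d}(A') = (A')^{-1}A'$ is open; hence $\mathbf{d}|_{A}$ is a continuous open bijection onto $\mathbf{d}(A)$, i.e., a homeomorphism.

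The main obstacle is the backward direction: translating the global closure property of $\Omega(G)$ into a pointwise local-bisection structure at each $g$. The decisive step is the symmetric choice of $A$, which uses continuity of $\mathbf{m}$ at both $(g, g^{-1})$ and $(g^{-1}, g)$ simultaneously to produce a single neighborhood on which $\mathbf{d}$ and $\mathbf{r}$ are both injective; this symmetry is precisely what makes $\mathbf{d}(A)$ coincide with $A^{-1}A$ and so lets openness of $\mathbf{d}(A)$ be read off directly from the hypothesis $A^{-1}A \in \Omega(G)$.
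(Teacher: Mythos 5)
Your proof is correct. Note that the paper does not actually prove Lemma~\ref{lem:rally}: it is quoted from Resende's lecture notes, so there is no in-text argument to compare against; your write-up supplies the missing proof along the standard lines. The backward direction --- the substantive one --- is complete and sound: the symmetric choice $A = A_{1} \cap B_{2} \cap B_{1}^{-1} \cap A_{2}^{-1}$ correctly yields an open local bisection around $g$ using only continuity of $\mathbf{m}$ and $\mathbf{i}$ together with openness of $G_{o}$, the identification $\mathbf{d}(A) = A^{-1}A$ is exactly what lets the monoid hypothesis deliver openness of $\mathbf{d}|_{A}$, and passing to open $A' \subseteq A$ gives that $\mathbf{d}|_{A}$ is an open map, hence a homeomorphism onto the open set $\mathbf{d}(A)$. (Your injectivity argument for $\mathbf{d}|_{A}$ is in effect a re-derivation of Lemma~\ref{lem:water}, which you could simply cite.) The forward direction is only sketched: the claim that the product of two open local bisections is open is deferred to ``the standard argument,'' and if you want the proof to be self-contained you should spell it out --- e.g., show that $x \mapsto x\,(\mathbf{r}|_{B})^{-1}(\mathbf{d}(x))$ is a homeomorphism from the open set $\mathbf{d}^{-1}(\mathbf{r}(B))$ onto the open set $\mathbf{d}^{-1}(\mathbf{d}(B))$ carrying $A \cap \mathbf{d}^{-1}(\mathbf{r}(B))$ onto $AB$. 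This is a matter of exposition rather than a gap; the device you name, $(\mathbf{d}|_{B})^{-1}$, is the right one.
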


We may paraphrase the above theorem by saying that \'etale groupoids are those topological groupoids that have an algebraic alter ego.

We say that an \'etale topological groupoid is {\em Boolean} if its space of identities is a locally compact Boolean space.
\begin{quote}
{\em Our perspective is that Boolean groupoids are `non-commutative' generalizations of locally compact Boolean spaces.}
\end{quote}
Thinking of topological groupoids as non-commutative spaces in this way goes back to \cite{K, Renault}.

\section{From Boolean groupoids to Boolean inverse semigroups}

The goal of this section is to show how to construct Boolean inverse semigroups from Boolean groupoids.

\begin{lemma}\label{lem:tea} 
The set of all local bisections of a discrete groupoid forms a Boolean inverse monoid under subset multiplication.
\end{lemma}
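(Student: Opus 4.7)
The plan is to verify, in order, the axioms that make the set $\Lambda(G)$ of local bisections of a discrete groupoid $G$ into a Boolean inverse monoid under subset multiplication. First I would show closure: if $A, B \in \Lambda(G)$, then $AB \in \Lambda(G)$. Using the characterisation in Lemma~\ref{lem:water}, suppose $c_{1} = a_{1}b_{1}$ and $c_{2} = a_{2}b_{2}$ lie in $AB$ with $\dom(c_{1}) = \dom(c_{2})$. Then $\dom(b_{1}) = \dom(b_{2})$ forces $b_{1} = b_{2}$ by the local bisection property of $B$, hence $\ran(b_{1}) = \ran(b_{2})$, so $\dom(a_{1}) = \dom(a_{2})$, so $a_{1} = a_{2}$; a symmetric argument handles the $\ran$ case. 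Thus $AB$ is a local bisection. Associativity and the identity role of $G_{o}$ follow directly from the groupoid axioms (C1)--(C3), since for any $A \in \Lambda(G)$ one has $G_{o}A = A = AG_{o}$ upon recognising that $\ran(a)\cdot a = a = a \cdot \dom(a)$.

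Next I would establish the inverse semigroup structure. Setting $A^{-1} = \{a^{-1} : a \in A\}$, the equalities $AA^{-1}A = A$ and $A^{-1}AA^{-1} = A^{-1}$ drop out from the local bisection condition, since $aa^{-1}a = a$ and the restrictions $A^{-1}A, AA^{-1} \subseteq G_{o}$ ensure these products reduce pointwise. Uniqueness of the inverse, and commutativity of idempotents, follow from the observation that $E \in \Lambda(G)$ is idempotent (i.e.\ $EE = E$) exactly when $E \subseteq G_{o}$: if $E \subseteq G_{o}$, then $EE = E$ trivially; conversely, if $EE = E$, then each $e \in E$ equals $e_{1}e_{2}$ for some $e_{1},e_{2} \in E$, and the local bisection condition plus iteration forces $e \in G_{o}$. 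Hence $\mathsf{E}(\Lambda(G)) = \mathsf{P}(G_{o})$, which is a unital Boolean algebra with bottom $\varnothing$ and top $G_{o}$ under $\cup$ and $\cap$.

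I would then unpack compatibility and joins. For local bisections $A,B$, one has $A \sim B$ iff $A \cup B$ is again a local bisection (using Lemma~\ref{lem:water} and the description of $\sim$ via $\dom$/$\ran$): compatibility amounts to the statement that whenever $a \in A, b \in B$ share a domain or range, they are equal. In that case the natural partial order on the inverse semigroup coincides with set inclusion, so $A \vee B = A \cup B$ is the least compatible upper bound. Distributivity of subset multiplication over unions, $C(A \cup B) = CA \cup CB$ and $(A \cup B)C = AC \cup BC$, holds pointwise and is exactly the distributivity axiom for a distributive inverse monoid.

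The potential obstacle, and the part I would be most careful about, is the verification that idempotent $E \in \Lambda(G)$ forces $E \subseteq G_{o}$ and that joins are unions rather than something larger; these together are what identify $\mathsf{E}(\Lambda(G))$ with the Boolean algebra $\mathsf{P}(G_{o})$ (which is unital and hence a fortiori a generalized Boolean algebra). Once all these pieces are in place, $\Lambda(G)$ is a distributive inverse monoid whose idempotent semilattice is a Boolean algebra, which is precisely the definition of a Boolean inverse monoid.
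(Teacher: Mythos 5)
Your proposal is correct and follows essentially the same route as the paper: closure of local bisections under subset multiplication, $G_{o}$ as identity, regularity via $AA^{-1}A=A$, identification of the idempotents with the subsets of $G_{o}$ (hence commuting, and forming the power-set Boolean algebra), the natural partial order as inclusion, and compatible joins as unions with pointwise distributivity. The only cosmetic difference is that you verify closure element-wise via Lemma~\ref{lem:water} where the paper computes $(AB)^{-1}AB \subseteq B^{-1}B \subseteq G_{o}$ directly; the two are interchangeable.
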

\begin{proof} Let $A$ and $B$ be local bisections.
We prove that $AB$ is a local bisection.
We calculate $(AB)^{-1}AB$.
This is equal to $B^{-1}A^{-1}AB$.
Now $A^{-1}A$ is a set of identities.
Thus $B^{-1}A^{-1}AB \subseteq B^{-1}B$.
But $B^{-1}B$ is a set of identities.
It follows that $(AB)^{-1}AB$ is a set of identities.
By a similar argument we deduce that $AB(AB)^{-1}$ is a set of identities.
We have therefore proved that the product of two local bisections is a local bisection.
The proof of associativity is straightforward.
Since $G_{o}$ is a local bisection, we have proved that the set of local bisections is a monoid.
Observe that if $A$ is a local bisection, then  $A = AA^{-1}A$ and $A = A^{-1}AA^{-1}$.
Thus the semigroup is regular.
Suppose that $A^{2} = A$, where $A$ is a local bisection.
Then $a = bc$ where $b,c \in A$.
But $\mathbf{d}(a) = \mathbf{d}(c)$, and so $a = c$,
and $\mathbf{r}(a) = \mathbf{r}(b)$, and so $a = b$.
It follows that $a = a^{2}$.
But the only idempotents in groupoids are identities and so $a$ is an identity.
We have shown that if $A^{2} = A$ then $A \subseteq G_{o}$.
It is clear that if $A \subseteq G_{o}$ then $A^{2} = A$.
We have therefore proved that the idempotent local bisections are precisely the subsets of
the set of identities.
The product of any two such idempotents is simply their intersection
and so idempotents commute with each other.
It follows that our monoid is inverse.
It is easy to check that $A \leq B$ in this inverse semigroup precisely when
$A \subseteq B$.
Now, the idempotents are the subsets of the set of identities
and the natural partial order is subset inclusion.
It follows that the set of identities is a Boolean algebra,
since it is isomorphic to the Boolean algebra of all subsets of $G_{o}$.
Suppose that $A$ and $B$ are local bisections such that $A \sim B$.
Then it is easy to check that $A \cup B$ is a local bisection.
Clearly, subset multiplication distributes over such unions.
We have therefore proved that the monoid is a Boolean inverse monoid.
\end{proof}

A subset $A \subseteq G$ of a groupoid is called a {\em bisection} if
$$A^{-1}A, AA^{-1} = G_{o}.$$
The following is immediate by Proposition~\ref{lem:tea} and tells us that we may also construct groups from groupoids.

\begin{corollary} The set of bisections of a discrete groupoid is just the group of units of the inverse monoid
of all local bisections of the discrete groupoid.
\end{corollary}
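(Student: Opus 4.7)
The plan is to leverage the general fact that in any inverse monoid $M$ with identity $1$, an element $s$ is a unit if and only if $s^{-1}s = ss^{-1} = 1$, where $s^{-1}$ denotes the inverse-semigroup inverse. Once this is in hand, the corollary reduces to identifying the monoid identity and the inverse-semigroup inverse in the concrete inverse monoid of local bisections constructed in Lemma~\ref{lem:tea}.

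First I would recall from the proof of Lemma~\ref{lem:tea} that the monoid identity is $G_{o}$ (the set of identities of the groupoid, which is itself a local bisection), and that for a local bisection $A$, the inverse-semigroup inverse of $A$ is the set-theoretic inverse $A^{-1} = \{a^{-1} \colon a \in A\}$. Indeed, the proof of Lemma~\ref{lem:tea} verifies $A = AA^{-1}A$ and $A = A^{-1}AA^{-1}$, and since inverses in an inverse semigroup are unique, $A^{-1}$ must be the inverse of $A$ in the semigroup-theoretic sense.

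Next I would apply the general characterization of units. If $A$ is a unit of the monoid of local bisections, then there exists a local bisection $B$ with $AB = BA = G_{o}$; by uniqueness of inverses in an inverse monoid, $B = A^{-1}$, and hence $AA^{-1} = A^{-1}A = G_{o}$, which is precisely the condition defining a bisection. Conversely, if $A$ is a bisection, then $AA^{-1} = A^{-1}A = G_{o}$ by definition, so $A^{-1}$ witnesses that $A$ is a unit.

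There is essentially no obstacle here; the only mild subtlety is to be sure that the inverse-semigroup inverse of a local bisection really is the set-theoretic $A^{-1}$ rather than some other subset, but this is immediate once one invokes uniqueness of inverses together with the identities $A = AA^{-1}A$ and $A^{-1} = A^{-1}AA^{-1}$ already established. The corollary therefore follows as an essentially one-line consequence of Lemma~\ref{lem:tea} together with the definition of a bisection.
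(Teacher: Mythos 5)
Your argument is correct and is exactly the reasoning the paper intends: it states the corollary as "immediate" from Lemma~\ref{lem:tea} without further proof, and your identification of the monoid identity as $G_{o}$, of the semigroup inverse of $A$ as the set-theoretic $A^{-1}$, and of units as those $A$ with $A^{-1}A = AA^{-1} = G_{o}$ (i.e.\ the bisections) is precisely the intended one-line justification.
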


\noindent
{\bf Definition. } Let $G$ be a Boolean groupoid.
Denote by $\mathsf{KB}(G)$ the set of all compact-open local bisections of $G$.\\

\begin{proposition}\label{prop:bis-from-boolean-groupoids}
Let $G$ be a Boolean groupoid.
Then $\mathsf{KB}(G)$ is a Boolean inverse semigroup.
\end{proposition}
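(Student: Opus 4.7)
The strategy is to leverage Lemma~\ref{lem:tea}, which says that the set of all local bisections of a (discrete) groupoid forms a Boolean inverse monoid under subset multiplication, and then to verify that restriction to the subset $\mathsf{KB}(G)$ of compact-open local bisections preserves all the operations and produces a generalized Boolean algebra of idempotents rather than a unital one. All the algebraic content (associativity, the local-bisection identities, the fact that idempotent local bisections are precisely subsets of the identity space) is inherited for free; what requires checking is the interaction with the \'etale topology.

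First I would show closure under inversion and product. For inversion, $\mathbf{i}$ is a self-homeomorphism of $G$, so $A^{-1}$ is compact-open whenever $A$ is, and the local bisection axiom is manifestly symmetric. For the product $AB$, openness follows directly from Lemma~\ref{lem:rally}: in an \'etale groupoid, $\Omega(G)$ is a monoid under subset multiplication. The local-bisection axiom on $AB$ follows from the same algebraic computation as in Lemma~\ref{lem:tea}. For compactness of $AB$, I would write $AB = \mathbf{m}(A \ast B)$ with $A \ast B = (A \times B) \cap (G \ast G)$. Since the identity space $G_o$ is Hausdorff, the diagonal $\Delta \subseteq G_o \times G_o$ is closed, so $G \ast G$, being the preimage of $\Delta$ under the continuous map $(g,h) \mapsto (\mathbf{d}(g), \mathbf{r}(h))$, is closed in $G \times G$. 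Hence $A \ast B$ is a closed subset of the compact space $A \times B$ (part (1) of Lemma~\ref{lem:topology-needed}), therefore compact by part (3), and so $AB = \mathbf{m}(A \ast B)$ is compact by continuity of $\mathbf{m}$.

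The algebraic portion of Lemma~\ref{lem:tea} then transfers: $\mathsf{KB}(G)$ is an inverse semigroup whose idempotents are precisely the compact-open subsets of $G_o$. By Lemma~\ref{lem:compact-open}, $\mathsf{E}(\mathsf{KB}(G)) = \mathsf{B}(G_o)$ is a generalized Boolean algebra. For compatible joins, if $A \sim B$ in $\mathsf{KB}(G)$, then $A \cup B$ is again a local bisection (as in Lemma~\ref{lem:tea}); it is open as a union of two opens, and compact as a union of two compact sets. Subset multiplication distributes over set-theoretic union, giving distributivity of the inverse semigroup. Together these facts are precisely what it means for $\mathsf{KB}(G)$ to be a Boolean inverse semigroup.

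The main obstacle is the compactness of $AB$, since it rests on the non-obvious topological fact that $G \ast G$ is closed in $G \times G$. This is the one point where one cannot simply mimic the discrete argument of Lemma~\ref{lem:tea}, and it is worth emphasising that we do \emph{not} assume $G$ itself to be Hausdorff, so the separation argument must be run on $G_o$ rather than on $G$. Everything else (closure under inversion, under compatible joins, distributivity, and identification of the idempotent semilattice) is a routine transfer from the discrete case once that topological step is in place.
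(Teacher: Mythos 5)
Your proof is correct, and the algebraic transfer from Lemma~\ref{lem:tea} together with the identification $\mathsf{E}(\mathsf{KB}(G)) = \mathsf{B}(G_o)$ matches what the paper does. Where you genuinely diverge is on the one non-trivial point, the compactness of the product $AB$. You argue that $G \ast G$ is closed in $G \times G$ because it is the preimage of the diagonal of $G_o \times G_o$ under the continuous map $(g,h) \mapsto (\mathbf{d}(g), \mathbf{r}(h))$, and the diagonal is closed since $G_o$ is Hausdorff; then $A \ast B$ is a closed subset of the compact space $A \times B$, hence compact, and $AB = \mathbf{m}(A \ast B)$ is the continuous image of a compact set. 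The paper instead runs a finite-subcover argument: it covers $UV$ by basic open local bisections $A_i$, conjugates to obtain a cover $U^{-1}UVV^{-1} = U^{-1}U \cap VV^{-1} \subseteq \bigcup_i U^{-1}A_iV^{-1}$ of a compact-open subset of the identity space, extracts a finite subcover there, and translates back via $UV \subseteq \bigcup_{i=1}^{n} UU^{-1}A_iV^{-1}V \subseteq \bigcup_{i=1}^{n} A_i$. Both arguments ultimately rest on the Hausdorffness of $G_o$ (the paper needs it to know that $U^{-1}U \cap VV^{-1}$ is compact; you need it for the closedness of the diagonal), and you are right to flag that the separation argument must live on $G_o$ rather than on $G$. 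Your route is shorter and more conceptual, at the cost of invoking the closedness of $G \ast G$ and the standard fact that continuous images of compact sets are compact (which is not in the paper's Lemma~\ref{lem:topology-needed} but is of course elementary); the paper's route stays entirely inside the \'etale calculus of local bisections and never needs $G \ast G$ at all.
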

\begin{proof} Let $U$ and $V$ be two compact-open local bisections.
Since the groupoid $G$ is \'etale, the product $UV$ is open by Lemma~\ref{lem:rally}.
The product of local bisections is a local bisection by the proof of Lemma~\ref{lem:tea}.
It remains to show that $UV$ is compact.
Let $UV \subseteq \bigcup_{i \in I} A_{i}$ where the $A_{i}$ are open local bisections
--- since the open local bisections of an \'etale groupoids form a base for the topology.
Then $U^{-1}U \cap VV^{-1} = U^{-1}UVV^{-1} \subseteq  \bigcup_{i \in I} U^{-1}A_{i}V^{-1}$.
The sets $U^{-1}A_{i}V^{-1}$ are open local bisections.
By assumption $U^{-1}U \cap VV^{-1}$ is compact-open;
here we use the fact that the identity space of a Boolean groupoid is a locally compact Boolean space.
Thus  $U^{-1}U \cap VV^{-1} = U^{-1}UVV^{-1} \subseteq  \bigcup_{i = 1}^{n} U^{-1}A_{i}V^{-1}$,
relabelling if necessary.
It follows that 
$UV \subseteq  \bigcup_{i = 1}^{n} UU^{-1}A_{i}V^{-1}V \subseteq       \bigcup_{i = 1}^{n} A_{i}$
and so $UV$ is compact.
It follows that $\mathsf{KB}(G)$ is a semigroup.
The proof that it is a Boolean inverse semigroup now follows easily from what we have done above 
and Lemma~\ref{lem:tea}.
\end{proof}

\section{From Boolean inverse semigroups to Boolean groupoids}

The goal of this section is to show how to construct Boolean groupoids from Boolean inverse semigroups.

Let $S$ be an inverse semigroup.
A non-empty subset $A \subseteq S$ is said to be a (respectively, {\em proper}) {\em filter} 
(respectively, if $0 \notin A$),
if the set $A$ is downwardly directed and upwardly closed.
A maximal proper filter is called an {\em ultrafilter}.
Let $A$ be a proper filter in a distributive inverse semigroup $S$.
We say it is {\em prime} if $a \vee b \in A$ implies that $a \in A$ or $b \in A$.
The proof of the following is straightforward.

\begin{lemma}\label{lem:minuses} Let $S$ be a Boolean inverse semigroup.
Then $A$ is an ultrafilter if and only if  $A^{-1}$ is an ultrafilter.
\end{lemma}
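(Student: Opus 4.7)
The plan rests on a single structural observation: the inversion map $\iota \colon S \to S$, $s \mapsto s^{-1}$, is an order-isomorphism of $S$ onto itself with respect to the natural partial order. This is a standard fact about inverse semigroups: $a \leq b$ means $a = eb$ for some idempotent $e$, and taking inverses gives $a^{-1} = b^{-1}e$, which one checks equals $fb^{-1}$ for the idempotent $f = b^{-1}eb$, so $a^{-1} \leq b^{-1}$. Since $\iota$ is its own inverse, the converse holds too. Also $\iota(0) = 0$.

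Given this, the proof is essentially automatic. First I would show that $\iota$ carries filters to filters: if $A$ is downwardly directed and $a^{-1}, b^{-1} \in A^{-1}$, pick $c \in A$ with $c \leq a, b$; then $c^{-1} \in A^{-1}$ and $c^{-1} \leq a^{-1}, b^{-1}$ by the order-isomorphism property. Upward closure of $A^{-1}$ follows the same way: if $a^{-1} \leq x$ and $a \in A$, then $a \leq x^{-1}$ (applying $\iota$ to $a^{-1} \leq x$, noting $x = (x^{-1})^{-1}$), so $x^{-1} \in A$ and hence $x = (x^{-1})^{-1} \in A^{-1}$. Non-emptiness of $A^{-1}$ is trivial, and properness is preserved because $0 \in A^{-1}$ would force $0 = 0^{-1} \in A$.

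Finally, since $\iota$ is a bijection of $S$ with $\iota \circ \iota = \mathrm{id}_S$, and it sends proper filters to proper filters in both directions, it must send maximal proper filters to maximal proper filters: if $A^{-1} \subsetneq B$ for some proper filter $B$, then applying $\iota$ again yields $A \subsetneq B^{-1}$ with $B^{-1}$ a proper filter, contradicting maximality of $A$. This gives both implications simultaneously.

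There is really no obstacle here; the content is entirely in the order-isomorphism property of inversion, which is a basic fact from \cite{Lawson1998} and does not rely on the Boolean structure at all. The Boolean hypothesis plays no active role in the argument, so the same lemma would hold for any inverse semigroup with zero. The only thing to be careful about is stating the order-isomorphism claim cleanly before invoking it, so that the filter and ultrafilter preservation steps read as immediate consequences rather than separate calculations.
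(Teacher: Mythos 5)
Your proof is correct, and since the paper declares this lemma ``straightforward'' and omits the argument, your route via the observation that inversion is an order-isomorphism of $S$ fixing $0$ is exactly the intended one. Your closing remark is also right: the Boolean hypothesis is not used, and the statement holds in any inverse semigroup with zero.
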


A subset $A$ of an inverse semigroup $S$ is said to be a {\em coset} if $a,b,c \in A$ implies that $ab^{-1}c \in A$.
The following extends \cite[Lemma~2.6]{Lawson3}.

\begin{lemma}\label{lem:coset} 
Every filter is a coset.
\end{lemma}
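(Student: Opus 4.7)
The plan is to use the two defining properties of a filter—downward directedness and upward closure—together with the compatibility of the natural partial order with multiplication and inversion in an inverse semigroup.

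Given $a, b, c \in A$, I first invoke downward directedness to find $d \in A$ with $d \leq a$, $d \leq b$, and $d \leq c$ (applying the two-element version of directedness twice). The key computation is then to show $d \leq ab^{-1}c$, at which point upward closure of $A$ forces $ab^{-1}c \in A$.

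To establish $d \leq ab^{-1}c$, I would use two standard facts about the natural partial order on an inverse semigroup: first, $x \leq y$ implies $x^{-1} \leq y^{-1}$, so from $d \leq b$ we get $d^{-1} \leq b^{-1}$; second, the natural partial order is compatible with multiplication, so from $d \leq a$, $d^{-1} \leq b^{-1}$, $d \leq c$ we conclude $d \cdot d^{-1} \cdot d \leq a b^{-1} c$. Since $d d^{-1} d = d$, this gives $d \leq a b^{-1} c$, as required.

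There is no real obstacle here: the proof is essentially a two-line consequence of the elementary behaviour of the natural partial order. The only small care needed is to make sure the reader recalls that $d \mapsto d^{-1}$ is order-preserving and that multiplication is order-preserving in each argument—both standard facts from \cite{Lawson1998} that were implicitly flagged at the start of the paper when the author reminded us that ``order'' always means the natural partial order.
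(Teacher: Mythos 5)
Your proof is correct and is essentially identical to the paper's: both find $d \in A$ with $d \leq a,b,c$ by downward directedness and then observe $d = dd^{-1}d \leq ab^{-1}c$, concluding by upward closure. You merely spell out in more detail why $dd^{-1}d \leq ab^{-1}c$ (order-preservation of inversion and multiplication), which the paper leaves implicit.
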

\begin{proof} Let $A$ be a filter and let $a,b,c \in A$.
Let $d \in A$ where $d \leq a,b,c$.
Then $d = dd^{-1}d \leq ab^{-1}c$.
It follows that $ab^{-1}c \in A$.
\end{proof}

We define an {\em idempotent filter} to be a filter that contains an idempotent.
The following was proved for cosets as \cite[Proposition 1.5]{Lawson1996} and so follows by Lemma~\ref{lem:coset}.

\begin{lemma}\label{lem:idempotent-filter-is} 
A filter is idempotent if and only if it is an inverse subsemigroup.
\end{lemma}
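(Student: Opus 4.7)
My plan is to treat the two directions of the biconditional separately; the forward direction is essentially immediate from the definitions, while the reverse direction is where Lemma~\ref{lem:coset} (every filter is a coset) does the real work. For the forward direction, any inverse subsemigroup $A$ is non-empty (filters are non-empty by the standing convention), so pick any $a \in A$; then $\dom(a) = a^{-1}a \in A$ is an idempotent, so $A$ is an idempotent filter.

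For the reverse direction, fix an idempotent $e \in A$ and show that $A$ is closed under inverses and products. For inverses, given $a \in A$, downward directedness yields $d \in A$ with $d \leq a$ and $d \leq e$; since $d$ lies below an idempotent, $d$ is itself idempotent, so $d = d^{-1} \leq a^{-1}$ (inversion is order-preserving), and upward closure gives $a^{-1} \in A$. Running this argument once more shows that $\dom(a), \ran(a) \in A$ for every $a \in A$: any idempotent $d \leq a$ automatically satisfies $d \leq \dom(a)$ and $d \leq \ran(a)$ (using the characterisation $x \leq f = f^2$ iff $x = xf$), and upward closure puts both $\dom(a)$ and $\ran(a)$ into $A$.

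For products, given $a, b \in A$, I now have $\dom(a), \ran(b) \in A$, and downward directedness produces an idempotent $g \in A$ with $g \leq \dom(a)$ and $g \leq \ran(b)$. Since $g = g^{-1}$, the coset property applied to $a, g, b \in A$ gives $ag^{-1}b = agb \in A$. The bounds $g \leq \dom(a)$ and $g \leq \ran(b)$ force $ag \leq a \cdot \dom(a) = a$ and $gb \leq \ran(b) \cdot b = b$, so $agb \leq ab$ by monotonicity of multiplication; a final application of upward closure yields $ab \in A$.

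The main obstacle is the product-closure step: the trick is to interpose the idempotent $g$ sandwiched below both $\dom(a)$ and $\ran(b)$, use the coset property to land $agb$ inside $A$, and then use monotonicity plus upward closure to climb to $ab$. The preparatory observation that $\dom$ and $\ran$ values of elements of $A$ remain in $A$ is what makes this interposition possible, and the inverse-closure step is what makes the preparation work.
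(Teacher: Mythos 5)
Your proof is correct and rests on the same key ingredient as the paper's: the paper simply cites the coset version of this result from \cite{Lawson1996} and invokes Lemma~\ref{lem:coset}, whereas you have written out in full the argument that the coset property, together with downward directedness past the idempotent and upward closure, yields closure under inverses and products. The details all check out (in particular, the interposed idempotent $g \leq \dom(a), \ran(b)$ and the estimate $agb \leq ab$ are exactly what is needed), so this is the same approach with the citation unpacked.
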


We now relate idempotent filters in $S$ to filters in $\mathsf{E}(S)$.

\begin{lemma}\label{lem:idempotent-filter} Let $S$ be an inverse semigroup.
There is an order-isomorphism between the idempotent filters in $S$ and the filters in $\mathsf{E}(S)$,
in which proper filters correspond to proper filters.
\end{lemma}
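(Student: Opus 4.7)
The plan is to produce explicit inverse bijections between the two posets and then verify the stated properties. Define
$$\Phi \colon F \mapsto F \cap \mathsf{E}(S) \qquad \text{and} \qquad \Psi \colon G \mapsto G^{\uparrow}$$
where $F$ is an idempotent filter in $S$, $G$ is a filter in $\mathsf{E}(S)$, and $G^{\uparrow}$ denotes upward closure taken inside $S$. Both maps are manifestly inclusion-preserving, so once well-definedness and mutual inverseness are established, the order-isomorphism follows immediately.

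First I would check that $\Phi$ lands in filters of $\mathsf{E}(S)$. By Lemma~\ref{lem:idempotent-filter-is}, an idempotent filter $F$ is an inverse subsemigroup, so for any $s \in F$ we have $s^{-1}s \in F \cap \mathsf{E}(S)$; hence $\Phi(F)$ is non-empty. Downward-directedness in $\mathsf{E}(S)$ follows by taking products $ef$, which lie in $F$ (since $F$ is a subsemigroup) and are idempotents (since idempotents in $S$ commute). Upward-closure in $\mathsf{E}(S)$ is inherited from that in $S$. For $\Psi$, non-emptiness and upward-closure are trivial, downward-directedness in $S$ follows because if $e \le a$ and $f \le b$ with $e,f \in G$, then any $g \in G$ with $g \le e, f$ satisfies $g \le a,b$, and $\Psi(G)$ contains the idempotent witnessing that $G \ne \varnothing$.

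Next I would verify $\Psi\Phi = \mathrm{id}$ and $\Phi\Psi = \mathrm{id}$. The inclusion $\Psi(\Phi(F)) \subseteq F$ is immediate from upward-closure of $F$, and $\Phi(\Psi(G)) \supseteq G$ is trivial, with $\Phi(\Psi(G)) \subseteq G$ following from upward-closure of $G$ in $\mathsf{E}(S)$. The key step, and the only place that requires a small argument, is showing $F \subseteq \Psi(\Phi(F))$: given $s \in F$, I need an idempotent in $F$ lying below $s$. Pick any $e_{0} \in F \cap \mathsf{E}(S)$, which exists because $F$ is an idempotent filter; downward-directedness of $F$ supplies $t \in F$ with $t \le s$ and $t \le e_{0}$. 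The crucial observation is that any element below an idempotent is itself an idempotent: writing $t = f e_{0}$ for some idempotent $f$ via the natural partial order, $t$ is a product of two commuting idempotents and so is idempotent. Hence $t \in F \cap \mathsf{E}(S)$ witnesses $s \in \Psi(\Phi(F))$.

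Finally, properness is preserved in both directions because $0$ is an idempotent: $0 \in F$ iff $0 \in F \cap \mathsf{E}(S)$, and $0 \in G^{\uparrow}$ iff there exists $e \in G$ with $e \le 0$ iff $0 \in G$. I do not anticipate any genuine obstacle; the single point requiring care is the downward-directedness argument used to prove every element of an idempotent filter sits above an idempotent of that filter, and this hinges on the standard fact that elements beneath an idempotent in an inverse semigroup are themselves idempotents.
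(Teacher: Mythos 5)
Your proposal is correct and follows essentially the same route as the paper: the same pair of mutually inverse maps $F \mapsto F \cap \mathsf{E}(S)$ and $G \mapsto G^{\uparrow}$, with the key step in both being that any element of $S$ lying below an idempotent is itself an idempotent, so every element of an idempotent filter sits above an idempotent of that filter. The only cosmetic difference is that you verify downward-directedness of $F \cap \mathsf{E}(S)$ via closure of $F$ under products (using Lemma~\ref{lem:idempotent-filter-is}), whereas the paper finds a common lower bound in $F$ and pushes it up to $ef$; both are fine.
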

\begin{proof}  Let $A$ be an idempotent filter in $S$.
We prove first that $\mathsf{E}(A)$ is a filter in $\mathsf{E}(S)$. 
Let $e,f \in \mathsf{E}(A)$.
By assumption, $e,f \in A$.
Thus there is an element $i \in A$ such that $i \leq e,f$.
But the set of idempotents of an inverse semigroup is an order-ideal.
It follows that $i$ is an idempotent.
But $i \leq e,f$ and so $i \leq ef$.
However, $A$ is a filter and so since $i \in A$ we have that $ef \in A$ and so $ef \in \mathsf{E}(A)$.
Let $e \in \mathsf{E}(A)$ and let $e \leq f$ where $f$ is an idempotent.
Then $f \in A$ and so $f \in \mathsf{E}(A)$.
We have therefore proved that $\mathsf{E}(A)$ is a filter in $\mathsf{E}(S)$.
It is clear that if $A \subseteq B$ then $\mathsf{E}(A) \subseteq \mathsf{E}(B)$.
Now, let $F$ be a filter in $\mathsf{E}(S)$ is is easy to check that $F^{\uparrow}$ is 
an idempotent filter in $S$.
It is clear that if $F \subseteq G$ then $F^{\uparrow} \subseteq G^{\uparrow}$.
Let $A$ be an idempotent filter in $S$.
We prove that $A = \mathsf{E}(A)^{\uparrow}$.  
It is clear that $\mathsf{E}(A)^{\uparrow} \subseteq A$.
Let $a \in A$.
By assumption $e \in A$ for some idempotent $e$.
But $a,e \in A$.
There exists $f \leq a,e$ which has to be an idempotent since the set of idempotents of an inverse
semigroup is an order-ideal.
It follows, in particular, that $f \leq a$,
which proves the claim.
Let $F$ be a filter in $\mathsf{E}(S)$.
It is now routine to check that $F = \mathsf{E}(F^{\uparrow})$.
We have therefore proved our order-isomorphism and it is clear that 
proper filters map to proper filters.
\end{proof}

\begin{lemma}\label{lem:relating} Let $A$ be an idempotent filter in a distributive inverse semigroup $S$.
\begin{enumerate}
\item $A$ is a prime filter in $S$ if and only if $\mathsf{E}(A)$ is a prime filter in $\mathsf{E}(S)$. 
\item $A$ is an ultrafilter in $S$ if and only if $\mathsf{E}(A)$ is an ultrafilter in $\mathsf{E}(S)$. 
\end{enumerate}
\end{lemma}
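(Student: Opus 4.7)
The plan is to leverage the order-isomorphism between idempotent filters in $S$ and filters in $\mathsf{E}(S)$ established in Lemma~\ref{lem:idempotent-filter}, together with the distributivity of $S$ to handle non-idempotent elements that appear in the primeness condition.

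For (1), the forward direction ($A$ prime $\Rightarrow$ $\mathsf{E}(A)$ prime) is immediate: if $e, f \in \mathsf{E}(S)$ satisfy $e \vee f \in \mathsf{E}(A) \subseteq A$, then primeness of $A$ gives $e \in A$ or $f \in A$, and either way the element lies in $\mathsf{E}(A)$. The reverse direction is the main content. Suppose $\mathsf{E}(A)$ is prime and $a \vee b \in A$. Pick any idempotent $i \in \mathsf{E}(A)$, and use downward-directedness to find $c \in A$ with $c \leq i$ and $c \leq a \vee b$. Since idempotents form an order-ideal, $c$ is itself an idempotent. Because $c$ is idempotent with $c \leq a \vee b$, I have $c = (a \vee b)c$, and then distributivity yields
\[c = ac \vee bc.\]
Both $ac$ and $bc$ lie below $c$, hence are idempotents (idempotents form an order-ideal), and moreover $ac \leq a$ and $bc \leq b$ in the natural partial order (since $ac = a \cdot (a^{-1}ac)$ with $a^{-1}ac$ an idempotent below $\mathbf{d}(a)$, and similarly for $bc$). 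Primeness of $\mathsf{E}(A)$ applied to $c = ac \vee bc \in \mathsf{E}(A)$ forces $ac \in \mathsf{E}(A)$ or $bc \in \mathsf{E}(A)$, and upward closure of $A$ then gives $a \in A$ or $b \in A$.

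For (2), I would exploit the order-isomorphism of Lemma~\ref{lem:idempotent-filter} directly, which sends proper filters to proper filters. If $A$ is an ultrafilter in $S$ and $F$ were a proper filter in $\mathsf{E}(S)$ strictly above $\mathsf{E}(A)$, then $F^{\uparrow}$ would be a proper idempotent filter in $S$ strictly above $A = \mathsf{E}(A)^{\uparrow}$, contradicting maximality. Conversely, if $\mathsf{E}(A)$ is an ultrafilter in $\mathsf{E}(S)$ and $B$ is a proper filter in $S$ strictly containing $A$, then $B$ contains the idempotent already present in $A$, so $B$ is itself an idempotent filter. The order-isomorphism then gives $\mathsf{E}(A) \subseteq \mathsf{E}(B)$, and this inclusion must be strict (otherwise $A = \mathsf{E}(A)^{\uparrow} = \mathsf{E}(B)^{\uparrow} = B$), contradicting maximality of $\mathsf{E}(A)$.

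The only step I expect to require any thought is the backward direction of (1), specifically the decomposition $c = ac \vee bc$ into idempotents and the observation $ac \leq a$. Everything else is either a routine application of the order-isomorphism from Lemma~\ref{lem:idempotent-filter} or a one-line consequence of definitions.
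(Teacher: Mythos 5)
Your proof is correct. Part (2) and the forward direction of part (1) coincide with the paper's own argument, both resting on the order-isomorphism of Lemma~\ref{lem:idempotent-filter}. The backward direction of part (1) is where you diverge: the paper applies $\mathbf{d}$ to the join, using $\mathbf{d}(a \vee b) = \mathbf{d}(a) \vee \mathbf{d}(b)$ to land in $\mathsf{E}(A)$, and then recovers $a = (a \vee b)\mathbf{d}(a) \in A$ from the fact that an idempotent filter is an inverse subsemigroup (Lemma~\ref{lem:idempotent-filter-is}). You instead pick a small idempotent $c \in A$ below $a \vee b$ and distribute to get $c = ac \vee bc$, with $ac, bc$ idempotents satisfying $ac \leq a$ and $bc \leq b$, so that upward closure of $A$ finishes the job. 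Both routes use distributivity exactly once; yours trades the identity for $\mathbf{d}$ of a join against the closure of $A$ under multiplication, and has the mild advantage of only needing that $A$ is an upwardly closed, downwardly directed set containing an idempotent, rather than invoking the subsemigroup property. The small verifications you flag are all sound: $ac \leq (a \vee b)c = c$ forces $ac$ to be idempotent since the idempotents form an order-ideal, and $ac \leq a$ holds because $c$ is idempotent.
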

\begin{proof} (1) Suppose first that $A$ is a prime filter in $S$.
We saw in Lemma~\ref{lem:idempotent-filter}, that $\mathsf{E}(A)$ is a proper filter in $\mathsf{E}(S)$.
Suppose that $e \vee f \in \mathsf{E}(A)$.
Then $e \vee f \in A$.
It follows that $e \in A$ or $f \in A$.
It is now immediate that $\mathsf{E}(A)$ is a prime filter in $\mathsf{E}(S)$.
We now prove the converse.
Suppose that  $\mathsf{E}(A)$ is a prime filter in $\mathsf{E}(S)$.
We prove that $A$ is a prime filter in $S$.
Let $a \vee b \in A$.
Then $\mathbf{d} (a \vee b) = \mathbf{d}(a) \vee \mathbf{d}(b) \in \mathsf{E}(A)$.
It follows that $\mathbf{d}(a) \in \mathsf{E}(A)$ or $\mathbf{d}(b) \in \mathsf{E}(A)$.
Without loss of generality, we suppose the former.
So, $a \vee b \in A$ and $\mathbf{d}(a) \in A$.
But $A$ is an inverse subsemigroup and so $a = (a \vee b)\mathbf{d}(a) \in A$.
This proves that $A$ is a prime filter in $S$.

(2) Suppose that $A$ is an ultrafilter in $S$.
We prove that $\mathsf{E}(A)$ is an ultrafilter in $\mathsf{E}(S)$.
Suppose that $F$ is a proper filter in  $\mathsf{E}(S)$ such that $\mathsf{E}(A) \subseteq F$.
We claim that $F^{\uparrow}$ is a proper filter in $A$.
Let $a, b \in F^{\uparrow}$.
The  $e \leq a$ and $f \leq b$ where $e,f \in F$.
But then $ef \leq a,b$ and $ef \in F$.
The set $F^{\uparrow}$ is clearly upwardly closed and evidently proper.
But $A \subseteq F^{\uparrow}$ and $A$ is a maximal proper filter.
We deduce that $A = F^{\uparrow}$.
It is now immediate that $\mathsf{E}(A) = F$.
We have therefore proved that $\mathsf{E}(A)$ is an ultrafilter in $\mathsf{E}(S)$.
We now prove the converse.
Suppose that $\mathsf{E}(A)$ is an ultrafilter in $\mathsf{E}(S)$.
We prove that $A$ is an ultrafilter in $S$.
Suppose that $A \subseteq B$ where $B$ is a proper filter in $S$.
Since $A$ is an idempotent filter so too is $B$.
We therefore have that $A = \mathsf{E}(A)^{\uparrow} \subseteq \mathsf{E}(B)^{\uparrow} = B$.
But $\mathsf{E}(A) \subseteq \mathsf{E}(B)$ and, by assumption, $\mathsf{E}(A)$ is an ultrafilter in $\mathsf{E}(S)$.
It follows that $\mathsf{E}(A) = \mathsf{E}(B)$ from which we deduce that $A = B$. 
\end{proof}

\begin{lemma}\label{lem:d-for-a-filter} Let $A$ be a (respectively, proper) filter.
Then $(A^{-1}A)^{\uparrow}$ is a (respectively, proper) idempotent filter.
Likewise, $(AA^{-1})^{\uparrow}$ is a filter.
\end{lemma}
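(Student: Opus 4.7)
The plan is to check that $(A^{-1}A)^{\uparrow}$ is non-empty, downwardly directed, and contains an idempotent (and, in the proper case, avoids $0$); upward closure is automatic from the definition. Non-emptiness and the idempotent clause are immediate: since $A$ is non-empty, any $a \in A$ yields the idempotent $a^{-1}a \in A^{-1}A \subseteq (A^{-1}A)^{\uparrow}$, which simultaneously witnesses non-emptiness and the idempotent-filter condition.

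The heart of the matter is the following fact about arbitrary inverse semigroups: if $c \leq a$ and $c \leq b$, then $c^{-1}c \leq a^{-1}b$. This rests on the standard characterisations of the natural partial order (see \cite{Lawson1998}), namely $c \leq a$ iff $ca^{-1} = cc^{-1}$, and $c \leq b$ iff $c^{-1}b = c^{-1}c$. Substituting,
$$
(c^{-1}c)(a^{-1}b) \;=\; c^{-1}(ca^{-1})b \;=\; c^{-1}(cc^{-1})b \;=\; c^{-1}b \;=\; c^{-1}c,
$$
and since $c^{-1}c$ is an idempotent, the equation $(c^{-1}c)(a^{-1}b) = c^{-1}c$ is exactly the assertion that $c^{-1}c \leq a^{-1}b$. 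To prove downward directedness, take $x_{1}, x_{2} \in (A^{-1}A)^{\uparrow}$ dominating $a_{1}^{-1}a_{2}$ and $b_{1}^{-1}b_{2}$ respectively, with $a_{1},a_{2},b_{1},b_{2} \in A$; iterating downward directedness of the filter $A$ produces a single $c \in A$ with $c \leq a_{1},a_{2},b_{1},b_{2}$, and the key inequality then places $c^{-1}c \in A^{-1}A$ below both $a_{1}^{-1}a_{2}$ and $b_{1}^{-1}b_{2}$, hence below both $x_{1}$ and $x_{2}$.

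For the proper case, if $0 \notin A$ then every $c \in A$ is non-zero, hence $c^{-1}c \neq 0$, and the key inequality shows that each element of $A^{-1}A$ dominates some such non-zero idempotent and is therefore itself non-zero; consequently $(A^{-1}A)^{\uparrow}$ cannot contain $0$. The twin statement for $(AA^{-1})^{\uparrow}$ is entirely symmetric; alternatively, since inversion is an order-automorphism of $S$ that sends filters to filters, $A^{-1}$ is itself a (proper) filter and $(AA^{-1})^{\uparrow} = ((A^{-1})^{-1}A^{-1})^{\uparrow}$ falls under the case already handled. The only step requiring any real care is the inequality $c^{-1}c \leq a^{-1}b$; everything else is elementary manipulation of the filter axioms together with the fact that in an inverse semigroup $c \neq 0$ if and only if $c^{-1}c \neq 0$.
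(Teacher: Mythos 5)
Your proof is correct and follows essentially the same route as the paper's: both reduce downward directedness to finding a common lower bound $c \in A$ of the four generators and then observing that $c^{-1}c \in A^{-1}A$ lies below both given elements. The only difference is that you explicitly verify the inequality $c \leq a, b \Rightarrow c^{-1}c \leq a^{-1}b$, which the paper asserts without proof.
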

\begin{proof} We prove that $(A^{-1}A)^{\uparrow}$ is a filter.
Let $x,y \in (A^{-1}A)^{\uparrow}$.
Then $a^{-1}b \leq x$ and $c^{-1}d \leq y$ where $a,b,c,d \in A$.
Let $z \leq a,b,c,d$ where $z \in A$.
Then $z^{-1}z \leq x,y$.
It is clear that $(A^{-1}A)^{\uparrow}$ is upwardly closed.
It is also clear that if $A$ is a proper filter so too is $(A^{-1}A)^{\uparrow}$.
The fact that $(A^{-1}A)^{\uparrow}$ is an idempotent filter is immediate.
\end{proof}

Let $A$ be a (respectively, proper) filter.
Then $A^{-1}$ is a (respectively, proper) filter.
If $A$ is a (proper) filter define 
$\mathbf{d}(A) = (A^{-1}A)^{\uparrow}$ 
and
$\mathbf{r}(A) = (AA^{-1})^{\uparrow}$.
By Lemma~\ref{lem:d-for-a-filter}, these are both (proper) idempotent filters.
In fact, we have the following which is easy to prove using Lemma~\ref{lem:idempotent-filter-is}. 

\begin{lemma}\label{lem:identities-in-bis} Let $S$ be a Boolean inverse semigroup.
If $A$ is an idempotent ultrafilter then $\mathbf{d}(A) = A$ and $\mathbf{r}(A) = A$. 
\end{lemma}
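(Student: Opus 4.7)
The plan is to reduce the claim to the observation that an idempotent filter is closed under the semigroup operations (by Lemma~\ref{lem:idempotent-filter-is}, which we are given), and then verify the two inclusions $\mathbf{d}(A) \subseteq A$ and $A \subseteq \mathbf{d}(A)$ directly. The ultrafilter hypothesis is not actually needed; only that $A$ is an idempotent filter.

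For $\mathbf{d}(A) \subseteq A$, I would invoke Lemma~\ref{lem:idempotent-filter-is} to assert that $A$ is an inverse subsemigroup of $S$. This gives $A^{-1} \subseteq A$ and $A \cdot A \subseteq A$, so in particular $A^{-1}A \subseteq A$. Since $A$ is upwardly closed, it follows that $(A^{-1}A)^{\uparrow} \subseteq A^{\uparrow} = A$, that is, $\mathbf{d}(A) \subseteq A$.

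For the reverse inclusion $A \subseteq \mathbf{d}(A)$, I would show the stronger statement $A \subseteq A^{-1}A$. Given $a \in A$, the element $aa^{-1}$ lies in $A$ (since $A$ is an inverse subsemigroup containing $a$), and then
\[ a = (aa^{-1}) \cdot a = (aa^{-1})^{-1} \cdot a \in A^{-1}A, \]
using that the idempotent $aa^{-1}$ is self-inverse together with the basic identity $aa^{-1}a = a$. Hence $a \in (A^{-1}A)^{\uparrow} = \mathbf{d}(A)$. Combining the two inclusions yields $\mathbf{d}(A) = A$. The statement $\mathbf{r}(A) = A$ follows by the entirely symmetric argument, using $a^{-1}a \in A$ and $a = a \cdot (a^{-1}a) \in AA^{-1}$.

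There is no real obstacle here: the only nontrivial ingredient is Lemma~\ref{lem:idempotent-filter-is}, which has already been quoted, and everything else reduces to the standard identities $aa^{-1}a = a$ and $(aa^{-1})^{-1} = aa^{-1}$ in an inverse semigroup. The brief proof can therefore be written in essentially a few lines.
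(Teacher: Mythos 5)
Your proof is correct and follows exactly the route the paper intends: the paper omits the argument, saying only that the result ``is easy to prove using Lemma~\ref{lem:idempotent-filter-is}'', and your write-up is precisely the fleshed-out version of that, including the accurate observation that only the idempotent-filter hypothesis (not maximality) is used.
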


We can now explain why we have used the term `coset'.

\begin{lemma}\label{lem:cosets} Let $A$ be a filter in an inverse semigroup.
Then $A = (a \mathbf{d}(A))^{\uparrow}$ and $A = (\mathbf{r}(A)a)^{\uparrow}$,
where $a \in A$. 
\end{lemma}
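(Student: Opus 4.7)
The plan is to prove the two equalities by symmetric arguments; I will write out the first, $A = (a\mathbf{d}(A))^{\uparrow}$, and the second follows by passing to $A^{-1}$ and invoking Lemma~\ref{lem:minuses}-style dualities (or more directly, by a mirror argument using $\mathbf{r}$ instead of $\mathbf{d}$).

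For the inclusion $(a\mathbf{d}(A))^{\uparrow} \subseteq A$, I would take a typical generator $ae$ with $e \in \mathbf{d}(A) = (A^{-1}A)^{\uparrow}$. By definition of $\mathbf{d}(A)$, there exist $b,c \in A$ with $b^{-1}c \leq e$, hence $ab^{-1}c \leq ae$. Now Lemma~\ref{lem:coset} tells us that $A$ is a coset, so $ab^{-1}c \in A$; upward closure of $A$ then gives $ae \in A$, and a second application of upward closure finishes the inclusion.

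For the reverse inclusion $A \subseteq (a\mathbf{d}(A))^{\uparrow}$, I would take an arbitrary $x \in A$ and use downward directedness of $A$ to pick $y \in A$ with $y \leq a$ and $y \leq x$. The standard fact about the natural partial order in an inverse semigroup gives $y = a\,\mathbf{d}(y)$ (since $y \leq a$ forces $y = aa^{-1}y = a(y^{-1}y)$). But $\mathbf{d}(y) = y^{-1}y \in A^{-1}A \subseteq \mathbf{d}(A)$, so $y \in a\,\mathbf{d}(A)$, and since $y \leq x$ we conclude $x \in (a\mathbf{d}(A))^{\uparrow}$.

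I do not anticipate a real obstacle here; the only subtlety is recognizing that one must invoke the coset property (Lemma~\ref{lem:coset}) to handle the product $ab^{-1}c$ in the forward inclusion, and the identity $y = a\,\mathbf{d}(y)$ for the reverse inclusion. The second equality $A = (\mathbf{r}(A)a)^{\uparrow}$ is handled identically with the roles of $\mathbf{d}$ and $\mathbf{r}$ (equivalently, left and right multiplication) swapped, using $y = \mathbf{r}(y)a$ when $y \leq a$.
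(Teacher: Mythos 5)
Your proposal is correct and follows essentially the same route as the paper: the inclusion $A \subseteq (a\mathbf{d}(A))^{\uparrow}$ via downward directedness (picking $y \leq a,x$ and noting $y = a\,\mathbf{d}(y) \in a\mathbf{d}(A)$, which is the paper's $ab^{-1}b$), and the reverse inclusion via the coset property from Lemma~\ref{lem:coset}. Your write-up is if anything slightly more explicit than the paper's about unwinding the upward closure in $\mathbf{d}(A) = (A^{-1}A)^{\uparrow}$.
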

\begin{proof} We prove that $A = (a \mathbf{d}(A))^{\uparrow}$.
Let $x \in A$.
Then, since $a \in A$, there exists $b \in A$ such that $b \leq x,a$.
Observe that $ab^{-1}b \leq xx^{-1}x = x$ and $ab^{-1}b \in a\mathbf{d}(A)$.
Thus $A \subseteq (a \mathbf{d}(A))^{\uparrow}$.
To prove the reverse inclusion let $x \in (a \mathbf{d}(A))^{\uparrow}$.
Then $ab^{-1}c \leq x$ where $a,b,c \in A$.
But we proved above that $A$ was a coset and so $ab^{-1}c \in A$.
It follows that $x \in A$.
\end{proof}

\begin{lemma}\label{lem:d-is-extremal} Let $A$ be a filter in a distributive inverse semigroup.
\begin{enumerate}
\item $A$ is a prime filter if and only if $\mathbf{d}(A)$ is a prime filter, and dually.
\item $A$ is an ultrafilter if and only if $\mathbf{d}(A)$ is an ultrafilter, and dually.
 \end{enumerate}
\end{lemma}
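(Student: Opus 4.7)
The plan is to prove each direction of each equivalence directly, leveraging three tools: the coset description $A = (a\,\mathbf{d}(A))^{\uparrow}$ of Lemma~\ref{lem:cosets}, the identity $\mathbf{d}(a \vee b) = \mathbf{d}(a) \vee \mathbf{d}(b)$ from Lemma~\ref{lem:meets-joins}(1), and distributivity of multiplication over joins. The ``and dually'' clauses then follow automatically by applying the results to $A^{-1}$ via Lemma~\ref{lem:minuses}, since $\mathbf{r}(A) = \mathbf{d}(A^{-1})$.

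For part (1), to prove $\mathbf{d}(A)$ prime implies $A$ prime, I take $a \vee b \in A$; existence of this join forces $a \sim b$, so $ba^{-1} = (ab^{-1})^{-1}$ is idempotent and hence $b\,\mathbf{d}(a) = (ba^{-1})a \leq a$. Also $\mathbf{d}(a) \vee \mathbf{d}(b) = \mathbf{d}(a \vee b) \in \mathbf{d}(A)$, so primality gives (say) $\mathbf{d}(a) \in \mathbf{d}(A)$. Lemma~\ref{lem:cosets} with representative $a \vee b$ then places $(a \vee b)\,\mathbf{d}(a)$ in $A$, and distributivity reduces this to $a \vee b\,\mathbf{d}(a) = a$. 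Conversely, to prove $A$ prime implies $\mathbf{d}(A)$ prime, take $x \vee y \in \mathbf{d}(A)$, pick $a, b \in A$ with $a^{-1}b \leq x \vee y$, and then $c \in A$ with $c \leq a, b$. Writing $c = a\,\mathbf{d}(c)$ together with $c^{-1}b = c^{-1}c$ (valid for $c \leq b$) gives $c^{-1}c \leq a^{-1}b \leq x \vee y$, so by distributivity $c \leq c(x \vee y) = cx \vee cy$, placing $cx \vee cy$ in $A$. Primality of $A$ yields (say) $cx \in A$, hence $\mathbf{d}(c)\,x = c^{-1}(cx) \in \mathbf{d}(A)$, and $\mathbf{d}(c)\,x \leq x$ then places $x \in \mathbf{d}(A)$ by upward closure.

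For part (2), the easy direction assumes $\mathbf{d}(A)$ is an ultrafilter and $A \subseteq B$ with $B$ proper: Lemma~\ref{lem:d-for-a-filter} makes $\mathbf{d}(B)$ a proper filter containing $\mathbf{d}(A)$, so $\mathbf{d}(A) = \mathbf{d}(B)$. For $b \in B$, pick $a \in A$ and $c \in B$ with $c \leq a, b$; then $c = a\,\mathbf{d}(c)$ with $\mathbf{d}(c) \in \mathbf{d}(B) = \mathbf{d}(A)$, so $c \in a\,\mathbf{d}(A) \subseteq A$ by Lemma~\ref{lem:cosets}, and upward closure gives $b \in A$. In the reverse direction, assuming $A$ is an ultrafilter and $\mathbf{d}(A) \subseteq F$ with $F$ proper, I form $B := (AF)^{\uparrow}$, verify it is a proper filter containing $A$, conclude $B = A$ by maximality, and then read off $F \subseteq \mathbf{d}(A)$ from the observation that $af \in A$ for $a \in A$, $f \in F$ yields $\mathbf{d}(a)\,f = a^{-1}(af) \in \mathbf{d}(A)$ with $\mathbf{d}(a)\,f \leq f$.

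The main obstacle is showing that $B = (AF)^{\uparrow}$ is proper, which amounts to ruling out $af = 0$ for $a \in A$, $f \in F$, equivalently $\mathbf{d}(a)\,f = 0$. Since both $\mathbf{d}(a)$ and $f$ lie in the filter $F$, there exists an idempotent $g \in F$ with $g \leq \mathbf{d}(a), f$; the relations $fg = g$ (valid because $g \leq f$ and $g$ is idempotent) and $\mathbf{d}(a)\,g = g$ combine to give $(\mathbf{d}(a)\,f)\,g = \mathbf{d}(a)(fg) = \mathbf{d}(a)\,g = g$, so $g \leq \mathbf{d}(a)\,f$. Upward closure of $F$ then places $\mathbf{d}(a)\,f$ in $F$, and properness forces $\mathbf{d}(a)\,f \neq 0$, as required.
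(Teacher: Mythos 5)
Your proof is correct and follows essentially the same route as the paper's: both arguments rest on the coset representation $A = (a\,\mathbf{d}(A))^{\uparrow}$ of Lemma~\ref{lem:cosets}, the identity $\mathbf{d}(a \vee b) = \mathbf{d}(a) \vee \mathbf{d}(b)$, distributivity of multiplication over joins, and a maximality argument for part (2). The only difference is that you spell out two details the paper leaves implicit, namely that $(a \vee b)\,\mathbf{d}(a) = a$ and that the enlarged filter (your $(AF)^{\uparrow}$, the paper's $(aB)^{\uparrow}$) is proper.
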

\begin{proof} (1) Suppose that $A$ is a prime filter. We prove that $\mathbf{d}(A)$ is a prime filter.
Let $x \vee y \in \mathbf{d}(A)$.
Then $a^{-1}b \leq x \vee y$.
It follows that $aa^{-1}b \leq a(x \vee y)$.
But $aa^{-1}b \in A$ since $A$ is a coset.
Thus $ax \vee ay \in A$.
Without loss of generality, suppose that $ax \in A$.
Then $a^{-1}(ax)  \in A^{-1}A$ and so $x \in \mathbf{d}(A)$.
We have therefore proved that $\mathbf{d}(A)$ is a prime filter.
Suppose now that $\mathbf{d}(A)$ is a prime filter.
We prove that $A$ is a prime filter.
Let $x \vee y \in A$.
Then $\mathbf{d}(x \vee y) \in \mathbf{d}(A)$.
Without loss of generality, suppose that $\mathbf{d}(x) \in \mathbf{d}(A)$.
It follows that $a^{-1}b \leq \mathbf{d}(x)$ where $a,b \in A$.
Thus $(x \vee y)a^{-1}b \leq (x \vee y)\mathbf{d}(x) = x$ where $(x \vee y)a^{-1}b \in A$ since $A$ is a coset.
We have proved that $x \in A$.

(2) Suppose that $A$ is an ultrafilter.
We prove that $\mathbf{d}(A)$ is an ultrafilter.
Suppose that $\mathbf{d}(A) \subseteq B$ where $B$ is a proper filter in $S$.
Observe that $B$ is an idempotent filter.
Let $a \in A$.
Then $A = (a\mathbf{d}(A))^{\uparrow}$.
It follows that $A \subseteq (aB)^{\uparrow}$.
By assumption $A = (aB)^{\uparrow}$.
Thus $\mathbf{d}(A) = B$, as required.
Suppose now that  $\mathbf{d}(A)$ is an ultrafilter.
We prove that $A$ is an ultrafilter.
Suppose that $A \subseteq B$ where $B$ is a proper filter.
Then $\mathbf{d}(A) \subseteq \mathbf{d}(B)$.
By assumption $\mathbf{d}(A) = \mathbf{d}(B)$
from which it follows that $A = B$.
\end{proof}

The following result is important.
In particular, prime filters are easier to work with than ultrafilters.

\begin{lemma}\label{lem:prime-equals-uf} In a Boolean inverse semigroup,
prime filters are the same as ultrafilters.
\end{lemma}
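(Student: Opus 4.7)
The plan is to reduce the statement to the corresponding fact for generalized Boolean algebras, which is already recorded (Lemma~\ref{lem:ll}, Remark~\ref{rem:prime-equal-uf}). The reduction proceeds along the two translations already set up in this section:
\[
A \;\longleftrightarrow\; \mathbf{d}(A) \;\longleftrightarrow\; \mathsf{E}(\mathbf{d}(A)),
\]
where the first is the passage from a filter in $S$ to its associated idempotent filter, and the second is the order-isomorphism between idempotent filters in $S$ and filters in $\mathsf{E}(S)$.

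First I would record that, because $S$ is Boolean, $\mathsf{E}(S)$ is a generalized Boolean algebra, and so Lemma~\ref{lem:ll} applies: prime filters and ultrafilters in $\mathsf{E}(S)$ coincide. Then I would chain the two translations. Starting with a proper filter $A$ in $S$, Lemma~\ref{lem:d-is-extremal}(1) says that $A$ is prime if and only if $\mathbf{d}(A)$ is prime, and Lemma~\ref{lem:d-is-extremal}(2) says that $A$ is an ultrafilter if and only if $\mathbf{d}(A)$ is an ultrafilter. Next, since $\mathbf{d}(A)$ is an idempotent filter, Lemma~\ref{lem:relating} gives: $\mathbf{d}(A)$ is prime in $S$ iff $\mathsf{E}(\mathbf{d}(A))$ is prime in $\mathsf{E}(S)$; and $\mathbf{d}(A)$ is an ultrafilter in $S$ iff $\mathsf{E}(\mathbf{d}(A))$ is an ultrafilter in $\mathsf{E}(S)$.

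Splicing these equivalences together, both ``$A$ is prime'' and ``$A$ is an ultrafilter'' are equivalent to the single condition ``$\mathsf{E}(\mathbf{d}(A))$ is a prime filter = ultrafilter in the generalized Boolean algebra $\mathsf{E}(S)$'', and so they are equivalent to each other. Since there is essentially no new computational content—every nontrivial step has already been proved in the lemmas just above—there is no real obstacle; the proof is a short diagram chase. The only point that needs a line of care is to note that $\mathsf{E}(S)$ really is a generalized Boolean algebra (so that Lemma~\ref{lem:ll} is available), which is immediate from the definition of a Boolean inverse semigroup.
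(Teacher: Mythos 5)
Your proof is correct and is essentially identical to the paper's: both reduce the statement to Lemma~\ref{lem:ll} for the generalized Boolean algebra $\mathsf{E}(S)$ via the chain of equivalences in Lemma~\ref{lem:d-is-extremal} and Lemma~\ref{lem:relating}. You simply spell out the backward direction that the paper compresses into ``we can now work our way backwards.''
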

\begin{proof} The proper filter $A$ is prime if and only if $\mathbf{d}(A)$ is prime by Lemma~\ref{lem:d-is-extremal}.
By Lemma~\ref{lem:relating}, the proper filter $\mathbf{d}(A)$ is prime if and only if $\mathsf{E}(\mathbf{d}(A))$ is a prime 
filter in the generalized Boolean algebra $\mathsf{E}(S)$.
But in a generalized Boolean algebra, byLemma~\ref{lem:ll}, prime filters are the same as ultrafilters.
We can now work our way backwards to etablish the claim.
\end{proof}

\noindent
{\bf Definition. }Let $A$ and $B$ be ultrafilters.
Define $A \cdot B$ precisely when $\mathbf{d}(A) = \mathbf{r}(B)$.
In which case, $A \cdot B = (AB)^{\uparrow}$.
Observe that $\mathbf{d}(A^{-1}) = \mathbf{r}(A)$.
It follows that $A^{-1} \cdot A = \mathbf{d}(A)$
and $A \cdot A^{-1} = \mathbf{r}(A)$.\\

\begin{lemma}\label{lem:product} Let $S$ be a Boolean inverse semigroup. 
\begin{enumerate}
\item If $A$ is an ultrafilter then both $\mathbf{d}(A)$ and $\mathbf{r}(A)$ are ultrafilters.
\item If $A$ and $B$ are ultrafilters of $S$ such that $A \cdot B$ is defined, 
then $A \cdot B$ is an ultrafilter of $S$ such that 
$\mathbf{d}(A \cdot B) = \mathbf{d}(B)$
and
$\mathbf{r}(A \cdot B) = \mathbf{r}(A)$.
\item If $A$ is an ultrafilter in $S$ then $A \cdot \mathbf{d}(A)$ is defined
and $A \cdot \mathbf{d}(A) = A$.
Similarly, $\mathbf{r}(A) \cdot A = A$.
\item $(A \cdot B) \cdot C = A \cdot (B \cdot C)$
when the product are defined.
\end{enumerate}
\end{lemma}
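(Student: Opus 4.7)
My plan is to dispatch parts (1), (3), and (4) from material already developed, and to concentrate the technical effort on part (2). Part (1) is immediate from Lemma~\ref{lem:d-is-extremal}(2), which characterises $A$ as an ultrafilter in terms of $\mathbf{d}(A)$, together with Lemma~\ref{lem:minuses} to handle $\mathbf{r}(A) = \mathbf{d}(A^{-1})$. Part (3) unpacks Lemma~\ref{lem:cosets}: for any $a \in A$, the identity $A = (a\mathbf{d}(A))^{\uparrow}$ is precisely $A = A \cdot \mathbf{d}(A)$ (the product is defined because $\mathbf{d}(A)$ is its own $\mathbf{r}$ by Lemma~\ref{lem:identities-in-bis}), and dually $\mathbf{r}(A) \cdot A = A$. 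Part (4) reduces, once part (2) is in hand, to the observation that both $(A \cdot B) \cdot C$ and $A \cdot (B \cdot C)$ are defined under the common hypothesis $\mathbf{d}(A) = \mathbf{r}(B)$ and $\mathbf{d}(B) = \mathbf{r}(C)$ (using $\mathbf{d}(A\cdot B) = \mathbf{d}(B)$ and $\mathbf{r}(B\cdot C) = \mathbf{r}(B)$ from part (2)); the equality itself follows from associativity of subset multiplication in $S$ together with $(X^{\uparrow}Y^{\uparrow})^{\uparrow} = (XY)^{\uparrow}$, both sides evaluating to $(ABC)^{\uparrow}$.

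For part (2), the pivotal device is a refinement construction. Given $a \in A$ and $b \in B$, put $e = \mathbf{d}(a) \wedge \mathbf{r}(b)$. Both meetands lie in the proper idempotent filter $\mathbf{d}(A) = \mathbf{r}(B)$, which is closed under meets as a filter in the generalised Boolean algebra $\mathsf{E}(S)$, so $e \neq 0$. Lemma~\ref{lem:cosets} then yields $ae \in a\mathbf{d}(A) \subseteq A$ and $eb \in \mathbf{r}(B)b \subseteq B$, and since $\mathbf{d}(ae) = e = \mathbf{r}(eb)$ the product $aeb$ is non-zero (else $eb = 0$, contradicting properness of $B$). With this tool in hand, $(AB)^{\uparrow}$ is a proper filter: non-emptiness and properness are immediate, upward closure is by definition, and downward directedness follows by applying the refinement to common lower bounds $a'' \leq a_1, a_2$ in $A$ and $b'' \leq b_1, b_2$ in $B$, the refined product lying below both $a_1b_1$ and $a_2b_2$ by monotonicity of multiplication under the natural partial order.

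To finish part (2), I will show $\mathbf{d}(B) \subseteq \mathbf{d}(A \cdot B)$; maximality of the ultrafilter $\mathbf{d}(B)$ from part (1), together with properness of $\mathbf{d}(A \cdot B)$ supplied by Lemma~\ref{lem:d-for-a-filter}, then forces equality, after which Lemma~\ref{lem:d-is-extremal}(2) upgrades $A \cdot B$ to an ultrafilter. Given $f \in \mathbf{d}(B)$ with $f \geq b_1^{-1}b_2$, pick $b \leq b_1, b_2$ in $B$; writing $b = b_i\mathbf{d}(b)$ for $i = 1,2$ gives $(b_1^{-1}b_2)\mathbf{d}(b) = b_1^{-1}b = \mathbf{d}(b)$, so $\mathbf{d}(b) \leq b_1^{-1}b_2 \leq f$. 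Applying the refinement to $b$ produces $ab' \in AB$ with $\mathbf{d}(ab') = \mathbf{d}(b') \leq \mathbf{d}(b) \leq f$, whence $f \in \mathbf{d}(A \cdot B)$. The symmetric identity $\mathbf{r}(A \cdot B) = \mathbf{r}(A)$ follows by inverting and using Lemma~\ref{lem:minuses}. The main obstacle is precisely this refinement construction and its use to realise arbitrary elements of $\mathbf{d}(B)$ as upper bounds of $\mathbf{d}$-values of products in $A \cdot B$; everything else reduces to fairly direct citations.
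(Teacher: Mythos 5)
Your proposal is correct and follows essentially the same route as the paper: parts (1), (3) and (4) are dispatched by the same citations, and part (2) proceeds, as in the paper, by showing $(AB)^{\uparrow}$ is a proper filter, identifying $\mathbf{d}(A\cdot B)$ with $\mathbf{d}(B)$, and invoking Lemma~\ref{lem:d-is-extremal}. The only difference is that you spell out the steps the paper leaves as routine: your refinement element $e=\mathbf{d}(a)\wedge\mathbf{r}(b)$ is a cleaner packaging of the paper's non-vanishing argument, and your one-inclusion-plus-maximality computation of $\mathbf{d}(A\cdot B)=\mathbf{d}(B)$ correctly fills in that gap.
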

\begin{proof} 
(1) This follows by Lemma~\ref{lem:d-is-extremal}.

(2) We prove first that if $A$ and $B$ are proper filters then $A \cdot B$ is a proper filter.
Let $x, y \in A \cdot B$.
Then $ab \leq x$ and $cd \leq y$ where $a \in A$, $b \in B$, $c \in A$ and $d \in B$.
Let $u \leq a,c$ and $u \in A$, and let $v \leq b,d$ and $v \in B$.
Then $uv \leq ab \leq x$ and $uv \leq cd \leq y$.
Observe that $uv \in AB$ and so we have proved that $(AB)^{\uparrow}$ is downwardly directed.
It is clearly upwardly directed.
Suppose that $0 \in (AB)^{\uparrow}$.
Then $ab = 0$ for some $a \in A$ and $b \in B$.
But $(A^{-1}A)^{\uparrow} = (BB^{-1})^{\uparrow}$.
It follows that $a^{-1}a \in (BB^{-1})^{\uparrow}$.
Thus $bb^{-1} \leq a^{-1}a$ where $b \in B$.
It follows that $0 = ab =a(aa^{-1})b \geq  bb^{-1}b = b$.
It follows that $b = 0$ which contradicts the assumption that $B$ is a proper filter.
It is routine to prove that $\mathbf{d}(A \cdot B) = \mathbf{d}(B)$.
The result now follows by  Lemma~\ref{lem:d-is-extremal}.

(3) This is straightforward on the basis of Lemma~\ref{lem:coset}.

(4) This follows by (2) above and Lemma~\ref{lem:cosets}.
\end{proof}

\noindent
{\bf Definition. }
Let $S$ be a Boolean inverse semigroup. 
Denote by $\mathsf{G}(S)$ the set of prime filters on $S$.\\

Using the identification between prime filters and ultrafilters proved in
Lemma~\ref{lem:prime-equals-uf} together with Lemma~\ref{lem:product}, Lemma~\ref{lem:minuses} and Lemma~\ref{lem:identities-in-bis},
we have proved the following.

\begin{lemma}\label{lem:groupoid-bis} Let $S$ be a Boolean inverse semigroup.
Then $\mathsf{G}(S)$ is a groupoid with respect to the partially defined operation $\cdot$,
where the identities of this groupoid are precisely the idempotent ultrafilters.
\end{lemma}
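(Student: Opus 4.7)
The plan is to verify the three groupoid axioms (C1)--(C3) directly for $(\mathsf{G}(S), \cdot)$ by assembling the pieces already established, and then to characterize the identities. First I would use Lemma~\ref{lem:prime-equals-uf} to identify $\mathsf{G}(S)$ with the set of ultrafilters, so that every result phrased for ultrafilters in Lemmas~\ref{lem:minuses}, \ref{lem:identities-in-bis}, \ref{lem:d-is-extremal}, and \ref{lem:product} applies.

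For closure and well-definedness, Lemma~\ref{lem:product}(2) gives that $A \cdot B$ is an ultrafilter whenever $\mathbf{d}(A) = \mathbf{r}(B)$, together with the formulas $\mathbf{d}(A \cdot B) = \mathbf{d}(B)$ and $\mathbf{r}(A \cdot B) = \mathbf{r}(A)$. Axiom (C1) (associativity) is exactly Lemma~\ref{lem:product}(4); axiom (C2) (``$\exists abc$ iff $\exists ab$ and $\exists bc$'') follows from the $\mathbf{d}$, $\mathbf{r}$ formulas in (2) combined with the definition of the partial product, since $\mathbf{d}(A \cdot B) = \mathbf{d}(B)$ makes the compatibility condition for $(A \cdot B) \cdot C$ equivalent to that for $B \cdot C$, and symmetrically on the other side. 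For axiom (C3), given an ultrafilter $A$ I would take $\mathbf{d}(A)$ and $\mathbf{r}(A)$ as the candidate identities: Lemma~\ref{lem:product}(1) ensures they lie in $\mathsf{G}(S)$, and Lemma~\ref{lem:product}(3) gives $A \cdot \mathbf{d}(A) = A = \mathbf{r}(A) \cdot A$. The identity property for a general element meeting $\mathbf{d}(A)$ (i.e. $B \cdot \mathbf{d}(A) = B$ whenever defined) would follow by applying (3) to $B$ and using $\mathbf{d}(B) = \mathbf{d}(A)$, which forces $B \cdot \mathbf{d}(A) = B \cdot \mathbf{d}(B) = B$. Uniqueness of $\mathbf{d}(A)$ and $\mathbf{r}(A)$ is automatic from the same computation.

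To see that $\mathsf{G}(S)$ is actually a groupoid (and not just a category), I would invoke Lemma~\ref{lem:minuses}: $A^{-1}$ is an ultrafilter whenever $A$ is, and the definition gives $A^{-1} \cdot A = \mathbf{d}(A)$ and $A \cdot A^{-1} = \mathbf{r}(A)$, which are identities by the previous step.

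Finally, to identify the identities I would argue in two directions. If $E$ is an identity of $\mathsf{G}(S)$, then $E = \mathbf{d}(E)$, and since $\mathbf{d}(E) = (E^{-1}E)^{\uparrow}$ is an idempotent filter by Lemma~\ref{lem:d-for-a-filter}, $E$ itself contains an idempotent. Conversely, if $E$ is an idempotent ultrafilter, then Lemma~\ref{lem:identities-in-bis} gives $\mathbf{d}(E) = E = \mathbf{r}(E)$, so $E \cdot E = E$ and $E$ plays the role of the identity at itself. The main obstacle, though a mild one, is this last identification: one must be careful that the groupoid-theoretic identities really coincide with the algebraic property ``contains an idempotent'', but this is handled cleanly by Lemmas~\ref{lem:d-for-a-filter} and \ref{lem:identities-in-bis}. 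Everything else is bookkeeping that chains the cited lemmas together.
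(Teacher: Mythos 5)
Your proposal is correct and follows essentially the same route as the paper, which simply cites Lemma~\ref{lem:prime-equals-uf}, Lemma~\ref{lem:product}, Lemma~\ref{lem:minuses} and Lemma~\ref{lem:identities-in-bis} and leaves the assembly of the axioms (C1)--(C3) to the reader. You have merely written out the bookkeeping that the paper omits, and your identification of the identities with the idempotent ultrafilters via Lemmas~\ref{lem:d-for-a-filter} and \ref{lem:identities-in-bis} is exactly the intended argument.
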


Let $S$ be a Boolean inverse semigroup.
Let $a \in S$.
Denote by $\mathscr{U}_{a}$ the set of all primefilters in $S$ that contain $a$.
Observe that if $a = 0$ then $\mathscr{U}_{0} = \varnothing$.

\begin{lemma}\label{lem:primefilters-form-a-base}
Let $S$ be a Boolean inverse semigroup.
\begin{enumerate}
\item Each non-zero element of $S$ is contained in a prime filter.
Thus $\mathscr{U}_{a} \neq \varnothing$ if and only if $a \neq 0$.
\item $\left( \mathscr{U}_{a} \right)^{-1} = \mathscr{U}_{a^{-1}}$.
\item Let $A \in \mathscr{U}_{a} \cap \mathscr{U}_{b}$.
Then there exists $c \leq a,b$ such that $A \in \mathscr{U}_{c} \subseteq \mathscr{U}_{a} \cap \mathscr{U}_{b}$.
\item If $a \sim b$ then $\mathscr{U}_{a \vee b} = \mathscr{U}_{a} \cup \mathscr{U}_{b}$.
\end{enumerate}
\end{lemma}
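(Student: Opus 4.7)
The plan is to prove the four items in order, with (1) being the one with real content and (2)--(4) being short direct arguments from the machinery already developed. The key is to exploit the correspondence, established in Lemma~\ref{lem:idempotent-filter}, Lemma~\ref{lem:relating}, and Lemma~\ref{lem:d-is-extremal}, between ultrafilters of $S$ and ultrafilters of the generalized Boolean algebra $\mathsf{E}(S)$, together with Lemma~\ref{lem:prime-equals-uf} to turn ultrafilters into prime filters.

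For (1), given a non-zero $a \in S$, I would start from the non-zero idempotent $\mathbf{d}(a)$. By Corollary~\ref{cor:latvia} applied in $\mathsf{E}(S)$, there is an ultrafilter $F \subseteq \mathsf{E}(S)$ containing $\mathbf{d}(a)$. Lifting through Lemma~\ref{lem:idempotent-filter} and Lemma~\ref{lem:relating}(2) gives an idempotent ultrafilter $E := F^{\uparrow}$ of $S$. Motivated by Lemma~\ref{lem:cosets}, I define
$$A := (aE)^{\uparrow} = \{ x \in S \colon ae \leq x \text{ for some } e \in E \}.$$
I would verify in turn that $A$ is downwardly directed (given $x \geq ae$ and $y \geq af$ with $e,f \in E$, the element $aef$ lies in $aE$ and satisfies $aef \leq ae \leq x$ and $aef = afe \leq af \leq y$), that $A$ is upwardly closed by construction, and that $A$ is proper (if $ae = 0$ then $\mathbf{d}(a) \wedge e = 0 \in E$, contradicting properness of $E$). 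Since $a = a\mathbf{d}(a) \in aE$, we have $a \in A$. To promote $A$ to an ultrafilter, I would compute $\mathbf{d}(A)$: the element $(ae)^{-1}a = e \mathbf{d}(a)$ lies in $A^{-1}A$, showing $e \in \mathbf{d}(A)$ by upward closure, hence $E \subseteq \mathbf{d}(A)$; since $E$ is already an ultrafilter and $\mathbf{d}(A)$ is a proper idempotent filter by Lemma~\ref{lem:d-for-a-filter}, they coincide. Lemma~\ref{lem:d-is-extremal}(2) then shows $A$ is an ultrafilter in $S$, and Lemma~\ref{lem:prime-equals-uf} upgrades this to a prime filter. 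Thus $A \in \mathscr{U}_{a}$, so $\mathscr{U}_{a} \neq \varnothing$ iff $a \neq 0$.

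For (2), I would simply observe that $a \in A$ iff $a^{-1} \in A^{-1}$, and combine this with Lemma~\ref{lem:minuses} to see that inversion sends prime filters containing $a$ bijectively onto prime filters containing $a^{-1}$. For (3), any $A \in \mathscr{U}_{a} \cap \mathscr{U}_{b}$ contains both $a$ and $b$ and, being a filter, is downwardly directed, so there is $c \in A$ with $c \leq a, b$; then $A \in \mathscr{U}_{c}$, and upward closure forces every filter in $\mathscr{U}_{c}$ to lie in $\mathscr{U}_{a} \cap \mathscr{U}_{b}$. For (4), the inclusion $\mathscr{U}_{a} \cup \mathscr{U}_{b} \subseteq \mathscr{U}_{a \vee b}$ is immediate from upward closure since $a, b \leq a \vee b$, and the reverse uses precisely that our prime filters \emph{are} prime: if $a \vee b \in A$ then $a \in A$ or $b \in A$.

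The main obstacle is (1): there is no direct Boolean Prime Ideal Theorem available on the non-commutative side, so the construction of an ultrafilter containing the prescribed element $a$ must be routed through the Boolean algebra $\mathsf{E}(S)$, using the fact (Lemma~\ref{lem:cosets}) that a filter is entirely determined by $\mathbf{d}(A)$ together with any one of its elements. The only non-routine verification is that $(aE)^{\uparrow}$ is a proper filter whose $\mathbf{d}$ recovers $E$; once this is in hand, Lemmas \ref{lem:d-is-extremal} and \ref{lem:prime-equals-uf} do the rest.
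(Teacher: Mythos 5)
Your proposal is correct and follows essentially the same route as the paper: part (1) is proved by lifting an ultrafilter of $\mathsf{E}(S)$ containing $\mathbf{d}(a)$ to an idempotent (prime) filter $E$ of $S$ and forming $(aE)^{\uparrow}$, while (2)--(4) are the same short arguments from downward directedness, upward closure and primeness. The only difference is that you spell out the verifications (properness, $\mathbf{d}(A)=E$, the appeal to Lemma~\ref{lem:d-is-extremal} and Lemma~\ref{lem:prime-equals-uf}) that the paper leaves implicit.
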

\begin{proof}
(1) Let $a \neq 0$.
Then $\mathbf{d}(a) \neq 0$.
Let $F$ be any prime filter in $\mathsf{E}(S)$ that contains $\mathbf{d}(a)$.
Then $F^{\uparrow}$ is an idempotent prime filter in $S$.
Thus $A = (aF^{\uparrow})^{\uparrow}$ is a prime filter in $S$ that contains $a$.

(2) Straightforward.

(3) Let $A \in \mathscr{U}_{a} \cap \mathscr{U}_{b}$.  
It follows that $a,b \in A$.
But $A$ is a filter and so there is $c \in A$ such that $c \leq a,b$.
It follows that $A \in \mathscr{U}_{c}$ and that $\mathscr{U}_{c} \subseteq \mathscr{U}_{a} \cap \mathscr{U}_{b}$.

(4) We suppose that $a \sim b$ and so $a \vee b $ exists.
The inclusion $\mathscr{U}_{a} \cup \mathscr{U}_{b} \subseteq \mathscr{U}_{a \vee b}$ is immediate.
The reverse inclusion follows from the fact that ultrafilters are the same as prime filters in
a Boolean inverse semigroup by Lemma~\ref{lem:prime-equals-uf} 
\end{proof}

By Lemma~\ref{lem:primefilters-form-a-base}, the collection of all sets $\mathscr{U}_{a}$, where $a \in S$,
forms a base for a topology on $\mathsf{G}(S)$.
Denote by $\mathsf{F}(S)$ the set of all idempotent prime filters.
This can be topologized by giving it the subspace topology.
Thus the sets of the form $\mathscr{U}_{a} \cap \mathsf{F}(S)$ form a base for the topology on $\mathsf{F}(S)$.
Observe that 
$$\mathscr{U}_{a} \cap \mathsf{F}(S) = \bigcup_{f \leq a, f^{2} = f} \mathscr{U}_{f}.$$
It follows by the above observation and Lemma~\ref{lem:idempotent-filter-is}
that the collection of sets $\mathscr{U}_{e}$, where $e$ is an idempotent, forms a base for the subspace
topology on $\mathsf{F}(S)$.
By Lemma~\ref{lem:idempotent-filter} and Lemma~\ref{lem:prime-equals-uf},
we see that there is a bijection between the set $\mathsf{F}(S)$ and the set of prime filters of $\mathsf{E}(S)$.
If $e \in \mathsf{E}(S)$ then we denote the set of ultrafilters of $\mathsf{E}(S)$
that contain $e$ by $\mathscr{V}_{e}$.
The bijection above restricts to a bijection between $\mathscr{U}_{e}$ and $\mathscr{V}_{e}$
for each idempotent $e$.
We have therefore proved the following.

\begin{lemma}\label{lem:topology-idempotent} Let $S$ be a Boolean inverse semigroup.
Then the topological space of idempotent prime filters is homeomorphic to the Stone space of $\mathsf{E}(S)$.
\end{lemma}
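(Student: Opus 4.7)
The plan is to exhibit an explicit bijection between the two spaces that sends basic opens to basic opens; since a bijection carrying one base onto another is automatically a homeomorphism, this will suffice.

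First I would write down the candidate map. Define $\phi \colon \mathsf{F}(S) \rightarrow \mathsf{X}(\mathsf{E}(S))$ by $A \mapsto \mathsf{E}(A) = A \cap \mathsf{E}(S)$. By Lemma~\ref{lem:idempotent-filter}, the assignment $A \mapsto \mathsf{E}(A)$ is an order-isomorphism between idempotent proper filters in $S$ and proper filters in $\mathsf{E}(S)$, with inverse $F \mapsto F^{\uparrow}$. By Lemma~\ref{lem:prime-equals-uf} together with Lemma~\ref{lem:relating}(2), this restricts to a bijection between idempotent prime filters in $S$ and ultrafilters in $\mathsf{E}(S)$; and by Lemma~\ref{lem:ll}(2) the latter coincide with the prime filters of $\mathsf{E}(S)$, i.e.\ with the points of the Stone space $\mathsf{X}(\mathsf{E}(S))$. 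Hence $\phi$ is a well-defined bijection.

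Next I would match the bases. As noted immediately before the lemma, the sets $\mathscr{U}_{e}$ for $e \in \mathsf{E}(S)$ form a base for the topology on $\mathsf{F}(S)$ (observe that any $A \in \mathscr{U}_{e}$ with $e$ idempotent is automatically an idempotent filter, so $\mathscr{U}_{e} \subseteq \mathsf{F}(S)$). The sets $\mathscr{V}_{e}$ for $e \in \mathsf{E}(S)$ form a base for the Stone space $\mathsf{X}(\mathsf{E}(S))$ by the discussion in Section~2.4. The key computation is then $\phi(\mathscr{U}_{e}) = \mathscr{V}_{e}$: if $A$ is an idempotent prime filter of $S$ containing the idempotent $e$, then $e \in \mathsf{E}(A) = \phi(A)$, so $\phi(A) \in \mathscr{V}_{e}$; conversely, if $F \in \mathscr{V}_{e}$ then $e \in F$ so $e \in F^{\uparrow}$, giving $\phi^{-1}(F) = F^{\uparrow} \in \mathscr{U}_{e}$.

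Since $\phi$ is a bijection that carries a base of the topology on $\mathsf{F}(S)$ bijectively onto a base of the topology on $\mathsf{X}(\mathsf{E}(S))$, both $\phi$ and $\phi^{-1}$ are continuous, so $\phi$ is a homeomorphism. I do not anticipate a serious obstacle here: essentially all the content has been packaged into the preceding lemmas (the filter correspondence, the identification of prime filters with ultrafilters in both settings, and the description of the subspace base on $\mathsf{F}(S)$), and it only remains to verify that the natural map respects basic opens, which is a one-line check on each side.
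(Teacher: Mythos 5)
Your proposal is correct and follows essentially the same route as the paper: the paper also establishes that the sets $\mathscr{U}_{e}$ (for $e$ idempotent) form a base for the subspace topology on $\mathsf{F}(S)$, invokes Lemma~\ref{lem:idempotent-filter} and Lemma~\ref{lem:prime-equals-uf} to get the bijection $A \mapsto \mathsf{E}(A)$ with the prime filters of $\mathsf{E}(S)$, and observes that this bijection carries $\mathscr{U}_{e}$ onto $\mathscr{V}_{e}$. Your version merely makes the map and the base-matching computation explicit, and adds the (harmless, indeed clarifying) citations of Lemma~\ref{lem:relating} and Lemma~\ref{lem:ll} to justify that ultrafilters correspond to ultrafilters.
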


\begin{lemma}\label{lem:local-bisection} Let $A$ and $B$ be filters such that $A \cap B \neq \varnothing$
and $\mathbf{d}(A) = \mathbf{d}(B)$ (respectively, $\mathbf{r}(A) = \mathbf{r}(B)$). Then $A = B$.
\end{lemma}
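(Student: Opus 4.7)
The plan is to reduce the statement to a direct application of the coset representation of filters proved in Lemma~\ref{lem:cosets}. That lemma says that any filter $A$ in an inverse semigroup is determined by any one of its elements $a$ together with its idempotent filter $\mathbf{d}(A)$ via $A = (a \mathbf{d}(A))^{\uparrow}$, and dually $A = (\mathbf{r}(A) a)^{\uparrow}$. This is tailor-made for the situation here, since we are handed both a common element (from $A \cap B \neq \varnothing$) and a common value of $\mathbf{d}$ (or $\mathbf{r}$).

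For the case $\mathbf{d}(A) = \mathbf{d}(B)$, I would pick any $c \in A \cap B$ and apply Lemma~\ref{lem:cosets} to each filter at this same element: $A = (c \mathbf{d}(A))^{\uparrow}$ and $B = (c \mathbf{d}(B))^{\uparrow}$. Since $\mathbf{d}(A) = \mathbf{d}(B)$ by hypothesis, the two right-hand sides are literally the same set, so $A = B$. The dual case $\mathbf{r}(A) = \mathbf{r}(B)$ is handled either directly by the second formula $A = (\mathbf{r}(A) c)^{\uparrow}$, or by passing to inverses: $A^{-1}$ and $B^{-1}$ are filters (they are downwardly directed and upwardly closed because $A$ and $B$ are), $c^{-1} \in A^{-1} \cap B^{-1}$, and $\mathbf{d}(A^{-1}) = \mathbf{r}(A) = \mathbf{r}(B) = \mathbf{d}(B^{-1})$, so the first case gives $A^{-1} = B^{-1}$ and hence $A = B$.

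There is essentially no obstacle here. The only point that merits a moment of care is making sure the hypothesis of Lemma~\ref{lem:cosets} is met in our ambient setting, but that lemma is stated for filters in an arbitrary inverse semigroup, and the element $c$ we feed it genuinely lies in both $A$ and $B$. So the argument is really just a one-line unpacking of the coset description of filters.
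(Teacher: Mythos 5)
Your proof is correct and is essentially identical to the paper's: the paper also takes $a \in A \cap B$, sets $C = \mathbf{d}(A)$, and concludes $A = (aC)^{\uparrow} = B$ via Lemma~\ref{lem:cosets}, with the $\mathbf{r}$ case handled dually. No issues.
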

\begin{proof} Let $a \in A \cap B$.
Put $\mathbf{d}(A) = C$.
Then $A = (aC)^{\uparrow} = B$.
The proof of the other case is similar.
\end{proof}

\begin{proposition}\label{prop:from-bis-to-boolean-groupoid} Let $S$ be a Boolean inverse semigroup.
Then $\mathsf{G}(S)$ is a Boolean groupoid.
\end{proposition}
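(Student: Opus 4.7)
The plan is to verify the four ingredients of being a Boolean groupoid: (i) $\mathsf{G}(S)$ is a groupoid, (ii) it carries a topology for which it is a topological groupoid, (iii) the source map $\mathbf{d}$ is a local homeomorphism, and (iv) the space of identities $\mathsf{F}(S)$ is a locally compact Boolean space. Ingredient (i) is already given by Lemma~\ref{lem:groupoid-bis}, and (iv) will drop out immediately from Lemma~\ref{lem:topology-idempotent} together with Lemma~\ref{lem:lc-Boolean-space}, since $\mathsf{E}(S)$ is a generalized Boolean algebra.

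For (ii), I would first use Lemma~\ref{lem:primefilters-form-a-base} to declare the sets $\mathscr{U}_a$ ($a \in S$) a base for the topology on $\mathsf{G}(S)$ (the intersection condition is part (3) of that lemma). Continuity of inversion is then immediate from $(\mathscr{U}_a)^{-1}=\mathscr{U}_{a^{-1}}$. For continuity of multiplication, suppose $A\cdot B\in\mathscr{U}_c$; then some $ab\leq c$ lies in $A\cdot B$ with $a\in A$, $b\in B$, so $A\in\mathscr{U}_a$, $B\in\mathscr{U}_b$, and one checks that $(\mathscr{U}_a\times\mathscr{U}_b)\cap(\mathsf{G}(S)\ast\mathsf{G}(S))$ maps into $\mathscr{U}_c$ by using that $A'\cdot B'$ contains $ab\leq c$ and is upwardly closed.

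For (iii), the key claim is that $\mathbf{d}$ restricts to a homeomorphism from each basic open $\mathscr{U}_a$ onto $\mathscr{U}_{\mathbf{d}(a)}\cap\mathsf{F}(S)$. I would verify each property separately: (a) $\mathbf{d}$ sends $\mathscr{U}_a$ \emph{into} $\mathscr{U}_{\mathbf{d}(a)}\cap\mathsf{F}(S)$ because $a\in A$ forces $\mathbf{d}(a)\in\mathbf{d}(A)$; (b) \emph{injectivity} on $\mathscr{U}_a$ follows from Lemma~\ref{lem:local-bisection}, since $A, A'\in\mathscr{U}_a$ share the element $a$ and have equal domains; (c) \emph{surjectivity} onto $\mathscr{U}_{\mathbf{d}(a)}\cap\mathsf{F}(S)$: given an idempotent prime filter $E$ with $\mathbf{d}(a)\in E$, the set $A=(aE)^{\uparrow}$ is a filter containing $a$, and Lemma~\ref{lem:d-is-extremal} together with $\mathbf{d}(A)=E$ shows $A$ is a prime filter with $\mathbf{d}(A)=E$; (d) the image equals $\mathscr{U}_{\mathbf{d}(a)}\cap\mathsf{F}(S)$, which is open in $\mathsf{F}(S)$, so $\mathbf{d}|_{\mathscr{U}_a}$ is an open map. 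Continuity of $\mathbf{d}$ as a global map follows from the same calculation, since the preimage of $\mathscr{U}_e \cap \mathsf{F}(S)$ ($e$ idempotent) is the union of the $\mathscr{U}_a$ with $\mathbf{d}(a)\leq e$, via Lemma~\ref{lem:cosets}.

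The main obstacle I anticipate is verifying surjectivity in step (c)—specifically checking that $A=(aE)^{\uparrow}$ really is a (prime) filter with $\mathbf{d}(A)=E$, which requires a careful interplay between the coset property (Lemma~\ref{lem:cosets}), Lemma~\ref{lem:d-is-extremal}, and the identification of prime filters with ultrafilters (Lemma~\ref{lem:prime-equals-uf}). Once that is in hand, together with the identification in Lemma~\ref{lem:topology-idempotent} of $\mathsf{F}(S)$ as the Stone space of $\mathsf{E}(S)$, the étaleness of $\mathsf{G}(S)$ and the local-compact-Boolean structure on $\mathsf{F}(S)$ assemble into the statement that $\mathsf{G}(S)$ is a Boolean groupoid.
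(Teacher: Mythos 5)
Your proposal is correct and follows essentially the same route as the paper: continuity of inversion and multiplication via the base of sets $\mathscr{U}_{a}$ and the preimage formula for $\mathbf{m}$, \'etaleness via the restriction of $\mathbf{d}$ to a homeomorphism $\mathscr{U}_{a} \rightarrow \mathscr{U}_{\mathbf{d}(a)}$, and the Boolean property via the identification of the identity space with the Stone space of $\mathsf{E}(S)$. The only difference is that you spell out the injectivity/surjectivity/openness checks that the paper delegates to Steps 3 and 4 of the proof of \cite[Proposition~2.22]{Lawson3}, and those details are verified correctly.
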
 
\begin{proof} First we show that $\mathsf{G}(S)$ is a topological groupoid.
By part (2) of Lemma~\ref{lem:primefilters-form-a-base}, the inversion map is continuous.
We observe that
$$\mathbf{m}^{-1}(\mathscr{U}_{s}) = \left( \bigcup_{0 \neq ab \leq s} \mathscr{U}_{a} \times \mathscr{U}_{b} \right) \cap (\mathsf{G} (S) \ast \mathsf{G} (S))$$
for all $s \in S$.
The proof is straightforward and the same as Step~3 of the proof of \cite[Proposition~2.22]{Lawson3}
and shows that $\mathbf{m}$ is a continuous function.

We show that $\mathsf{G}(S)$ is \'etale.
It is enough to show that the map from $\mathscr{U}_{a}$ to $\mathscr{U}_{\mathbf{d}(a)}$
given by $A \mapsto \mathbf{d}(A)$ is a homeomorphism.
The proof of this is the same as the proof of Step~4 of the proof of \cite[Proposition~2.22]{Lawson3}.

The fact that the identity space of $\mathsf{G}(S)$ is homeomorphic to
the Stone space of $\mathsf{E}(S)$ follows by Lemma~\ref{lem:topology-idempotent}.
This tells us that our \'etale topological groupoid is a Boolean groupoid.
\end{proof}

\noindent
{\bf Definition. }If $S$ is a Boolean inverse semigroup, then we refer to $\mathsf{G}(S)$ as the {\em Stone groupoid} of $S$.\\

\section{Non-commutative Stone duality}

In this section, we shall generalize Theorem~\ref{them:classical-stone-dualityII}
by replacing generalized Boolean algebras by Boolean inverse semigroups,
and locally compact Boolean spaces by Boolean groupoids.

\subsection{Properties of prime filters}

Our first goal now is to prove that we have {\em enough} ultrafilters in a Boolean inverse semigroup.
We adapt to our setting the proofs to be found in \cite[Chapter I, Section 2]{Johnstone}.
Any proofs that are omitted can be found in \cite{LL}.
Let $S$ be a distibutive inverse semigroup.
An order-ideal of $S$ closed under binary joins is called an {\em additive order-ideal}.
If $A$ is an order-ideal then $A^{\vee}$ denotes the set of all binary joins of compatible pairs of elements of $A$.
If $A$ is an order-ideal then $A^{\vee}$ is an additive order-ideal containing $A$.
We say that an additive order-ideal $A$ is {\em prime} if $a^{\downarrow} \cap b^{\downarrow} \subseteq A$ 
implies that $a \in A$ or $b \in A$.
The proof of the following is straightforward or can be found as
\cite[Lemma 3.10]{LL}.

\begin{lemma}\label{lem:prime-filter} Let $S$ be a distributive inverse semigroup.
Then $A$ is a prime filter if and only if
$S \setminus A$ is a prime additive order-ideal.
\end{lemma}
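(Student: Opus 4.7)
The plan is to prove the biconditional by directly translating each defining property of $A$ being a prime filter into the dual property of $I = S \setminus A$ being a prime additive order-ideal. The proof is essentially formal, once one is careful with the compatibility condition built into joins in a distributive inverse semigroup.

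For the forward direction, assume $A$ is a prime filter and set $I = S \setminus A$. Upward closure of $A$ immediately gives that $I$ is downwardly closed, i.e., an order-ideal. To see $I$ is closed under compatible binary joins, suppose $a,b \in I$ with $a \vee b$ existing; if $a \vee b$ were in $A$, primeness of $A$ would force $a \in A$ or $b \in A$, a contradiction. For the prime condition on $I$, suppose $a^{\downarrow} \cap b^{\downarrow} \subseteq I$; if both $a,b \in A$, the downward directedness of $A$ produces $c \in A$ with $c \leq a,b$, placing $c$ in $a^{\downarrow} \cap b^{\downarrow} \subseteq I$ and contradicting $c \in A$.

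For the reverse direction, assume $I = S \setminus A$ is a prime additive order-ideal (with $I$ non-empty and not all of $S$, else the edge cases collapse). Upward closure of $A$ is dual to downward closure of $I$. For downward directedness of $A$, given $a,b \in A$, the set $a^{\downarrow} \cap b^{\downarrow}$ cannot be contained in $I$, since otherwise primeness of $I$ would force $a \in I$ or $b \in I$; so some $c \leq a,b$ lies in $A$. Properness follows from $0 \in I$ (as $I$ is non-empty and downwardly closed), so $0 \notin A$. For primeness of $A$: if $a \vee b \in A$ with both $a,b \in I$, then since $a \vee b$ exists the pair $a,b$ is compatible, so closure of $I$ under compatible joins forces $a \vee b \in I$, a contradiction.

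The only subtle point I expect to have to watch is the compatibility convention attached to binary joins: "additive order-ideal" must be read as \emph{closed under joins of compatible pairs} (the only joins that exist in $S$), and the primeness condition on $A$ is correspondingly only invoked when $a \vee b$ actually exists. Once these are aligned, each of the four bullets (downward directed vs.\ prime, upwardly closed vs.\ downwardly closed, proper vs.\ containing $0$, prime vs.\ closed under compatible joins) falls out by straightforward set-theoretic complementation, with no appeal needed to the algebraic structure of $S$ beyond what is already encoded in the poset and the partial join.
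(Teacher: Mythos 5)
Your proof is correct and is exactly the straightforward complementation argument the paper has in mind (the paper omits the proof, declaring it ``straightforward'' and citing \cite[Lemma 3.10]{LL}). You have rightly handled the two points that need any care at all: joins are only taken over compatible pairs, and the degenerate cases $I=\varnothing$ and $I=S$ are excluded by the properness/non-emptiness conventions on filters.
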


The following can be proved using Zorn's Lemma.

\begin{lemma}\label{lem:ZL} Let $S$ be a distributive inverse semigroup.
Let $I$ be an additive order-ideal of $S$ and let $F$ be a filter disjoint from $I$.
Then there is an additive order-ideal $J$ maximal with respect to two properties:
$I \subseteq J$ and $J \cap F = \varnothing$.
\end{lemma}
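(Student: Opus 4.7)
The plan is a routine application of Zorn's Lemma to the poset of admissible additive order-ideals, so the writeup amounts to specifying the poset, verifying chain-completeness, and checking that the two constraints survive unions.

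First I would set $\mathcal{P}$ to be the collection of all additive order-ideals $K$ of $S$ satisfying $I \subseteq K$ and $K \cap F = \varnothing$, partially ordered by inclusion. The set $\mathcal{P}$ is non-empty because $I$ itself belongs to $\mathcal{P}$ by hypothesis. The goal is then to produce a maximal element of $\mathcal{P}$, and this maximal element is the $J$ demanded by the lemma.

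Next I would verify the hypothesis of Zorn's Lemma: every chain in $\mathcal{P}$ has an upper bound in $\mathcal{P}$. Given a chain $\{K_\lambda\}_{\lambda \in \Lambda}$ in $\mathcal{P}$, I take $K = \bigcup_{\lambda \in \Lambda} K_\lambda$ and check the three required properties. For the order-ideal property, if $x \in K$ and $y \leq x$ then $x \in K_\lambda$ for some $\lambda$, so $y \in K_\lambda \subseteq K$. For closure under compatible binary joins, if $a,b \in K$ with $a \sim b$, pick $\lambda,\mu$ with $a \in K_\lambda$, $b \in K_\mu$; by the chain condition one of them contains the other, say $K_\lambda \subseteq K_\mu$, so $a,b \in K_\mu$ and hence $a \vee b \in K_\mu \subseteq K$. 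Clearly $I \subseteq K$, and $K \cap F = \varnothing$ because $F$ meets no $K_\lambda$. Thus $K \in \mathcal{P}$ and is an upper bound for the chain.

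Zorn's Lemma now yields a maximal element $J \in \mathcal{P}$, which by construction is an additive order-ideal containing $I$ and disjoint from $F$, and is maximal among such. I do not anticipate a genuine obstacle: the only slightly delicate point is observing that the compatibility of $a$ and $b$ is a property of the pair in $S$ (not of the particular $K_\lambda$ they live in), so the join $a \vee b$ formed inside the larger $K_\mu$ is the same element as any join we might contemplate in $K$, and that is precisely what makes unions of chains close under compatible binary joins.
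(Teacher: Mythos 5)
Your proof is correct and is exactly the argument the paper has in mind: the paper gives no details, saying only that the lemma "can be proved using Zorn's Lemma," and your routine verification that the union of a chain of admissible additive order-ideals is again an admissible additive order-ideal (with the observation that compatibility and the resulting join are intrinsic to the pair in $S$) is the standard way to carry that out.
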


The proof of the following is straightforward.

\begin{lemma} Let $S$ be a distributive inverse semigroup.
Let $I$ be an additive order-ideal and let $a$ be an arbitrary element of $S$.
Then $I \cup a^{\downarrow}$ is an order-ideal
and
$$\left( I \cup a^{\downarrow} \right)^{\vee}
=
\{x \vee b \colon x \in I, b \leq a, x \sim b \}.$$ 
\end{lemma}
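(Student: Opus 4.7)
The plan is to prove the two assertions in sequence, with the first being a straightforward observation and the second a four-case analysis.

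First, I would show $I \cup a^{\downarrow}$ is an order-ideal. Suppose $y \leq z$ with $z \in I \cup a^{\downarrow}$. Either $z \in I$, in which case $y \in I$ since $I$ is already an order-ideal, or $z \leq a$, in which case $y \leq a$ by transitivity. Either way $y \in I \cup a^{\downarrow}$.

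Next, for the inclusion $\supseteq$, any element of the form $x \vee b$ with $x \in I$, $b \leq a$, $x \sim b$ is by definition a compatible join of the two elements $x \in I \subseteq I \cup a^{\downarrow}$ and $b \in a^{\downarrow} \subseteq I \cup a^{\downarrow}$, hence lies in $\left(I \cup a^{\downarrow}\right)^{\vee}$.

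For the inclusion $\subseteq$, take $u, v \in I \cup a^{\downarrow}$ with $u \sim v$, and split into four cases according to which of $I$ or $a^{\downarrow}$ each of $u$ and $v$ lies in. The mixed cases ($u \in I$, $v \leq a$, and its symmetric counterpart) already give $u \vee v$ in the required form. For the case $u, v \in I$, I would use additivity of $I$ to conclude $u \vee v \in I$ and then write $u \vee v = (u \vee v) \vee 0$, where $0 \leq a$ and $0$ is compatible with everything. Dually, for the case $u, v \in a^{\downarrow}$, one has $u \vee v \leq a$ (since $a$ is a common upper bound and $u \vee v$ is the least upper bound of the compatible pair), so write $u \vee v = 0 \vee (u \vee v)$ with $0 \in I$ and $0 \sim (u \vee v)$.

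This argument is essentially routine, so there is no real obstacle; the only subtlety worth flagging is the systematic use of $0$ as the neutral filler element to coerce the ``both in $I$'' and ``both in $a^{\downarrow}$'' cases into the required form $x \vee b$ with $x \in I$ and $b \leq a$. One should note that this uses the standing assumption that $S$ contains $0$ (which holds since $S$ is distributive in the sense defined in Section~3), so that $0 \in I \cap a^{\downarrow}$ and $0 \sim s$ for every $s \in S$.
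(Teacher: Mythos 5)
Your proof is correct. The paper itself omits the argument, declaring it straightforward, and your four-case analysis with $0$ as the filler element (valid since $I$, being a non-empty order-ideal in a semigroup with zero, contains $0$, and $0$ is compatible with every element) is exactly the routine verification intended.
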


\begin{lemma}\label{lem:prime} Let $S$ be a distributive inverse semigroup.
Let $F$ be a filter in $S$ and let $J$ be an additive order-ideal
maximal amongst all additive order-ideals disjoint from $F$.
Then $J$ is a prime additive order-deal.
\end{lemma}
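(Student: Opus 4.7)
The plan is to argue by contradiction: assume $a, b \notin J$ but $a^{\downarrow} \cap b^{\downarrow} \subseteq J$, and produce an element of $F \cap J$.

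First I would use maximality of $J$. Since $a \notin J$, the additive order-ideal $(J \cup a^{\downarrow})^{\vee}$ properly contains $J$, so by maximality it must meet $F$. By the preceding lemma describing $(J \cup a^{\downarrow})^{\vee}$, this means there are $x \in J$ and $a' \leq a$ with $x \sim a'$ and $u := x \vee a' \in F$. Arguing symmetrically with $b$, I get $y \in J$ and $b' \leq b$ with $y \sim b'$ and $v := y \vee b' \in F$. Since $F$ is a filter, it is downwardly directed, so there exists $w \in F$ with $w \leq u$ and $w \leq v$.

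The key step is to decompose $w$ using the meet/join calculus of Lemma~\ref{lem:meets-joins}. Since $w \leq u = x \vee a'$, both $w \wedge x$ and $w \wedge a'$ exist (their components are all below $u$, hence pairwise compatible, and compatible pairs have meets in a Boolean inverse semigroup by Lemma~\ref{lem:buffs1}), and by part (3) of Lemma~\ref{lem:meets-joins},
\[
w = w \wedge (x \vee a') = (w \wedge x) \vee (w \wedge a').
\]
Similarly $w = (w \wedge y) \vee (w \wedge b')$. Taking the meet of these two expressions for $w$ and expanding via part (5) of Lemma~\ref{lem:meets-joins} yields
\[
w = (w \wedge x \wedge y) \vee (w \wedge x \wedge b') \vee (w \wedge a' \wedge y) \vee (w \wedge a' \wedge b').
\]

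Now I finish by observing that every one of these four joinands lies in $J$: the first three lie below $x$, $x$, $y$ respectively (all in the order-ideal $J$), while the fourth lies below $a' \wedge b' \in a^{\downarrow} \cap b^{\downarrow} \subseteq J$. Since $J$ is an additive order-ideal (closed under compatible binary joins), $w \in J$. But $w \in F$, contradicting $J \cap F = \varnothing$. Hence $a \in J$ or $b \in J$, so $J$ is prime. The only real obstacle is keeping the existence of all the invoked meets under control; this is why it is important that $w \leq u, v$ so that all the elements being met lie below a common upper bound and are therefore compatible, guaranteeing their meets exist.
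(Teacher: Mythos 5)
Your proof is correct, and it is essentially the intended argument: the paper omits the proof, deferring to the standard distributive-lattice prime-ideal argument of Johnstone adapted to inverse semigroups, which is exactly what you carry out, with the existence of all meets correctly secured by working below the common lower bound $w$ and invoking parts (3) and (5) of Lemma~\ref{lem:meets-joins}. One cosmetic point: you justify the fourth joinand's membership in $J$ via ``$a' \wedge b'$'', whose existence you have not established (the pair $a', b'$ need not be compatible); it is cleaner to observe directly that $(w \wedge a') \wedge (w \wedge b')$ lies below both $a'$ and $b'$, hence in $a^{\downarrow} \cap b^{\downarrow} \subseteq J$, which is all you need.
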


The following lemma is crucial to our program.

\begin{lemma}\label{lem:crucial} Let $S$ be a Boolean inverse semigroup.
Let $a,b \in S$ such that $b \nleq a$.
Then there is an ultrafilter that contains $b$ but omits $a$.
\end{lemma}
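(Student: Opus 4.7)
The plan is to imitate the standard Boolean algebra argument (Corollary~\ref{cor:germany-two}) but in the inverse-semigroup setting, exploiting the prime-filter/prime-additive-order-ideal duality of Lemma~\ref{lem:prime-filter} together with the Zorn-type separation Lemma~\ref{lem:ZL} and the primality Lemma~\ref{lem:prime}. The final step will pass from prime filters back to ultrafilters via Lemma~\ref{lem:prime-equals-uf}.

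First I would set $I = a^{\downarrow}$ and $F = b^{\uparrow}$. The principal order-ideal $a^{\downarrow}$ is automatically an additive order-ideal, because any compatible pair $x,y \leq a$ joins inside $a^{\downarrow}$, while $F = b^{\uparrow}$ is trivially a filter. The key observation is that $I \cap F = \varnothing$: any element lying in the intersection would satisfy $b \leq x \leq a$, contradicting the hypothesis $b \nleq a$. This separation of $I$ from $F$ is exactly the input Lemma~\ref{lem:ZL} requires.

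Next, I would invoke Lemma~\ref{lem:ZL} to obtain an additive order-ideal $J \supseteq I$ that is maximal subject to the constraint $J \cap F = \varnothing$. Lemma~\ref{lem:prime} then upgrades this $J$ to a prime additive order-ideal, and Lemma~\ref{lem:prime-filter} tells us that $P := S \setminus J$ is a prime filter in $S$. By construction, $a \in I \subseteq J$, so $a \notin P$; and $b \in F$ is disjoint from $J$, so $b \in P$. Finally, since $S$ is a Boolean inverse semigroup, Lemma~\ref{lem:prime-equals-uf} identifies prime filters with ultrafilters, so $P$ is the desired ultrafilter.

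I do not expect a serious obstacle here: once the correct $I$ and $F$ are chosen, the result is essentially a bookkeeping exercise in the machinery already set up. The only point that requires a moment's care is verifying that $a^{\downarrow}$ really is an \emph{additive} order-ideal (so that Lemma~\ref{lem:ZL} applies), but this is immediate from the fact that compatible joins of elements below $a$ remain below $a$ in any distributive inverse semigroup.
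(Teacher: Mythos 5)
Your proposal is correct and follows essentially the same route as the paper: separate $b^{\uparrow}$ from $a^{\downarrow}$, apply Lemma~\ref{lem:ZL} and Lemma~\ref{lem:prime} to get a maximal prime additive order-ideal $J$, and take its complement via Lemma~\ref{lem:prime-filter}. The only (welcome) addition is that you make explicit the final appeal to Lemma~\ref{lem:prime-equals-uf} to pass from prime filter to ultrafilter, which the paper leaves implicit.
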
 
\begin{proof} By assumption, $b^{\uparrow} \cap a^{\downarrow} = \varnothing$.
Now $b^{\uparrow}$ is a filter and $a^{\downarrow}$ is an additive order-ideal.
By Lemma~\ref{lem:ZL}, there is an additive order-ideal $J$ maximal with respect to two properties:
$a^{\downarrow} \subseteq J$ and $J \cap b^{\uparrow} = \varnothing$.
By Lemma~\ref{lem:prime}, we have that $J$ is prime.
By Lemma~\ref{lem:prime-filter}, the complement of $J$ in $S$ is a prime filter 
containing $b$ but omitting $a$.
\end{proof}

\begin{lemma}\label{lem:needed-for-later}
Let $S$ be a Boolean inverse semigroup.
\begin{enumerate}
\item $\mathscr{U}_{a} \subseteq \mathscr{U}_{b}$ if and only if $a \leq b$.
\item $\mathscr{U}_{a} = \mathscr{U}_{b}$ if and only if $a = b$.
\item $\mathscr{U}_{a} \cdot \mathscr{U}_{b} = \mathscr{U}_{ab}$.
\item $\mathscr{U}_{a}$ contains only idempotent ultrafiters if and only if $a$ is an idempotent.
\item $\mathscr{U}_{a}$ is compact.
\item $\mathscr{U}_{a}$ is an idempotent if and only if $a$ is an idempotent.
\item $\mathscr{U}_{a} \sim \mathscr{U}_{b}$ if and only if $a \sim b$.
\end{enumerate}
\end{lemma}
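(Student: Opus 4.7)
The plan is to establish the seven parts in roughly the order given, letting each assertion support the later ones. Parts (1) and (2) come immediately from Lemma~\ref{lem:crucial}: the forward direction of (1) is upward-closure of filters, and its converse is the contrapositive of that lemma; (2) is then antisymmetry applied to (1).

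For (3), the inclusion $\mathscr{U}_{a}\cdot\mathscr{U}_{b}\subseteq\mathscr{U}_{ab}$ is direct, because for $A\in\mathscr{U}_{a}$ and $B\in\mathscr{U}_{b}$ with $\mathbf{d}(A)=\mathbf{r}(B)$ one has $ab\in AB\subseteq A\cdot B$. The reverse inclusion is the main obstacle: given $C\in\mathscr{U}_{ab}$ I have to produce ultrafilters $A\ni a$ and $B\ni b$ with $A\cdot B=C$. The idea is to set $B=(b\,\mathbf{d}(C))^{\uparrow}$ and $A=C\cdot B^{-1}$. First, $\mathbf{d}(ab)\in\mathbf{d}(C)$ lies below $\mathbf{d}(b)$, so $\mathbf{d}(b)\in\mathbf{d}(C)$, and hence $B$ is a filter containing $b$; a short computation using $B^{-1}B\subseteq\mathbf{d}(C)\cdot\mathbf{d}(b)\cdot\mathbf{d}(C)$ together with the reverse inclusion $g\geq g\mathbf{d}(b)\in B^{-1}B$ for $g\in\mathbf{d}(C)$ shows $\mathbf{d}(B)=\mathbf{d}(C)$, so $B$ is an ultrafilter by Lemma~\ref{lem:d-is-extremal}. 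Lemma~\ref{lem:product}(2) then makes $A=C\cdot B^{-1}$ an ultrafilter, and the inequality $abb^{-1}\leq a$ together with $abb^{-1}\in CB^{-1}$ shows $a\in A$. Finally, associativity and the unit identity in Lemma~\ref{lem:product} give $A\cdot B=C\cdot(B^{-1}\cdot B)=C\cdot\mathbf{d}(C)=C$.

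Part (4) is the key bridge for (6) and (7). One direction is clear: any ultrafilter containing an idempotent $a$ is itself an idempotent filter. Conversely, if $a$ is not idempotent then $a\nleq\mathbf{d}(a)$, for $a\leq\mathbf{d}(a)$ would present $a$ as a product of two idempotents and hence force it to be idempotent. Lemma~\ref{lem:crucial} then produces an ultrafilter $A$ containing $a$ but omitting $\mathbf{d}(a)$, and such $A$ cannot be idempotent, since by Lemma~\ref{lem:idempotent-filter-is} it would then be an inverse subsemigroup and so would contain $a^{-1}a=\mathbf{d}(a)$.

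Part (5) follows by combining Lemma~\ref{lem:topology-idempotent} with the observation inside the proof of Proposition~\ref{prop:from-bis-to-boolean-groupoid} that $A\mapsto\mathbf{d}(A)$ is a homeomorphism from $\mathscr{U}_{a}$ onto $\mathscr{U}_{\mathbf{d}(a)}$, together with the compactness of $\mathscr{V}_{\mathbf{d}(a)}$ in the Stone space of $\mathsf{E}(S)$ established inside the proof of Lemma~\ref{lem:lc-Boolean-space}. Part (6) is then immediate: the proof of Lemma~\ref{lem:tea} identifies the idempotents of the inverse monoid of local bisections of $\mathsf{G}(S)$ with subsets of the identity space, so $\mathscr{U}_{a}$ is idempotent precisely when every ultrafilter in $\mathscr{U}_{a}$ is idempotent, which by (4) is equivalent to $a$ being idempotent. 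Finally, (7) combines (2), (3), and (6): $\mathscr{U}_{a}\sim\mathscr{U}_{b}$ means that $\mathscr{U}_{a}^{-1}\cdot\mathscr{U}_{b}=\mathscr{U}_{a^{-1}b}$ and $\mathscr{U}_{a}\cdot\mathscr{U}_{b}^{-1}=\mathscr{U}_{ab^{-1}}$ are both idempotents, which by (6) is equivalent to $a^{-1}b$ and $ab^{-1}$ being idempotents, i.e., $a\sim b$.
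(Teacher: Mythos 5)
Your proof is correct, and for most parts it runs along the same lines as the paper: (1) and (2) via Lemma~\ref{lem:crucial} and antisymmetry, (5) via the homeomorphism $A \mapsto \mathbf{d}(A)$ onto $\mathscr{U}_{\mathbf{d}(a)}$ together with Lemma~\ref{lem:topology-idempotent} and the compactness of $\mathscr{V}_{\mathbf{d}(a)}$, and (7) as a formal consequence of the earlier parts. The genuine divergences are these. For the reverse inclusion in (3) the paper simply defers to the proof in \cite{Lawson3}, whereas you supply a self-contained factorization: given $C \in \mathscr{U}_{ab}$ you set $B = (b\,\mathbf{d}(C))^{\uparrow}$, verify $\mathbf{d}(B) = \mathbf{d}(C)$ so that $B$ is an ultrafilter by Lemma~\ref{lem:d-is-extremal}, and then take $A = C \cdot B^{-1}$; I checked the details (properness of $B$, $a \geq abb^{-1} \in CB^{-1}$, and the cancellation $A \cdot B = C \cdot \mathbf{d}(C) = C$ via Lemma~\ref{lem:product}) and the argument is sound --- this is the one place where your write-up is more complete than the paper's. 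For (4) the paper proves the nontrivial direction by observing $\mathscr{U}_{a} \subseteq \mathscr{U}_{a^{2}}$ and invoking part (1), while you argue the contrapositive directly from Lemma~\ref{lem:crucial} applied to $a \nleq \mathbf{d}(a)$; both are two-line arguments resting on the same separation lemma. For (6) the paper deduces the claim from (3) and (2) (i.e., $\mathscr{U}_{a}\mathscr{U}_{a} = \mathscr{U}_{a}$ forces $a^{2} = a$), whereas you route it through (4) and the identification, from the proof of Lemma~\ref{lem:tea}, of idempotent local bisections with subsets of the identity space; your route makes (4) load-bearing where the paper's does not, but both are valid.
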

\begin{proof} 
(1) Suppose that  $\mathscr{U}_{a} \subseteq \mathscr{U}_{b}$.
If $a \nleq b$ then by Lemma~\ref{lem:crucial} there exists an ultrafilter that contains $a$ and omits $b$,
which contradicts our assumption.
Thus $a \leq b$ as required.

(2) Immediate by (1) above.

(3) Let $A \in \mathscr{U}_{a}$ and $B \in \mathscr{U}_{b}$ such that $A \cdot B$ is defined.
Then $A \cdot B \in \mathscr{U}_{ab}$. The proof of the reverse inclusion is the same as the proof of part (4) of \cite{Lawson3}.

(4) Only one direction needs proving.
Observe that $\mathscr{U}_{a} \subseteq \mathscr{U}_{a^{2}}$ since
any ultrafilter that contains $a$ is an inverse subsemigroup and so must also contain $a^{2}$.
We therefore have that $a \leq a^{2}$ by (1) above from which it follows that $a = a^{2}$.

(5) By Lemma~\ref{lem:cosets}, there is a bijection between the set $\mathscr{U}_{a}$ and the set $\mathscr{U}_{\mathbf{d}(a)}$
given by $A \mapsto \mathbf{d}(A)$.
A base for the open sets of $\mathscr{U}_{a}$ is the collection $\mathscr{U}_{c}$ where $c \leq b$.
By the above bijection, the set $\mathscr{U}_{c}$ is mapped to the set $\mathscr{U}_{\mathbf{d}(c)}$.
It follows that there is a homeomorphism between $\mathscr{U}_{a}$ and the set $\mathscr{U}_{\mathbf{d}(a)}$.
But by Lemma~\ref{lem:topology-idempotent}, the space $\mathscr{U}_{\mathbf{d}(a)}$ is homeomorphic with the space
$\mathscr{V}_{\mathbf{d}(a)}$ of the prime filters in $\mathsf{E}(S)$ which contain $\mathbf{d}(a)$.
But the sets $\mathscr{V}_{\mathbf{d}(a)}$ are compact by the proof of Lemma~\ref{lem:lc-Boolean-space}.
It follows that $\mathscr{U}_{a}$ is compact.

(6) This follows by part (3) above and part (2).

(7) Only one direction needs proving. Suppose that $\mathscr{U}_{a} \sim \mathscr{U}_{b}$. 
Then by  part (2) of Lemma~\ref{lem:primefilters-form-a-base} and part (3) above,
both $\mathscr{U}_{a^{-1}b}$ and $\mathscr{U}_{ab^{-1}}$ are idempotents.
It follows by part (6) above, that $a \sim b$.
\end{proof}

\subsection{Properties of compact-open local bisections}

Let $G$ be a Boolean groupoid and let $g \in G$.
Define $\mathscr{F}_{g}$ to be the set of compact-open local bisections of $G$ that contain $g$.

\begin{lemma}\label{lem:compact-open-prime} Let $G$ be a Boolean groupoid.
\begin{enumerate}
\item $\mathscr{F}_{g}$ is a prime filter in $\mathsf{KB}(G)$.
\item Every prime filter in $\mathsf{KB}(G)$ is of the form $\mathscr{F}_{g}$ for some $g \in G$.
\item $\mathscr{F}_{g} \cdot \mathscr{F}_{g^{-1}} = \mathscr{F}_{\mathbf{r}(g)}$
and
$\mathscr{F}_{g^{-1}} \cdot \mathscr{F}_{g} = \mathscr{F}_{\mathbf{d}(g)}$.
\item Suppose that $gh$ is defined in $G$. 
Then
$\mathbf{d}(\mathscr{F}_{g}) = \mathbf{r}(\mathscr{F}_{h})$
and 
$\mathscr{F}_{g} \cdot \mathscr{F}_{h} = \mathscr{F}_{gh}$.
\item If $\mathscr{F}_{g} = \mathscr{F}_{h}$. Then $g = h$.
\end{enumerate}
\end{lemma}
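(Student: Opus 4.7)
The plan is to prove the five parts in the order given, exploiting the interplay between the topological structure of $G$ and the prime-filter structure in $\mathsf{KB}(G)$, and repeatedly invoking that prime filters in a Boolean inverse semigroup are the same as ultrafilters (Lemma~\ref{lem:prime-equals-uf}).

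For part~(1), I would verify the filter axioms directly: upward closure is trivial; downward directedness for $U,V\in\mathscr{F}_{g}$ follows because $U\cap V$ is an open set containing $g$, and since the compact-open local bisections form a base for the topology on a Boolean groupoid, there is some $W\in\mathsf{KB}(G)$ with $g\in W\subseteq U\cap V$. Properness is immediate since $g\notin\varnothing$. For primality, recall that the join of two compatible compact-open local bisections in $\mathsf{KB}(G)$ is just their union, so if $U\vee V\in\mathscr{F}_{g}$ then $g\in U\cup V$ and hence $g\in U$ or $g\in V$.

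For part~(2), the key is to manufacture the point $g$ from a prime filter $A$ by first finding an identity $e\in G_{o}$ and then using the local bisection property to pull back. By Lemma~\ref{lem:relating}, $\mathsf{E}(\mathbf{d}(A))$ is a prime filter in the generalized Boolean algebra $\mathsf{B}(G_{o})$ of idempotents of $\mathsf{KB}(G)$. By the object part of Proposition~\ref{prop:second-big-theorem}, prime filters of $\mathsf{B}(G_{o})$ are exactly the neighborhood filters $O_{e}$ for $e\in G_{o}$, so there is a unique $e$ with $\mathsf{E}(\mathbf{d}(A))=O_{e}$. For any $U\in A$ we have $U^{-1}U\in\mathbf{d}(A)$, so $e\in\mathbf{d}(U)$, and since $U$ is a local bisection (Lemma~\ref{lem:water}) there is a unique $g_{U}\in U$ with $\mathbf{d}(g_{U})=e$. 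Downward-directedness of $A$ implies that for any two $U,V\in A$ there is $W\in A$ with $W\subseteq U\cap V$, forcing $g_{U}=g_{W}=g_{V}$; call this common value $g$. By construction $A\subseteq\mathscr{F}_{g}$, and since $A$ is an ultrafilter (Lemma~\ref{lem:prime-equals-uf}) and $\mathscr{F}_{g}$ is a proper filter by~(1), we get $A=\mathscr{F}_{g}$. This step is where the work really lies; the subtlety is combining Stone duality on identities with the rigidity of local bisections.

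For parts~(3) and~(4), I would first observe that $\mathbf{d}(\mathscr{F}_{g})=\mathscr{F}_{\mathbf{d}(g)}$ (and analogously for $\mathbf{r}$): any generator $U^{-1}V$ with $U,V\in\mathscr{F}_{g}$ contains $\mathbf{d}(g)$, giving one inclusion, and ultrafilter maximality gives the other. So if $gh$ is defined in $G$ then $\mathbf{d}(\mathscr{F}_{g})=\mathscr{F}_{\mathbf{d}(g)}=\mathscr{F}_{\mathbf{r}(h)}=\mathbf{r}(\mathscr{F}_{h})$ and the product $\mathscr{F}_{g}\cdot\mathscr{F}_{h}$ is defined. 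For any $U\in\mathscr{F}_{g}$, $V\in\mathscr{F}_{h}$ we have $gh\in UV$, so $\mathscr{F}_{g}\cdot\mathscr{F}_{h}\subseteq\mathscr{F}_{gh}$, and maximality of ultrafilters forces equality. Part~(3) is then the special case $h=g^{-1}$, together with the fact that $\mathbf{d}(\mathscr{F}_{g^{-1}})=\mathscr{F}_{\mathbf{r}(g)}=\mathbf{r}(\mathscr{F}_{g})$.

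For part~(5), $\mathscr{F}_{g}=\mathscr{F}_{h}$ implies $\mathscr{F}_{\mathbf{d}(g)}=\mathbf{d}(\mathscr{F}_{g})=\mathbf{d}(\mathscr{F}_{h})=\mathscr{F}_{\mathbf{d}(h)}$ by the computation above. Under the identification of idempotent ultrafilters with points of $G_{o}$ from Proposition~\ref{prop:second-big-theorem}, this forces $\mathbf{d}(g)=\mathbf{d}(h)$ since $G_{o}$ is Hausdorff. Picking any $U\in\mathscr{F}_{g}=\mathscr{F}_{h}$, we have $g,h\in U$ with $\mathbf{d}(g)=\mathbf{d}(h)$, so the local bisection characterization (Lemma~\ref{lem:water}) gives $g=h$.
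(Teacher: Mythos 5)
Your proof is correct and follows essentially the same route as the paper's: downward directedness via the base of compact-open local bisections for (1), the containment-plus-ultrafilter-maximality trick for (3) and (4), and Hausdorffness of $G_{o}$ combined with the local bisection property (Lemma~\ref{lem:water}) for (5). The one place you add genuine value is part (2), which the paper merely cites from an earlier reference; your self-contained argument --- locating the identity $e$ by applying commutative Stone duality (Proposition~\ref{prop:second-big-theorem}) to the prime filter $\mathsf{E}(\mathbf{d}(A))$ in $\mathsf{B}(G_{o})$, lifting $e$ uniquely through each local bisection in $A$, using downward directedness to see the lifts agree, and concluding $A=\mathscr{F}_{g}$ by maximality --- is complete and correct.
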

\begin{proof}
(1) Let $U, V \in \mathscr{F}_{g}$.
Then $U \cap V$ is an open set containing $g$.
We now use the fact that the compact-open local bisections of $G$ form a base.
There is therefore a compact-open local bisection $W$ containing $g$ such that $W \subseteq U \cap V$.
It follows that $\mathscr{F}_{g}$ is downwardly directed.
It is clearly closed upwards and doesn't contain the empty set.
It is clearly a prime filter.

(2) The proof is the same as the proof of \cite[part (5) of Lemma 2.19]{Lawson3}.

(3) We shall prove 
$\mathscr{F}_{g} \cdot \mathscr{F}_{g^{-1}} = \mathscr{F}_{\mathbf{r}(g)}$
since the proof of the other case is similar.
We use the fact that in a Boolean groupoid the product of compact-open local
bisections is a compact-open local bisection by Proposition~\ref{prop:bis-from-boolean-groupoids}.
Thus 
$\mathscr{F}_{g} \cdot \mathscr{F}_{g^{-1}} \subseteq \mathscr{F}_{\mathbf{r}(g)}$.
But both left-hand side and right-hand side are prime filters
in a Boolean inverse semigroup.
It follows that both are ultrafilters and so must be equal.

(4) The proof is similar to the proof of (3).

(5) Let $g,h \in U \in \mathscr{F}_{g} = \mathscr{F}_{h}$.
We have that 
$\mathbf{d}(\mathscr{F}_{g}) = \mathbf{d}(\mathscr{F}_{h})$.
By definition, $\mathbf{d}(\mathscr{F}_{g}) = \mathscr{F}_{g}^{-1} \cdot \mathscr{F}_{g}$
and this is equal to $\mathscr{F}_{\mathbf{d}(g)}$ by (3) above.
Thus 
$\mathscr{F}_{\mathbf{d}(g)} = \mathscr{F}_{\mathbf{d}(h)}$.
It follows that the compact-open subsets of $G_{o}$ that contain $\mathbf{d}(g)$
are the same as the compact-open subsets of $G_{o}$ that contain $\mathbf{d}(h)$.
But the groupoid $G$ is Boolean and so $G_{o}$ is a locally compact Boolean space.
It follows that $\mathbf{d}(g) = \mathbf{d}(h)$.
But $g$ and $h$ certainly both belong to the same compact-open local bisection
and so $g = h$. 
\end{proof}

\subsection{Proof of the first part of the main theorem}

Given a Boolean inverse semigroup $S$, then by Proposition~\ref{prop:from-bis-to-boolean-groupoid} we have shown how to construct a Boolean groupoid $\mathsf{G}(S)$,
and given a Boolean groupoid $G$, then by Propsition~\ref{prop:bis-from-boolean-groupoids} we have shown how to construct a Boolean inverse semigoup $\mathsf{KB}(G)$.
The following result tells us what happens when we iterate these two constructions.

\begin{proposition}\label{prop:isomorphism-of-structures} \mbox{}
\begin{enumerate}
\item Let $S$ be a Boolean inverse semigroup. Define a function $\alpha \colon S \rightarrow \mathsf{KB} (\mathsf{G}(S))$ by $\alpha (a) = \mathscr{U}_{a}$.
Then this is an isomorphism of semigroups.
\item Let $G$ be a Boolean groupoid. Define a function $\beta \colon G \rightarrow \mathsf{G} (\mathsf{KB}(G))$  by $\beta (g) = \mathscr{F}_{g}$.
Then $\beta$ is an isomorphism of groupioids and a homeomorphism.
\end{enumerate}
\end{proposition}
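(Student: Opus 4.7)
The plan is to verify each claim by packaging together the technical lemmas already established (Lemma~\ref{lem:primefilters-form-a-base}, Lemma~\ref{lem:needed-for-later} for (1); Lemma~\ref{lem:compact-open-prime} for (2)), and to dispatch the topological parts separately.

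For (1), I would first check that $\alpha$ is well-defined, i.e.\ that each $\mathscr{U}_a$ really is a compact-open local bisection of $\mathsf{G}(S)$. It is open by construction of the base, compact by part (5) of Lemma~\ref{lem:needed-for-later}, and a local bisection because $\mathscr{U}_a^{-1}\cdot \mathscr{U}_a = \mathscr{U}_{a^{-1}a}$ consists of idempotent ultrafilters by part (3) of Lemma~\ref{lem:primefilters-form-a-base}, part (3) of Lemma~\ref{lem:needed-for-later}, and the idempotence of $a^{-1}a$ together with part (4) of Lemma~\ref{lem:needed-for-later}; dually on the other side. Multiplicativity $\alpha(ab) = \alpha(a)\cdot\alpha(b)$ is part (3) of Lemma~\ref{lem:needed-for-later}, and injectivity is part (2).

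The genuine work in (1) is surjectivity. Given a compact-open local bisection $U$ of $\mathsf{G}(S)$, open\-ness writes $U = \bigcup_i \mathscr{U}_{a_i}$, and compactness cuts this down to a finite cover $U = \bigcup_{i=1}^n \mathscr{U}_{a_i}$ with all $a_i\neq 0$. I would then argue pairwise compatibility $a_i\sim a_j$ as follows: if $A\in \mathscr{U}_{a_i}$ and $B\in \mathscr{U}_{a_j}$ with $A^{-1}\cdot B$ defined, then $A,B\in U$ forces $A=B$ by Lemma~\ref{lem:water}, so $A^{-1}\cdot B=\mathbf{d}(A)$ is an idempotent ultrafilter. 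Hence $\mathscr{U}_{a_i^{-1}a_j}$ contains only idempotent ultrafilters, and part (4) of Lemma~\ref{lem:needed-for-later} forces $a_i^{-1}a_j$ to be idempotent; symmetrically $a_ia_j^{-1}$ is idempotent. Thus $\bigvee_{i=1}^n a_i$ exists in $S$, and iterated application of part (4) of Lemma~\ref{lem:primefilters-form-a-base} gives $\mathscr{U}_{\bigvee_i a_i} = U$. The main obstacle here is precisely this transfer from the topological local-bisection condition on $U$ to algebraic compatibility of the $a_i$'s.

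For (2), the algebraic content is essentially Lemma~\ref{lem:compact-open-prime}: part (1) shows $\mathscr{F}_g$ is a prime filter, hence an ultrafilter by Lemma~\ref{lem:prime-equals-uf}, so an element of $\mathsf{G}(\mathsf{KB}(G))$; parts (3) and (4) give functoriality (preservation of identities, inverses, and products); part (2) is surjectivity; and part (5) is injectivity. This establishes that $\beta$ is a bijective functor, i.e.\ an isomorphism of groupoids. It remains to see $\beta$ is a homeomorphism. For continuity, a basic open set of $\mathsf{G}(\mathsf{KB}(G))$ has the form $\mathscr{U}_U$ with $U\in \mathsf{KB}(G)$, and the direct calculation
\[
\beta^{-1}(\mathscr{U}_U) \;=\; \{g\in G \colon U\in \mathscr{F}_g\} \;=\; U
\]
shows the preimage is open in $G$. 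For openness, it suffices to treat basic open sets of $G$; since $G$ is Boolean and \'etale, $\mathsf{KB}(G)$ is a base for its topology, and for $U\in \mathsf{KB}(G)$ the same calculation gives $\beta(U) = \mathscr{U}_U$, which is open. Combining these observations with the bijective functoriality already established completes the proof of (2).
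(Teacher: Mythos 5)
Your proof is correct and follows essentially the same route as the paper: well-definedness, multiplicativity and injectivity of $\alpha$ from Lemma~\ref{lem:needed-for-later}, surjectivity via a finite subcover whose indexing elements are shown to be pairwise compatible, and for $\beta$ the bijective functoriality from Lemma~\ref{lem:compact-open-prime} together with the base-to-base correspondence $U \leftrightarrow \mathscr{U}_{U}$. The only cosmetic differences are that you establish the local-bisection property of $\mathscr{U}_{a}$ and the compatibility of the $a_{i}$ by direct computation with parts (3) and (4) of Lemma~\ref{lem:needed-for-later}, where the paper instead cites Lemma~\ref{lem:local-bisection} and part (7) of Lemma~\ref{lem:needed-for-later} (your argument is in effect an inline reproof of that part (7)).
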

\begin{proof} (1) By Lemma~\ref{lem:local-bisection}, the set $\mathscr{U}_{a}$ is a local bisection.
It is open by definition of the topology.
It is compact by part (5) of Lemma~\ref{lem:needed-for-later}.
Thus $\mathscr{U}_{a}$ is a compact-open local bisection.
It follows that the function is well-defined.
It is a semigroup homomorphism by part (3) of Lemma~\ref{lem:needed-for-later}, since $\mathscr{U}_{a}\mathscr{U}_{b} = \mathscr{U}_{ab}$.
It is injective by part (2) of Lemma~\ref{lem:needed-for-later}.
It remains to show that it is surjective.
Let $U$ be any compact-open local bisection of $\mathsf{G}(S)$.
Since it is open it is a union of sets of the form $\mathscr{U}_{a}$ and since it is compact
it is a union of a finite numbers of sets of this form.
It follows that $U = \bigcup_{i=1}^{n} \mathscr{U}_{a_{i}}$.
But $\mathscr{U}_{a_{i}} \subseteq U$ which is the natural partial order in the inverse semigroup  $\mathsf{KB} (\mathsf{G}(S))$.
It follows that the set of elements of the form $\mathscr{U}_{a_{i}}$ is compatible.
By part (7) of Lemma~\ref{lem:needed-for-later},
it follows that set $\{a_{1}, \ldots, a_{m}\}$ is compatible.
Put $a = \bigvee_{i=1}^{n}a_{i}$.
Then $\alpha (a) = U$.
We have therefore proved that $\alpha$ is an isomorphism of semigroups.
  
(2) By Lemma~\ref{lem:compact-open-prime}, 
$\beta$ is a bijective functor.
It remains to show that it is a homeomorphism.
Since $G$ is a Boolean groupoid, a base for the topology on $G$ is provided by the compact-open local bisections.
Let $U$ be a comapct-open local bisection of $G$.
Then $U \in \mathsf{KB}(G)$.
We may therefore form the set $\mathscr{U}_{U}$ which is a typical element of the base for the topology 
on $\mathsf{G}(\mathsf{KB}(G))$.
It is now easy to check (or see the proof of \cite[Part (1) of Proposition 2.23]{Lawson3}),
that the bijection $\beta$ restricts to a bijection between $U$ and $\mathscr{U}_{U}$. 
\end{proof}

\subsection{Proof of the main theorem}

Our goal now is to take account of appropriate morphisms in our constructions.
We refer the reader to \cite{KL} for information about more general kinds of morphisms.

The statement and proof of \cite[part (3), Lemma 3.11]{LL} is incorrect.
We now give the correct statement and proof.

\begin{lemma}\label{lem:weakly-meet} Let $\theta \colon S \rightarrow T$ be a morphism of distributive inverse semigroups.
Then for each prime filter $P$ we have that $\theta^{-1}(P)$ is non-empty if and only if each $t \in T$
can be written $t = \bigvee_{i=1}^{n} t_{i}$ where each $t_{i} \leq \theta (s_{i})$ for some $s_{i} \in S$.
\end{lemma}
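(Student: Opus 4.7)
The plan is to exploit the theory of prime additive order-ideals developed just before the lemma, using the correspondence of Lemma~\ref{lem:prime-filter} between prime filters and prime additive order-ideals.

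For the ``if'' direction, let $P$ be a prime filter of $T$. Choose any $t \in P$ and a decomposition $t = \bigvee_{i=1}^{n} t_{i}$ with $t_{i} \leq \theta(s_{i})$. Primality forces some $t_{i}$ into $P$, and upward closure then gives $\theta(s_{i}) \in P$, so $s_{i} \in \theta^{-1}(P)$.

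For the ``only if'' direction, I argue by contraposition. Let $I \subseteq T$ consist of all elements admitting such a decomposition. Trivially $\theta(S) \subseteq I$ (take $n=1$), and closure under compatible binary joins is obtained by concatenating decompositions, using that any elements below a compatible pair are themselves pairwise compatible. The delicate step is downward closure: if $t' \leq t = \bigvee_{i} t_{i}$ with $t_{i} \leq \theta(s_{i})$, then $t' = t\,\mathbf{d}(t')$ (a standard characterization of the natural partial order), so distributivity of multiplication over compatible joins yields
$$t' \;=\; \bigvee_{i} t_{i}\,\mathbf{d}(t'),$$
and each $t_{i}\,\mathbf{d}(t') \leq t_{i} \leq \theta(s_{i})$, with the elements $t_{i}\,\mathbf{d}(t')$ pairwise compatible because they all lie below $t'$. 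Thus $I$ is an additive order-ideal containing $\theta(S)$. Now suppose some $t_{0} \in T$ lies outside $I$. Since $I$ is downward closed, $t_{0}^{\uparrow} \cap I = \varnothing$, so Lemma~\ref{lem:ZL} produces a maximal additive order-ideal $J \supseteq I$ disjoint from $t_{0}^{\uparrow}$. By Lemma~\ref{lem:prime}, $J$ is prime; by Lemma~\ref{lem:prime-filter}, $P := T \setminus J$ is then a prime filter of $T$. It contains $t_{0}$, but $P \cap \theta(S) = \varnothing$ because $\theta(S) \subseteq I \subseteq J$, so $\theta^{-1}(P) = \varnothing$, contradicting the hypothesis.

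The main obstacle is the downward-closure clause: in a merely distributive (non-Boolean) inverse semigroup one cannot intersect $t'$ with the $t_{i}$ as one would in a Boolean algebra, so the naive meet-based proof fails. The identity $t' = t\,\mathbf{d}(t')$, combined with the distributivity of multiplication over compatible joins built into the definition of a distributive inverse semigroup, is the essential ingredient that rescues the argument.
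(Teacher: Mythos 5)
Your proof is correct and follows essentially the same route as the paper: the easy direction is identical, and for the converse both arguments take $I$ to be the additive order-ideal generated by $\mathrm{im}(\theta)$ (your $I$ is exactly $(\mathrm{im}(\theta)^{\downarrow})^{\vee}$) and then invoke Lemma~\ref{lem:ZL}, Lemma~\ref{lem:prime} and Lemma~\ref{lem:prime-filter} to separate a non-decomposable element from $I$ by a prime filter. Your explicit verification that $I$ is downward closed, via $t' = t\,\mathbf{d}(t')$ and distributivity, is a detail the paper leaves to its general remarks on $A^{\downarrow}$ and $A^{\vee}$, but it is the same argument.
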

\begin{proof} We prove the easy direction first.
Suppose that for each $t \in T$ we can write $t = \bigvee_{i=1}^{n} t_{i}$ where each $t_{i} \leq \theta (s_{i})$ for some $s_{i} \in S$.
Let $P$ be any prime filter.
By assumption it is non-empty.
Let $t \in P$.
By assumption, we can write $t = \bigvee_{i=1}^{n} t_{i}$ where each $t_{i} \leq \theta (s_{i})$ for some $s_{i} \in S$.
But $P$ is a prime filter.
Thus $t_{i} \in P$ for some $i$.
It follows that $\theta (s_{i}) \in P$ for some $s_{i} \in S$.
It follows that  $\theta^{-1}(P)$ is non-empty. 
We now prove the converse.
Suppose that $t \in S$ which cannot be written in the stated form.
Then $t \notin \left( \mbox{im}(\theta)^{\downarrow} \right)^{\vee}$.
Put $I = \left( \mbox{im}(\theta)^{\downarrow} \right)^{\vee}$.
Then $t^{\uparrow} \cap I = \emptyset$.
We now use Section~7.1 to deduce that there is a prime filter $P$ that contains $t$ 
and is disjoint from $J$.
But this implies that $\theta^{-1}(P)$ is empty which is a contradiction.
It follows that no such element $t$ exists.
\end{proof}

A morphism $\theta \colon S \rightarrow T$ of Boolean inverse semigroups is said to be 
{\em weakly-meet-preserving} if $t \leq \theta (a), \theta (b)$
there exists $c \leq a,b$ such that $t \leq \theta (c)$.
The following is \cite[Proposition 3-4.6]{W}.

\begin{lemma}\label{lem:noise} Let $S$ be a Boolean inverse semigroup.
Let $I$ be an additive ideal of $S$.
\begin{enumerate}
\item Define $(a,b) \in \varepsilon_{I}$ if and only if there exists $c \leq a,b$ such that $a \setminus c, b \setminus c \in I$.
Then $\varepsilon_{I}$ is an additive congruence with kernel $I$.
\item If $\sigma$ is any additive congruence with kernel $I$ then $ \varepsilon_{I} \subseteq \sigma$.
\end{enumerate}
\end{lemma}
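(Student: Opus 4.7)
The plan is to verify the four ingredients of part (1) in turn: that $\varepsilon_I$ is an equivalence relation, is a semigroup congruence, preserves binary compatible joins, and has kernel $I$; part (2) will then follow from a short additional argument. The whole of part (1) rests on one algebraic identity that I would establish first. Whenever $y \leq x \leq z$ in a Boolean inverse semigroup, there is an orthogonal decomposition
\[
z \setminus y = (z \setminus x) \oplus (x \setminus y),
\]
which I would prove by transporting the question to the Boolean algebra $z^{\downarrow}$ (see Lemma~\ref{lem:complement}) and verifying the corresponding identity for complements there.

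Reflexivity and symmetry of $\varepsilon_I$ are immediate from the definition, taking $c = a$ and using $a \setminus a = 0 \in I$. For transitivity, given witnesses $c \leq a,b$ and $c' \leq b,d$, I would put $e = c \wedge c'$; this meet exists because $c$ and $c'$ both lie below $b$ and are therefore compatible. Then $e \leq a,d$, and the telescoping identity gives $a \setminus e = (a \setminus c) \oplus (c \setminus e)$; a short computation inside the Boolean algebra $b^{\downarrow}$ shows that $c \setminus e \leq b \setminus c'$, so both summands lie in $I$ (using that the semigroup ideal $I$ is downward closed), and additivity of $I$ forces $a \setminus e \in I$. Symmetrically $d \setminus e \in I$. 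For the multiplicative congruence property, if $c$ and $c'$ are witnesses for $a\,\varepsilon_I\,b$ and $a'\,\varepsilon_I\,b'$, then $cc' \leq aa',\,bb'$ and, using distributivity, one expands $aa' = cc' \oplus c(a'\setminus c') \oplus (a \setminus c)a'$; the last two terms lie in the two-sided ideal $I$, and closure under compatible joins concludes.

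To verify additivity of $\varepsilon_I$, assume $(a_i, b_i) \in \varepsilon_I$ with witnesses $c_i$ for $i = 1,2$ and that the joins $a_1 \vee a_2$ and $b_1 \vee b_2$ exist. Any two elements beneath compatible elements are themselves compatible, so $c_1 \vee c_2$ exists and lies below both joins. The key inequality
\[
(a_1 \vee a_2) \setminus (c_1 \vee c_2) \leq (a_1 \setminus c_1) \vee (a_2 \setminus c_2)
\]
I would obtain by applying the order-isomorphism $u \mapsto \mathbf{d}(u)$ to the principal order-ideal $(a_1 \vee a_2)^{\downarrow}$ and reducing to a routine computation in the generalized Boolean algebra $\mathsf{E}(S)$. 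Downward closure together with additivity of $I$ then finishes the job. The kernel is computed directly: $(a,0) \in \varepsilon_I$ forces $c = 0$ and $a \setminus 0 = a$, so the $\varepsilon_I$-class of $0$ is exactly $I$.

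For part (2), suppose $\sigma$ is an additive congruence with kernel $I$ and $(a,b) \in \varepsilon_I$ with witness $c$. Then $a = c \oplus (a \setminus c)$ and $(a \setminus c)\,\sigma\,0$, so additivity of $\sigma$ yields $a\,\sigma\,c$; symmetrically $b\,\sigma\,c$, whence $a\,\sigma\,b$. I expect the main obstacle throughout to be the careful manipulation of the relative complement operation through joins and meets---specifically establishing the telescoping identity and the join-inequality cleanly---but once these are transported via $\mathbf{d}$ to the generalized Boolean algebra of idempotents, the remaining work reduces to standard Boolean-algebra bookkeeping.
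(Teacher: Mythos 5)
The paper does not actually prove Lemma~\ref{lem:noise}: it is quoted verbatim from Wehrung \cite[Proposition 3-4.6]{W}, so there is no in-paper argument to compare against. Your proof is correct and self-contained, and its organizing idea --- reduce every manipulation of the relative complement to the generalized Boolean algebra $\mathsf{E}(S)$ via the order-isomorphism $x \mapsto \mathbf{d}(x)$ of $a^{\downarrow}$ onto $\mathbf{d}(a)^{\downarrow}$ --- is exactly consistent with how the paper defines $a \setminus b$ in Lemma~\ref{lem:complement}, so the telescoping identity and the join-inequality do transport as you claim. Two small points of hygiene. First, in the product step the three-term decomposition of $aa'$ is a compatible join but not in general an orthogonal one (the domains of $cc'$ and of $(a\setminus c)a'$ need not be orthogonal), so the $\oplus$ there should be a $\vee$; this costs nothing, since all three terms lie below $aa'$ and are therefore pairwise compatible. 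Second, in that same step "closure under compatible joins concludes" elides the final move you make explicitly elsewhere: from $aa' = cc' \vee u$ with $u \in I$ you still need $aa' \setminus cc' \leq u$ (the Boolean identity $t \wedge p^{c} \leq q$ when $t = p \vee q$, transported by $\mathbf{d}$) together with the downward closure of the semigroup ideal $I$ to conclude $aa' \setminus cc' \in I$. With those two cosmetic repairs the argument is complete, including the kernel computation and the minimality statement in part (2).
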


An additive congruence is {\em ideal-induced} if it equals $\varepsilon_{I}$ for some additive ideal $I$.
The following result is due to Ganna Kudryavtseva (private communication).

\begin{proposition}\label{prop:anja} A morphism of Boolean inverse semigroups is weakly-meet-pre\-serving if
and only if its associated congruence is ideal-induced.
\end{proposition}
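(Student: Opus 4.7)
Let $I = \ker \theta$, an additive ideal in $S$. By Lemma~\ref{lem:noise}(2) we always have $\varepsilon_I \subseteq \sigma_\theta$, so the proposition reduces to the equivalence between weakly-meet-preserving and $\sigma_\theta \subseteq \varepsilon_I$. I would prove the two implications separately.

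For the forward direction (weakly-meet-preserving implies ideal-induced), take $(a,b) \in \sigma_\theta$ and set $t = \theta(a) = \theta(b)$. Applying the weakly-meet-preserving hypothesis to $t \leq \theta(a), \theta(b)$ produces $c \leq a, b$ with $t \leq \theta(c)$; combined with $\theta(c) \leq \theta(a) = t$ this gives $\theta(c) = t$. Lemma~\ref{lem:complement} then yields the orthogonal decomposition $a = c \oplus (a \setminus c)$; applying $\theta$, which preserves compatible joins, gives $\theta(a) = \theta(c) \oplus \theta(a \setminus c)$. Since $\theta(a) = \theta(c)$ and the summands are orthogonal, the equality $x = x \oplus y$ forces $y = 0$ in any Boolean inverse semigroup (via $\mathbf{d}(y) \leq \mathbf{d}(x)$ and $\mathbf{d}(y)\mathbf{d}(x) = 0$). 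Hence $\theta(a \setminus c) = 0$, i.e., $a \setminus c \in I$; the symmetric argument gives $b \setminus c \in I$, and therefore $(a,b) \in \varepsilon_I$.

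For the reverse direction, assume $\sigma_\theta = \varepsilon_I$ and let $t \leq \theta(a), \theta(b)$. The plan is first to normalize by restricting to a common domain: put $e = \mathbf{d}(a)\mathbf{d}(b) \in \mathsf{E}(S)$, $a_1 = ae$, $b_1 = be$. Then $a_1 \leq a$, $b_1 \leq b$, both have domain $e$, and, since $\mathbf{d}(t) \leq \mathbf{d}(\theta(a))\mathbf{d}(\theta(b)) = \theta(e)$, one still has $t \leq \theta(a_1), \theta(b_1)$ with $\theta(a_1)\mathbf{d}(t) = \theta(b_1)\mathbf{d}(t) = t$. The target is an idempotent $\epsilon \in \mathsf{E}(S)$ with $\epsilon \leq e$, $a_1 \epsilon = b_1 \epsilon$, and $\theta(\epsilon) \geq \mathbf{d}(t)$: given such $\epsilon$, the element $c := a_1 \epsilon = b_1 \epsilon$ satisfies $c \leq a, b$ and $\theta(c) = \theta(a_1)\theta(\epsilon) \geq \theta(a_1)\mathbf{d}(t) = t$, closing the argument. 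To produce $\epsilon$ I would feed the hypothesis $\sigma_\theta = \varepsilon_I$ with a companion pair built from $a_1$ and $b_1$---the natural candidate being $(a_1 b_1^{-1} b_1,\, b_1 a_1^{-1} a_1)$, possibly after a further restriction---and take $\epsilon$ as the domain of the common lower bound that $\varepsilon_I$ supplies.

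The main obstacle is precisely this last step: one cannot invoke $(a_1, b_1) \in \varepsilon_I$ directly, because $\theta(a_1)$ and $\theta(b_1)$ agree only on the restriction to $\mathbf{d}(t)$ rather than on all of $\theta(e)$, and $\mathbf{d}(t)$ itself need not lie in the image $\theta(\mathsf{E}(S))$. Consequently, the naturally available pairs that $\sigma_\theta \subseteq \varepsilon_I$ can be applied to furnish idempotents in $S$ whose $\theta$-images may only dominate a strict sub-idempotent of $\mathbf{d}(t)$. The delicate point of the argument --- and what I expect to be the technical heart of Kudryavtseva's proof --- is to iterate the refinement so that the resulting $\theta(\epsilon)$ truly covers $\mathbf{d}(t)$, using Lemma~\ref{lem:meets-joins} to control how the meets and joins of these successive restrictions interact with $\theta$.
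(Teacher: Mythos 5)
Your forward direction (weakly-meet-preserving implies ideal-induced) is correct and is essentially the paper's own argument: the paper obtains $c \leq a,b$ with $t \leq \theta(c)$ and asserts that $\theta(a \setminus c) = 0 = \theta(b \setminus c)$ is ``easy to check''; your observation that $x = x \oplus y$ forces $y = 0$ (via $\mathbf{d}(y) \leq \mathbf{d}(x)$ together with $\mathbf{d}(y)\mathbf{d}(x)=0$) supplies exactly that check.

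The reverse direction, however, is a genuine gap, and you say so yourself. The obstacle you run into is an artefact of your strategy rather than of the problem: you are insisting that the common lower bound $c$ have the special form $a\epsilon = b\epsilon$ for an idempotent $\epsilon$ on which $a$ and $b$ literally agree, and, as you correctly note, no such $\epsilon$ with $\theta(\epsilon) \geq \mathbf{d}(t)$ can in general be extracted from the $\varepsilon_{I}$-data. The paper's proof simply drops that requirement. It works with the quotient morphism $\nu \colon S \rightarrow S/\varepsilon_{I}$ and takes $[t] \leq [a],[b]$. This gives $[t] = [at^{-1}t]$ and $[t] = [bt^{-1}t]$, and unpacking the definition of $\varepsilon_{I}$ for these two relations produces elements $u \leq t, at^{-1}t$ and $v \leq t, bt^{-1}t$ with $t \setminus u,\, t \setminus v \in I$. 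Since $u$ and $v$ both lie below the single element $t$ they are compatible, so $u \wedge v$ exists; it satisfies $u \wedge v \leq a,b$, and $t \setminus (u \wedge v) = (t \setminus u) \vee (t \setminus v) \in I$, whence $[u \wedge v] = [t]$. That closes the argument in one step --- no iterated refinement is needed, because the witnesses $u$ and $v$ need not be restrictions of $a$ and $b$ to a common idempotent; only their meet has to be a common lower bound. (A further point your formulation raises, but which the paper also sidesteps by proving this direction only for $\nu$, is what to do when $t \in T$ lies outside the image of $\theta$; for the quotient map every element of the codomain is a class $[t]$, so the issue does not arise there.)
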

\begin{proof} Let $I$ be an additive ideal of $S$ and let $\varepsilon_{I}$ be its associated additive congruence on $S$.
Denote by $\nu \colon S \rightarrow S/\varepsilon_{I}$ its associated natural morphism.
We prove that $\nu$ is weakly-meet-preserving.
Denote the $\varepsilon_{I}$-class containing $s$ by $[s]$.
Let $[t] \leq [a], [b]$.
Then $[t] = [at^{-1}t]$ and $[t] = [bt^{-1}t]$.
By definition there exist $u,v \in S$ such that
$u \leq t,at^{-1}t$ and $v \leq t,bt^{-1}t$ such that
$t \setminus u, at^{-1}t \setminus u, t \setminus v, bt^{-1}t \setminus v \in I$.
Now $[t] = [u] = [at^{-1}t]$ and $[t] = [v] = [bt^{-1}t]$.
Since $u,v \leq t$ it follows that $u \sim v$ and so $u \wedge v$ exists.
Clearly, $u \wedge v \leq a,b$.
In addition $[t] = [u \wedge v]$.
We have proved that $\nu$ is weakly-meet-preserving.

Conversely, let $\theta \colon S \rightarrow T$ be weakly-meet-preserving.
We prove that it is determined by its kernel $I$.
By part (2) of Lemma~\ref{lem:noise}, it is enough to prove that if $\theta (a) = \theta (b)$ then
we can find $c \leq a,b$ such that $a \setminus c, b \setminus c \in I$.
Put $t = \theta (a) = \theta (b)$.
Then there exists $c \leq a,b$ such that $t \leq \theta (c)$.
It is easy to check that $\theta (a \setminus c) = 0 = \theta (b \setminus c)$.
We have therefore proved that $a \setminus c, b \setminus c \in I$ and so $(a,b) \in \varepsilon_{I}$.
\end{proof}

We now combine the above two properties.
A morphism $\theta \colon S \rightarrow T$ of Boolean inverse semigroups is said to be {\em callitic}\footnote{I made this word up. It comes from the Greek word `kallos' meaning beauty.
I simply wanted to indicate that these maps were sufficiently `nice'.} 
if it satisfies two conditions:
\begin{enumerate}
\item We require that $\theta$ be {\em proper}. This means that for each $t \in T$ we can write $t = \bigvee_{i=1}^{n} t_{i}$
where each $t_{i} \leq \theta (s_{i})$ for some $s_{i} \in S$.
\item We require that $\theta$ be {\em weakily-meet-preserving}.
\end{enumerate}
A continuous function between topological spaces is said to be {\em coherent} if the inverse images
of compact-open subsets are compact-open.
You can easily check that the collection of Boolean inverse semigroups and callitic morphisms forms a category, 
as does the collection of Boolean groupoids and coherent, continuous covering functors.
We can now state and prove the main theorem of this paper.

\begin{theorem}[Non-commutative Stone duality]\label{them:non-com-stone} The category of Boolean inverse semigroups and callitic morphisms
is dually equivalent to the category of Boolean groupoids and coherent, continuous covering functors.
\end{theorem}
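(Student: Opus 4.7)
The plan is to extend the object-level correspondence of Proposition~\ref{prop:isomorphism-of-structures} to the level of morphisms, by exhibiting pseudo-inverse functors $\mathsf{G}$ and $\mathsf{KB}$ on arrows and then checking that the two unit/counit maps already constructed in Proposition~\ref{prop:isomorphism-of-structures} are natural.

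First I would define $\mathsf{G}$ on a callitic morphism $\theta \colon S \to T$ by $\mathsf{G}(\theta)(P) = \theta^{-1}(P)$. Properness of $\theta$, via Lemma~\ref{lem:weakly-meet}, guarantees $\theta^{-1}(P) \neq \varnothing$ for every prime filter $P$ in $T$. That $\theta^{-1}(P)$ is a prime filter then uses (a) upward closure from monotonicity of $\theta$, (b) downward directedness from weakly-meet-preservation together with downward directedness of $P$, and (c) primeness from the fact that $\theta$ preserves joins. Functoriality $\mathsf{G}(\theta)(A \cdot B) = \mathsf{G}(\theta)(A) \cdot \mathsf{G}(\theta)(B)$ and preservation of identities (idempotent ultrafilters) follow from $\theta$ being a semigroup homomorphism together with Lemma~\ref{lem:cosets} and Lemma~\ref{lem:identities-in-bis}. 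For continuity and coherence one simply computes $\mathsf{G}(\theta)^{-1}(\mathscr{U}_{a}) = \mathscr{U}_{\theta(a)}$ on basic compact-opens, and observes that every compact-open of $\mathsf{G}(S)$ is a finite union of such basic sets by Proposition~\ref{prop:isomorphism-of-structures}(1). Star-bijectivity (the covering condition) is where the main work lies: given $B \in \mathsf{G}(S)$ with $\mathbf{d}(B) = \theta^{-1}(P)$, one picks any $b \in B$, forms the filter $A = (\theta(b) \cdot P)^{\uparrow}$ in $T$, uses Lemma~\ref{lem:cosets} and Lemma~\ref{lem:d-is-extremal} to check that $A$ is a prime filter with $\mathbf{d}(A) = P$ and $\theta^{-1}(A) = B$, and finally uses Lemma~\ref{lem:local-bisection} for uniqueness.

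Conversely, given a coherent, continuous covering functor $\phi \colon G \to H$, I would define $\mathsf{KB}(\phi)(U) = \phi^{-1}(U)$. Coherence ensures $\phi^{-1}(U)$ is compact-open; that it is a local bisection uses the star-injective part of the covering condition together with Lemma~\ref{lem:water}; multiplicativity $\phi^{-1}(UV) = \phi^{-1}(U)\phi^{-1}(V)$ rests on Lemma~\ref{lem:former}, which is precisely the statement needed to lift a factorization in $H$ to $G$. Properness of $\mathsf{KB}(\phi)$ follows because the surjectivity half of star-bijectivity on identities (combined with coherence) shows that every compact-open local bisection of $G$ is a finite join of inverse images of basic compact-open local bisections of $H$. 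Weak-meet-preservation is then automatic from the set-theoretic identity $\phi^{-1}(U) \cap \phi^{-1}(V) = \phi^{-1}(U \cap V)$ together with Lemma~\ref{lem:meets-joins}.

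Finally, functoriality of $\mathsf{G}$ and $\mathsf{KB}$ on composites is immediate from the set-theoretic identities $(\psi\theta)^{-1} = \theta^{-1}\psi^{-1}$ and its groupoidal analogue, and identity morphisms are clearly preserved. The natural isomorphisms $\alpha_{S} \colon S \to \mathsf{KB}(\mathsf{G}(S))$ and $\beta_{G} \colon G \to \mathsf{G}(\mathsf{KB}(G))$ have already been exhibited in Proposition~\ref{prop:isomorphism-of-structures}; naturality in each variable is a direct diagram chase, since $\alpha_{S}(a) = \mathscr{U}_{a}$ and $\mathsf{KB}(\mathsf{G}(\theta))(\mathscr{U}_{a}) = \mathsf{G}(\theta)^{-1}(\mathscr{U}_{a}) = \mathscr{U}_{\theta(a)}$, and similarly for $\beta$. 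The main obstacle, as always in dualities of this flavour, is verifying star-bijectivity of $\mathsf{G}(\theta)$: this is the precise groupoid-theoretic shadow of the weakly-meet-preserving-plus-proper condition, and getting the lifted filter $A$ to be simultaneously prime, to sit above $P$, and to pull back to exactly $B$ is the technical heart of the argument.
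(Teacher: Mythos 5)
Your overall route is the same as the paper's: extend the object-level isomorphisms of Proposition~\ref{prop:isomorphism-of-structures} to morphisms by sending a callitic morphism $\theta$ to $\theta^{-1}$ on prime filters and a coherent continuous covering functor $\phi$ to $\phi^{-1}$ on compact-open local bisections, then check naturality. Most of your steps (non-emptiness of $\theta^{-1}(P)$ from properness, downward directedness from weak-meet-preservation, primeness from join-preservation, the star-surjectivity lift $A=(\theta(b)\cdot P)^{\uparrow}$ with uniqueness from Lemma~\ref{lem:local-bisection}, multiplicativity of $\phi^{-1}$ via Lemma~\ref{lem:former}) match the paper's argument.

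There is, however, one genuine gap: your claim that weak-meet-preservation of $\phi^{-1}$ is ``automatic from the set-theoretic identity $\phi^{-1}(U)\cap\phi^{-1}(V)=\phi^{-1}(U\cap V)$.'' This presupposes that $U\cap V$ is itself an element of $\mathsf{KB}(H)$, i.e.\ that the intersection of two compact-open local bisections is compact. That is exactly what fails when the Boolean groupoid $H$ is not Hausdorff (equivalently, when $\mathsf{KB}(H)$ is not a meet-semigroup; see Proposition~\ref{prop:ttwo}), and handling the non-Hausdorff case is the whole point of stating the theorem for general Boolean groupoids rather than reproving \cite{Lawson3}. The correct argument, as in the paper, takes a compact-open local bisection $Y\subseteq\phi^{-1}(U),\phi^{-1}(V)$, notes that $\phi(Y)$ is a compact subset of the \emph{open} local bisection $U\cap V$, covers $\phi(Y)$ by finitely many compact-open local bisections contained in $U\cap V$, and takes their union $W$; then $W\in\mathsf{KB}(H)$ with $W\leq U,V$ and $Y\subseteq\phi^{-1}(W)$. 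Without this step your proof only establishes the Hausdorff case. A second, more minor, imprecision: for properness of $\phi^{-1}$ you assert that every compact-open local bisection $B$ of $G$ is a finite join of sets of the form $\phi^{-1}(C)$; in general one only gets $B$ as a finite join of compact-open local bisections each \emph{contained in} some $\phi^{-1}(C)$, which is what the definition of proper actually requires. Similarly, in the functoriality computation $(\theta^{-1}(F)\theta^{-1}(G))^{\uparrow}=\theta^{-1}((FG)^{\uparrow})$ the nontrivial inclusion uses properness of $\theta$ (to produce $v$ with $\theta(v)\in G$ and factor $s$ as $(sv^{-1})v$), not merely the facts you cite; this should be made explicit.
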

\begin{proof} Let $\theta \colon S \rightarrow T$ be a callitic morphism between Boolean inverse semigroups.
Let $B$ be a prime filter in $T$.
We prove that $\theta^{-1}(B)$ is a prime filter in $S$.
By Lemma~\ref{lem:weakly-meet} this set is non-empty.
Let $x,y \in \theta^{-1}(B)$.
Then $\theta (x), \theta (y) \in B$.
But $B$ is a filter and so there is an element $b \in B$ such that
$b \leq \theta (x), \theta (y)$.
Since $\theta$ is weakly-meet-perserving, 
there is $s \in S$ such that $s \leq x,y$ and $b \leq \theta (s)$.
But $b \in B$ and so $\theta (s) \in B$ and so $s \in \theta^{-1}(B)$.
Let $x \in \theta^{-1}(B)$ and $x \leq y$.
Then $\theta (x) \leq \theta (y)$ and $\theta (x) \in B$.
It follows that $\theta (y) \in B$ and so $y \in \theta^{-1}(B)$.
Let $x \vee y \in \theta^{-1}(B)$.
Then $\theta (x \vee y) \in B$.
Now we use the fact that $\theta$ is also a morphism to get that $\theta (x) \vee \theta (y) \in B$.
But $B$ is a prime filter.
Without loss of generality, suppose that $\theta (x) \in B$ and so $x \in \theta^{-1}(B)$.
Put $\theta^{\star} = \theta^{-1}$.
We have therefore defined a function
$\theta^{\star} \colon \mathsf{G}(T) \rightarrow \mathsf{G}(S)$.
It remains to show that $\theta^{\star}$ is a coherent, continuous covering functor.

The bulk of the proof is taken up with showing that $\theta^{-1}$ is a functor.
Let $F$ be an idempotent prime filter in $T$.
Thus by Lemma~\ref{lem:idempotent-filter-is},
this is an inverse subsemigroup of $T$.
Then $\theta^{-1}(F)$ is a prime filter in $S$
and the inverse image of an inverse subsemigroup is an inverse subsemigroup.
It follows that $\theta^{-1}(F)$ is an idempotent prime filter.
We have therefore shown that $\theta^{-1}$ maps identities to identities.

We next prove that if $F$ and $G$ are prime filters such that $F^{-1} \cdot F = G \cdot G^{-1}$ then
$$( \theta^{-1} (F) \theta^{-1} (G) )^{\uparrow} = \theta^{-1} ( (FG)^{\uparrow}  ).$$
We prove first that
$$\theta^{-1} (F) \theta^{-1} (G) \subseteq \theta^{-1} (FG).$$
Let $s \in \theta^{-1} (F) \theta^{-1} (G)$.
Then $s = ab$ where $a \in \theta^{-1} (F)$ and $b \in \theta^{-1} (G)$.
Thus $\theta (s) = \theta (a) \theta (b) \in FG$.
It follows that $s \in \theta^{-1} (FG)$.
Observe that $\theta^{-1} (X)^{\uparrow} \subseteq \theta^{-1} (X^{\uparrow})$.
It follows that
$$( \theta^{-1} (F) \theta^{-1} (G) )^{\uparrow} \subseteq \theta^{-1} ( (FG)^{\uparrow}  ).$$
We now prove the reverse inclusion.
Let $s \in \theta^{-1} ( (FG)^{\uparrow} ).$
Then $\theta (s) \in F \cdot G$ and so
$fg \leq \theta(s)$ for some $f \in F$ and $g \in G$.
The map $\theta$ is assumed proper and so we may quickly deduce that there exists $v \in S$ such that $\theta (v) \in G$.
Consider the product $\theta (s) \theta (v)^{-1}$.
Since $\theta (s) \in F \cdot G$ and $\theta (v)^{-1} \in G^{-1}$ we have that
$\theta (s)\theta (v)^{-1} \in F \cdot G \cdot G^{-1} = F \cdot F^{-1} \cdot F = F$.
Thus $\theta (sv^{-1}) \in F$,  and we were given $\theta (v) \in G$, and clearly $(sv^{-1})v \leq s$.
Put $a = sv^{-1}$ and $b = v$.
Then $ab \leq s$ where $\theta (a) \in F$ and $\theta (b) \in G$.
It follows that $s \in (\theta^{-1} (F) \theta^{-1} (G) )^{\uparrow}$.

We may now show that $\theta^{-1}$ is a functor.
Let $F$ be a prime filter.
Observe that $\theta^{-1}(F)^{-1} = \theta^{-1} (F^{-1})$.
We have that
$$(\theta^{-1} (F^{-1}) \theta^{-1} (F))^{\uparrow}
=
(\theta^{-1} (F)^{-1} \theta^{-1} (F))^{\uparrow}
=
\mathbf{d} ( \theta^{-1} (F))$$
and
$$\theta^{-1} ( (F^{-1}F)^{\uparrow} ) = \theta^{-1} (\mathbf{d} (F)).$$
Hence by our result above
$$\theta^{-1} (\mathbf{d} (F))
=
\mathbf{d} ( \theta^{-1} (F)).$$
A dual result also holds and so $\theta^{-1}$ preserves the domain and codomain operations.
Suppose that $\mathbf{d}(F) = \mathbf{r} (G)$ so that $F \cdot G$ is defined.
By our calculation above $\mathbf{d} (\theta^{-1}(F)) = \mathbf{r} (\theta^{-1} (G))$
and so the product $\theta^{-1} (F) \cdot \theta^{-1} (G)$ is defined.
By our main result above we have that
$$\theta^{-1} (F \cdot G) = \theta^{-1} (F) \cdot \theta^{-1} (G),$$
as required.
We have therefore shown that $\theta^{-1}$ is a functor.
The proof that $\theta^{-1}$ is a covering functor follows the same lines as the proof of \cite[Proposition~2.15]{Lawson3}:
the proof of star injectivity uses Lemma~\ref{lem:local-bisection},
and the proof of star surjectivity uses this same lemma and Lemma~\ref{lem:d-is-extremal}.
To show that $\theta^{-1}$ is continuous, observe that 
a basic open set of $\mathsf{G}(S)$ has the form $\mathscr{U}_{s}$ for some $s \in S$.
It is simple to check that this is pulled back under the map $(\phi^{-1})^{-1}$ to the set $\mathscr{U}_{\theta (s)}$.
We now prove coherence.
Let $X$ be a compact-open subset of $\mathsf{G}(S)$.
Then $X$ may be written as a union of compact-open local bisections.
Thus, by compactness, we can write $X$ as a union of a finite number of compact-open local bisections.
We now use the previous result to deduce that the inverse image of $X$ is a finite union of compact-open local bisections
and so is itself a compact-open set.

Let $\phi \colon G \rightarrow H$ be a  coherent, continuous covering functor.
Let $U$ be a compact-open local bisection of $H$.
Then $\phi^{-1}(U)$ is compact-open since $\phi$ is coherent.
Without loss of generality, we can assume that it is non-empty.
We prove that it is a local bisection.
Let $g,h \in \phi^{-1}(U)$ be such that $\mathbf{d}(g) = \mathbf{d}(h)$.
Then $\theta (g), \theta (h) \in U$ and $\mathbf{d}(\theta (g)) = \mathbf{d}(\theta (h))$.
By assumption, $U$ is a local bisection and so $\theta (g) = \theta (h)$.
We now use the fact that $\phi$ is star-injective to deduce that $g = h$.
A dial result proves that $U$ is a local bisection.
Put $\phi^{-1} = \phi_{\star}$.
We have therefore defined a function
$\phi_{\star} \colon \mathsf{KB}(H) \rightarrow \mathsf{KB}(G)$.
It remains to show that $\phi_{\star}$ is a callitic morphism.

The proof that it is a semigroup homomorphism follows the same lines as the proof
in \cite[Proposition 2.17]{Lawson3} where we use Lemma~\ref{lem:former}.
We show that this map is proper and weakly-meet-preserving.
We prove first that $\phi^{-1}$ is proper. 
Let $B \in \mathsf{KB}(G)$.
Then $B$ is a non-empty compact-open local bisection in $G$.
Let $g \in B$.
Then $\phi (g) \in H$.
Clearly, $H$ is an open set containing $\phi (g)$.
Since $H$ is \'etale, it follows that $H$ is a union of compact-open local bisections 
and so $\phi (g) \in C_{g}$ for some an compact-open local bisection $C_{g}$ in $H$.
Since $\phi$ is continuous and coherent $g \in \phi^{-1}(C_{g})$ is compact-open 
and because $\phi$ is a covering functor $\phi^{-1} (C_{g})$ is a local bisection.
It follows that $B \subseteq \bigcup_{g \in B} \phi^{-1}(C_{g})$.
Since $B$ is compact, we may in fact write
$B \subseteq \bigcup_{i=1}^{m} \phi^{-1}(C_{g_{i}})$
for some finite set of elements $g_{1}, \ldots, g_{m} \in B$.
Put $B_{i} = B \cap \phi^{-1}(C_{g_{i}})$.
This is clearly an open local bisection and $B = \bigcup_{i=1}^{m} B_{i}$ 
and each $B_{i} \subseteq \phi^{-1}(C_{i})$ where $C_{i} = C_{g_{i}}$.
We prove that we may find compact-open local bisections $D_{i}$ such that
$B$ is the union of the $D_{i}$ and $D_{i} \subseteq \phi^{-1}(C_{i})$.
Since $B_{i}$ is an open local bisection it is a union of compact-open local bisections.
Amalgamating these unions we have that $B$ is a union of compact-open local bisections
each of which is a subset of one of the $\phi^{-1}(C_{i})$.
It follows that $B$ is a union of a finite number of such compact-open local bisections.
Define $D_{i}$ to be the union of those which are contained in $\phi^{-1}(C_{i})$ and the result follows.
We now prove that $\phi^{-1}$ is weakly-meet-preserving.
Let $A,B \in \mathsf{KB}(H)$.
Then $A$ and $B$ are compact-open local bisections of $H$.
Let $Y$ be any compact-open local bisection of $G$ such that $Y \subseteq \phi^{-1}(A), \phi^{-1}(B)$.
Clearly, $Y \subseteq \phi^{-1}(A \cap B)$.
We can at least say that $A \cap B$ is an open local bisection
and 
$\theta (Y) \subseteq A \cap B$.
Since $\phi$ is continuous, we know that $\phi (Y)$ is compact.
Now $H$ has a base of compact-open local bisections.
It follows that $A \cap B$ is a union of compact-open local bisections.
But $\theta (Y)$ is compact and so $\theta (Y)$ is contained in a finite union of compact-open local
bisections that is also contained in $A \cap B$.
Thus $\theta (Y) \subseteq V = \bigcup_{i=1}^{m} V_{i} \subseteq A \cap B$.
Now $A \cap B$ a local bisection implies that $V$ is a local bisection.
It is evident that $V$ is a compact-open local bisection itself.
We therefore have $\theta (Y) \subseteq V \subseteq A \cap B$.
Hence $Y \subseteq \theta^{-1}(\theta (Y)) \subseteq \theta^{-1}(V) \subseteq \theta^{-1}(A \cap B)$. 

Let $\theta \colon S \rightarrow T$ be a callitic morphism between two Boolean inverse semigroups.
Define $\mathsf{G}(\theta) = \theta^{\star}$.
Let $\phi \colon G \rightarrow H$ be a coherent continuous covering functor between Boolean groupoids.
Define $\mathsf{KB}(\mathsf{\phi}) = \phi_{\star}$.
It is now routine to check that we have defined functors:
where $\mathsf{G}$ takes us from Boolean inverse semigroups and callitic morphisms
to the dual of the category of Boolean groupoids and coherent continuous covering functors;
and where $\mathsf{KB}$ takes us from the category of Boolean groupoids and coherent continuous covering functors
to the dual of the category of Boolean inverse semigroups and callictic morphisms.

Let $\theta \colon S \rightarrow T$ be a callitic morphism between two Boolean inverse semigroups.
We shall compare this to the callitic morphism 
$(\theta^{\star})_{\star} \colon \mathsf{KB}(\mathsf{G}(S)) \rightarrow \mathsf{KB}(\mathsf{G}(T))$
using Proposition~\ref{prop:isomorphism-of-structures}.
We have to compute $(\theta^{\star})_{\star}(\mathscr{U}_{s})$.
It is routine to check that this is $\mathscr{U}_{\theta (s)}$. 
Let $\phi \colon G \rightarrow H$ be a coherent continuous covering functor between Boolean groupoids.
We shall compare this to the coherent continuous covering functor
$(\phi_{\star})^{\star} \colon \mathsf{G}(\mathsf{KB}(G)) \rightarrow \mathsf{G}(\mathsf{KB}(H))$
using Proposition~\ref{prop:isomorphism-of-structures}.
We have to compute $(\phi_{\star})^{\star}(\mathscr{F}_{g})$.
It is routine to check that this is $\mathscr{F}_{\phi (g)}$.

The functor $\mathsf{KB} \circ \mathsf{G}$ is now clearly naturally isomorphic with the identity functor on 
the category of Boolean inverse semigroups and callitic morphisms,
whereas
the functor $\mathsf{G} \circ \mathsf{KB}$ is now clearly naturally isomorphic with the identity functor on
the category of Boolean groupoids and coherent continuous covering functors.
\end{proof}

\section{Special cases}

In this section, we shall describe some special cases of Theorem~\ref{them:non-com-stone}. 
None of these is new, but they have not appeared altogether in this way before.

By the results of Section~2.3 and 2.4,
a generalized Boolean algebra with an identity is nothing other than a Boolean algebra.
It follows that the Boolean inverse semigroups with the property 
that the space of identities of their Stone groupoids are compact are precisely the Boolean inverse monoids.

The group of units of a Boolean inverse monoid is just the set of all compact-open bisections of the associated Stone groupoid;
the compact-open bisections form what is known as the {\em topological full group} of the Boolean groupoid.

Suppose that $S$ is a countable Boolean inverse semigroup.
Then the Stone groupoid of $S$ must have a countable base of compact-open local bisections.
It follows that its Stone groupoid is second-countable.

The Stone space of the Tarski algebra is the Cantor space by Example (2) of Example~\ref{ex:some-stone-spaces}.
Define a {\em Tarski monoid} to be a countably infinite Boolean inverse monoid
whose semilattice of idempotents forms a Tarski algebra.
Define a {\em Tarski groupoid} to be a second-countable Boolean groupoid whose space of identities is the Cantor space.

Following Wehrung \cite{W}, 
define a {\em semisimple} Boolean inverse semigroup to be one 
in which for each element $a$ the principal order ideal $a^{\downarrow}$
is finite. 

\begin{proposition}\label{prop:semisimple} Let $S$ be a Boolean inverse semigroup.
Then $S$ is semisimple if and only if $\mathsf{G}(S)$ carries the discrete topology.
\end{proposition}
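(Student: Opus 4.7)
\medskip
\noindent
\textbf{Proof plan.} The plan is to prove each direction by translating between the structure of $a^{\downarrow}$ in $S$, the Boolean algebra $\mathbf{d}(a)^{\downarrow}$ in $\mathsf{E}(S)$, and the clopen set $\mathscr{U}_{a}$ in the Stone groupoid $\mathsf{G}(S)$. Recall from the proof of Lemma~\ref{lem:complement} that the map $x \mapsto \mathbf{d}(x)$ is an order-isomorphism between $a^{\downarrow}$ and $\mathbf{d}(a)^{\downarrow}$; this is used in both directions.

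For the direction ``$\mathsf{G}(S)$ discrete $\Rightarrow$ $S$ semisimple'', fix $a \in S$ and put $e = \mathbf{d}(a)$. The set $\mathscr{U}_{e}$ is compact by part (5) of Lemma~\ref{lem:needed-for-later}, and it inherits the discrete topology from $\mathsf{G}(S)$; hence $\mathscr{U}_{e}$ is finite. Since $e$ is an idempotent, $\mathscr{U}_{e}$ consists entirely of idempotent ultrafilters, so $\mathscr{U}_{e} \subseteq \mathsf{F}(S)$; by Lemma~\ref{lem:topology-idempotent} together with Lemma~\ref{lem:bijection}, $\mathscr{U}_{e}$ is homeomorphic to the Stone space of the Boolean algebra $e^{\downarrow}$. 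A finite compact Hausdorff Stone space corresponds (via classical Stone duality, Theorem~\ref{them:finite-duality}) to a finite Boolean algebra, so $e^{\downarrow} = \mathbf{d}(a)^{\downarrow}$ is finite, whence $a^{\downarrow}$ is finite.

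For the converse, assume $S$ is semisimple and let $A \in \mathsf{G}(S)$. Pick any $a \in A$. Then $\mathbf{d}(a)^{\downarrow}$ is a finite Boolean algebra. By Lemma~\ref{lem:idempotent-filter} and Lemma~\ref{lem:relating}, $\mathsf{E}(\mathbf{d}(A))$ is an ultrafilter of $\mathsf{E}(S)$ containing $\mathbf{d}(a)$; via the order-isomorphism of Lemma~\ref{lem:bijection}, it restricts to an ultrafilter of the finite Boolean algebra $\mathbf{d}(a)^{\downarrow}$, which by Lemma~\ref{lem:green} is principal on some atom $e$ of $\mathbf{d}(a)^{\downarrow}$. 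Thus $\mathbf{d}(A) = e^{\uparrow}$ in $S$, and by Lemma~\ref{lem:cosets} we get $A = (ae)^{\uparrow}$. I claim $\mathscr{U}_{ae} = \{A\}$: if $B \in \mathscr{U}_{ae}$, then $e = \mathbf{d}(ae) \in \mathsf{E}(\mathbf{d}(B))$; since $e$ is an atom of $\mathbf{d}(a)^{\downarrow}$ (and hence no idempotent strictly lies below it in $\mathsf{E}(S)$), $e^{\uparrow}$ is the unique ultrafilter of $\mathsf{E}(S)$ containing $e$, forcing $\mathbf{d}(B) = e^{\uparrow} = \mathbf{d}(A)$. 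Since $ae \in A \cap B$, Lemma~\ref{lem:local-bisection} gives $A = B$. Hence $\{A\}$ is open and $\mathsf{G}(S)$ is discrete.

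The bulk of the work is really in the forward direction, where one must locate the ``witness'' $ae$ whose basic open set pins down $A$. The main potential obstacle is keeping the bijections between ultrafilters of $S$, idempotent ultrafilters, ultrafilters of $\mathsf{E}(S)$, and ultrafilters of the principal Boolean algebra $\mathbf{d}(a)^{\downarrow}$ straight, and verifying that the atom in $\mathbf{d}(a)^{\downarrow}$ one extracts really does generate an ultrafilter of $\mathsf{E}(S)$; this is essentially built into Lemma~\ref{lem:bijection}, so no surprises are expected.
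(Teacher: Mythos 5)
Your proof is correct. It rests on the same two ideas as the paper's: atoms of the semigroup give rise to singleton basic open sets, and a compact-open set in a discrete space is finite. The organization, however, is different enough to be worth comparing. In the direction ``semisimple $\Rightarrow$ discrete'' the paper argues globally: every non-zero element of a semisimple Boolean inverse semigroup is a finite join of atoms, so the sets $\mathscr{U}_{a}$ with $a$ an atom form a base, and each such set is a singleton; you instead argue locally, isolating each individual ultrafilter $A$ by producing the witness $ae$ (with $e$ an atom extracted from $\mathbf{d}(A)$ via Lemma~\ref{lem:green}) and invoking Lemma~\ref{lem:local-bisection} to show $\mathscr{U}_{ae}=\{A\}$ --- in effect you supply the justification for the paper's unproved assertion that $\mathscr{U}_{a}$ is a singleton when $a$ is an atom. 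In the converse direction the paper finishes in one line: $\mathscr{U}_{a}$ is finite and $b\mapsto \mathscr{U}_{b}$ order-embeds $a^{\downarrow}$ into the subsets of $\mathscr{U}_{a}$ by part (1) of Lemma~\ref{lem:needed-for-later}; you instead pass through $\mathbf{d}$, Lemma~\ref{lem:topology-idempotent}, Lemma~\ref{lem:bijection} and classical Stone duality to conclude that $\mathbf{d}(a)^{\downarrow}$ is a finite Boolean algebra. Your route is longer and uses more machinery, but it makes the reduction to the commutative (generalized Boolean algebra) case completely explicit; the paper's order-embedding argument is shorter and avoids Stone duality altogether. One cosmetic slip: your closing remark locates the ``witness $ae$'' in the forward direction, but in the order you presented the two implications it occurs in the converse.
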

\begin{proof} Let $a$ be any element of such a semigroup.
Then $\mathscr{U}_{a} = \bigcup_{i=1}^{n} \mathscr{U}_{a_{i}}$, where $a_{1}, \ldots, a_{n}$ are all the atoms below $a$;
this is proved by observinng that in a semisimple Boolean inverse semigroup,
every non-zero element lies above an atom and so every non-zero element is a join of atoms.
If $a$ is an atom then $\mathscr{U}_{a}$ contains exactly one element.
It follows that the sets $\mathscr{U}_{a}$, where $a$ is an atom, form a base for the topology on $\mathsf{G}(S)$,
and so this set is equipped with the discrete topology.
Conversely, suppose that $\mathsf{G}(S)$ is equipped with the discrete topology.
Then, for each $a \in S$, we have that $\mathscr{U}_{a}$ is finite since it is compact-open.
But $b \leq a$ if and only if $\mathscr{U}_{b} \subseteq \mathscr{U}_{a}$.
This proves that the Boolean inverse semigroup $S$ is semisimple.
\end{proof}

If $S$ is a semisimple Boolean inverse semigroup,
then the Stone groupoid $\mathsf{G}(S)$ is isomorphic to the set of atoms of $S$ equipped with the restricted product;
see \cite{Lawson2021} for the structure of semisimple Boolean inverse semigroups.
The significance of semisimple Boolean inverse semigroups is explained by the following result.
We say that a Boolean inverse semigroup is {\em atomless} if it has no atoms.

\begin{proposition}[Dichotomy theorem] Let $S$ be a $0$-simplifying Boolean inverse semigroup.
Then either $S$ is semisimple or $S$ is atomless.
\end{proposition}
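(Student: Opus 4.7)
The plan is to prove the contrapositive: assuming $S$ is not atomless, so that $S$ contains some atom $a$, I aim to deduce that every principal order-ideal of $S$ is finite. The strategy has three stages: first, bootstrap from the single atom $a$ to an atom of the idempotent semilattice $\mathsf{E}(S)$; then, use $0$-simplicity in its pencil form (Lemma~\ref{lem:zero-simplifying}) to manufacture atoms beneath every non-zero idempotent; finally, convert this atomicity into finiteness of principal order-ideals.

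The key technical step I would isolate is an \emph{atom transport} lemma: if $x \in S$ is non-zero and $\mathbf{r}(x) = \mathbf{d}(a)$, then $\mathbf{d}(x)$ is an atom of $\mathsf{E}(S)$. To see this, take any $0 < f \leq \mathbf{d}(x)$ and consider $xf$. Then $xf \leq x$ and $\mathbf{d}(xf) = f \neq 0$, so $xf \neq 0$, while a short computation gives $\mathbf{r}(xf) \leq \mathbf{r}(x) = \mathbf{d}(a)$. Specialising this argument to $x = a$ already shows that $\mathbf{d}(a)$ is an atom of $\mathsf{E}(S)$: if $0 < g \leq \mathbf{d}(a)$ then $ag \leq a$ is non-zero, so by atomicity of $a$ we get $ag = a$, and therefore $g = \mathbf{d}(ag) = \mathbf{d}(a)$. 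Knowing that $\mathbf{d}(a)$ is an atom forces $\mathbf{r}(xf) \in \{0,\mathbf{d}(a)\}$, and since $xf \neq 0$ we conclude $\mathbf{r}(xf) = \mathbf{d}(a) = \mathbf{r}(x)$. The standard inverse-semigroup fact that $b \leq a$ together with $\mathbf{r}(b) = \mathbf{r}(a)$ forces $b = a$ (because $b = \mathbf{r}(b)a = \mathbf{r}(a)a = a$) then yields $xf = x$ and hence $f = \mathbf{d}(x)$.

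Next, let $e$ be any non-zero idempotent of $S$. By Lemma~\ref{lem:zero-simplifying}, there is a pencil $\{x_1,\ldots,x_n\}$ from $e$ to $\mathbf{d}(a)$, so $e = \bigvee_{i=1}^n \mathbf{d}(x_i)$ and each $\mathbf{r}(x_i) \leq \mathbf{d}(a)$. Discarding the indices with $x_i = 0$ and using that $\mathbf{d}(a)$ is an atom of $\mathsf{E}(S)$, every remaining $x_i$ has $\mathbf{r}(x_i) = \mathbf{d}(a)$. The atom transport lemma then says that each $\mathbf{d}(x_i)$ is an atom of $\mathsf{E}(S)$. After removing duplicates (distinct atoms are orthogonal by Lemma~\ref{lem:jojo}), this presents $e$ as a finite orthogonal join of atoms, so the interval $e^{\downarrow}$ in the generalized Boolean algebra $\mathsf{E}(S)$ is a finite Boolean algebra by Proposition~\ref{prop:finite}.

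For the final step I would invoke the order-isomorphism $s^{\downarrow} \to \mathbf{d}(s)^{\downarrow}$, $x \mapsto \mathbf{d}(x)$, established in the proof of Lemma~\ref{lem:complement}: since the codomain is finite by the previous paragraph, so is $s^{\downarrow}$ for every $s \in S$, which is exactly the definition of semisimplicity. I expect the main obstacle to be the atom transport lemma, because it is the point where one must translate the atomicity of a single element $a \in S$ into a non-trivial atomicity statement about $\mathbf{d}(x)$ for elements $x$ unrelated to $a$; the rest of the argument is then a short combinatorial bookkeeping with pencils and an appeal to the already-established structure of finite Boolean algebras.
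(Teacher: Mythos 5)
Your proposal is correct and follows essentially the same route as the paper: pass from the atom $a$ to the idempotent atom $\mathbf{d}(a)$, use a pencil from an arbitrary non-zero idempotent to $\mathbf{d}(a)$ to write it as a finite join of atoms (your ``atom transport'' lemma is exactly the step the paper asserts with ``it follows that each $\mathbf{d}(x_i)$ is an atom,'' and your proof of it is right), and then transfer finiteness of $\mathbf{d}(s)^{\downarrow}$ to $s^{\downarrow}$ via the order-isomorphism. The only cosmetic difference is that you apply the pencil once to $\mathbf{d}(x)$ itself rather than to every $f\leq\mathbf{d}(x)$, and you spell out the transport argument that the paper leaves implicit.
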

\begin{proof} Suppose that $a$ is an atom.
Then $\mathbf{d}(a)$ is an atom.
We can therefore assume, without loss of generality, that there is at least one idempotent atom $e$.
Let $x$ be any non-zero element of $S$.
We prove that $x^{\downarrow}$ is finite.
It is enough to prove that $\mathbf{d}(x)^{\downarrow}$ is finite.
Let $f \leq \mathbf{d}(x)$.
Since the semigroup is assumed to be $0$-simplifying, there is a pencil from $f$ to $e$.
There is therefore a finite set of elements $\{x_{1},\ldots, x_{n} \}$ such that $f = \bigvee_{i=1}^{n} \mathbf{d}(x_{i})$
and $\mathbf{r}(x_{i}) \leq e$.
But $e$ is an atom.
Without loss of generality, we assume that all the $x_{i}$ are non-zero.
Thus $\mathbf{r}(x_{i}) = e$.
It follows that each $\mathbf{d}(x_{i})$ is an atom
and we have proved that $f$ is a join of a finite number of atoms.
It follows that $\mathbf{d}(x)^{\downarrow}$ is a finite Boolean algebra.
\end{proof}

Our next result tells us when the Stone groupoid is Hausdorff.

\begin{proposition}\label{prop:ttwo} Let $S$ be a Boolean inverse semigroup.
Then $S$ is a meet-semigroup if and only if $\mathsf{G}(S)$ is Hausdorff.
\end{proposition}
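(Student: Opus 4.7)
The plan is to transport the question across the isomorphism $S \cong \mathsf{KB}(\mathsf{G}(S))$ of Proposition~\ref{prop:isomorphism-of-structures}, using throughout the identity $\mathscr{U}_{a} \cap \mathscr{U}_{b} = \mathscr{U}_{a \wedge b}$ whenever $a \wedge b$ exists (both inclusions follow at once from filters being downward directed and upward closed).

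For the reverse direction I would fix $a,b \in S$ and show that $\mathscr{U}_{a} \cap \mathscr{U}_{b}$ is a compact-open local bisection, whence by Proposition~\ref{prop:isomorphism-of-structures} it is of the form $\mathscr{U}_{c}$ for a unique $c \in S$; part~(1) of Lemma~\ref{lem:needed-for-later} then identifies $c$ as $a \wedge b$. Compactness is where Hausdorffness enters: by part~(5) of Lemma~\ref{lem:needed-for-later} each $\mathscr{U}_{a}$ is compact-open, by part~(2) of Lemma~\ref{lem:topology-needed} it is also closed, so $\mathscr{U}_{a} \cap \mathscr{U}_{b}$ is a closed subset of the compact set $\mathscr{U}_{a}$ and hence compact by part~(3) of Lemma~\ref{lem:topology-needed}. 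The local bisection property is automatic since any subset of a local bisection is a local bisection.

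For the forward direction I will produce, for each pair of distinct prime filters $A \neq B$, elements $a \in A$, $b \in B$ with $a \wedge b = 0$; then $\mathscr{U}_{a}, \mathscr{U}_{b}$ are disjoint basic opens separating $A,B$. Suppose for contradiction no such pair exists. From $\mathbf{d}(a \wedge b) \leq \mathbf{d}(a)\mathbf{d}(b)$, all the idempotent products $\mathbf{d}(a)\mathbf{d}(b)$ are non-zero; this lifts to arbitrary idempotents in $\mathsf{E}(\mathbf{d}(A))$ and $\mathsf{E}(\mathbf{d}(B))$ (each sits above some such $\mathbf{d}(a)$, respectively $\mathbf{d}(b)$), so Lemma~\ref{lem:exel} applied in $\mathsf{E}(S)$ to the ultrafilter $\mathsf{E}(\mathbf{d}(A))$ (an ultrafilter by Lemma~\ref{lem:relating}) forces $\mathsf{E}(\mathbf{d}(B)) \subseteq \mathsf{E}(\mathbf{d}(A))$, hence equality by maximality; thus $\mathbf{d}(A) = \mathbf{d}(B)$, and symmetrically $\mathbf{r}(A) = \mathbf{r}(B)$. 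Lemma~\ref{lem:local-bisection} then gives $A \cap B = \varnothing$. Now pick $a_{0} \in A$, $b_{0} \in B$ and set $c = a_{0} \wedge b_{0}$: since $c \leq b_{0} \notin A$, upward closure forces $c \notin A$, and symmetrically $c \notin B$. Writing $a_{0} = c \vee (a_{0} \setminus c)$ with $c \perp (a_{0} \setminus c)$ via Lemma~\ref{lem:complement}, primality of $A$ combined with $c \notin A$ gives $a_{0} \setminus c \in A$; similarly $b_{0} \setminus c \in B$. Their meet lies below $a_{0} \wedge b_{0} = c$ while also sitting below $a_{0} \setminus c$, which is orthogonal to $c$; being simultaneously below and orthogonal to $c$ it must be zero, the desired contradiction.

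The main obstacle will be precisely that last subcase, where $\mathbf{d}(A) = \mathbf{d}(B)$ and $\mathbf{r}(A) = \mathbf{r}(B)$: the Hausdorffness of the identity space of $\mathsf{G}(S)$ gives no help in separating $A$ and $B$, and the meet hypothesis on $S$ must be genuinely used. The decisive step is passing from $a_{0}, b_{0}$ to the orthogonal complements $a_{0} \setminus c, b_{0} \setminus c$ relative to $c = a_{0} \wedge b_{0}$, which manufactures a pair with zero meet and closes the argument.
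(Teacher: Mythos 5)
Your proposal is correct. The reverse direction is essentially the paper's own argument: intersect two compact-open local bisections, use Hausdorffness (via parts (2) and (3) of Lemma~\ref{lem:topology-needed}) to see that the intersection is a closed subset of a compact set and hence compact-open, and transport back through Proposition~\ref{prop:isomorphism-of-structures}. Your forward direction also lands on the paper's decisive step --- passing from $a_{0},b_{0}$ to the relative complements $a_{0}\setminus c$ and $b_{0}\setminus c$ of $c=a_{0}\wedge b_{0}$, whose meet must vanish because it lies both below and orthogonal to $c$ --- but you reach it by a detour the paper avoids. Since $A$ and $B$ are distinct ultrafilters, neither contains the other, so one may choose $a_{0}\in A\setminus B$ and $b_{0}\in B\setminus A$ outright; then $c\notin A$ is immediate (if $c\in A$ then $b_{0}\in A$ by upward closure) and symmetrically $c\notin B$, which is all your final step requires. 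The entire contradiction scaffold --- lifting non-zero meets to $\mathsf{E}(\mathbf{d}(A))$ and $\mathsf{E}(\mathbf{d}(B))$, invoking Lemma~\ref{lem:exel} to force $\mathbf{d}(A)=\mathbf{d}(B)$ and $\mathbf{r}(A)=\mathbf{r}(B)$, and then Lemma~\ref{lem:local-bisection} to obtain $A\cap B=\varnothing$ --- is therefore dispensable: it is not wrong, but its only purpose is to manufacture the non-membership $b_{0}\notin A$, $a_{0}\notin B$ that a direct choice of $a_{0}$ and $b_{0}$ already supplies. The direct route gives a shorter, non-contradiction proof; your route costs extra machinery without buying additional generality here.
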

\begin{proof} Let $S$ be a Boolean inverse semigroup and suppose that it is a meet-semigroup.
Let $F$ and $G$ be distinct ultrafilters in $\mathsf{G}(S)$,
since neither ultrafilter can be a subset of the other,
we can find elements $a \in F \setminus G$ and $b \in G \setminus F$.
The element $a \wedge b$ exists by assumption and also by assumption $a \neq a \wedge b$ and $b \neq a \wedge b$.
Observe that $F \in \mathscr{U}_{a \setminus (a \wedge b)}$ 
and
$G \in \mathscr{U}_{b \setminus (a \wedge b)}$.
Let $x \leq a \setminus (a \wedge b), b \setminus (a \wedge b)$.
Then $x \leq a \wedge b$.
It follows that $x = 0$.
We have proved that $\mathscr{U}_{a \setminus (a \wedge b)} \cap \mathscr{U}_{b \setminus (a \wedge b)} = \varnothing$
and so we have proved that $\mathscr{G}(S)$ is Hausdorff.
Conversely, suppose that $\mathscr{G}(S)$ is Hausdorff.
Let $A$ and $B$ be two compact-open local bisections.
By part (2) of Lemma~\ref{lem:topology-needed}, both $A$ and $B$ are clopen.
Thus $A \cap B$ is clopen.
But by part (3) of Lemma~\ref{lem:topology-needed},
$A \cap B$ is also compact, and so it is a compact-open local bisection.
The result, that $S$ is a meet-semigroup, follows by Proposition~\ref{prop:isomorphism-of-structures}.
\end{proof}

\begin{remark}{\em The above result was essentially the basis of my paper \cite{Lawson3}. 
This paper won the Mahony-Neumann-Room Prize of the Australian Mathematical Society in 2017.}
\end{remark}

Recall that an inverse semigroup is {\em fundamental} if the only elements
that commute with all idempotents are themselves idempotents.
If we denote the {\em centralizer of the idempotents} by $\mathsf{Z}(\mathsf{E}(S))$
then an inverse semigroup is fundmanetal when  $\mathsf{E}(S) = \mathsf{Z}(\mathsf{E}(S))$
An \'etale topological groupoid $G$ is said to be {\em effective} if the 
interior of the isometry groupoid consists simple of the space of identities.

\begin{proposition}\label{prop:effective} Let $S$ be a Boolean inverse semigroup.
Then $S$ is fundamental if and only if $\mathsf{G}(S)$ is effective.
\end{proposition}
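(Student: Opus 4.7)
The plan is to translate both conditions to statements about compact-open local bisections $\mathscr{U}_a$ via the duality of Proposition~\ref{prop:isomorphism-of-structures} and then link them through part (4) of Lemma~\ref{lem:needed-for-later}. The pivotal intermediate claim is:
\begin{quote}
$a \in \mathsf{Z}(\mathsf{E}(S))$ if and only if $\mathscr{U}_a \subseteq \mathrm{Iso}(\mathsf{G}(S))$.
\end{quote}
Observe first that if $a$ commutes with all idempotents, then in particular $a\mathbf{d}(a) = \mathbf{d}(a)a = \mathbf{d}(a)a\mathbf{r}(a)$, which forces $\mathbf{r}(a) \leq \mathbf{d}(a)$; the symmetric argument gives $\mathbf{d}(a) = \mathbf{r}(a)$.

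For the ($\Rightarrow$) direction of the claim, take $F \in \mathscr{U}_a$. By Lemma~\ref{lem:relating}, $\mathbf{d}(F) = \mathbf{r}(F)$ as idempotent prime filters precisely when $\mathsf{E}(\mathbf{d}(F)) = \mathsf{E}(\mathbf{r}(F))$ in the Boolean algebra $\mathsf{E}(S)$. Using that $F$ is downwardly directed, one sees $\mathsf{E}(\mathbf{d}(F)) = \{e \in \mathsf{E}(S) : s^{-1}s \leq e \text{ for some } s \in F\}$ and symmetrically for $\mathsf{E}(\mathbf{r}(F))$. Given $e \in \mathsf{E}(\mathbf{d}(F))$, refine to $t \in F$ with $t \leq a$ and $t^{-1}t \leq e$. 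Then $t = a\mathbf{d}(t)$ and, since $a$ commutes with the idempotent $\mathbf{d}(t)$, $tt^{-1} = a\mathbf{d}(t)a^{-1} = \mathbf{d}(t)aa^{-1} = \mathbf{d}(t)\mathbf{r}(a) \leq \mathbf{d}(t) \leq e$. Hence $e \in \mathsf{E}(\mathbf{r}(F))$, and the reverse inclusion is symmetric. For ($\Leftarrow$), I argue by contraposition: if $ae \neq ea$ for some idempotent $e$, then without loss of generality $ea \not\leq ae$ (otherwise swap the roles), so Lemma~\ref{lem:crucial} yields a prime filter $F \ni ae$ with $ea \notin F$. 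Then $a \in F$ since $ae \leq a$, and $(ae)^{-1}(ae) = e\mathbf{d}(a) \in \mathbf{d}(F)$, so $e \in \mathsf{E}(\mathbf{d}(F))$. Were $\mathbf{d}(F) = \mathbf{r}(F)$, some $s \in F$ would satisfy $\mathbf{r}(s) \leq e$; taking $t \in F$ with $t \leq a,s$ gives $t = \mathbf{r}(t)a \leq ea$, contradicting $ea \notin F$. Thus $F \in \mathscr{U}_a \setminus \mathrm{Iso}(\mathsf{G}(S))$. The main obstacle in this step is the careful bookkeeping that refines arbitrary filter witnesses down to an element lying below $a$, so the commutativity hypothesis can actually be applied to the right idempotent.

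With the claim in hand, the proposition follows at once. The interior of $\mathrm{Iso}(\mathsf{G}(S))$, being an open subset of the space with basis $\{\mathscr{U}_a : a \in S\}$, equals the union of those basic open sets it contains, which by the claim is exactly $\bigcup \{\mathscr{U}_a : a \in \mathsf{Z}(\mathsf{E}(S))\}$. Since $\mathsf{G}(S)_o$ is open in the \'etale groupoid and is trivially contained in the interior of $\mathrm{Iso}(\mathsf{G}(S))$, effectiveness is equivalent to $\mathscr{U}_a \subseteq \mathsf{G}(S)_o$ for every $a \in \mathsf{Z}(\mathsf{E}(S))$. By part (4) of Lemma~\ref{lem:needed-for-later}, this says exactly that every element of $\mathsf{Z}(\mathsf{E}(S))$ is an idempotent, i.e., $\mathsf{E}(S) = \mathsf{Z}(\mathsf{E}(S))$, which is the definition of $S$ being fundamental.
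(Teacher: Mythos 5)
Your proposal is correct and follows essentially the same route as the paper: both hinge on the intermediate claim that $a \in \mathsf{Z}(\mathsf{E}(S))$ if and only if $\mathscr{U}_a \subseteq \mathrm{Iso}(\mathsf{G}(S))$ and then read off effectiveness using part (4) of Lemma~\ref{lem:needed-for-later} (the paper proves the converse of that claim by showing $\mathscr{U}_{ae} = \mathscr{U}_{ea}$ and invoking injectivity of $a \mapsto \mathscr{U}_a$ rather than by your contraposition, but both rest on the separation Lemma~\ref{lem:crucial}). One cosmetic slip: from $ea \nleq ae$, Lemma~\ref{lem:crucial} produces a filter containing $ea$ and omitting $ae$, so the case you actually argue is $ae \nleq ea$; the symmetric argument you indicate handles the other case, so nothing essential is affected.
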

\begin{proof} We prove first that $a \in \mathsf{Z}(\mathsf{E}(S))$ if and only if 
$\mathscr{U}_{a} \subseteq \mbox{\rm Iso}(\mathsf{G}(S))$.
Suppose that $a$ centralizes all the idempotents of $S$.
Then, in particular, $a$ entralizes $aa^{-1}$.
Thus $aaa^{-1} = aa^{-1}a = a$.
It follows that $a^{-1}a aa^{-1} = aa^{-1}$.
This proves that $aa^{-1} \leq a^{-1}a$.
By symmetry, $a^{-1}a \leq aa^{-1}$ and so $a^{-1}a = aa^{-1}$.
Let $A \in \mathscr{U}_{a}$.
We need to prove that $\mathbf{d}(A) = \mathbf{r}(A)$.
Let $x \in \mathbf{d}(A)$.
Then there is an idempotent $e \in \mathbf{d}(A)$ such that $e \leq x$.
By assumption, $a \in A$ and so $a^{-1}a \in \mathbf{d}(A)$.
It follows that $ea^{-1}a \in A$ and clearly $ea^{-1}a \leq x$.
But $ea^{-1}a = eaa^{-1} = aea^{-1}$, where we have again used the fact that $a$ commutes with all idempotents.
It follows that $aea^{-1} \leq x$.
Now $ae \in A \cdot \mathbf{d}(A) = A$.
It follows that $x \in \mathsf{r}(A)$.
We have therefore prove that $\mathsf{d}(A) \subseteq \mathbf{r}(A)$.
The proof of the reverse inclusion follows by symmetry.
We have therefore proved that $\mathbf{d}(A) = \mathbf{r}(A)$.
It follows that 
$a \in \mathsf{Z}(\mathsf{E}(S))$ 
implies that $\mathscr{U}_{a} \subseteq \mbox{\rm Iso}(\mathsf{G}(S))$.
Now, suppose that $\mathscr{U}_{a} \subseteq \mbox{\rm Iso}(\mathsf{G}(S))$.
Let $e$ be any idempotent.
We shall prove that $\mathscr{U}_{ae} = \mathscr{U}_{ea}$ and the result will follow by part (2) of
Lemma~\ref{lem:needed-for-later}.
Suppose that $A \in \mathscr{U}_{ae}$.
Then $ae \in A$ and so $a \in A$.
By assumption, $\mathbf{d}(A) = \mathbf{r}(A)$.
Now, $ae \in A$ and so $(ae)^{-1}ae =  ea^{-1}a$.
It follows that $ea^{-1}a \in \mathbf{r}(A)$.
Hence $a^{-1}aea \in A$.
It follows that $ea \in A$.
We have therefore proved that $A \in \mathscr{U}_{ea}$.
By symmetry, $\mathscr{U}_{ae} = \mathscr{U}_{ea}$.

We now prove the claim.
Suppose that $S$ is fundamental.
Let $\mathscr{U}_{a} \subseteq  \mbox{\rm Iso}(\mathsf{G}(S))$.
Then $a$ must commute with all idempotents and so is itself an idempotent.
It follows that the only open sets in $\mbox{\rm Iso}(\mathsf{G}(S))$
are open sets of identities.
It follows that $\mathsf{G}(S)$ is effective.
Conversely, suppose that $\mathsf{G}(S)$ is effective.
Let $a$ commute with all idempotents.
Then $\mathscr{U}_{a} \subseteq \mbox{\rm Iso}(\mathsf{G}(S))$.
It follows that every element of $\mathscr{U}_{a}$ is an idempotent ultrafilter.
Thus by part (4) of Lemma~\ref{lem:needed-for-later}, we deduce that $a$ is an idempotent
and so $S$ is fundamental.
\end{proof}

Fundamental inverse semigroups have the important property that
there are no non-trivial {\em idempotent-separating} homomophsims.

Recall that an {\em infinitesimal} in an inverse semigroup $S$ with zero
is a non-zero element $a$ such that $a^{2} = 0$.
We say that a Boolean inverse semigroup is {\em basic} if every element is 
a finite join of infinitesimals and an idempotent.
You can check that basic inverse semigroups always have meets; see \cite[Lemma 4.30]{Lawson3}.

\begin{lemma}\label{lem:infinitesimals-in-ultrafilters} Let $S$ be a Boolean inverse semigroup.
Then every ultrafilter $A$ such that $\mathbf{d}(A) \neq \mathbf{r}(A)$
contains an infinitesimal. 
\end{lemma}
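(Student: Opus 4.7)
The plan is to exploit the fact that the idempotent ultrafilters $\mathbf{d}(A)$ and $\mathbf{r}(A)$, being distinct, must be separated by orthogonal idempotents. By Lemma~\ref{lem:idempotent-filter} together with Lemma~\ref{lem:relating}(2), $\mathsf{E}(\mathbf{d}(A))$ and $\mathsf{E}(\mathbf{r}(A))$ are distinct ultrafilters in the generalized Boolean algebra $\mathsf{E}(S)$. Since neither ultrafilter can be properly contained in the other, we may without loss of generality pick $e \in \mathsf{E}(\mathbf{d}(A)) \setminus \mathsf{E}(\mathbf{r}(A))$. Applying Exel's Lemma~\ref{lem:exel} to the ultrafilter $\mathsf{E}(\mathbf{r}(A))$, there must exist $f \in \mathsf{E}(\mathbf{r}(A))$ with $ef = 0$.

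Next, I would use the coset description of filters (Lemma~\ref{lem:cosets}) to manufacture an element of $A$ that is simultaneously clipped on both sides by these orthogonal idempotents. Fixing any $a \in A$, the equalities $A = (a\mathbf{d}(A))^{\uparrow} = (\mathbf{r}(A)a)^{\uparrow}$ immediately yield $ae \in A$ and $fa \in A$. Because $A$ is a filter, there exists $s \in A$ with $s \leq ae$ and $s \leq fa$. Since $\mathbf{d}$ and $\mathbf{r}$ are order-preserving, the first inequality gives $\mathbf{d}(s) \leq \mathbf{d}(ae) \leq e$ and the second gives $\mathbf{r}(s) \leq \mathbf{r}(fa) \leq f$. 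Consequently
$$\mathbf{d}(s) \cdot \mathbf{r}(s) \leq ef = 0.$$

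Finally, I would observe that $\mathbf{d}(s) \perp \mathbf{r}(s)$ forces $s^{2}=0$: using $s = s\mathbf{d}(s) = \mathbf{r}(s)s$ and the fact that idempotents commute,
$$s^{2} = (s\,\mathbf{d}(s))(\mathbf{r}(s)\,s) = s\,(\mathbf{d}(s)\mathbf{r}(s))\,s = s \cdot 0 \cdot s = 0.$$
Moreover $s \neq 0$, since $s \in A$ and $A$ is a proper filter. Hence $s$ is an infinitesimal lying in $A$, as required. The only mildly delicate step is the separation argument at the outset, namely translating ``distinct ultrafilters in $S$'' into ``distinct ultrafilters in $\mathsf{E}(S)$'' so that Exel's lemma can be invoked; once that bridge is crossed, everything else reduces to straightforward manipulations with the natural partial order and the coset formulas for filters.
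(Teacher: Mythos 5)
Your proof is correct and follows essentially the same strategy as the paper's: separate the distinct idempotent ultrafilters $\mathbf{d}(A)$ and $\mathbf{r}(A)$ by orthogonal idempotents $e$ and $f$, then clip an element $a \in A$ on both sides to produce a nonzero element of $A$ whose square is zero. The only differences are cosmetic --- the paper obtains the orthogonal pair $e,f$ by appealing to the Hausdorffness of the Stone space of $\mathsf{E}(S)$, whereas you derive it algebraically from Lemma~\ref{lem:exel}, and the paper takes the product $eaf$ directly rather than invoking downward directedness of $A$; both routes come to the same thing.
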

\begin{proof} Let $A$ be an ultrafilter such that $\mathbf{d}(A) \neq \mathbf{r}(A)$.
Then $\mathbf{d}(A) = E^{\uparrow}$ and $\mathbf{r}(A) = F^{\uparrow}$ where $E$ and $F$ are ultrafilters in
the generalized Boolean algebra $\mathsf{E}(S)$.
By classical Stone duality extended to generalized Boolean algebras, 
we know that the structure space of $\mathsf{E}(S)$ is Hausdorff.
Let $e,f \in \mathsf{E}(S)$ be such that $E \in \widetilde{U}_{e}$, $F \in \widetilde{U}_{f}$ and $\widetilde{U}_{e} \cap \widetilde{U}_{f} = \varnothing$.
In particular, $ef = 0$.
Let $a \in A$.
Then $eaf \in A$.
But $eaf$ is an infinitesimal. 
\end{proof}

The following was stated in \cite[Proposition 4.31]{Lawson2017}
but the condition that the Boolean inverse semigroup be a meet-monoid was omitted.

\begin{proposition}\label{prop:basic-principal} Let $S$ be a Boolean inverse meet-semigroup.
Then $S$ is basic if and only if $\mathsf{G}(S)$ is a principal groupoid.
\end{proposition}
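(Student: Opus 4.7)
The plan is to reformulate `principal' as `isotropy is trivial', i.e.\ every ultrafilter $A$ with $\mathbf{d}(A) = \mathbf{r}(A)$ is already an identity of $\mathsf{G}(S)$, equivalently is an idempotent ultrafilter. Both directions then reduce to analyzing such isotropy ultrafilters.

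For the forward direction, suppose $S$ is basic and that $A \in \mathsf{G}(S)$ satisfies $\mathbf{d}(A) = \mathbf{r}(A)$. Pick $a \in A$ and write $a = e \vee x_{1} \vee \cdots \vee x_{n}$ with $e \in \mathsf{E}(S)$ and each $x_{i}$ an infinitesimal. Since $A$ is prime (Lemma~\ref{lem:prime-equals-uf}), one summand must lie in $A$. If some $x_{i} \in A$, then $x_{i}^{2} = 0 \in AA \subseteq A \cdot A$, contradicting the fact that $A \cdot A$ is a proper ultrafilter by Lemma~\ref{lem:product}. Hence $e \in A$, so $A$ is an idempotent filter and therefore an identity.

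For the converse, assume $\mathsf{G}(S)$ is principal and fix $a \in S$. Using the meet-semigroup hypothesis, set $e = a \wedge \mathbf{d}(a) = \phi(a)$, the largest idempotent below $a$ by Lemma~\ref{lem:meets}, and let $b = a \setminus e$, so that $a = e \vee b$ orthogonally. Any idempotent $g \leq b$ satisfies $g \leq a$ and hence $g \leq e$ by maximality, while $g \perp e$ from $b \perp e$; thus $g = ge = 0$, so $0$ is the only idempotent below $b$. It remains to write $b$ as a finite join of infinitesimals. Consider any $B \in \mathscr{U}_{b}$. By Lemma~\ref{lem:idempotent-filter}, an idempotent filter containing $b$ would force a nonzero idempotent below $b$, which is impossible; so $B$ is not an identity, and principality forces $\mathbf{d}(B) \neq \mathbf{r}(B)$. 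Lemma~\ref{lem:infinitesimals-in-ultrafilters} then yields an infinitesimal $y \in B$, and $z = b \wedge y$ (available by the meet hypothesis) is a nonzero infinitesimal below $b$ lying in $B$, since $z^{2} \leq y^{2} = 0$ and $z \neq 0$ as $B$ is proper. Thus $\mathscr{U}_{b}$ is covered by the sets $\mathscr{U}_{z}$ with $z \leq b$ and $z^{2} = 0$; compactness of $\mathscr{U}_{b}$ (Lemma~\ref{lem:needed-for-later}) extracts a finite subcover $\mathscr{U}_{b} = \mathscr{U}_{z_{1}} \cup \cdots \cup \mathscr{U}_{z_{n}}$. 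The $z_{i}$ are pairwise compatible since they all lie below $b$, so Lemma~\ref{lem:needed-for-later} gives $z_{1} \vee \cdots \vee z_{n} = b$, yielding the basic decomposition $a = e \vee z_{1} \vee \cdots \vee z_{n}$.

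The main obstacle is the reverse direction, and more precisely the passage from `each $B \in \mathscr{U}_{b}$ contains an infinitesimal' to the global equation $b = z_{1} \vee \cdots \vee z_{n}$. This requires three ingredients to be combined cleanly: Lemma~\ref{lem:infinitesimals-in-ultrafilters} to locate infinitesimals pointwise, the meet-semigroup hypothesis to trim each infinitesimal $y$ down to $b \wedge y \leq b$, and the compactness of $\mathscr{U}_{b}$ on the Stone side to pass from a pointwise cover to a finite one. The forward direction, by contrast, is essentially a one-line application of the properness of $A \cdot A$ from Lemma~\ref{lem:product}.
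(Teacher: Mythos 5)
Your proof is correct and follows essentially the same route as the paper's: the forward direction rules out infinitesimals in isotropy ultrafilters via properness of $A \cdot A$ and then invokes primeness, and the converse splits $a = \phi(a) \vee (a \setminus \phi(a))$, locates infinitesimals in each prime filter of $\mathscr{U}_{b}$ via Lemma~\ref{lem:infinitesimals-in-ultrafilters}, and finishes by compactness of $\mathscr{U}_{b}$. The only cosmetic difference is that you trim the infinitesimal to $b \wedge y$ using the meet hypothesis where the paper uses downward directedness of the filter together with the observation that anything below an infinitesimal is zero or an infinitesimal.
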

\begin{proof} The proof that basic implies principal does not require the assumption that
the Boolean inverse semigroup be a meet-monoid.
Since ultrafilters are prime in Boolean inverse semigroups,
it follows that if $A$ is an element of a local group of the groupoid
then it cannot contain infinitesimals and so must contain an idempotent from which it follows that $A$ is an identity
ultrafilter.
We now prove the converse and use a different approach from the one adopted in \cite{Lawson2016}.
We are given that $\mathsf{G}(S)$ is principal and Hausdorff and we shall prove that $S$ is basic.
Let $a$ be any element of $S$.
Then, by assumption, $\phi (a) \leq a$ is the largest idempotent less than or equal to $a$.
Observe that $a = \phi (a) \vee (a \setminus \phi (a))$ is an orthogonal join
and that $\phi (a \setminus \phi (a)) = 0$.
With this in mind,
let $b \in S$ be an element such that $\phi (b) = 0$.
We shall be done if we prove that $b$ is a finite join of infinitesimals.
Consider the set $\mathscr{U}_{b}$ of all prime filters that contain $b$.
Let $A \in \mathscr{U}_{b}$.
The groupoid $\mathsf{G}(S)$ is principal.
If $\mathbf{d}(A) = \mathbf{r}(A)$ then $A$ must be an identity prime filter and so
contains non-zero idempotents.
It follows that $b$ must be above a non-zero idempotent, which is a contradiction.
Thus $\mathbf{d}(A) \neq \mathbf{r}(A)$.
We now use Lemma~\ref{lem:infinitesimals-in-ultrafilters} to deduce that
when the groupoid $\mathsf{G}(S)$ is principal each non-identity prime filter $A$ must contain infinitesimals.
But any element below an infinitesimal is either zero or an infinitesimal.
Thus $b$ must lie above an infinitesimal.
It follows that $\mathscr{U}_{b} = \bigcup_{x \leq b, x^{2} = 0, x \neq 0} \mathscr{U}_{x}$.
We now use compactness of $U_{b}$ to deduce that 
$\mathscr{U}_{b} = \bigcup_{i=1}^{m} \mathscr{U}_{x_{i}}$ 
where each $x_{i}$ is an infinitesimal $x_{i} \leq b$.
It follows by part (2) of Lemma~\ref{lem:needed-for-later} 
that $b$ is a join of a finite number of infinitesimals.
We have therefore proved that $S$ is basic.
\end{proof}

A subset of a groupoid is said to be {\em invariant} if it is a union of connected components.
The following combines results to be found in \cite{Lenz} and \cite{Lawson2016}. 

\begin{lemma} Let $S$ be a Boolean inverse semigroup.
Then there is an order-isomorphism between the set of additive ideals of $S$
and the set of open invariant subsets of $\mathsf{G}(S)$.
\end{lemma}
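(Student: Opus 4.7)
The plan is to construct two mutually inverse, inclusion-preserving assignments between additive ideals of $S$ and open invariant subsets of $\mathsf{G}(S)$:
\[
I \longmapsto U_I := \bigcup_{a \in I} \mathscr{U}_a,
\qquad
U \longmapsto I_U := \{ s \in S : \mathscr{U}_s \subseteq U \}.
\]
The set $U_I$ is open by construction and both maps are manifestly order-preserving; what remains is to verify (i) that $U_I$ is invariant, (ii) that $I_U$ is an additive ideal, and (iii) that these assignments are mutually inverse.

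For (i), I would first observe that additive ideals are automatically downward closed, since $b \leq a \in I$ yields $b = bb^{-1}a \in SI \subseteq I$. This downward closure lets me show that $A \in U_I$ if and only if $\mathbf{d}(A) \in U_I$: the forward direction uses $\mathbf{d}(a) \in \mathbf{d}(A) \cap I$ whenever $a \in A \cap I$, while for the converse an idempotent $e \in \mathbf{d}(A) \cap I$ forces some $c^{-1}c \leq e$ with $c \in A$, giving $c = c \cdot c^{-1}c \in A \cap I$. Hence invariance of $U_I$ reduces to showing that $U_I \cap \mathsf{F}(S)$ is closed under the $\mathsf{G}(S)$-orbit relation on the identity space. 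Given an arrow $X$ with $\mathbf{d}(X) = F$ containing an idempotent $f \in I$, the coset identity $X = (xF)^{\uparrow}$ from Lemma~\ref{lem:cosets} (applied to any $x \in X$) gives $xf \in X$, whence $xfx^{-1} \in XX^{-1} \subseteq \mathbf{r}(X) = F'$; since $xfx^{-1} \in I$ by the semigroup-ideal axiom, $F' \in U_I$.

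For (ii), closure of $I_U$ under compatible joins is immediate from $\mathscr{U}_{a \vee b} = \mathscr{U}_a \cup \mathscr{U}_b$. For the semigroup-ideal property, Lemma~\ref{lem:needed-for-later}(3) gives $\mathscr{U}_{sa} = \mathscr{U}_s \cdot \mathscr{U}_a$; any product $X \cdot Y$ in this set has $Y \in \mathscr{U}_a \subseteq U$ and $\mathbf{d}(X \cdot Y) = \mathbf{d}(Y)$, so $X \cdot Y \equiv Y$ via the identity at $\mathbf{d}(Y)$, and invariance of $U$ forces $X \cdot Y \in U$. The case of $as$ is symmetric via $\mathbf{r}$.

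For (iii), the equality $U = U_{I_U}$ follows at once from openness of $U$ together with the fact that the $\mathscr{U}_s$ form a base for the topology. The main obstacle is the reverse inclusion $I_{U_I} \subseteq I$. My approach is to reduce to the idempotent case: if I can show $\mathbf{d}(s) \in I$ then $s = s \cdot \mathbf{d}(s) \in SI \subseteq I$, and invariance of $U_I$ combined with $\mathscr{U}_s \subseteq U_I$ yields $\mathscr{U}_{\mathbf{d}(s)} \subseteq U_I$, so it suffices to treat $s = e$ an idempotent with $\mathscr{U}_e \subseteq U_I$. Compactness of $\mathscr{U}_e$ (Lemma~\ref{lem:needed-for-later}(5)) furnishes a finite subcover $\mathscr{U}_e \subseteq \bigcup_{i=1}^n \mathscr{U}_{a_i}$ with $a_i \in I$; replacing each $a_i$ by $f_i := \mathbf{d}(a_i) \in I$ on the idempotent level and passing to the Stone space of $\mathsf{E}(S)$ via Lemma~\ref{lem:topology-idempotent}, this becomes $\mathscr{V}_e \subseteq \mathscr{V}_{f_1 \vee \cdots \vee f_n}$ in that Stone space. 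Commutative Stone duality for generalized Boolean algebras (Proposition~\ref{prop:second-big-theorem}) then gives $e \leq f_1 \vee \cdots \vee f_n$; since $I$ is closed under the (automatically compatible) join of idempotents and is downward closed, $e \in I$, and the proof is complete.
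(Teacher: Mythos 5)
Your proof is correct. Note that the paper itself gives no argument for this lemma --- it merely cites Lenz and Lawson (2016) --- so there is no internal proof to compare against; what you have written is a complete, self-contained argument using exactly the correspondence one would expect, namely $I \mapsto \bigcup_{a \in I}\mathscr{U}_{a}$ and $U \mapsto \{s : \mathscr{U}_{s} \subseteq U\}$. The key points are all in place: downward closure of additive ideals, the equivalence $A \in U_{I} \Leftrightarrow \mathbf{d}(A) \in U_{I}$ together with the coset identity $X = (xF)^{\uparrow}$ to push membership along orbits of identities, invariance of $U$ to absorb products $\mathscr{U}_{s}\cdot\mathscr{U}_{a}$, and compactness of $\mathscr{U}_{e}$ plus closure of $I$ under finite joins of idempotents to get $I_{U_{I}} \subseteq I$. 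One small simplification: in the last step you need not pass to the Stone space of $\mathsf{E}(S)$ at all, since $\mathscr{U}_{e} \subseteq \mathscr{U}_{f_{1}\vee\cdots\vee f_{n}}$ already gives $e \leq f_{1}\vee\cdots\vee f_{n}$ directly by parts (1) and (4) of Lemma~\ref{lem:needed-for-later}; but the detour you take is harmless.
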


An \'etale groupoid is said to be {\em minimal} if it contains exactly two open invariant subsets.
The following was essentially proved as \cite[Corollary 4.8]{Lawson3},
but we do not need the assumption that the semigroup is a meet-semigroup.

\begin{proposition}\label{prop:zero-simplifying} A Boolean inverse semigroup $S$ is
$0$-simplifying if and only if $\mathsf{G}(S)$ is minimal.
\end{proposition}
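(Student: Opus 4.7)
The plan is to derive this as an immediate consequence of the order-isomorphism between the lattice of additive ideals of $S$ and the lattice of open invariant subsets of $\mathsf{G}(S)$ established in the lemma just preceding the statement. Once that correspondence is in place, the result reduces to counting.

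First I would check that the order-isomorphism matches up the two distinguished additive ideals with the two distinguished open invariant subsets: namely, the zero ideal $\{0\}$ of $S$ corresponds to the empty open invariant subset $\varnothing$ of $\mathsf{G}(S)$, while the improper ideal $S$ itself corresponds to the full invariant subset $\mathsf{G}(S)$. This is immediate from the construction of the isomorphism (an additive ideal $I$ maps to $\bigcup_{a \in I} \mathscr{U}_{a}$, and conversely an open invariant $U$ maps to $\{a \in S \colon \mathscr{U}_{a} \subseteq U\}$), but it is the only verification worth spelling out.

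Next I would unpack the two definitions. Saying $S$ is $0$-simplifying means $S \neq \{0\}$ and the only additive ideals of $S$ are $\{0\}$ and $S$; saying $\mathsf{G}(S)$ is minimal means it has exactly two open invariant subsets, namely $\varnothing$ and $\mathsf{G}(S)$ itself (and these are distinct, so $\mathsf{G}(S) \neq \varnothing$). Under the order-isomorphism, ``the set of additive ideals equals $\{\{0\}, S\}$'' translates directly into ``the set of open invariant subsets equals $\{\varnothing, \mathsf{G}(S)\}$'', and the non-triviality clauses $S \neq \{0\}$ and $\mathsf{G}(S) \neq \varnothing$ match up under the same correspondence since $S = \{0\}$ forces $\mathscr{U}_{a} = \varnothing$ for every $a \in S$ and hence $\mathsf{G}(S) = \varnothing$, and conversely if $\mathsf{G}(S) = \varnothing$ then by Lemma~\ref{lem:primefilters-form-a-base}(1) every element of $S$ is zero.

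There is no real obstacle here: the argument is pure bookkeeping once the preceding lemma is invoked. The only point requiring even mild care is tracking the non-triviality hypothesis through the bijection, which is why I would record it explicitly rather than appealing to the bare lattice isomorphism. An alternative, direct route would mimic the proof strategy of Lemma~\ref{lem:zero-simplifying} by constructing, for any non-empty proper open invariant subset $U$, an additive ideal $I \neq \{0\}, S$ via $I = \{a \in S \colon \mathscr{U}_{a} \subseteq U\}$, and conversely; but this merely re-proves the preceding lemma on the fly and is unnecessary given that the lemma is already available.
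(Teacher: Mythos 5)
Your proposal is correct and is exactly the argument the paper intends: the paper states this proposition without an explicit proof, relying on the immediately preceding lemma (the order-isomorphism between additive ideals of $S$ and open invariant subsets of $\mathsf{G}(S)$) together with a citation to an earlier paper, and your derivation simply writes out that translation, including the correct matching of $\{0\} \leftrightarrow \varnothing$ and $S \leftrightarrow \mathsf{G}(S)$ and the tracking of the non-triviality clause. Nothing is missing.
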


A Boolean inverse semigroup which is fundamental and $0$-simplifying is said to be {\em simple}.
The terminology is explained by the following result.

\begin{lemma}Let $\theta\colon S \rightarrow T$ be a morphism of Boolean inverse semigroups
where $S$ is simple. Then $\theta$ is injective. 
\end{lemma}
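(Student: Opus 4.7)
The plan is as follows. First consider the kernel $K=\{s\in S\colon \theta(s)=0\}$. This is an additive ideal of $S$: it is absorbing under the semigroup operation because $\theta$ is multiplicative, and it is closed under compatible binary joins because $\theta$ preserves them. Since $S$ is $0$-simplifying, either $K=\{0\}$ or $K=S$; the latter is the zero morphism (which must tacitly be excluded, since $S$ is non-zero by simplicity). Thus $K=\{0\}$.

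Next I would establish that $\theta$ is idempotent-separating. The restriction of $\theta$ to $\mathsf{E}(S)$ is a morphism of generalized Boolean algebras: meets coincide with multiplication, joins and $0$ are preserved, so the relative complement $\setminus$ is determined and preserved as well. Hence if $\theta(e)=\theta(f)$, then $\theta(e\setminus (e\wedge f))=\theta(e)\setminus\theta(f)=0$, so $e\setminus(e\wedge f)\in K=\{0\}$, giving $e\leq f$; by symmetry $e=f$.

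Finally, suppose $\theta(a)=\theta(b)=t$. Idempotent-separation applied to the equalities $\theta(aa^{-1})=tt^{-1}=\theta(bb^{-1})$ and the dual one gives $aa^{-1}=bb^{-1}$ and $a^{-1}a=b^{-1}b$. Set $c=ab^{-1}$; a direct computation shows $cc^{-1}=c^{-1}c=aa^{-1}$, while $\theta(c)=tt^{-1}$ is an idempotent. The heart of the argument is the claim that $c$ commutes with every idempotent of $S$. For an idempotent $e\leq aa^{-1}$, the conjugate $cec^{-1}$ is itself an idempotent, with image $tt^{-1}\theta(e)tt^{-1}=\theta(e)$ (since $\theta(e)\leq tt^{-1}$ and idempotents in $T$ commute); by idempotent-separation, $cec^{-1}=e$, and multiplying on the right by $c$ and using $e\cdot aa^{-1}=e$ yields $ce=ec$. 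For a general idempotent $e$ one writes $ce=c(aa^{-1}e)=(aa^{-1}e)c=ec$, the middle equality by the case just settled. Since $S$ is fundamental, this forces $c\in\mathsf{E}(S)$. But idempotents are self-inverse, so $c=cc^{-1}=aa^{-1}$, i.e.\ $ab^{-1}=aa^{-1}$. Multiplying on the right by $b$ and using $b^{-1}b=a^{-1}a$ gives $a=aa^{-1}b$, i.e.\ $a\leq b$; the symmetric argument yields $b\leq a$, so $a=b$.

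The principal obstacle is the commutation step in the third paragraph: going from $\theta$ being idempotent-separating to $c=ab^{-1}$ commuting with \emph{every} idempotent is what licenses the appeal to fundamentality, and it relies essentially on the conjugation trick together with the preliminary fact that $cc^{-1}=c^{-1}c=aa^{-1}$, which is what allows one to restrict attention to idempotents below $aa^{-1}$ without loss of generality.
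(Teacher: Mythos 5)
Your proof is correct and follows the same route as the paper: the kernel is an additive ideal, so $0$-simplifying forces it to be trivial, whence $\theta(e\setminus(e\wedge f))=0$ gives idempotent-separation, and fundamentality then yields injectivity. The only difference is that the paper simply invokes the standard fact that an idempotent-separating morphism out of a fundamental inverse semigroup is injective, whereas you prove it from scratch with a (correct) conjugation argument showing that $c=ab^{-1}$ centralizes $\mathsf{E}(S)$; your explicit exclusion of the zero morphism is also a point the paper leaves tacit.
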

\begin{proof} We prove first that $\theta$ is injective on idempotents;
in other words, that $\theta$ is idempotent-separating.
Let $e$ and $f$ be idempotents such that $\theta (e) = \theta (f)$.
Then $e \wedge f \leq e$ and $\theta (e \setminus (e \wedge f)) = 0$.
By assumption, the semigroup $S$ is $0$-simplifying and so $e = e \wedge f$.
By symmetry $f = e \wedge f$.
It follows that $e = f$.
We have proved that $\theta$ is idempotent-separating.
However, we are assuming also that $S$ is fundamental.
This means that $\theta$ is actually injective.
\end{proof}

An inverse semigroup $S \neq \{0\}$ 
is said to be {\em $0$-simple} if its only semigroup ideals
are $\{0\}$ and $S$ itself.
Observe that being $0$-simple is a stronger condition than being $0$-simplifying.
The following was proved as \cite[Proposition 3.2.10]{Lawson1998}.

\begin{lemma}\label{lem:zero-simple} Let $S$ be an inverse semigroup with zero.
Then it is $0$-simple if and only if for any two idempotents $e$ and $f$ there exists an idempotent $i$ such that $e \,\mathscr{D}\, i \leq f$.
\end{lemma}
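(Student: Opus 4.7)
The plan is to prove both directions separately. For the reverse direction, suppose the stated characterization holds and let $I$ be a non-zero semigroup ideal; I want to show $I = S$. Pick any non-zero $a \in I$ and note that the non-zero idempotent $f := aa^{-1}$ lies in $I$ (since $I$ is two-sided). For an arbitrary non-zero $x \in S$, I apply the hypothesis to the idempotents $e := x^{-1}x$ and $f$, obtaining an idempotent $i \leq f$ together with an element $s$ witnessing $s^{-1}s = e$ and $ss^{-1} = i$. From $i = if$ and $f \in I$ one gets $i \in I$, then $s = is \in I$; finally, the equality $s^{-1}s = x^{-1}x$ is the relation $s \,\mathscr{L}\, x$, so $x \in Ss \cup \{s\} \subseteq I$ because $I$ is a two-sided ideal. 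Hence $I = S$.

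For the forward direction, assume $S$ is $0$-simple and take idempotents $e, f$ (the case $e = 0$ is trivial with $i = 0$, and the statement forces $f \neq 0$ whenever $e \neq 0$ since $\mathscr{D}$-equivalence with $0$ entails being $0$). My first step is to establish $S^2 = S$: the set $S^2$ is an ideal of $S$, and if $S^2 = \{0\}$ then every element $a = a a^{-1} a \in S^3 \subseteq S^2$ is zero, contradicting $S \neq \{0\}$. Combined with $0$-simplicity applied to the principal two-sided ideals $SeS$ and $SfS$ (both non-zero, hence both equal to $S$), I obtain the key identity
\[
S \;=\; S^2 \;=\; SeS \cdot SfS \;\subseteq\; S \cdot (eSf) \cdot S,
\]
which forces $eSf \neq \{0\}$. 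I then extract a non-zero element $s \in eSf$; the relation $s = esf$ yields $ss^{-1} \leq e$ and $s^{-1}s \leq f$, both non-zero, with $ss^{-1} \,\mathscr{D}\, s^{-1}s$.

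The remaining and most delicate step is to upgrade this $\mathscr{D}$-equivalence, which so far only involves a possibly proper sub-idempotent $ss^{-1}$ of $e$, to one involving $e$ itself. My plan is to invoke (and separately prove) the fact that in a $0$-simple inverse semigroup every pair of non-zero idempotents is $\mathscr{D}$-equivalent, so that in particular $e \,\mathscr{D}\, ss^{-1}$; transitivity then yields $e \,\mathscr{D}\, s^{-1}s$ with $s^{-1}s \leq f$, as required. That auxiliary fact—equivalent to saying a $0$-simple inverse semigroup is $0$-bisimple—is the main obstacle in the argument; I would establish it by fixing any two non-zero idempotents $e', e''$, applying the ideal-product calculation just made to obtain $t \in e'Se''$ non-zero, and then bootstrapping using the principal-ideal structure (together with the observation that $\mathscr{D}$-equivalence of a pair of idempotents propagates along the natural $\mathscr{L}$- and $\mathscr{R}$-classes) to fill out the non-zero part of $S$ as a single $\mathscr{D}$-class.
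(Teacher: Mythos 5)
Your ``if'' direction is correct: producing the witness $s$ with $s^{-1}s = x^{-1}x$ and $ss^{-1}=i\leq f\in I$ and concluding $x\in Ss\subseteq I$ is exactly the right argument. The ``only if'' direction, however, rests on a false auxiliary claim: a $0$-simple inverse semigroup need \emph{not} be $0$-bisimple. Adjoining a zero to any simple but non-bisimple inverse semigroup (for instance a simple inverse $\omega$-semigroup with more than one $\mathscr{D}$-class, which exist by the Kochin--Munn classification) yields a $0$-simple inverse semigroup in which not all non-zero idempotents are $\mathscr{D}$-related. Indeed, the content of the lemma is precisely that $0$-simplicity is equivalent to the strictly weaker condition $e\,\mathscr{D}\,i\leq f$; were your auxiliary fact true, the lemma would just read ``$e\,\mathscr{D}\,f$ for all non-zero idempotents $e,f$''. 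So the proposed bootstrapping cannot be carried out, and the upgrade from $ss^{-1}\,\mathscr{D}\,s^{-1}s$ with $ss^{-1}\leq e$ to $e\,\mathscr{D}\,i$ genuinely fails along that route.

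The repair is short and does not pass through $eSf$. Since $SfS$ is a non-zero ideal, $0$-simplicity gives $e\in SfS$, say $e=afb$. Put $x=eaf$ and $y=fbe$; then $xy=eaffbe=e(afb)e=e$. From $x=e(af)$ one gets $xx^{-1}=e\,(af)(af)^{-1}\leq e$, while $e=(xy)(xy)^{-1}=x(yy^{-1})x^{-1}\leq xx^{-1}$ because $yy^{-1}$ is an idempotent; hence $xx^{-1}=e$. From $x=(ea)f$ one gets $i:=x^{-1}x\leq f$, and $x$ witnesses $e\,\mathscr{D}\,i\leq f$. (Note that the paper itself offers no proof of this statement, citing Proposition~3.2.10 of \cite{Lawson1998}, so the comparison here is with the standard argument rather than with an in-paper one.)
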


A non-zero idempotent $e$ is said to be {\em properly infinite} if we may find orthogonal idempotents
$i$ and $j$ such that $e \, \mathscr{D} \, i$ and $f \, \mathscr{D} \, j$ and $i,j \leq e$.
An inverse semigroup with zero is said to be {\em purely infinite} if every non-zero idempotent
is properly infinite.
The proof of the following is \cite[Lemma 4.11]{Lawson2016}.

\begin{lemma}\label{lem:Tarski-zero-simple} Let $S$ be a $0$-simple Tarski monoid.
Then $S$ is purely infinite.
\end{lemma}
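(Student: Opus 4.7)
The plan is to combine the atomless structure of the Tarski algebra $\mathsf{E}(S)$ with the characterization of $0$-simplicity in Lemma~\ref{lem:zero-simple}.

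Let $e \in \mathsf{E}(S)$ be an arbitrary non-zero idempotent. I would first exploit the fact that, since $S$ is a Tarski monoid, its semilattice of idempotents $\mathsf{E}(S)$ is a Tarski algebra and so atomless. Consequently $e$ cannot be an atom of $\mathsf{E}(S)$, so there is an idempotent $f$ with $0 < f < e$. Then the complement relative to $e$ (guaranteed by Lemma~\ref{lem:complement}) gives $e = f \oplus (e \setminus f)$ as an orthogonal join of two non-zero idempotents; set $e_{1} = f$ and $e_{2} = e \setminus f$ so that $e_{1}, e_{2} \leq e$, $e_{1} e_{2} = 0$, and both are non-zero.

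Next I would apply Lemma~\ref{lem:zero-simple}, which characterizes $0$-simplicity: for any two non-zero idempotents of $S$ one can be $\mathscr{D}$-related to an idempotent below the other. Applying this to the pair $(e, e_{1})$ yields an idempotent $i \leq e_{1}$ with $e \,\mathscr{D}\, i$; applying it to the pair $(e, e_{2})$ yields an idempotent $j \leq e_{2}$ with $e \,\mathscr{D}\, j$. Because $i \leq e_{1}$, $j \leq e_{2}$ and $e_{1} \perp e_{2}$, we automatically have $ij = 0$, so $i$ and $j$ are orthogonal sub-idempotents of $e$ each $\mathscr{D}$-related to $e$. This is exactly what it means for $e$ to be properly infinite, and since $e$ was arbitrary, $S$ is purely infinite.

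No step looks genuinely difficult; the only thing to be careful of is that Lemma~\ref{lem:zero-simple} must be applied in the right order (to get the $\mathscr{D}$-partner \emph{below} the prescribed idempotent), and that the split of $e$ really produces two non-zero pieces — which is where atomlessness of $\mathsf{E}(S)$, and hence the hypothesis that $S$ is a \emph{Tarski} monoid rather than merely $0$-simple, is essential. The countability and the Boolean-monoid structure beyond atomlessness play no role in the argument.
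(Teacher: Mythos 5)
Your proof is correct. The paper does not actually reproduce an argument for this lemma --- it simply cites \cite[Lemma 4.11]{Lawson2016} --- but the argument you give is exactly the expected one: atomlessness of the Tarski algebra $\mathsf{E}(S)$ splits $e$ as an orthogonal join $e = e_{1} \oplus e_{2}$ of two non-zero idempotents, and Lemma~\ref{lem:zero-simple} then plants a $\mathscr{D}$-copy of $e$ below each of $e_{1}$ and $e_{2}$; orthogonality of $i$ and $j$ follows since $i \leq e_{1}$, $j \leq e_{2}$ and $e_{1}e_{2}=0$. One tiny quibble with your closing remark: the Boolean structure is not entirely idle beyond atomlessness, since you need the relative complement $e \setminus f$ to exist in order to turn ``$e$ is not an atom'' into ``$e$ dominates two orthogonal non-zero idempotents'' --- but you do invoke Lemma~\ref{lem:complement} for precisely this, so the proof as written is complete.
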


The proof of the following is \cite[Theorem 4.16]{Lawson2016}.

\begin{proposition} Let $S$ be a Tarski monoid.
Then $S$ is $0$-simple if and only if $S$ is $0$-simplifying and purely infinite.
\end{proposition}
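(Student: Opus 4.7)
The plan is to tackle the two implications separately. The forward direction---$0$-simple implies $0$-simplifying and purely infinite---is essentially immediate: every additive ideal is in particular a semigroup ideal, so the only additive ideals of a $0$-simple semigroup are $\{0\}$ and $S$, giving $0$-simplifying; and Lemma~\ref{lem:Tarski-zero-simple} supplies pure infiniteness directly.

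For the converse, I would invoke Lemma~\ref{lem:zero-simple}, which reduces $0$-simplicity to showing that, given any two non-zero idempotents $e$ and $f$, there is an idempotent $j \leq f$ with $e \,\mathscr{D}\, j$. My strategy is to build an element $z \in S$ with $\mathbf{d}(z) = e$ and $\mathbf{r}(z) \leq f$; then $j = \mathbf{r}(z)$ does the job. The construction proceeds in two stages. First, $0$-simplifying gives, by Lemma~\ref{lem:zero-simplifying}, a pencil $\{x_1, \ldots, x_n\}$ from $e$ to $f$; by the remark following that lemma I may assume the domains $\mathbf{d}(x_i)$ are pairwise orthogonal. The ranges $\mathbf{r}(x_i)$ all lie below $f$ but may well overlap, so the obvious orthogonal join of the $x_i$ need not exist.

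Second, pure infiniteness lets me spread these ranges apart inside $f$. Since $f$ is properly infinite, iterated splitting---at each stage, the two orthogonal $\mathscr{D}$-copies of $f$ produced are non-zero, hence again properly infinite by hypothesis---yields pairwise orthogonal idempotents $f_1, \ldots, f_n \leq f$ with each $f_i \,\mathscr{D}\, f$. Choose witnesses $w_i \in S$ with $w_i^{-1}w_i = f$ and $w_iw_i^{-1} = f_i$, and set $z_i = w_ix_i$. A short calculation, using $\mathbf{r}(x_i) \leq f = w_i^{-1}w_i$ to conclude that $fx_i = x_i$, shows that $\mathbf{d}(z_i) = \mathbf{d}(x_i)$ and $\mathbf{r}(z_i) \leq f_i$. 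Both the domains and the ranges of the $z_i$ are now pairwise orthogonal, so the orthogonal join $z = z_1 \oplus \cdots \oplus z_n$ exists, with $\mathbf{d}(z) = \bigvee_i \mathbf{d}(x_i) = e$ and $\mathbf{r}(z) \leq \bigvee_i f_i \leq f$, as required.

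The main obstacle I expect is the iteration argument producing $n$ pairwise orthogonal $\mathscr{D}$-copies of $f$ inside $f$: one must know that the idempotents produced at each stage are themselves properly infinite, so the recursion can be continued. Fortunately, pure infiniteness is a blanket assumption---\emph{every} non-zero idempotent is properly infinite---so the non-zero $\mathscr{D}$-copies arising in the splitting are automatically properly infinite and the induction goes through. A secondary concern is the bookkeeping in the pencil-and-translation step (domains orthogonal by the remark after Lemma~\ref{lem:zero-simplifying}; ranges orthogonal because each lies below the chosen $f_i$), but this is routine given Lemma~\ref{lem:meets-joins}.
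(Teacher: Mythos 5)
Your proof is correct, and its overall skeleton is the same as the paper's: the forward direction is Lemma~\ref{lem:Tarski-zero-simple} plus the observation that additive ideals are semigroup ideals, and the converse reduces via Lemma~\ref{lem:zero-simple} to producing an element with domain $e$ and range below $f$, built by taking a pencil from $e$ to $f$ (Lemma~\ref{lem:zero-simplifying}), spreading its ranges apart inside $f$, and forming an orthogonal join. The one place you diverge is the mechanism for spreading: you manufacture pairwise orthogonal idempotents $f_1,\ldots,f_n \leq f$ with $f_i \,\mathscr{D}\, f$ by iterated splitting, which requires invoking pure infiniteness afresh at each stage of the recursion (harmless, since it is a blanket hypothesis). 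The paper instead takes a single properly infinite splitting of $f$, witnessed by elements $a,b$ with $\mathbf{d}(a)=\mathbf{d}(b)=f$ and $\mathbf{r}(a)\perp\mathbf{r}(b)$ below $f$, and sets $v_i = b^{i-1}a$; the ranges of the $v_i$ are then automatically pairwise orthogonal. The paper's trick uses proper infiniteness of the single idempotent $f$ only once, which is marginally more economical; your iteration is perhaps more transparent but costs you the small extra verification that the $\mathscr{D}$-copies produced at each stage are non-zero (they are, being $\mathscr{D}$-related to $f \neq 0$) and hence properly infinite. Both arguments are sound and the remaining bookkeeping is identical.
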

\begin{proof} One direction is proved by Lemma~\ref{lem:Tarski-zero-simple}.
The other direction is proved in \cite{Lawson2016},
but since the proof there is slightly garbled we give the complete proof here.
It is just a translation of \cite[Proposition~4.11]{Matui13}.
Let $e$ and $f$ be any non-zero idempotents.
Under the assumption that $S$ is $0$-simplifying, 
we may find elements $w_{1}, \ldots, w_{n}$ such that
$e = \bigoplus_{i=1}^{n} \mathbf{d}(w_{i})$ and $\mathbf{r}(w_{i}) \leq f$.
Using the fact that the semigroup is purely infinite,
we may find elements $a$ and $b$ such that $\mathbf{d}(a) = f = \mathbf{d}(b)$,
$\mathbf{r}(a) \perp \mathbf{r}(b)$ and $\mathbf{r}(a), \mathbf{r}(b) \leq f$.
Define elements $v_{1}, \ldots, v_{n}$ as follows:
$v_{1} = a$, $v_{2} = ba$, $v_{3} = b^{2}a$, \ldots, $v_{n} = b^{n-1}a$.
Observe that $\mathbf{d}(v_{1}) = \mathbf{d}(v_{2}) = \ldots = \mathbf{d}(v_{n}) = f$,
and that $\mathbf{r}(v_{i}) \leq f$ for $1 \leq i \leq n$.
The elements $\mathbf{r}(v_{i})$ are pairwise orthogonal.
Consider now the elements $v_{1}w_{1}, \ldots, v_{n}w_{n}$.
Observe that $\mathbf{d}(v_{i}) \geq \mathbf{r}(w_{i})$.
It follows that the domains of these elements are pairwise orthogonal as indeed are their ranges.
These elements are compatible and so we may form their (orthogonal) join:
$w = \bigoplus_{i=1}^{n} v_{i}w_{i}$.
Observe that $\mathbf{d}(w) = e$ and that the ranges, being orthogonal and each less than or equal to $f$ must have a join which is less than or equal to $f$.
\end{proof}

A Boolean inverse semigroup that is fundamental and $0$-simple is {\em congruence-free};
what we mean by this terminology is that there are no non-trivial congruences on $S$ 
of any description. 
See \cite{Lawson1998}.

If $S$ is a Boolean inverse semigroup then we may always write $S = \bigcup_{e \in \mathsf{E}(S)} eSe$.
A Boolean inverse semigroup is said to be {\em $\sigma$-unital}\footnote{A term taken 
from ring theory.} if there is a non-decreasing sequence of idempotents $e_{1} \leq e_{2} \leq \ldots$ 
such that $S = \bigcup_{i=1}^{\infty} e_{i}Se_{i}$. 
We call $\{e_{i} \colon i \in \mathbb{N} \setminus \{0\}\}$ the {\em $\sigma$-unit}.

\begin{example}{\em Let $S$ be a Boolean inverse monoid.
Then the Boolean semigroup $M_{\omega}(S)$ is $\sigma$-unital.}
\end{example}

A topological space is said to be {\em $\sigma$-compact} 
if it is a union of countably many compact spaces.

\begin{proposition} Let $S$ be a Boolean inverse semigroup.
Then $S$ is $\sigma$-unital if and only if the identity space of $\mathsf{G}(S)$ is
$\sigma$-compact.
\end{proposition}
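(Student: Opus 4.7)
The plan rests on Lemma~\ref{lem:topology-idempotent}, which identifies the identity space of $\mathsf{G}(S)$ with the Stone space $\mathsf{X}(\mathsf{E}(S))$ of the generalized Boolean algebra $\mathsf{E}(S)$; by the commutative Stone duality of Proposition~\ref{prop:second-big-theorem}, the compact-open subsets of this space are precisely the sets $\mathscr{V}_{e}$ for $e \in \mathsf{E}(S)$, and $\mathscr{V}_{e} \subseteq \mathscr{V}_{f}$ iff $e \leq f$. Throughout, I will identify an idempotent ultrafilter of $S$ with the corresponding ultrafilter of $\mathsf{E}(S)$ via Lemma~\ref{lem:idempotent-filter} and Lemma~\ref{lem:prime-equals-uf}.

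For the forward direction, suppose $\{e_{i}\}$ is a $\sigma$-unit, so that $S = \bigcup_{i} e_{i}Se_{i}$. I claim $\mathsf{X}(\mathsf{E}(S)) = \bigcup_{i} \mathscr{V}_{e_{i}}$; since each $\mathscr{V}_{e_{i}}$ is compact, this exhibits the identity space as $\sigma$-compact. Let $F$ be an ultrafilter on $\mathsf{E}(S)$ and pick any $e \in F$. By assumption, $e = e_{j}se_{j}$ for some $j$ and some $s \in S$. Since $e$ is idempotent, $e = \mathbf{d}(e) = \mathbf{d}(e_{j}se_{j}) = e_{j}s^{-1}e_{j}se_{j}$, an element clearly below $e_{j}$ in the idempotent semilattice; because $F$ is upwardly closed it follows that $e_{j} \in F$, i.e., $F \in \mathscr{V}_{e_{j}}$, as required.

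For the converse, write $\mathsf{X}(\mathsf{E}(S)) = \bigcup_{n} K_{n}$ with each $K_{n}$ compact. The basic compact-open sets $\mathscr{V}_{e}$ form a base, so each $K_{n}$ is covered by finitely many of them; taking the join of the corresponding idempotents, $K_{n} \subseteq \mathscr{V}_{e_{n}}$ for some $e_{n} \in \mathsf{E}(S)$. Setting $f_{n} = e_{1} \vee \cdots \vee e_{n}$ yields a non-decreasing sequence of idempotents with $\mathsf{X}(\mathsf{E}(S)) = \bigcup_{n}\mathscr{V}_{f_{n}}$. Now let $a \in S$. The compact sets $\mathscr{V}_{\mathbf{d}(a)}$ and $\mathscr{V}_{\mathbf{r}(a)}$ are each covered by the increasing chain $\{\mathscr{V}_{f_{n}}\}$, so by compactness both lie in a single $\mathscr{V}_{f_{k}}$; by Stone duality, $\mathbf{d}(a), \mathbf{r}(a) \leq f_{k}$. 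Then $f_{k}a = f_{k}\mathbf{r}(a)a = \mathbf{r}(a)a = a$ and dually $af_{k} = a$, whence $a = f_{k}af_{k} \in f_{k}Sf_{k}$. Thus $S = \bigcup_{k} f_{k}Sf_{k}$, so $\{f_{k}\}$ is a $\sigma$-unit.

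No step is genuinely difficult once the identification with the commutative Stone space is in hand; the only point requiring even mild care is the forward direction, where one must extract the order relation $e \leq e_{j}$ from the membership $e \in e_{j}Se_{j}$ via the identity $\mathbf{d}(e_{j}se_{j}) \leq e_{j}$ that holds in any inverse semigroup.
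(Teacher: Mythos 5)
Your proof is correct and follows essentially the same route as the paper's: the forward direction shows every idempotent ultrafilter contains some $e_{j}$ of the $\sigma$-unit (the paper leaves this as ``immediate''; you supply the computation $\mathbf{d}(e_{j}se_{j}) \leq e_{j}$), and the converse extracts a covering by basic compact-opens, forms the increasing joins $f_{n}$, and uses compactness of the basic sets to place $\mathbf{d}(a)$ and $\mathbf{r}(a)$ under a single $f_{k}$, exactly as the paper does with the single idempotent $\mathbf{d}(s)\vee\mathbf{r}(s)$. The only cosmetic difference is that you work in the Stone space of $\mathsf{E}(S)$ via Lemma~\ref{lem:topology-idempotent} rather than directly with the sets $\mathscr{U}_{e}$ in $\mathsf{G}(S)_{o}$.
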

\begin{proof} Suppose first that $S$ is $\sigma$-compact.
If $A$ is an ultrafilter that is also idempotent then it contains an idempotent.
It is immediate that  $\mathsf{G}(S)_{o} = \bigcup_{i=1}^{\infty} \mathscr{U}_{e_{i}}$.
Thus $\mathsf{G}(S)_{o}$ is $\sigma$-compact.
Conversely, suppose that  $\mathsf{G}(S)_{o} = \bigcup_{i=1}^{\infty} U_{i}$,
where each $U_{i}$ is compact.
Observe that  $\mathsf{G}(S)_{o} = \bigcup_{f \in \mathsf{E}(S)} \mathscr{U}_{f}$.
Then we can choose $f_{i}$ such that $\mathsf{G}(S)_{o} = \bigcup_{i=1}^{\infty} \mathscr{U}_{f_{i}}$.
Define $e_{1} = f_{1}$, $e_{2} = f_{1} \vee f_{2}$, $e_{3} = f_{1} \vee f_{2} \vee f_{3}$, \ldots.
Let $s \in S$.
Then we can find an idempotent such that $s = isi$;
for example, $i = \mathbf{d}(s) \vee \mathbf{r}(s)$ will work.
All the ultrafilters in $\mathscr{U}_{i}$ are idempotent.
It follows that $\mathscr{U}_{i} \subseteq \mathscr{U}_{e_{p}}$ for some $p$.
Thus by part (1) of Lemma~\ref{lem:needed-for-later},
we have that $i \leq e_{p}$.
We have therefore shown that $s \in e_{p}Se_{p}$.
\end{proof}

The following table summarizes the different aspects of non-commutative Stone duality we have proved:

\vspace{0.5cm}
\begin{center}
\begin{tabular}{|c||c|}\hline
{\bf Boolean inverse semigroup} & {\bf Boolean groupoid}  \\ \hline \hline
{\small Boolean inverse monoid} & {\small Boolean groupoid with a compact identity space} \\ \hline
{\small Group of units of monoid} & {\small Topological full group} \\ \hline
{\small Countable} & {\small Second-countable}  \\ \hline 
{\small Tarski algebra of idempotents} & {\small Cantor space of identities} \\ \hline
{\small Tarski monoid} & {\small Tarski groupoid} \\ \hline
{\small Semisimple} & {\small Discrete} \\ \hline
{\small Meet-semigroup} & {\small Hausdorff} \\ \hline
{\small Fundamental} & {\small Effective}  \\ \hline
{\small Basic} & {\small Principal and Hausdorff} \\ \hline
{\small $0$-simplifying} & {\small Minimal} \\ \hline
{\small Simple} & {\small Minimal and effective} \\ \hline
{\small $0$-simple Tarski monoid} & {\small Purely infinite and minimal Tarski groupoid} \\ \hline
{\small Congruence-free Tarski monoid} & {\small Purely infinite, minimal and effective Tarski groupoid} \\ \hline
{\small $\sigma$-unital} & {\small Identity space is $\sigma$-compact} \\ \hline
\end{tabular}
\end{center}
\vspace{0.5cm}

\section{Unitization}

Every Boolean inverse semigroup can be embedded (in a nice way) into a Boolean inverse monoid \cite[Definition 6.6.1]{W}.
We shall now obtain this result using our non-commutative Stone duality.

We begin with Lemma~\ref{lem:one-point}.
Let $X$ be a locally compact Boolean space.
Put $X^{\infty} = X \cup \{\infty\}$ and endow $X^{\infty}$ with
the topology that consists of all the open subsets of $X$ together with the complements in $X^{\infty}$ of
the compact sets of $X$ together with $X^{\infty}$ itself.
The following lemma is useful in proving that this really is a topology and will also be needed later.

\begin{lemma}\label{lem:one-point-topology}
The set $U \cap (X^{\infty} \setminus V)$, where $U$ is open in $X$ and $V$ is compact in $X$, is open in $X$.
\end{lemma}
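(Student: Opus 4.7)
The plan is to unpack the set-theoretic identity and then apply the basic topological fact that compact subsets of Hausdorff spaces are closed.

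First, I would observe that $X^{\infty} \setminus V = (X \setminus V) \cup \{\infty\}$, since $V \subseteq X$ and the point $\infty$ is by construction not in $V$. Intersecting with $U$ and using that $U \subseteq X$, so $\infty \notin U$, gives
$$U \cap (X^{\infty} \setminus V) = U \cap \bigl( (X \setminus V) \cup \{\infty\} \bigr) = U \cap (X \setminus V).$$
Thus the question reduces to showing that $U \cap (X \setminus V)$ is open in $X$.

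Since $X$ is a locally compact Boolean space, it is in particular Hausdorff. By part~(2) of Lemma~\ref{lem:topology-needed}, the compact subset $V$ is closed in $X$, and hence $X \setminus V$ is open in $X$. The set $U \cap (X \setminus V)$ is then the intersection of two open subsets of $X$, so it is open in $X$, which is what was claimed.

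There is no real obstacle here: the content is almost entirely bookkeeping about the one-point compactification, with the one substantive ingredient being that compactness plus Hausdorffness forces closedness. The lemma is essentially a sanity check that the ``complement of a compact set'' type open sets of $X^{\infty}$ behave well when restricted to~$X$, and this will presumably be used in the sequel when verifying the axioms of a topology on $X^{\infty}$ and analysing how open subsets of $X^{\infty}$ relate to open subsets of $X$.
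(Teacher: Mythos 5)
Your proof is correct and follows essentially the same route as the paper: reduce $U \cap (X^{\infty}\setminus V)$ to $U \cap (X\setminus V)$ using $\infty \notin U$, then invoke part (2) of Lemma~\ref{lem:topology-needed} to see that $V$ is closed in the Hausdorff space $X$, so the set is an intersection of two open subsets of $X$. The paper phrases the last step via complements of closed sets, but the argument is the same.
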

\begin{proof} Observe that since $U \subseteq X$, the intersection in question is actually $U \cap (X \setminus V)$.
But if $U$ is open in $X$ there is a closed set $U_{1}$ in $X$ such that $U = X \setminus U_{1}$.
It follows that the intersection is $X \setminus (U_{1} \cup V)$.
But $V$ is a compact subspace of a Hausdorff space and so is closed by part (2) of Lemma~\ref{lem:topology-needed}.
However, $U_{1} \cup V$ is closed in $X$ and so $X \setminus (U_{1} \cup V)$ is an open subset of $X$.
\end{proof}

The key result is the following (which, of course, is well-known).

\begin{proposition}[One-point compactification]\label{prop:one-point-c} 
Let $X$ be a locally compact Hausdorff space.
Then $X^{\infty}$, with the above topology, is a compact Hausdorff space that contains $X$ as an open subset.
If $X$ is $0$-dimensional so too is $X^{\infty}$.
\end{proposition}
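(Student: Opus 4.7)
The plan is to verify the three assertions in turn: that the stated collection really is a topology making $X$ open, that $X^{\infty}$ is compact Hausdorff, and that $0$-dimensionality is inherited. The essential tool for the first two items will be Lemma~\ref{lem:one-point-topology}, and for the last the hypothesis that $X$ has a base of compact-open sets (Lemma~\ref{lem:boolean-space}).

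First I would check the topology axioms. The family is closed under finite intersections because $(X^{\infty} \setminus V_{1}) \cap (X^{\infty} \setminus V_{2}) = X^{\infty} \setminus (V_{1} \cup V_{2})$ (the union of two compacts is compact), $U \cap (X^{\infty} \setminus V)$ is open in $X$ by Lemma~\ref{lem:one-point-topology}, and intersections of opens in $X$ are opens in $X$. Arbitrary unions are handled by grouping the ``$X$-open'' terms together, likewise the ``co-compact'' terms (noting that intersections of compact sets in a Hausdorff space are compact by parts (2) and (3) of Lemma~\ref{lem:topology-needed}), and then invoking Lemma~\ref{lem:one-point-topology} to combine the two groups. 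The set $X$ is open by fiat in the topology.

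Next I would prove compactness and Hausdorffness. For compactness, take any open cover of $X^{\infty}$; some member contains $\infty$, so has the form $X^{\infty} \setminus V$ with $V \subseteq X$ compact. The remaining members, intersected with $X$ (which is open), cover $V$ by open subsets of $X$, so finitely many suffice, and together with $X^{\infty} \setminus V$ they give a finite subcover. For Hausdorff, two points of $X$ are separated inside $X$ (which is open in $X^{\infty}$), while for $x \in X$ and $\infty$ local compactness furnishes a compact $K \subseteq X$ with $x \in K^{\circ}$; then $K^{\circ}$ and $X^{\infty} \setminus K$ are disjoint open neighbourhoods.

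Finally I would handle $0$-dimensionality. By Lemma~\ref{lem:boolean-space}, $X$ has a base of compact-open sets. Any such $U$ is open in $X^{\infty}$ by definition of the topology, and its complement $X^{\infty} \setminus U$ is also open since $U$ is compact in $X$; hence $U$ is clopen in $X^{\infty}$. This provides clopen basic neighbourhoods of every point of $X$. The one real subtlety is producing clopen neighbourhoods of $\infty$: a basic open neighbourhood is $X^{\infty} \setminus V$ for some compact $V \subseteq X$, which need not itself be closed in the sense required. The trick will be to cover $V$ by compact-open sets $W_{1},\dots,W_{n}$ drawn from the base (using compactness of $V$), set $W = W_{1} \cup \cdots \cup W_{n}$ (compact-open since a finite union of compact-open sets is compact-open), and observe $V \subseteq W$, so $X^{\infty} \setminus W$ is a clopen neighbourhood of $\infty$ contained in $X^{\infty} \setminus V$. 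This step is the main (though mild) obstacle, because the neighbourhoods of $\infty$ are parametrised by compact rather than compact-open sets, so one must enlarge before one can guarantee clopenness.
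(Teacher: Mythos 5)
Your proof is correct, but it is worth recording how it differs from the paper's. For the first claim the paper simply cites Simmons, whereas you prove compactness and Hausdorffness directly (the cover argument through a member $X^{\infty}\setminus V$ containing $\infty$, and the separation of $x$ from $\infty$ via a compact $K$ with $x\in K^{\circ}$); both arguments are standard and sound, and your verification of the topology axioms fills in a step the paper only gestures at via Lemma~\ref{lem:one-point-topology}. For the $0$-dimensionality claim the two proofs share the essential idea --- one must enlarge a compact $V\subseteq X$ to a \emph{compact-open} set before complementing --- but reach it differently. The paper first characterises all clopen subsets of $X^{\infty}$ and then shows that $K$ equals the intersection of all compact-open sets containing it, by separating each point $x\notin K$ from $K$ with disjoint open sets and shrinking the one around $K$ to a compact-open set; this exhibits $X^{\infty}\setminus K$ as a union of clopen sets. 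You instead work pointwise: for $\infty$ you take a finite subcover of $V$ by compact-open base elements and complement their union $W$, getting a single clopen neighbourhood $X^{\infty}\setminus W\subseteq X^{\infty}\setminus V$, while points of $X$ are handled by the compact-open base of $X$. Your route is slightly more economical (one finite subcover rather than a separation argument for every external point) and correctly identifies the one genuine subtlety; the paper's route yields as a by-product the full description of the clopen sets of $X^{\infty}$, which it then reuses in Section~9 when analysing $\mathsf{KB}(G^{\infty})$.
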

\begin{proof} The proof of the first claim can be found in \cite[Section 37]{Simmons}.
The proof of the second claim is well-known, but we give a proof anyway.
We begin by describing the clopen subsets of $X^{\infty}$.
These are of two types: 
\begin{enumerate}
\item Those that do not contain $\infty$ are precisely the compact-open subsets of $X$.
\item Those that do contain $\infty$ are precisely of the form $X^{\infty} \setminus U$ where $U$ is a compact-open 
subset of $X$.
\end{enumerate}
We now give the proofs of these two claims.
(1) Suppose that $U$ is a clopen subset of $X^{\infty}$ where $\infty \notin U$.
Thus, in particular, $U \subseteq X$.
Now, $U$ is a closed subset of $X^{\infty}$, which is compact.
It follows that $U$ is compact in $X^{\infty}$ and so must be compact in $X$.
It is open by definition.
Thus $U$ is compact-open in $X$.
We now go in the other direction.
Let $U$ be compact-open in $X$.
Then $U$ is open in $X^{\infty}$ by definition.
It remains to prove that $U$ is closed in $X^{\infty}$.
Since $U$ is compact in $X$ it follows by the definition of the topology that $X^{\infty} \setminus U$
is open in $X^{\infty}$.
It follows that $U$ is closed in $X^{\infty}$.
Thus $U$ is clopen in $X^{\infty}$.
(2) Let $U$ be a clopen subset of $X^{\infty}$ that contains $\infty$.
Since $U$ is open and contains $\infty$, we may write $U = X^{\infty} \setminus K$ where $K$ is a compact subset of $X$.
Since $U$ is closed in $X^{\infty}$, there is an open subset $V \subseteq X^{\infty}$ such that
$U = X^{\infty} \setminus V$. Observe that $V$ cannot contain $\infty$ and so must be an open subset of $X$.
It follows that $U$ is the complement in $X^{\infty}$ of a compact-open subset of $X$.
We now go in the opposite direction.
Let $U$ be a compact-open subset of $X$.
We prove that $X^{\infty} \setminus U$ is clopen in $X^{\infty}$.
Since $U$ is compact in $X$, the set $X^{\infty} \setminus U$ is open by definition.
Since $U$ is open in $X$, it is open in $X^{\infty}$ by definition.
It follows that $X^{\infty} \setminus U$ is closed.
We have proved that $X^{\infty} \setminus U$ is clopen.

It remains to prove that the clopen sets form a base for the topology on $X^{\infty}$ if 
$X$ is $0$-dimensional.
We prove that every open subset of $X^{\infty}$ is a union of clopen subsets.
Let $U$ be an open set of $X^{\infty}$.
There are two possibilities.
The first is that $U \subseteq X$ then $U$ is the union of the compact-open subsets of $X$ by Lemma~\ref{lem:boolean-space};
thus it is a union of clopen sets of $X^{\ast}$.
The second is that $U = X^{\infty} \setminus K$, where $K$ is a compact subspace of $X$.
Observe first that $K$ is contained in a compact-open subset of $X$.
We prove that $K$ is in fact equal to the intersection of all the compact-open subsets that contain it.
Let $x \in X$ but $x \notin K$.
Then since $X$ is Hausdorff there are open sets $U$ and $V$ such that
$x \in U$, $K \subseteq V$ and $U \cap V = \varnothing$;
we have used \cite[Theorem 26.C]{Simmons}.
Since $V$ is open it is a union of compact-open subsets.
These cover $K$ which is compact.
We can therefore assume that $V$ is compact-open.
Thus for every point in the complement of $K$ we can find a compact-open set that omits that point and contains $K$.
It follows that $K$ is equal to the intersection of all the compact-open sets that contain $K$.
Whence $X^{\infty} \setminus K = \bigcup_{i} (X^{\infty} \setminus V_{i})$
where each $V_{i}$ is compact-open in $X$ and contains $K$.
\end{proof}

The above result can be used to embed generalized Boolean algebras into Boolean algebras using Stone duality;
this was first proved in \cite{Stone1937} and was touched upon in Lemma~\ref{lem:one-point}.
Let $B$ be a generalized Boolean algebra.
Its Stone space $\mathsf{X}(B)$ is a $0$-dimensional locally compact Hausdorff space.
Construct the one-point compactification of this space to get a $0$-dimensional compact Hausdorff space $\mathsf{X}(B)^{\infty}$.
This gives rise to a Boolean algebra $\mathsf{B}(\mathsf{X}(B)^{\infty})$ into which $B$ can be embedded.
This embedding is summarized by the following (well-known) result.

\begin{proposition}\label{prop:gen-into-unital} Let $B$ be a generalized Boolean algebra without a top element.
Then there is a Boolean algebra $C$ which has $B$ as a subalgebra and order-ideal such that
the elements of $C$ are either $e$ or $e'$, where $e \in B$. 
\end{proposition}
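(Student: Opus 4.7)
The plan is to apply the Stone duality for generalized Boolean algebras (Proposition~\ref{prop:second-big-theorem}) together with the one-point compactification (Proposition~\ref{prop:one-point-c}). Let $X = \mathsf{X}(B)$ be the Stone space of $B$; by Lemma~\ref{lem:lc-Boolean-space} it is a locally compact Boolean space, and by Proposition~\ref{prop:second-big-theorem}(1) the map $a \mapsto \mathscr{U}_{a}$ is an isomorphism $B \cong \mathsf{B}(X)$ of generalized Boolean algebras. Since $B$ has no top element, $X$ cannot itself be compact: otherwise $X$ would be a compact-open subset of $X$ and would correspond under the isomorphism to a top element of $B$. Form $X^{\infty}$ via Proposition~\ref{prop:one-point-c}; this is a compact Boolean space, so $C := \mathsf{B}(X^{\infty})$ is a (unital) Boolean algebra by Lemma~\ref{lem:poland}.

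The classification of clopen subsets of $X^{\infty}$ carried out inside the proof of Proposition~\ref{prop:one-point-c} states that every clopen subset of $X^{\infty}$ is either a compact-open subset $U$ of $X$ (precisely when $\infty \notin U$) or of the form $X^{\infty} \setminus U$ for a compact-open subset $U$ of $X$ (precisely when $\infty \in U$). Identifying each element of $B$ with its image under $a \mapsto \mathscr{U}_{a}$, viewed as a subset of $X^{\infty}$, gives an inclusion $B \hookrightarrow C$; under this identification the two clopen classes above become exactly the elements of the form $e$ and the elements of the form $e'$ (complement taken in $C$), for $e \in B$. This is precisely the stated dichotomy.

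Finally, I would verify that $B$ is both a subalgebra of $C$ and an order-ideal in $C$. For the subalgebra property, the bottom element is $\varnothing$ in either algebra, and finite meets, joins and relative complements are computed set-theoretically and send compact-open subsets of $X$ to compact-open subsets of $X$, so the image of $B$ is closed under each of these operations. For the order-ideal property, suppose $W \in C$ satisfies $W \subseteq U$ for some $U \in B$; then $U \subseteq X$ forces $\infty \notin W$, placing $W$ in the first clopen class above and so $W \in B$. The only real point of care in the whole argument is using the hypothesis that $B$ has no top element to ensure that $X$ is genuinely non-compact, so that the adjoined point $\infty$ is new; everything else is a routine transcription of the topological classification of the clopen subsets of $X^{\infty}$ through Stone duality.
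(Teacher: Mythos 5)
Your proposal is correct and follows exactly the route the paper takes: the paper's own (sketched) argument in the paragraph preceding the proposition is precisely to pass to the Stone space $\mathsf{X}(B)$, form its one-point compactification, and take $C = \mathsf{B}(\mathsf{X}(B)^{\infty})$, with the dichotomy of elements coming from the classification of clopen subsets of $X^{\infty}$ established in Proposition~\ref{prop:one-point-c}. Your write-up merely supplies the routine verifications (non-compactness of $X$ from the absence of a top element, and the subalgebra and order-ideal properties) that the paper leaves implicit.
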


The following example is an illustration of Proposition~\ref{prop:gen-into-unital}.

\begin{example}
{\em Let $B$ be the generalized Boolean algebra of all finite subsets of the set $\mathbb{N}$.
With respect to the discrete topology, $\mathbb{N}$ is a locally compact Boolean space. 
This space can be embedded in the Boolean space $\mathbb{N}^{\infty} = \mathbb{N} \cup \{\infty\}$ which has as open sets
all finite subsets of $\mathbb{N}$ together with all cofinite subsets of $\mathbb{N}$ with $\infty$ adjoined.
The set of all finite subsets of $\mathbb{N}^{\infty}$ which omit $\infty$ together with all cofinite subsets of $\mathbb{N}^{\infty}$
that contain $\infty$ form a Boolean algebra.
This is isomorphic with the Boolean algebra of all finite subsets of $\mathbb{N}$ together with the cofinite subsets of $\mathbb{N}$.}
\end{example}

We shall now extend Proposition~\ref{prop:gen-into-unital} to Boolean inverse semigroups.

Let $G$ be a Boolean groupoid where the space $G_{o}$ is a locally compact Boolean space.
Denote by $G^{\infty}$ the groupoid $G \cup \{\infty\}$ where $\infty$ is a new identity such
that $(G^{\infty})_{o} = G_{o} \cup \{\infty\}$.
Endow $G^{\infty}$ with the topology generated by the base $\Omega (G) \cup \Omega (G_{o}^{\infty})$
(it remains to show that this really is a base);
every open subset of $G^{\infty}$ is therefore the union of an open subset of $G$ and an open subset of $G_{o}^{\infty}$.

\begin{lemma}\label{lem:one-point-groupoid} With the above definitions, 
$G^{\infty}$ is a Boolean groupoid, the identity space of which is compact.
\end{lemma}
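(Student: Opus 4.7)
The plan is to verify in turn four things: (i) that the stated collection is indeed a base for a topology on $G^{\infty}$, (ii) that the subspace topology on $(G^{\infty})_{o} = G_{o} \cup \{\infty\}$ agrees with the one-point compactification topology of Proposition~\ref{prop:one-point-c}, (iii) that inversion and multiplication are continuous, and (iv) that $\mathbf{d}$ is a local homeomorphism. The compactness of the identity space then follows from Proposition~\ref{prop:one-point-c}.

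First I would check the base condition. If $U_{1}, U_{2} \in \Omega(G)$ or $V_{1}, V_{2} \in \Omega(G_{o}^{\infty})$, closure under intersection is automatic within each topology. For the mixed case, if $U \in \Omega(G)$ and $V \in \Omega(G_{o}^{\infty})$, then $\infty \notin U$ forces $U \cap V = U \cap (V \cap G_{o})$; since $G$ is \'etale, $G_{o}$ is open in $G$, and $V \cap G_{o}$ is open in $G_{o}$ by the one-point compactification topology, so $U \cap V \in \Omega(G)$. For (ii), restricting the topology on $G^{\infty}$ to $G_{o}^{\infty}$ produces exactly the open subsets of $G_{o}$ (from $\Omega(G)$, since $U \cap G_{o}^{\infty} = U \cap G_{o}$) together with the elements of $\Omega(G_{o}^{\infty})$, which is the one-point compactification topology. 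By Proposition~\ref{prop:one-point-c} this space is compact, Hausdorff and $0$-dimensional, hence a locally compact Boolean space.

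For (iii), continuity of $\mathbf{i}$ is clear: basic opens in $\Omega(G)$ pull back to their inverses (open in $G$), and basic opens $V \in \Omega(G_{o}^{\infty})$ pull back to themselves since every element of $V$ is an identity. Continuity of $\mathbf{m}$ is the delicate step. Note first that $\mathbf{d}(x) \in G_{o}$ for $x \in G$, whereas $\mathbf{r}(\infty) = \infty$, so the pair $(\infty, \infty)$ composes only with itself, giving the set-theoretic decomposition $G^{\infty} \ast G^{\infty} = (G \ast G) \sqcup \{(\infty, \infty)\}$, and $G \ast G$ is open in $G^{\infty} \ast G^{\infty}$ because $G$ is open in $G^{\infty}$. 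For $U \in \Omega(G)$, $\mathbf{m}^{-1}(U)$ lies entirely in $G \ast G$ and equals the preimage under multiplication in $G$, so is open. For $V \in \Omega(G_{o}^{\infty})$ with $\infty \in V$, write $V = (G_{o} \setminus K) \cup \{\infty\}$ with $K$ compact in $G_{o}$; then $\mathbf{m}^{-1}(V) = \mathbf{m}_{G}^{-1}(G_{o} \setminus K) \cup \{(\infty, \infty)\}$, and the first piece is open by continuity of multiplication in $G$ together with Lemma~\ref{lem:one-point-topology}. For the isolated-looking point $(\infty, \infty)$, the basic open $(V \times V) \cap (G^{\infty} \ast G^{\infty})$ is a neighborhood; any composable pair inside it consists of two elements of $G_{o}^{\infty}$, hence must be $(e, e)$ with $e \in V$, whose product $e \in V$. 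So the neighborhood is contained in $\mathbf{m}^{-1}(V)$.

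Finally, for (iv), the map $\mathbf{d}$ is already a local homeomorphism on $G$ since $G$ is \'etale, so it suffices to handle $\infty$: on any basic open $V \in \Omega(G_{o}^{\infty})$ containing $\infty$ the map $\mathbf{d}$ is the identity, since every element of $V$ is an identity. The main obstacle will be the careful bookkeeping around $(\infty, \infty)$ in verifying continuity of $\mathbf{m}$, specifically writing the preimage of a basic open and matching it with a basic neighborhood of $(\infty, \infty)$; everything else is essentially unpacking definitions and invoking that $G_{o}$ is open in $G$.
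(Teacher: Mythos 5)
Your proof is correct and follows essentially the same route as the paper: verify the base condition via the mixed-case intersection, reduce continuity of $\mathbf{m}_{\infty}$ to computing the preimage of a basic open set containing $\infty$, and invoke Proposition~\ref{prop:one-point-c} for compactness of the identity space. Your observation that $G^{\infty} \ast G^{\infty} = (G \ast G) \cup \{(\infty,\infty)\}$ gives a slightly cleaner form of the preimage formula than the paper's, and your explicit checks of \'etaleness at $\infty$ and of the subspace topology on $(G^{\infty})_{o}$ fill in details the paper leaves implicit, but the underlying argument is the same.
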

\begin{proof} We show first that $\Omega (G) \cup \Omega (G_{o}^{\infty})$ is a base for a topology.
This boils down to checking that if $U$ is an open set of $G$ and $V$ is an open set of $G_{o}^{\infty}$
then $U \cap V$ is an open set.
There are two possibities for $V$.
If $V$ is an open subset of $G_{o}$ then it is also an open subset of $G$, since $G$ is an \'etale groupoid,
and so its space of identities is an open subset.
It follows that $U \cap V$ is open in $G$ and so belongs to our topology.
The other possibility is that $V = G_{0}^{\infty} \setminus K$ where $K$ is a compact subset of $G_{0}$.
But $U$, being in $G$, does not contain $\infty$ so $U \cap V = U \cap (G_{0} \setminus K)$.
Since $G$ is an \'etale groupoid $G_{o}$ is an open subset of $G$. 
We have that $U \cap V = (U \cap G_{o}) \cap (G_{0} \setminus K)$.
We now apply Lemma~\ref{lem:one-point-topology}, to deduce that $U \cap V$ is an open subset of  $G_{o}^{\infty}$.

Next we show that with respect to this topology $G^{\infty}$ is a topological groupoid.
It is clear that $g \mapsto g^{-1}$ is a homeomorphism.
We prove that multiplication
$$\mathbf{m}_{\infty} \colon G^{\infty} \ast G^{\infty} \rightarrow G^{\infty}$$ 
is continuous.
We denote the multiplication on $G$ by $\mathbf{m}$.
The basic open sets of $G^{\infty}$ are of two kinds.
Those in $G$ and those in $G_{o}^{\infty}$.
The former cause us no problems since they do not contain $\infty$
and so the result follows from the fact that $\mathbf{m}$ is continuous.
The open sets of $G_{o}^{\infty}$ are of two kinds.
Those which are simply open subsets of $G_{o}$ and so open subsets of $G$
(since $G$ is \'etale $G_{o}$ is an open subset of $G$)
are dealt with above since they do not contain $\infty$.
Thus the only case we have to deal with are those subsets of the form $U = G_{o}^{\infty} \setminus K$ where $K$ is a compact subset of $G_{o}$.
We have to prove that $\mathbf{m}_{\infty}^{-1} (U)$ is an open subset of $G^{\infty} \ast G^{\infty}$.
Observe first that 
$$\mathbf{m}_{\infty}^{-1} (U)
=
[((G \ast G) \setminus \mathbf{m}^{-1}(K)) \cap \mathbf{m}^{-1}(G_{o})]
\cup 
[(U \times U) \cap (G^{\infty} \ast G^{\infty})].$$
It is easy to show that the left-hand side is contained in the right-hand side;
just recall that the elements of $\mathbf{m}_{\infty}^{-1}(U)$ are of two types:
either ordered pairs $(g,h) \in G \times G$ or $(\infty, \infty)$.
It is also easy to show that the right-hand side is contained in the left-hand side.
It remains to be shown that the right-hand side above is an open subset of $G^{\infty} \ast G^{\infty}$;
to do this we use the product topology on $G^{\infty} \times G^{\infty}$.
The set $K$ is compact and so it is a closed subset of $G_{o}$.
Thus $\mathbf{m}^{-1}(K)$ is a closed subset of $G \ast G$.
Since $G$ is \'etale we know that $G_{o}$ is an open subset of $G$ and so $\mathbf{m}^{-1}(G_{o})$
is an open subset of $G \ast G$.
It follows that the first term is an open subset of $G \ast G$ which is an open subset of $G^{\infty} \ast G^{\infty}$.
The second term is just the intersection of $U \times U$, which is an open subset of $G^{\infty} \times G^{\infty}$,
with $G^{\infty} \ast G^{\infty}$ which gives us an open subset of $G^{\infty} \ast G^{\infty}$.
We have therefore proved that $G^{\infty}$ is a topological groupoid.
It follows that $G^{\infty}$ is an \'etale groupoid (because $G$ is)
and it is Boolean by construction with a compact identity space by Proposition~\ref{prop:one-point-c}. 
\end{proof}

By the above result, $\mathsf{KB}(G^{\infty})$ is a Boolean inverse monoid.
It is clear that $\mathsf{KB}(G)$ embeds into $\mathsf{KB}(G^{\infty})$.
Observe that $\mathsf{KB}(G)$ is closed under binary joins taken in $\mathsf{KB}(G^{\infty})$.
The generalized Boolean algebra $\mathsf{B}(G_{o})$ is a subalgebra of the Boolean algebra $\mathsf{B}(G_{o}^{\infty})$.
We have therefore proved the following.

\begin{lemma}\label{lem:subalgebra} With the above definition,
$\mathsf{KB}(G)$ is a subalgebra of  $\mathsf{KB}(G^{\infty})$.
\end{lemma}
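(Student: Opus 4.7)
The plan is to verify the three conditions in the definition of subalgebra from Section~3: that $\mathsf{KB}(G)$ is an inverse subsemigroup of $\mathsf{KB}(G^{\infty})$, that it is closed under binary compatible joins taken in $\mathsf{KB}(G^{\infty})$, and that for $e,f\in\mathsf{E}(\mathsf{KB}(G))$ the relative complement $e\setminus f$ lies in $\mathsf{E}(\mathsf{KB}(G))$.

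First I would establish the embedding. By the construction of the topology on $G^{\infty}$ in Lemma~\ref{lem:one-point-groupoid}, $\Omega (G)\subseteq \Omega (G^{\infty})$; in particular $G$ itself is open in $G^{\infty}$ and every open subset of $G$ is open in $G^{\infty}$. Compactness is intrinsic to a subspace, so every compact-open subset of $G$ is a compact-open subset of $G^{\infty}$. The local bisection condition (Lemma~\ref{lem:water}) is algebraic and survives the inclusion $G\hookrightarrow G^{\infty}$, since the added identity $\infty$ contributes no new composable pairs to subsets of $G$ and the groupoid operations on $G$ are inherited from $G^{\infty}$. Hence $\mathsf{KB}(G)\subseteq \mathsf{KB}(G^{\infty})$ with products and inverses computed identically in either semigroup, giving an inverse subsemigroup. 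For compatible joins, recall from the proof of Lemma~\ref{lem:tea} that the join of two compatible local bisections is their set-theoretic union; so if $U,V\in\mathsf{KB}(G)$ are compatible, then $U\vee V=U\cup V$ is a finite union of compact-open subsets of $G$, hence itself in $\mathsf{KB}(G)$.

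It remains to handle relative complements of idempotents. By the proof of Lemma~\ref{lem:tea}, the idempotents of $\mathsf{KB}(G)$ (respectively $\mathsf{KB}(G^{\infty})$) are precisely the compact-open subsets of $G_{o}$ (respectively of $G_{o}^{\infty}$). The analysis of clopen sets in the proof of Proposition~\ref{prop:one-point-c} identifies the clopen subsets of $G_{o}^{\infty}$ that omit $\infty$ with the compact-open subsets of $G_{o}$; this identification realises $\mathsf{B}(G_{o})$ as a generalized Boolean subalgebra of $\mathsf{B}(G_{o}^{\infty})$, so ordinary set difference of two compact-open subsets of $G_{o}$ lies again in $\mathsf{B}(G_{o})$. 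The relative complement $e\setminus f$ is characterised in either ambient Boolean inverse semigroup by the defining identities $e\setminus f\le e$, $(e\setminus f)\wedge f=0$, and $e=(e\wedge f)\vee(e\setminus f)$, and set difference witnesses these in both $\mathsf{B}(G_{o})$ and $\mathsf{B}(G_{o}^{\infty})$; by uniqueness the two notions agree and the result stays in $\mathsf{E}(\mathsf{KB}(G))$.

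The only real work, modest as it is, lies in the identification of compact-open subsets of $G_{o}$ with the clopen subsets of $G_{o}^{\infty}$ not containing $\infty$, and in checking that this identification respects set difference — but Proposition~\ref{prop:one-point-c} essentially hands us this, so the proof will consist of assembling already-established facts rather than any fresh topological argument.
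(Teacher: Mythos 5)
Your proposal is correct and follows essentially the same route as the paper, which simply records the three required facts as observations: that $\mathsf{KB}(G)$ embeds in $\mathsf{KB}(G^{\infty})$, that it is closed under binary compatible joins (unions), and that $\mathsf{B}(G_{o})$ is a subalgebra of $\mathsf{B}(G_{o}^{\infty})$ via the description of the clopen sets of $G_{o}^{\infty}$ in Proposition~\ref{prop:one-point-c}. You have merely supplied the routine verifications the paper leaves implicit.
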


We have therefore embedded a Boolean inverse semigroup into a Boolean inverse monoid
where the generalized Boolean algebra of the former is embedded into the Boolean algebra of the latter.

We now describe the elements $A \in \mathsf{KB}(G^{\infty})$, the compact-open local bisections of $G^{\infty}$.
This will provide the connection with \cite[Definition 6.6.1]{W}.
There are two cases.
Either $\infty \notin A$ or $\infty \in A$.
In the former case, $A$ is just a compact-open local bisection of $G$
and so an element of $\mathsf{KB}(G)$.
We therefore deal with the latter case.
Let $A \in \mathsf{KB}(G^{\infty})$ contain $\infty$.
Since it is open, $A$ can be written as a union $A = U \cup V$ where $U$ is an open subset of $G$
and $V = G_{o} \setminus K$, where $K$ is a compact subset of $G_{o}$.
We now use the fact that $G^{\infty}$ is an \'etale groupoid and so has a base consisting of compact-open local bisections.
We may therefore write $U = \bigcup_{i \in I} B_{i}$ and $V = \bigcup_{j \in J} C_{j}$
where the $B_{i}$ and $C_{j}$ are compact-open local bisections of $G^{\infty}$.
We now use the fact that $A$ is compact to deduce that 
$A = (B_{1} \cup \ldots B_{m}) \cup (C_{1} \cup \ldots \cup C_{n})$.
Now, $B_{1}, \ldots, B_{m} \subseteq U$ and so each subset omits $\infty$.
In addition, $B_{i} \subseteq A$ so they are pairwise compatible.
It follows that $B = B_{1} \cup \ldots \cup B_{m}$ is a compact-open local bisection of $G$.
The union $C_{1} \cup \ldots \cup C_{n}$ is a compact-open local bisection which is an idempotent in $\mathsf{KB}(G^{\infty})$
and contains $\infty$.
It follows that each $C_{i} \subseteq G_{o}^{\infty}$ and is clopen.
We now use the description of the clopen subsets of $G_{o}^{\infty}$ given in Proposition~\ref{prop:one-point-c}.
Those $C_{i}$ which are compact-open subsets of $G_{o}$ can be absorbed into $B$.
We may therefore assume that each $C_{i} = G_{o}^{\infty} \setminus K_{i}$, where each $K_{i}$ is compact-open in $G_{o}^{\infty}$.
It follows that $A = B \cup (G_{o}^{\infty} \setminus D)$, where $B$ is an element of $\mathsf{KB}(G)$
and $D$ is a compact-open bisection and idempotent $\mathsf{KB}(G)$.
The comparison with Wehrung's construction, \cite[Definition 6.6.1]{W}, 
follows from the next lemma.

\begin{lemma}\label{lem:one-point-groupoid}
Let $S$ be a Boolean inverse monoid.
Let $x = e' \vee a$ where $e \in \mathsf{E}(S)$.
Then $x = e' \vee eae$, where clearly $e' \perp eae$.
\end{lemma}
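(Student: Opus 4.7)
The plan hinges on unpacking the hypothesis $x = e' \vee a$: since this join exists, necessarily $a \sim e'$, which by definition means both $ae'$ and $a^{-1}e'$ are idempotents of $S$. From this single observation everything unwinds through the distributive structure of the Boolean inverse monoid.

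First I would establish two absorption facts. Because $ae' \in \mathsf{E}(S)$ and $(ae')e' = ae'$, we have $ae' \leq e'$. Taking inverses, $a^{-1}e' \in \mathsf{E}(S)$ is its own inverse, so
\[
a^{-1}e' \;=\; (a^{-1}e')^{-1} \;=\; e'a,
\]
and in particular $e'a \leq e'$. Multiplying on the right by $e$ and using $e'e = 0$ yields the key vanishing
\[
e'ae \;=\; (e'a)e \;=\; (a^{-1}e')e \;=\; 0.
\]

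Next I would exploit the fact that $1 = e \vee e'$ with $e \perp e'$ and that multiplication in a Boolean inverse monoid distributes over compatible joins (Lemma~\ref{lem:meets-joins}):
\[
a \;=\; a\cdot 1 \;=\; ae \vee ae',
\qquad
ae \;=\; (e\vee e')\cdot ae \;=\; eae \vee e'ae \;=\; eae.
\]
Combining these gives $a = eae \vee ae'$. Since $ae' \leq e'$, the summand $ae'$ is absorbed by $e'$ in the outer join, so
\[
x \;=\; e' \vee a \;=\; e' \vee eae \vee ae' \;=\; e' \vee eae,
\]
which is the desired identity.

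The orthogonality $e' \perp eae$ is the easy tail: $\dom(eae) = e\,a^{-1}eae$ is an idempotent lying below $e$ (it absorbs $e$ on the left), hence $\dom(eae)\cdot e' \leq e\cdot e' = 0$, and symmetrically $\ran(eae)\cdot e' = 0$. The only mildly delicate point in the whole argument is the self-inverse move $a^{-1}e' = e'a$, but that is literally the content of the compatibility $a \sim e'$, so no real obstacle arises; every step relies only on distributivity, the Boolean complement, and the definition of $\sim$.
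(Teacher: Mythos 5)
Your proof is correct and follows essentially the same route as the paper's: both arguments extract the idempotency of $ae'$ and $e'a$ from the compatibility $a \sim e'$ forced by the existence of the join, expand against $1 = e \vee e'$ using distributivity, and kill the cross term ($e'ae$ in your version, $eae'$ in the paper's) via $e'e = 0$. The only difference is cosmetic — you decompose $a$ itself as $ae \vee ae'$, whereas the paper multiplies the whole join $x$ by $e \vee e'$ on the left and then on the right.
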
 
\begin{proof} By assumption $e' \sim a$.
It follows that $e'a$ and $e'a^{-1} = ae'$ are both idempotents.
We have that $1 = e \vee e'$.
Thus $x = (e \vee e')(e' \vee a) = e' \vee ea \vee e'a$.
But $e'a$ is an idempotent less than $e$.
It follows that $x = e' \vee ea$.
Whence, $x = (e' \vee ea)(e \vee e') = e' \vee eae \vee eae'$.
But $ae'$ is an idempotent and so $eae' = (ae')e = 0$.
We have therefore proved that $x = e' \vee eae$.
\end{proof}

Taking into account Lemma~\ref{lem:subalgebra}, Lemma~\ref{lem:one-point-groupoid} 
and using our non-commutative Stone duality,
we have therefore proved the following, a result first established by Wehrung.

\begin{proposition}[Unitization]\label{prop:wehrung} Let $S$ be a Boolean inverse semigroup which is not a monoid.
Then there is a Boolean inverse monoid $T$ containing $S$ as a subalgebra and ideal
such that each element of $T \setminus S$ is of the form $e' \vee s$ where $e \in \mathsf{E}(S)$
and $s \in eSe$.
\end{proposition}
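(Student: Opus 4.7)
The plan is to use our non-commutative Stone duality to reduce the algebraic statement to a topological construction on the Stone groupoid. Set $G = \mathsf{G}(S)$, a Boolean groupoid whose identity space $G_{o}$ is a locally compact Boolean space that is not compact (since $S$ is not a monoid, the unit of $\mathsf{E}(S)$ does not exist). Form the one-point compactification $G^{\infty}$ as in Lemma~\ref{lem:one-point-groupoid}, so that $G^{\infty}$ is a Boolean groupoid with compact identity space $G_{o}^{\infty} = G_{o} \cup \{\infty\}$, with $\infty$ adjoined as an isolated identity (no non-identity arrows have $\infty$ as domain or range). Define $T = \mathsf{KB}(G^{\infty})$, which is a Boolean inverse semigroup by Proposition~\ref{prop:bis-from-boolean-groupoids} and in fact a monoid since $G_{o}^{\infty}$ is compact-open. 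By Proposition~\ref{prop:isomorphism-of-structures}(1) we identify $S$ with $\mathsf{KB}(G)$; by Lemma~\ref{lem:subalgebra}, $S$ is a subalgebra of $T$.

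Next I would verify that $S$ is an ideal of $T$. The key point is that in $G^{\infty}$ the only element with $\infty$ in its domain or range is $\infty$ itself. Hence for $A \in T$ and $B \in S \subseteq G$, any composable pair $(a,b)$ with $a \in A$, $b \in B$ has $\mathbf{d}(a) = \mathbf{r}(b) \in G_{o}$, forcing $a \in G$; thus the subset product $AB$ lies entirely in $G$, and similarly for $BA$. This gives $TS, ST \subseteq S$, and since $S$ is already closed under binary compatible joins taken in $T$, it is an additive ideal.

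For the normal form of elements in $T \setminus S$, I would use the decomposition of compact-open local bisections of $G^{\infty}$ containing $\infty$ that is developed just before Lemma~\ref{lem:one-point-groupoid}: every such $A$ can be written as $A = B \cup (G_{o}^{\infty} \setminus D)$, where $B$ is a compact-open local bisection of $G$ (so an element of $S$) and $D$ is a compact-open subset of $G_{o}$ (so an idempotent of $S$). Writing this in the inverse-monoid language of $T$, the complement $D' = G_{o}^{\infty} \setminus D$ is the Boolean complement of $D$ inside the unital Boolean algebra $\mathsf{E}(T) = \mathsf{B}(G_{o}^{\infty})$, and the union is the orthogonal join $B \vee D'$. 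Applying the algebraic Lemma~\ref{lem:one-point-groupoid} with $e = D$ and $a = B$ rewrites this as $A = D' \vee DBD$ with $DBD \in DSD = eSe$, which is exactly the claimed form.

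The assembly is mostly bookkeeping: all of the hard work (construction of the one-point compactification of the groupoid, clopen characterisation in Proposition~\ref{prop:one-point-c}, the decomposition analysis, and the algebraic rewriting lemma) is already in place. The one step that deserves careful attention is the ideal property, because it is not formally stated in the preceding lemmas; the argument via the isolated identity $\infty$ is short but must be given explicitly, since it is what turns $S$ from a mere subalgebra into an ideal of the unitization $T$.
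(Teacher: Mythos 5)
Your proposal is correct and follows essentially the same route as the paper: pass to the Stone groupoid, adjoin an isolated identity $\infty$ to obtain the Boolean groupoid $G^{\infty}$ with compact identity space, take $T = \mathsf{KB}(G^{\infty})$, and use the decomposition $A = B \cup (G_{o}^{\infty}\setminus D)$ together with the algebraic rewriting lemma to obtain the normal form $e' \vee eae$. Your explicit verification that $S$ is an ideal of $T$ (via the observation that no non-identity arrow of $G^{\infty}$ has $\infty$ as domain or range) is a welcome addition, since the paper asserts the ideal property without spelling out this step.
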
 

\begin{remark}{\em 
Our definition of $G^{\infty}$ is by means of a base extension as in \cite{MR}, although our approach is quite different.}
\end{remark}

We can define the group of units of a Boolean inverse {\em semigroup} $S$ as follows, independently of what we did above.
For each idempotent $e \in \mathsf{E}(S)$ define the group $G_{e}$ to be the group
of units of the Boolean inverse monoid $eSe$.
If $e \leq f$ define a map $\phi^{e}_{f} \colon G_{e} \rightarrow G_{f}$ by $a \mapsto a \vee (f \setminus e)$.
It is easy to check that this is a well-defined injective function that maps $e$ to $f$
and is a group homomorphism.
Observe that $\phi^{e}_{e}$ is the identity function on $G_{e}$,
and if $e \leq f \leq g$ then $(g \setminus e) = (g \setminus f) \vee (f \setminus e)$ and so
$\phi^{e}_{g} = \phi^{f}_{g}\phi^{e}_{f}$.
It follows that we have an ($E$-unitary) strong semilattice of groups $\{G_{e},\phi^{e}_{f} \colon e,f \in \mathsf{E}(S)\}$.
Such a system gives rise to an inverse semigroup with central idempotents as follows.
Put $T = \bigcup_{e \in \mathsf{E}(S)}G_{e}$, a disjoint union, with product $\circ$ defined as follows:
$$a \circ b = \phi^{e}_{e \vee f}(a) \phi^{f}_{e \wedge f}(b)$$
where $a \in G_{e}$ and $b \in G_{f}$.
See \cite[Section 5.2, page 144]{Lawson1998} for details.
Put $\mathsf{C}(S) = (T,\circ)$.
Observe that $a \leq b$ in $\mathsf{C}(S)$ precisely when $a \in G_{e}$ and $b \in G_{f}$
and $e \leq f$ in $S$ and $\phi^{e}_{e \vee f}(a) = b$. 
An inverse semigroup with central idempotents is called a {\em Clifford semigroup}.
An inverse semigroup is said to be {\em $E$-unitary} if $e \leq a$, where $e$ is an idempotent,
implies that $a$ is an idempotent.

\begin{proposition}\label{prop:clifford} With each Boolean inverse semigroup $S$, 
we can associate an $E$-unitary Clifford semigroup
$\mathsf{C}(S)$. 
The meet semilattice of this semigroup is $(\mathsf{E}(S),\vee)$.
\end{proposition}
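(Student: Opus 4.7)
The plan is to reduce the statement to the classical theory of strong semilattices of groups, which is known to produce $E$-unitary Clifford semigroups (cf.\ \cite[Section 5.2]{Lawson1998}). First I would verify that each $\phi^{e}_{f}$ (for $e \leq f$ in $\mathsf{E}(S)$) is a well-defined injective group homomorphism. An element $a \in G_{e}$ is a unit of $eSe$, so $\dom(a) = \ran(a) = e$; since $e \perp (f \setminus e)$ by Lemma~\ref{lem:complement}, the orthogonal join $a \vee (f \setminus e)$ exists and has both domain and range equal to $e \vee (f \setminus e) = f$, hence lies in $G_{f}$. Injectivity comes from multiplying $a \vee (f \setminus e) = b \vee (f \setminus e)$ on the right by $e$. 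For the homomorphism property I would expand
\[
(a \vee (f \setminus e))(b \vee (f \setminus e))
= ab \;\vee\; a(f \setminus e) \;\vee\; (f \setminus e)b \;\vee\; (f \setminus e),
\]
using distributivity of multiplication over compatible joins (Lemma~\ref{lem:meets-joins}); the two middle terms vanish because $a,b \leq e$ while $f \setminus e \perp e$, leaving $\phi^{e}_{f}(ab)$. That $\phi^{e}_{e} = \mathrm{id}$ is immediate, and the cocycle condition $\phi^{f}_{g}\phi^{e}_{f} = \phi^{e}_{g}$ (for $e \leq f \leq g$) reduces to the identity $(g \setminus f) \vee (f \setminus e) = g \setminus e$ in the generalized Boolean algebra $\mathsf{E}(S)$, already invoked in the statement.

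Next, I invoke the classical fact that a strong semilattice of groups is an inverse semigroup whose idempotents are central, i.e.\ a Clifford semigroup. In the case at hand the idempotents of $\mathsf{C}(S)$ are precisely the identities $e \in G_{e}$, and these are in bijection with $\mathsf{E}(S)$. The $E$-unitary property can be checked directly: if $e \leq a$ in $\mathsf{C}(S)$ with $e$ idempotent (so $e \in G_{e}$) and $a \in G_{f}$, the definition of the order in $\mathsf{C}(S)$ forces $e \leq f$ and $\phi^{e}_{f}(e) = a$; but $\phi^{e}_{f}(e) = e \vee (f \setminus e) = f$, which is the identity of $G_{f}$, hence an idempotent.

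For the meet semilattice structure, I compute the product of two idempotents $e,f \in \mathsf{C}(S)$ using the definition of $\circ$:
\[
e \circ f \;=\; \phi^{e}_{e \vee f}(e)\,\phi^{f}_{e \vee f}(f) \;=\; (e \vee f)(e \vee f) \;=\; e \vee f.
\]
Since in any inverse semigroup the meet of two idempotents equals their product, this shows the meet of $e$ and $f$ in the semilattice of idempotents of $\mathsf{C}(S)$ is $e \vee f$ as computed in $\mathsf{E}(S)$; hence the meet semilattice of $\mathsf{C}(S)$ is $(\mathsf{E}(S),\vee)$. The main obstacle I anticipate is the first step, namely checking that each $\phi^{e}_{f}$ is a homomorphism and that these maps fit into a coherent transitive system; the orthogonality argument above and the use of Lemma~\ref{lem:meets-joins} should handle this, after which everything else follows routinely from the standard theory of strong semilattices of groups.
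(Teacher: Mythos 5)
Your proposal is correct and takes essentially the same route as the paper, which presents exactly this strong-semilattice-of-groups construction and leaves the verifications to the reader (citing \cite[Section 5.2]{Lawson1998}); you have simply supplied those verifications, and in doing so you have also silently corrected the typo $\phi^{f}_{e \wedge f}(b)$ to $\phi^{f}_{e \vee f}(b)$ in the displayed product. One small repair: in the homomorphism computation the middle terms vanish not because $a,b \leq e$ (a unit of $eSe$ need not lie below $e$ in the natural partial order) but because $\mathbf{d}(a) = e = \mathbf{r}(b)$ gives $a = ae$ and $b = eb$, whence $a(f \setminus e) = ae(f \setminus e) = 0$ and $(f \setminus e)b = (f \setminus e)eb = 0$.
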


With each inverse semigroup $S$ we can define the {\em minimum group congruence $\sigma$} which has the property that
$S/\sigma$ is a group. See \cite[Section 2.4]{Lawson1998}.
In the case that $S$ is $E$-unitary, it turns out that $\sigma \, = \, \sim$.
See \cite[Theorem 2.4.6]{Lawson1998}.\\

\noindent
{\bf Definition.} Let $S$ be a Boolean inverse semigroup.
We define the {\em group of units} of $S$, denoted by $\mathsf{U}(S)$,
to be the group $\mathsf{C}(S)/\sigma$.\\

It is routine to check that in the case where $S$ is a Boolean inverse {\em monoid}, 
the usual definition of the group of units is returned.
The group of units we have defined in terms of Boolean inverse semigroups is the same
as the group defined in \cite[Remark 3.10]{NO}, 
when you recall the connection between quotients of certain inverse semigroups and directed colimits \cite[Section 2.5]{LS2}.
The following lemmas will needed for the proof of the last proposition of this section.
The proof of the next lemma uses Lemma~\ref{lem:buffs1} and is routine.

\begin{lemma}\label{lem:matt} Let $S$ be a Boolean inverse monoid where $e$ and $e_{1}$ are idempotents.
Suppose that
$a = b \oplus e'$, where $\mathbf{d}(b) = \mathbf{r}(b) = e$,
and 
$a = b_{1} \oplus e_{1}'$, where $\mathbf{d}(b_{1}) = \mathbf{r}(b_{1}) = e_{1}$.
Then $b \sim b_{1}$ in $S$.
Put $x = b \wedge b_{1}$.
Then $\mathbf{d}(x) = \mathbf{r}(x) = ee_{1}$.
We have that
$b = x \vee e(ee_{1})'$ and $b_{1} = x \vee e_{1}(ee_{1})'$.
\end{lemma}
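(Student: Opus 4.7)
The approach is to observe that both $b$ and $b_1$ lie below $a$, which at once makes them compatible in $S$ and so makes the meet $x = b \wedge b_1$ available via Lemma~\ref{lem:buffs1}. The decomposition $b = x \vee e(ee_1)'$ will then fall out of distributing $b$ over the orthogonal splitting of its domain idempotent $e = ee_1 \vee e(ee_1)'$, using the fact that $a$ acts as the identity on $e_1'$.

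First I would note that $b \leq a$ and $b_1 \leq a$, since in each case $a$ is an orthogonal join with $b$ (respectively $b_1$) as one of its components. Any two elements below a common element are compatible: concretely, $b^{-1}b_1 = \mathbf{d}(b) a^{-1} a \, \mathbf{d}(b_1) = \mathbf{d}(b) \mathbf{d}(b_1) = ee_1$ is an idempotent, and symmetrically so is $b b_1^{-1}$. Thus $b \sim b_1$, and Lemma~\ref{lem:buffs1} yields the meet $x = b \wedge b_1$ with $\mathbf{d}(x) = e \wedge e_1 = ee_1 = \mathbf{r}(x)$.

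For the join decomposition I would use that $\mathsf{E}(S)$ is a (unital) Boolean algebra to write $e = ee_1 \oplus e(ee_1)'$ as an orthogonal join of idempotents. Multiplying $b$ on the right and invoking distributivity of multiplication over compatible joins (Lemma~\ref{lem:meets-joins}) gives the orthogonal decomposition $b = b \cdot ee_1 \,\oplus\, b \cdot e(ee_1)'$. For the first summand, Lemma~\ref{lem:buffs1}(2) gives $b \wedge b_1 = b b_1^{-1} b_1 = b \cdot \mathbf{d}(b_1) = b \cdot e_1$, and since $\mathbf{d}(b) = e$ this equals $b \cdot ee_1$, so the first summand is $x$. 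For the second summand, $e_1' \leq a$ implies $a \cdot e_1' = e_1'$; since $e(ee_1)' = ee_1' \leq e_1'$, restricting further gives $a \cdot e(ee_1)' = e(ee_1)'$. Combined with $b = a \cdot e$ (which holds because $b \leq a$ with $\mathbf{d}(b) = e$) and $e(ee_1)' \leq e$, this gives $b \cdot e(ee_1)' = a \cdot e(ee_1)' = e(ee_1)'$. Assembling the two summands: $b = x \vee e(ee_1)'$, and the formula for $b_1$ is obtained by swapping the roles of $(b,e)$ and $(b_1,e_1)$ and using that $e_1 e = ee_1$.

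I do not see any serious obstacle. The only step worth flagging is the chain $b \cdot e(ee_1)' = a \cdot e(ee_1)' = e(ee_1)'$; its content is just the bookkeeping observation that restricting $a$ on the right to an idempotent that sits below one of the idempotents $a$ already fixes simply returns that idempotent.
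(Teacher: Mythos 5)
Your proof is correct, and it follows exactly the route the paper intends: the paper gives no written proof, saying only that the argument ``uses Lemma~\ref{lem:buffs1} and is routine,'' and your argument is precisely that routine verification (compatibility from $b,b_{1}\leq a$, the meet via Lemma~\ref{lem:buffs1}, and the decomposition from $e=ee_{1}\oplus e(ee_{1})'$ together with $ae_{1}'=e_{1}'$). No gaps.
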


The proof of the following is routine.

\begin{lemma}\label{lem:beard} Let $S$ be a Boolean inverse monoid where $e$ and $e_{1}$ are idempotents.
Suppose that
$\mathbf{d}(b) = \mathbf{r}(b) = e$,
and 
$\mathbf{d}(b_{1}) = \mathbf{r}(b_{1}) = e_{1}$
and there is an element $x$, 
such that $\mathbf{d}(x) = \mathbf{r}(x) = y \geq e,e_{1}$,
where $b = x \vee ey'$ and $b_{1} = x \vee e_{1}y'$.
Then $b \vee e' = b_{1} \vee e_{1}'$.
\end{lemma}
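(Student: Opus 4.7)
The plan is to show that both $b \vee e'$ and $b_{1} \vee e_{1}'$ simplify to the common element $x \vee y'$, where the key algebraic fact in the Boolean algebra $\mathsf{E}(S)$ is that
$$ey' \vee e' \;=\; y',$$
valid whenever $y \leq e$ (reading the hypothesis in the sense matching Lemma~\ref{lem:matt}, where the element $y = ee_{1}$ lies below both $e$ and $e_{1}$, not above). This identity follows by distributivity: $y' = y' \wedge (e \vee e') = ey' \vee y'e'$, and $e' \leq y'$ forces $y'e' = e'$.

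First I would justify the existence of the joins in question and identify them as orthogonal joins. From $\mathbf{d}(b) = e$ one obtains $b \cdot e' = b \cdot e \cdot e' = 0$, so $b \perp e'$ and $b \vee e' = b \oplus e'$ exists; the same reasoning gives $b_{1} \perp e_{1}'$. A parallel check using $\mathbf{d}(x) = y$ and $y \wedge y' = 0$ shows $x \cdot y' = 0$, so $x \perp y'$, which will legitimise the final normal form.

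The main computation is then a direct manipulation of compatible joins:
$$b \vee e' \;=\; (x \vee ey') \vee e' \;=\; x \vee (ey' \vee e') \;=\; x \vee y',$$
using the associativity and commutativity of compatible joins followed by the idempotent identity noted above. The identical chain of equalities with $e_{1}$ replacing $e$ (noting $y \leq e_{1}$ as well) yields $b_{1} \vee e_{1}' = x \vee y'$, so the two expressions coincide.

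There is no real obstacle: the lemma is essentially a bookkeeping identity whose content lies entirely in the single Boolean-algebra computation $ey' \vee e' = y'$. Its role in the paper is to package the calculation so that the strong semilattice structure described around Proposition~\ref{prop:clifford} — and hence the definition of the group of units $\mathsf{U}(S)$ via $\mathsf{C}(S)/\sigma$ — is insensitive to the choice of idempotent cut-off $e$, providing exactly the compatibility that the transition maps $\phi^{e}_{f}$ require.
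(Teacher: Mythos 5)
Your proof is correct, and it is precisely the routine verification that the paper omits (the lemma is introduced with ``The proof of the following is routine'' and no argument is given): both sides reduce to the orthogonal join $x \vee y'$ via the Boolean identity $ey' \vee e' = y'$. You were also right to read the hypothesis as $y \leq e, e_{1}$ rather than the printed $y \geq e, e_{1}$ --- the latter is a typo, as both the intended application via Lemma~\ref{lem:matt} (where $y = ee_{1}$) and the consistency requirement $\mathbf{d}(b) = y \vee ey' = e$ confirm.
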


The proof of the next result is by means of a direct verification.

\begin{lemma}\label{lem:walsh} Let $S$ be a Boolean inverse monoid where $e$ and $f$ are idempotents.
Suppose that 
$a = a_{1} \oplus e'$, where $\mathbf{d}(a_{1}) = \mathbf{r}(a_{1}) = e$,
and 
$b = b_{1} \oplus f'$, where $\mathbf{d}(b_{1}) = \mathbf{r}(b_{1}) = f$.
Then
$$(a_{1} \oplus (e \vee f)e')(b_{1} \oplus (e \vee f)f') \oplus e'f' = ab.$$
\end{lemma}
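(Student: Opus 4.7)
The plan is to prove the identity by direct distributive expansion of both sides and verifying that they reduce to the same four-term compatible join. Since the statement is purely algebraic and both sides live inside the Boolean inverse monoid $S$, no ultrafilter or groupoid machinery is needed.

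First I would expand the right-hand side $ab$ using distributivity of multiplication over compatible joins (available because $S$ is a Boolean inverse monoid) to obtain
$$ab = (a_{1} \vee e')(b_{1} \vee f') = a_{1}b_{1} \vee a_{1}f' \vee e'b_{1} \vee e'f'.$$
Next, on the left-hand side I would simplify the idempotent factors. In the generalized Boolean algebra $\mathsf{E}(S)$ one has $(e \vee f)e' = ee' \vee fe' = fe' = e'f$ and, symmetrically, $(e \vee f)f' = ef'$. So the expression to be matched rewrites as $(a_{1} \oplus e'f)(b_{1} \oplus ef') \oplus e'f'$.

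Then I would expand this product by distributivity:
$$(a_{1} \oplus e'f)(b_{1} \oplus ef') = a_{1}b_{1} \vee a_{1}(ef') \vee (e'f)b_{1} \vee (e'f)(ef').$$
The last term vanishes because $(e'f)(ef') = (e'e)(ff') = 0$. The second term simplifies via $\mathbf{d}(a_{1}) = e$ to $a_{1}(ef') = (a_{1}e)f' = a_{1}f'$, and the third simplifies via $\mathbf{r}(b_{1}) = f$ to $(e'f)b_{1} = e'(fb_{1}) = e'b_{1}$. Finally, joining with $e'f'$ gives exactly $a_{1}b_{1} \vee a_{1}f' \vee e'b_{1} \vee e'f'$, matching the expansion of $ab$.

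I do not anticipate a substantive obstacle; the only care required is to justify that every join written with $\oplus$ is genuinely orthogonal. For the two factors this is immediate from the computations $e(e'f) = 0$ and $f(ef') = 0$ (and their range duals), using Lemma~\ref{lem:buffs1} together with $\mathbf{d}(a_{1}) = \mathbf{r}(a_{1}) = e$ and $\mathbf{d}(b_{1}) = \mathbf{r}(b_{1}) = f$. For the outer $\oplus e'f'$, orthogonality follows once the expansion of the product is seen to be bounded above by $(e \vee f)$ on both sides, so its domain and range are orthogonal to $e'f' = (e \vee f)'$. With these orthogonality checks in place, the equality is just the observation that both sides expand to the same four-term join.
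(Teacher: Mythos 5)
Your computation is correct and is precisely the ``direct verification'' that the paper alludes to without writing out: both sides expand to the same compatible join $a_{1}b_{1} \vee a_{1}f' \vee e'b_{1} \vee e'f'$, and your orthogonality checks for the $\oplus$ symbols are the only other point needing care. Nothing further is required.
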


\begin{proposition}\label{prop:group-of-units} 
Let $G$ be a Boolean groupoid, the identity space of which is locally compact and put $S = \mathsf{KB}(G)$.
Then the group of units of $\mathsf{KB}(G^{\infty})$ (as defined above) is isomorphic to the topological full group of $G^{\infty}$.
\end{proposition}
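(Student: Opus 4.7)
The plan is to establish that for the Boolean inverse monoid $T := \mathsf{KB}(G^{\infty})$, the abstract construction $\mathsf{U}(T) = \mathsf{C}(T)/\sigma$ recovers the ordinary group of units of the monoid $T$, and then to observe that this ordinary group of units is by definition the topological full group of $G^{\infty}$.

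First I would note that $G^{\infty}$ has compact identity space, so $T = \mathsf{KB}(G^{\infty})$ is a Boolean inverse \emph{monoid} with top idempotent $1 = (G^{\infty})_{o}$. In this monoid, the ordinary group of units $G_{1}$ consists of those compact-open local bisections $A$ with $\mathbf{d}(A) = \mathbf{r}(A) = 1$, i.e.\ the compact-open bisections of $G^{\infty}$, which is by definition the topological full group of $G^{\infty}$. Thus it suffices to exhibit a group isomorphism $\mathsf{U}(T) \cong G_{1}$.

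The candidate isomorphism is the map $\Phi \colon \mathsf{C}(T) \to G_{1}$ defined by $\Phi(b) := b \vee e'$ for $b \in G_{e}$, where $e' := 1 \setminus e$. This is well-defined because $\mathbf{d}(b) = \mathbf{r}(b) = e$ forces $b \perp e'$ and $\Phi(b) \in G_{1}$; surjectivity is automatic, since any $a \in G_{1}$ equals $\Phi(a)$ at level $e = 1$. Because $\mathsf{C}(T)$ is $E$-unitary, $\sigma$ coincides with the compatibility relation $\sim$. Lemma~\ref{lem:beard} then shows that $\sim$-related elements at levels $e$ and $e_{1}$ have equal images under $\Phi$, so $\Phi$ descends to $\bar{\Phi} \colon \mathsf{U}(T) \to G_{1}$; conversely, Lemma~\ref{lem:matt} produces $x = b \wedge b_{1} \in G_{ee_{1}}$ as a common lower bound whenever $\Phi(b) = \Phi(b_{1})$, establishing injectivity. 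Finally, Lemma~\ref{lem:walsh}, once one unpacks the Clifford product $a \circ b = \phi^{e}_{e \vee f}(a)\, \phi^{f}_{e \vee f}(b)$ in $\mathsf{C}(T)$, says precisely that $\Phi(a \circ b) = \Phi(a)\Phi(b)$, so $\bar{\Phi}$ is a group homomorphism.

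The hard part is really only bookkeeping: matching the strong-semilattice-of-groups description of $\mathsf{C}(T)$ with the orthogonal-sum decompositions $a = a_{1} \oplus e'$ appearing in Lemmas~\ref{lem:matt}--\ref{lem:walsh}, and carefully invoking the $E$-unitary hypothesis to identify $\sigma$ with $\sim$. Once those identifications are in place, the three required facts (well-defined, bijective, multiplicative) each fall out directly from the matching lemma.
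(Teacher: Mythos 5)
You have proved the literal reading of the statement: that for the Boolean inverse \emph{monoid} $T = \mathsf{KB}(G^{\infty})$ the quotient $\mathsf{C}(T)/\sigma$ coincides with the ordinary group of units $G_{1}$ of $T$, which is the topological full group of $G^{\infty}$. Within that scope your argument is organized sensibly, and Lemmas~\ref{lem:matt}, \ref{lem:beard} and \ref{lem:walsh} are indeed the right tools. But this is exactly the fact the paper dismisses as ``routine to check'' immediately after defining $\mathsf{U}(S)$ --- namely that for a monoid the new definition returns the usual group of units --- and your proof never uses the hypothesis ``put $S = \mathsf{KB}(G)$'', which is a signal that something else is intended. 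The proposition the paper actually proves is that $\mathsf{U}(S) = \mathsf{C}(S)/\sigma$, computed intrinsically from the possibly non-unital semigroup $S = \mathsf{KB}(G)$ (the definition given ``independently of what we did above'', i.e.\ without reference to $G^{\infty}$), is isomorphic to the topological full group of the unitized groupoid $G^{\infty}$.

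The paper's map runs in the opposite direction to yours: a compact-open bisection $A$ of $G^{\infty}$ is first decomposed as $A = B \cup (G_{o}^{\infty}\setminus E)$ with $B \in \mathsf{KB}(G)$ and $B^{-1}B = BB^{-1} = E$, using the description of the elements of $\mathsf{KB}(G^{\infty})$ obtained earlier in the section; then $A \mapsto \sigma(B) \in \mathsf{U}(S)$, with Lemma~\ref{lem:matt} giving independence of the chosen decomposition, Lemma~\ref{lem:beard} giving injectivity, and Lemma~\ref{lem:walsh} giving multiplicativity. The step your argument cannot supply is precisely this decomposition. Your surjectivity claim rests on taking $e = 1 = G_{o}^{\infty}$, which is \emph{not} an idempotent of $S$ when $G_{o}$ is non-compact, so the Clifford semigroup $\mathsf{C}(T)$ you work over is indexed by strictly more idempotents than $\mathsf{C}(S)$; in particular nothing in your proof shows that every unit of $T$ is $\sigma$-equivalent to something coming from $S$, nor that the induced comparison $\mathsf{C}(S)/\sigma \rightarrow \mathsf{C}(T)/\sigma$ is injective. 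Showing that every compact-open bisection of $G^{\infty}$ has the form $b \oplus e'$ with $b \in S$ and $\mathbf{d}(b) = \mathbf{r}(b) = e \in \mathsf{E}(S)$, and that this representation is unique up to the compatibility relation in $\mathsf{C}(S)$, is the actual content of the paper's proof and is missing from yours.
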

\begin{proof}  
We first of all establish a bijection between the elements of the topological full group and the elements of the group of units.
We begin by describing the elements of the topological full group of $G^{\infty}$.
A compact-open bisection $A$ of $G^{\infty}$ is a compact-open local bisection such that
$A^{-1}A = AA^{-1} = G_{o}^{\infty}$.
We know also that $A$ can be written in the form $A = B \cup C$, a disjoint union (by Lemma~\ref{lem:one-point-groupoid}),
where $B$ is a compact-open local bisection of $G$ and $C$ is of the form
$C = G_{o}^{\infty}\setminus D$ where $D$ is a compact-open subset of $G_{o}$ and $B = DBD$.
Because of the disjointness, we have that $B^{-1}B = BB^{-1} = E$ and $C = G_{o}^{\infty}\setminus E$.
The above representation for $A$ is, of course, not unique.
Supose that $A = B_{1} \cup C_{1}$, where 
$B_{1}^{-1}B_{1} = B_{1}B_{1}^{-1} = E_{1}$ 
and 
$C_{1} = G_{o}^{\infty}\setminus E_{1}$.
The fact that $B \sim B_{1}$ in $\mathsf{C}(S)$ follows by Lemma~\ref{lem:matt}.
We may therefore map $A$ to $\sigma (B)$.
Now, suppose that $B$ and $B_{1}$ are compact-open local bisections of $G$
such that $B^{-1}B = BB^{-1} = E$, $B_{1}^{-1}B_{1} = B_{1}B_{1}^{-1} = E_{1}$ and $\sigma (B) = \sigma (B_{1})$.
Then by Lemma~\ref{lem:beard}, we have that 
$$A = B \cup (G_{o}^{\infty} \setminus E) 
= 
B_{1} \cup (G_{o}^{\infty} \setminus E_{1})$$
where $A$ is a compact-open bisection of $G^{\infty}$
such that $A$ maps to $\sigma (B) = \sigma (B_{1})$.
We have therefore established our bijection.
The proof that we have a homomorphism and so an isomorphism follows by Lemma~\ref{lem:walsh}.
\end{proof}

\section{Final remarks}

A {\em frame} is a complete infinitely distributive lattice \cite[Page 39]{Johnstone}.
A {\em completely prime filter} in a frame $L$ is a proper filter $F$ such that
$\bigvee_{i \in I} a_{i} \in F$ implies that $a_{i} \in F$ for some $i \in I$.
If $X$ is a topological space then its set of open subsets $\Omega (X)$ is a frame.
If $x \in X$ denote by $\mathscr{G}_{x}$ the set of all open subsets of $X$ that contains $x$.
The set  $\mathscr{G}_{x}$ is a completely prime fiter of $\Omega (X)$.
We refer to the completely prime filters in $\Omega (X)$ as {\em points}
and denote the set of all points in $X$ by $\mbox{\rm pt}(\Omega (X))$.
We have therefore defined a map $X \rightarrow \mbox{\rm pt}(\Omega (X))$
given by $x \mapsto \mathscr{G}_{x}$.
We say that the space $X$ is {\em sober} if this map is a bijection.
See \cite[Page 43]{Johnstone}. 
We say that a frame $L$ is {\em spatial} if for any elements $a,b \in L$
such that $a \nleq b$ then there is a completely prime filter that contains $a$ but omits $b$.
See \cite[Page 43]{Johnstone}.

We say that a topological space is {\em spectral} if it is sober and has a base of compact-open subsets with the additional property that the intersection
of any two compact-open subsets is itself compact-open;
This is different from the definition given in \cite{Johnstone} since we do not assume that the space be compact.
A {\em spectral groupoid} is an \'etale topological groupoid whose space of identities forms a spectral space.
If $S$ is a distributive inverse semigroup then $\mathsf{G}(S)$, the set of prime filters of $S$,
is a spectral groupoid; if $G$ is a spectral groupoid then $\mathsf{KB}(G)$, the set of compact-open local bisections of $G$,
is a distributive inverse semigroup.
The following is proved as \cite[Theorem 3.17]{LL}.

\begin{theorem}[Non-commutative Stone duality for distributive inverse semigroups]\label{them:dist-stone}
The category of distributive inverse semigroups and their callitic morphisms is dually 
equivalent to the category coherent continuous covering functors.
\end{theorem}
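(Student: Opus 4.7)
The plan is to mimic the proof strategy of Theorem~\ref{them:non-com-stone} while working throughout with prime filters rather than ultrafilters. Many results of Sections~6 and~7 were formulated for Boolean inverse semigroups, but their proofs go through verbatim for distributive inverse semigroups; the point at which the arguments diverge is precisely where Lemma~\ref{lem:prime-equals-uf} (prime filters equal ultrafilters) is invoked. Since for a general distributive inverse semigroup prime filters form a strictly larger collection than ultrafilters, we must work with prime filters in place of ultrafilters; happily, many key intermediate results such as Lemma~\ref{lem:d-is-extremal} are already proved in the distributive setting.

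First I would construct $\mathsf{G}(S)$ as the set of prime filters of a distributive inverse semigroup $S$, equipped with the partial operation $A \cdot B = (AB)^{\uparrow}$ when $\mathbf{d}(A) = \mathbf{r}(B)$. That $A \cdot B$ is again a prime filter is the main technical point here: using Lemma~\ref{lem:d-is-extremal}(1) one reduces to showing $\mathbf{d}(A \cdot B) = \mathbf{d}(B)$ is prime, which follows because $B$ is prime. Equipping $\mathsf{G}(S)$ with the topology generated by the base $\{\mathscr{U}_{a} : a \in S\}$ of prime filters containing $a$, the analogues of Lemma~\ref{lem:primefilters-form-a-base} and Proposition~\ref{prop:from-bis-to-boolean-groupoid} give a topological groupoid whose structure maps are continuous. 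The proof of étaleness proceeds exactly as before: $A \mapsto \mathbf{d}(A)$ restricts to a homeomorphism $\mathscr{U}_{a} \to \mathscr{U}_{\mathbf{d}(a)}$.

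Next I would identify the identity space of $\mathsf{G}(S)$. By Lemma~\ref{lem:idempotent-filter} and the prime-filter case of Lemma~\ref{lem:relating}, idempotent prime filters of $S$ correspond bijectively with prime filters of the distributive lattice $\mathsf{E}(S)$, and the induced topology is the Stone topology of $\mathsf{E}(S)$. Classical Stone duality for distributive lattices (the version without Boolean complements) shows that this space is spectral, with a base of compact-open sets closed under finite intersection. Étaleness then upgrades this to the statement that $\mathsf{G}(S)$ is a spectral groupoid. In the reverse direction, for a spectral groupoid $G$, the argument of Proposition~\ref{prop:bis-from-boolean-groupoids} shows that $\mathsf{KB}(G)$ is a distributive inverse semigroup; this is actually simpler here because we use only the fact that finite intersections of compact-open subsets of $G_{o}$ are compact-open, directly built into spectrality.

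For the object-level correspondence I would follow Proposition~\ref{prop:isomorphism-of-structures} unchanged, recovering each $a \in S$ as the compact-open local bisection $\mathscr{U}_{a}$, and each $g \in G$ as the prime filter $\mathscr{F}_{g}$ of compact-open local bisections containing~$g$; the necessary lemmas (Lemma~\ref{lem:local-bisection}, Lemma~\ref{lem:needed-for-later}, Lemma~\ref{lem:compact-open-prime}) use only distributive structure. For morphisms, callitic morphisms correspond to coherent continuous covering functors by the argument of Theorem~\ref{them:non-com-stone}; Lemma~\ref{lem:weakly-meet} is already stated for distributive inverse semigroups, and the proof that $\theta^{\star} = \theta^{-1}$ is a covering functor between prime-filter groupoids carries over word for word.

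The hard part will be verifying that prime filters (rather than ultrafilters) behave sufficiently well under the partial product: specifically, that $\mathsf{G}(S)$ really forms a groupoid under $\cdot$ rather than just a category. The chief danger is that $(AB)^{\uparrow}$ might be proper but not prime; this is dispatched using Lemma~\ref{lem:d-is-extremal} to transfer the primality question to the idempotent case, which reduces to the known behavior of prime filters in distributive lattices. Once this point is secured, the remaining steps are direct adaptations of the Boolean proof.
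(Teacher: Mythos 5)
Your strategy --- rerun the Boolean argument with prime filters in place of ultrafilters --- is viable, but it is not the route the paper takes. The paper does not reprove the theorem directly: it cites \cite[Theorem 3.17]{LL} and, as explained at the end of Section~10, obtains the result by specializing the adjunction between pseudogroups and \'etale groupoids (Theorem~\ref{them:adjunction-theorem}) to coherent pseudogroups and spectral groupoids, a distributive inverse semigroup being recovered as the set of finite elements of a coherent pseudogroup. Your direct approach is more elementary and self-contained, and it is genuinely available because almost all of the filter machinery in Sections~6 and~7 (Lemmas~\ref{lem:idempotent-filter}, \ref{lem:relating}, \ref{lem:d-is-extremal}, \ref{lem:prime-filter}--\ref{lem:crucial}, \ref{lem:weakly-meet}) is already stated and proved for distributive inverse semigroups; what one loses by avoiding the adjunction is the uniform treatment that also yields the frame/locale case and the Boolean case as corollaries. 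You are also right that some steps become \emph{easier} with prime filters --- for instance part (4) of Lemma~\ref{lem:primefilters-form-a-base} is immediate from primeness rather than needing Lemma~\ref{lem:prime-equals-uf}.

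One claim in your last two paragraphs is too quick and needs repair: Lemma~\ref{lem:compact-open-prime} does \emph{not} ``use only distributive structure''. Parts (2) and (5) --- that every prime filter in $\mathsf{KB}(G)$ equals $\mathscr{F}_{g}$ for a unique $g$ --- are proved in the Boolean case using the fact that $G_{o}$ is a locally compact \emph{Hausdorff} space (compact subsets are closed, points are separated by compact-open sets). For a spectral groupoid $G_{o}$ need not be Hausdorff, and these two parts must be reproved from sobriety of $G_{o}$, which is built into the definition of a spectral space: an identity prime filter of $\mathsf{KB}(G)$ restricts to a prime filter of the distributive lattice of compact-open subsets of $G_{o}$, which determines a unique point by the classical duality for spectral spaces, and one then lifts along $\mathbf{d}$ using the local bisection structure. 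This is exactly the role played by Lemma~\ref{lem:sober} at the pseudogroup level, and it is the one place where the non-Hausdorff setting demands a genuinely new ingredient rather than a word-for-word transcription. With that step supplied, your proof goes through.
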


By Lemma~\ref{lem:boolean-space} and the fact that Hausdorff spaces are sober \cite[part (ii) of Lemma II.1.6]{Johnstone}, 
a special case of the above theorem is Theorem~\ref{them:non-com-stone}
since the Hausdorff spectral spaces are precisely the locally compact Boolean spaces.

By a {\em pseudogroup} we mean an inverse semigroup which has arbitrary compatible joins
and multiplication distributes over such joins.
Observe that pseudogroups are automatically meet-monoids.
In addition, if $S$ is a pseudogroup then $\mathsf{E}(S)$ is a frame.
Pseudogroups were studied historically by Boris Schein \cite{Schein} and more recently by Resende \cite{Resende2}.
The connection between pseudogroups and distributive inverse semigroups is provided by the notion of `coherence'.
Let $S$ be a pseudogroup.
An element $a \in S$ is said to be {\em finite} if $a \leq \bigvee_{i \in I} x_{i}$, 
where $\{x_{i} \colon i \in I\}$ is any compatible subset of $S$, 
then there exists a finite subset $x_{1},\ldots, x_{n}$ such that $a \leq \vee_{i=1}^{n} x_{i}$.
Denote the set of finite elements of $S$ by $\mathsf{K}(S)$.
We say that the pseudogroup $S$ is {\em coherent} if $\mathsf{K}(S)$ is a distributive
inverse semigroup and every element of $S$ is a join of a compatible subset of $\mathsf{K}(S)$.
It can be shown that every distributive inverse semigroup arises from
some pseudogroup as its set of finite elements \cite[Proposition 3.5]{LL}.

Let $G$ be an \'etale groupoid.
Then the set of open local bisections of $G$, denoted by $\mathsf{B}(G)$,
forms a pseudogroup \cite[Proposition 2.1]{LL}.
Let $S$ be a pseudogroup.
A {\em completely prime filter} $A \subseteq S$ is a proper filter in $S$ 
with the property that $\bigvee_{i \in I} x_{i} \in A$ implies that $x_{i} \in A$ for some $i$.
Denote the set of completely prime filters on $S$ by $\mathsf{G}_{CP}(S)$.
Then $\mathsf{G}_{CP}(S)$ is an \'etale groupoid \cite[Proposition 2.8]{LL}.
A homomorphism $\theta \colon S \rightarrow T$ is said to be {\em hypercallitic}\footnote{The definition we have given here looks different from the one given in \cite{LL}
but is equivalent by using \cite{Resende3}.}
if it preserves arbitrary joins, preserves binary meets, and has the
property that for each $t \in T$ we may write $t = \bigvee_{i \in I} t_{i}$
where $t_{i} \leq \theta (s_{i})$ for each $i \in I$.
Denote by $\mbox{\bf Pseudo}$ the category of pseudogroups and hypercallitic maps.
Denote by $\mbox{\bf Etale}$ the category of \'etale groupoids and continuous covering functors.
If $\theta \colon S \rightarrow T$ is hypercallitic,
define $\mathsf{G}_{CP}(\theta)$ to be $\theta^{-1}$;
by \cite[Lemma 2.14 and Lemma 2.16]{LL}, it follows that $\mathsf{G}_{CP}(\theta)$ is a continuous covering functor
from $\mathsf{G}_{CP}(T)$ to $\mathsf{G}_{CP}(S)$.
On the other hand, if $\phi \colon G \rightarrow H$ is a continuous covering functor
then $\mathsf{B}(\phi) = \phi^{-1}$ is a hypercallitic map from $\mathsf{B}(H)$ to $\mathsf{B}(G)$ by
\cite[Lemma 2.19]{LL}.
The following is immediate by \cite[Corollary 2.18, Theorem 2.22]{LL}.

\begin{theorem}[The adjunction theorem]\label{them:adjunction-theorem} The functor
$\mathsf{G}_{CP} \colon \mbox{\bf Pseudo}^{op} \rightarrow \mbox{\bf Etale}$ is right
adjoint to the functor $\mathsf{B} \colon \mbox{\bf Etale} \rightarrow \mbox{\bf Pseudo}^{op}$.
\end{theorem}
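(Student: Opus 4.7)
The plan is to exhibit the adjunction by constructing a natural bijection
\[
\operatorname{Hom}_{\mathbf{Pseudo}}(S,\mathsf{B}(G)) \;\longleftrightarrow\; \operatorname{Hom}_{\mathbf{Etale}}(G,\mathsf{G}_{CP}(S))
\]
using a candidate unit and counit. The unit $\eta_G \colon G \to \mathsf{G}_{CP}(\mathsf{B}(G))$ sends $g$ to $\mathscr{F}_g$, the set of open local bisections of $G$ that contain $g$; the counit (in $\mathbf{Pseudo}$) is $\varepsilon_S \colon S \to \mathsf{B}(\mathsf{G}_{CP}(S))$, $s \mapsto \mathscr{U}_s$, where $\mathscr{U}_s$ is the set of completely prime filters of $S$ that contain $s$.

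First I would verify that these candidates are well-defined maps in the appropriate categories. For $\eta_G$, I would check that $\mathscr{F}_g$ is downward directed (using that the open local bisections form a base, so any two neighbourhoods of $g$ contain a common local-bisection neighbourhood), upward closed, proper, and completely prime (using that an open cover of a neighbourhood of $g$ must include one member containing $g$). The fact that $\eta_G$ is a functor follows from the étale structure, star-bijectivity from the local-homeomorphism property of $\mathbf{d}$, and continuity because $\eta_G^{-1}(\mathscr{U}_U) = U$ for every $U \in \mathsf{B}(G)$. For $\varepsilon_S$, I would check that $\mathscr{U}_s$ is an open local bisection of $\mathsf{G}_{CP}(S)$ (by a direct computation analogous to Lemma~\ref{lem:needed-for-later}), that $s\mapsto \mathscr{U}_s$ preserves arbitrary compatible joins (this is exactly complete primeness) and binary meets (using that filters are closed under meets), and that the properness condition for hypercallitic morphisms holds because every open subset of $\mathsf{G}_{CP}(S)$ is, by definition of the base topology, a union of sets of the form $\mathscr{U}_s$.

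Second, I would define the two sides of the bijection using the cited lemmas from \cite{LL}. Given a hypercallitic $\theta \colon S \to \mathsf{B}(G)$, set
\[
\widetilde{\theta} \;=\; \mathsf{G}_{CP}(\theta) \circ \eta_G \colon G \longrightarrow \mathsf{G}_{CP}(S),
\]
so that $\widetilde{\theta}(g) = \{s \in S : g \in \theta(s)\}$; this is a continuous covering functor by \cite[Lemma 2.14, Lemma 2.16]{LL} combined with the properties of $\eta_G$. Conversely, given a continuous covering functor $\phi \colon G \to \mathsf{G}_{CP}(S)$, set
\[
\overline{\phi} \;=\; \mathsf{B}(\phi) \circ \varepsilon_S \colon S \longrightarrow \mathsf{B}(G),
\]
so that $\overline{\phi}(s) = \phi^{-1}(\mathscr{U}_s) = \{g \in G : s \in \phi(g)\}$; this is hypercallitic by \cite[Lemma 2.19]{LL} combined with the properties of $\varepsilon_S$. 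Naturality of $\widetilde{(-)}$ and $\overline{(-)}$ in $G$ and $S$ reduces to functoriality of $\mathsf{G}_{CP}$ and $\mathsf{B}$, together with the naturality of the collections $\mathscr{F}_g$ and $\mathscr{U}_s$, both of which are direct pointwise checks.

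Third, I would verify that the two constructions are mutually inverse by a direct pointwise computation:
$\overline{\widetilde{\theta}}(s) = \{g : s \in \widetilde{\theta}(g)\} = \{g : g \in \theta(s)\} = \theta(s)$,
and
$\widetilde{\overline{\phi}}(g) = \{s : g \in \overline{\phi}(s)\} = \{s : s \in \phi(g)\} = \phi(g)$.
Equivalently, one may verify the triangle identities $\mathsf{B}(\eta_G) \circ \varepsilon_{\mathsf{B}(G)} = 1_{\mathsf{B}(G)}$ and $\mathsf{G}_{CP}(\varepsilon_S) \circ \eta_{\mathsf{G}_{CP}(S)} = 1_{\mathsf{G}_{CP}(S)}$, both of which unwind to tautologies about which completely prime filters contain which open local bisections.

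The main obstacle is the well-definedness of the unit and counit at the level of morphisms: showing that $\mathscr{F}_g$ really is a completely prime filter (as opposed to merely a prime one) uses the full frame structure of $\mathsf{B}(G)$ together with local compactness of étale covers; showing that $\varepsilon_S$ is hypercallitic requires a careful verification that arbitrary joins in $S$, not merely compatible finite joins, are realised as unions $\mathscr{U}_{\bigvee s_i} = \bigcup \mathscr{U}_{s_i}$. Once these two facts are in hand, functoriality and the bijection follow mechanically from the lemmas already recorded in \cite{LL}, and no further topological subtlety intervenes.
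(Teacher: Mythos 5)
Your proposal is correct and is essentially the argument the paper delegates to the citation of \cite{LL}: the unit $g\mapsto\mathscr{F}_g$ and counit $s\mapsto\mathscr{U}_s$ you construct are exactly the maps $\eta$ ($g\mapsto\mathscr{C}_g$) and $\varepsilon$ ($s\mapsto\mathscr{X}_s$) that the paper introduces immediately after the theorem statement, and the triangle identities do unwind to the tautologies you describe. The only quibble is your closing remark that complete primeness of $\mathscr{F}_g$ uses ``local compactness of \'etale covers'' --- no local compactness is assumed or available at this level of generality, and none is needed: compatible joins in $\mathsf{B}(G)$ are unions of open local bisections, so $g\in\bigcup_{i} U_{i}$ forces $g\in U_{i}$ for some $i$.
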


The above theorem is a generalization of \cite[Theorem II.1.4]{Johnstone} from frames/locales to pseudogroups.
It can also be used to prove Theorem~\ref{them:dist-stone}, which is the approach adopted in \cite{LL}.

Let $S$ be a pseudogroup.
If $s \in S$, denote by $\mathscr{X}_{s}$ the set of all completely prime filters in $S$ containing the element $s$.
The function $\varepsilon \colon S \rightarrow \mathsf{B}(\mathsf{G}_{CP}(S))$ given by $s \mapsto \mathscr{X}_{s}$
is a surjective callic morphism \cite[Proposition 2.9, part (1) of Proposition 2.12, Corollary 2.18, Lemma 2.21]{LL}.
We say that $S$ is {\em spatial} if $\varepsilon$ is injective.
Denote by  $\mbox{\bf Pseudo}_{sp}$ the category of spatial pseudogroups and hyperallitic morphisms.

\begin{lemma}\label{lem:spatial} 
The pseudogroup $S$ is spatial if and only if the frame $\mathsf{E}(S)$ is spatial as a frame.
\end{lemma}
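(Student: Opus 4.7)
The plan is to exploit a correspondence (analogous to Lemmas~\ref{lem:idempotent-filter} and~\ref{lem:d-is-extremal}) between idempotent completely prime filters in $S$ and completely prime filters in $\mathsf{E}(S)$, combined with the coset description of filters in $S$ as $A = (sF^{\uparrow})^{\uparrow}$ with $F$ an idempotent filter and $s$ a chosen representative, as in Lemma~\ref{lem:cosets}.

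For $(\Rightarrow)$, I would first note that the inverse-semigroup homomorphism $\varepsilon$, being injective, automatically reflects the natural partial order: $\varepsilon(s) \leq \varepsilon(t)$ forces $\varepsilon(s) = \varepsilon(ts^{-1}s)$, hence $s = ts^{-1}s$, i.e.\ $s \leq t$. Given $e, f \in \mathsf{E}(S)$ with $e \nleq f$, this yields a completely prime filter $A$ of $S$ containing $e$ and omitting $f$. The restriction $F := A \cap \mathsf{E}(S)$ is a filter of $\mathsf{E}(S)$ separating $e$ from $f$; complete primeness passes to $F$ because any compatible join of idempotents lying in $A$ has a summand in $A$, which is perforce an idempotent and hence in $F$.

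For $(\Leftarrow)$, I would reduce injectivity of $\varepsilon$ to order-reflection, so assume $s \nleq t$ and seek a completely prime filter separating $s$ from $t$. Since $S$ is a pseudogroup, $s \wedge t$ exists, and $s \nleq t$ means $s \wedge t < s$; noting that elements of an inverse semigroup with the same domain and one below the other must coincide (since $a \leq b$ gives $a = b\mathbf{d}(a)$), this forces $\mathbf{d}(s \wedge t) < \mathbf{d}(s)$ in $\mathsf{E}(S)$. Spatiality of $\mathsf{E}(S)$ then supplies a completely prime filter $F$ of $\mathsf{E}(S)$ containing $\mathbf{d}(s)$ but not $\mathbf{d}(s \wedge t)$. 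I would define $A := (sF^{\uparrow})^{\uparrow}$, where $F^{\uparrow}$ is the upward closure of $F$ inside $S$; clearly $s \in A$, and a putative inclusion $t \in A$ would produce $sh \leq t$ with $h \in F^{\uparrow}$ and $h \leq \mathbf{d}(s)$, whence $sh \leq s \wedge t$ and $h = \mathbf{d}(sh) \leq \mathbf{d}(s \wedge t)$, placing $\mathbf{d}(s \wedge t)$ in $F$ and contradicting the choice of $F$.

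The main obstacle will be checking that $F^{\uparrow}$ and $A$ are genuinely \emph{completely} prime (not just prime) filters in $S$. For $F^{\uparrow}$ this uses the fact that in any inverse semigroup the elements below an idempotent are themselves idempotents: a join $\bigvee_{i} x_{i} \geq e \in F$ can be intersected with $e$ to produce a join of idempotents to which complete primeness of $F$ applies. For $A$ the argument is similar, with an additional appeal to infinite distributivity in the pseudogroup: $sh \leq \bigvee_{i} x_{i}$ yields $sh = \bigvee_{i} (sh \wedge x_{i})$, hence $h = \bigvee_{i} \mathbf{d}(sh \wedge x_{i})$ in $\mathsf{E}(S)$, and complete primeness of $F$ singles out a summand $sh \wedge x_{j}$ which, being $\leq s$, equals $s\mathbf{d}(sh \wedge x_{j}) \in sF^{\uparrow} \subseteq A$, forcing $x_{j} \in A$. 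The remaining closure-under-meets and upward-closure conditions on $A$ reduce to routine coset manipulations.
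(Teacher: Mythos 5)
Your argument is correct, but the converse direction takes a genuinely different route from the paper's. The paper proves injectivity of $\varepsilon$ directly: from $\mathscr{X}_{a} = \mathscr{X}_{b}$ it uses the groupoid structure on completely prime filters from \cite{LL} to see that every filter in $\mathscr{X}_{a^{-1}b}$ is idempotent, whence $\mathscr{X}_{a^{-1}b} = \mathscr{X}_{a^{-1}b \wedge 1} = \mathscr{X}_{a^{-1}a}$, and then applies spatiality of $\mathsf{E}(S)$ to the idempotents $a^{-1}b \wedge 1$ and $a^{-1}a$ to get $a^{-1}a \leq a^{-1}b$ and hence $a \leq b$. You instead build the separating completely prime filter explicitly as $(sF^{\uparrow})^{\uparrow}$ from a completely prime filter $F$ of $\mathsf{E}(S)$ separating $\mathbf{d}(s)$ from $\mathbf{d}(s \wedge t)$. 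Both work: the paper's version is shorter because it outsources the filter theory to \cite{LL} and exploits the fixed-point operator $x \mapsto x \wedge 1$ available in a pseudogroup, while yours is more self-contained and delivers the order-separating filter that the literal definition of frame spatiality asks for, at the cost of the complete-primeness verifications. Two points to make explicit if you write this up: first, the step $sh = \bigvee_{i}(sh \wedge x_{i})$ needs binary meets to distribute over arbitrary compatible joins, which is not part of the definition of a pseudogroup but follows from \cite{Resende3}; alternatively you can avoid meets altogether by writing $sh = \bigl(\bigvee_{i} x_{i}\bigr)\mathbf{d}(sh) = \bigvee_{i} x_{i}\mathbf{d}(sh)$, which uses only the distributivity of multiplication over joins. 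Second, an element of $(sF^{\uparrow})^{\uparrow}$ lies above $sx$ with $x \in F^{\uparrow}$ arbitrary and not necessarily idempotent, so you should normalize first --- replace $x$ by an idempotent $e \in F$ with $e \leq x$ and then by $\mathbf{d}(s)e \in F$ --- before asserting $h \leq \mathbf{d}(s)$ and $h = \mathbf{d}(sh)$. Your forward direction is essentially the paper's (idempotent completely prime filters of $S$ correspond to completely prime filters of $\mathsf{E}(S)$), merely phrased through order-reflection of $\varepsilon$ rather than injectivity.
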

\begin{proof} We denote by $\mathscr{X}_{s}$ the set of all completely prime filters in $S$ that contain $s$.
We denote by $\widetilde{\mathscr{X}_{e}}$  the set of all completely prime filters in $\mathsf{E}(S)$ that contain $e$.
Assume first that $S$ is spatial.
Suppose that for idempotents $e$ and $f$ we have that $\widetilde{\mathscr{X}_{e}} = \widetilde{\mathscr{X}_{f}}$.
We use the fact that every idempotent filter in $S$ is determined by the idempotents it contains. 
It follows that $\mathscr{X}_{e} = \mathscr{X}_{f}$ and so, by assumption, $e = f$.
We now assume that $\mathsf{E}(S)$ is spatial.
Suppose that $a$ and $b$ are elements of $S$ such that $\mathscr{X}_{a} = \mathscr{X}_{b}$ 
Using \cite[part (2) and (3) of Lemma 2.6]{LL}, it follows that every element of  $\mathscr{X}_{a^{-1}b}$ 
contains an idempotent. Similarly for $\mathscr{X}_{ab^{-1}}$.
It follows that 
$\mathscr{X}_{a^{-1}b} = \mathscr{X}_{a^{-1}b \wedge 1}$
and  
$\mathscr{X}_{ab^{-1}} = \mathscr{X}_{ab^{-1} \wedge 1}$
where $a^{-1}b \wedge 1$ and $ab^{-1} \wedge 1$ are idempotents.
We have that
$\mathscr{X}_{a^{-1}b} = \mathscr{X}_{a^{-1}a}$.
We now use \cite[part (2) of Lemma 2.2]{LL} and the fact that $\mathsf{E}(S)$ is spatial
to deduce that $a^{-1}b \wedge 1 = a^{-1}a$.
Similarly, $ab^{-1} \wedge 1 = bb^{-1}$.
We have that $a^{-1}a \leq a^{-1}b$ from which we deduce that $a \leq b$.
Similarly, $bb^{-1} \leq ab^{-1}$ from which we deduce that $b \leq a$.
It follows that $a = b$, as required.
\end{proof}

Let $G$ be an \'etale groupoid.
For each $g \in G$, denote by $\mathscr{C}_{g}$ the set of all open local bisections of $G$ containing the element $g$.
The function $\eta \colon G \rightarrow \mathsf{G}_{CP}(\mathsf{B}(G))$ given by $g \mapsto \mathscr{C}_{g}$
is a continuous covering functor by \cite[Proposition 2.11]{LL}.
We say that $G$ is {\em sober} if $\eta$ is a homeomorphism.
Denote by $\mbox{\bf Etale}_{so}$ the category of sober \'etale groupoids and continuous covering functors.
 
\begin{lemma}\label{lem:sober} Let $G$ be an \'etale groupoid.
\begin{enumerate}
\item Each open set in a completely prime filter of open subsets $F$ contains as a subset an open local bisection also in $F$.
\item Completely prime filters of open subsets are determined by the open local bisections they contain.
\item If $F$ is a completely prime filter of open sets in $G$ then 
$$\mathbf{d}(F) = \{ \mathbf{d}(U) \colon U \in F\}$$
is a completely prime filter of open sets in $G_{o}$.
\item If $F$ is a completely prime filter of open sets and $U \in F$ is an open local bisection
then $U\mathbf{d}(F)$ consists of open local bisections and $F = (U\mathbf{d}(F))^{\uparrow}$.
\item $G$ is a $T_{0}$-space if and only if $G_{o}$ is a $T_{0}$-space.
\item $G$ is sober if and only if $G_{o}$ is sober.
\end{enumerate}
\end{lemma}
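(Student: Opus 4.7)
The plan is to treat the six parts in order, since each feeds into the next. Part (1) uses the hypothesis that $G$ is \'etale: the open local bisections form a base for $G$, so any $V \in F$ can be written as $V = \bigcup_i U_i$ with each $U_i$ an open local bisection, and complete primality of $F$ yields some $U_i \in F$ with $U_i \subseteq V$. Part (2) follows immediately, since then any element of $F_1$ contains an open local bisection which belongs to $F_1 \cap F_2$, hence to $F_2$ by upward closure, and by symmetry $F_1 = F_2$.

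For (3), I check that $\mathbf{d}(F)$ is a completely prime filter in $\Omega(G_o)$. Properness comes from $\mathbf{d}(U) = \varnothing$ iff $U = \varnothing$. For upward closure I use that $G_o$ is open in the \'etale groupoid, so an open $V \subseteq G_o$ is also open in $G$: if $V \supseteq \mathbf{d}(U)$ with $U \in F$, then $U \cup V \in F$ and $\mathbf{d}(U \cup V) = V$. Downward directedness follows from $F$ being closed under binary intersection (a filter in a meet-semilattice) together with $\mathbf{d}(U_1 \cap U_2) \subseteq \mathbf{d}(U_1) \cap \mathbf{d}(U_2)$. Complete primality rests on the decomposition $U = \bigcup_i (U \cap \mathbf{d}^{-1}(V_i))$ whenever $\mathbf{d}(U) \subseteq \bigcup_i V_i$, together with the identity $\mathbf{d}(U \cap \mathbf{d}^{-1}(V_i)) = V_i$ whenever $V_i \subseteq \mathbf{d}(U)$.

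For (4), the local bisection identity $UV = U \cap \mathbf{d}^{-1}(V)$ for $V \subseteq G_o$ shows that $U\mathbf{d}(F)$ consists of open local bisections. One inclusion is easy: for $U' \in F$, $U \cap \mathbf{d}^{-1}(\mathbf{d}(U')) \supseteq U \cap U' \in F$. For the other, given $W \in F$, part (1) supplies an open local bisection $U'' \in F$ with $U'' \subseteq W$; then $U \cap U''$ is an open local bisection in $F$, and the local bisection property of $U$ (Lemma \ref{lem:water}) forces $U \cap \mathbf{d}^{-1}(\mathbf{d}(U \cap U'')) = U \cap U'' \subseteq W$. Part (5) is standard: subspaces inherit $T_0$, and for the converse I treat distinct $g, h \in G$ by cases; if $\mathbf{d}(g) \neq \mathbf{d}(h)$ (or analogously for $\mathbf{r}$), a $T_0$-separator from $G_o$ pulls back under $\mathbf{d}$, while if $\mathbf{d}(g) = \mathbf{d}(h)$ then any open local bisection through $g$ omits $h$ by Lemma \ref{lem:water}.

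For (6), sobriety forces $T_0$, so by (5) the $T_0$ halves of the two sobriety statements match; the content is surjectivity of the point maps. If $G_o$ is sober and $F$ is completely prime in $\Omega(G)$, then by (3) the filter $\mathbf{d}(F)$ corresponds to a unique $x \in G_o$; an open local bisection $U \in F$ (from (1)) contains a unique $g$ with $\mathbf{d}(g) = x$ by Lemma \ref{lem:water}, and unwinding $F = (U\mathbf{d}(F))^{\uparrow}$ from (4) identifies $F$ with the set of open sets of $G$ containing $g$. Conversely, if $G$ is sober and $F$ is completely prime in $\Omega(G_o)$, I set $F^{\ast} = \{W \in \Omega(G) : W \cap G_o \in F\}$, which is completely prime in $\Omega(G)$ (using that $G_o$ is open in $G$, so $W \cap G_o$ is open in $G_o$, and that $\bigcap$, $\bigcup$ distribute over the restriction); sobriety of $G$ then produces a unique $g \in G$ with $F^{\ast}$ equal to the open neighbourhood filter of $g$, and $G_o \in F^{\ast}$ forces $g \in G_o$, whereupon $F$ is the open neighbourhood filter of $g$ in $G_o$. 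The main obstacle I expect is keeping the various descriptions of completely prime filters (in $\Omega(G)$ via (2), in $\Omega(G_o)$ via $\mathbf{d}$, and pulled back via $W \cap G_o$) mutually compatible across (3), (4) and (6); part (1) is the crucial bridge between these viewpoints and is invoked repeatedly.
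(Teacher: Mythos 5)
Your proposal is correct and follows essentially the same route as the paper's proof: part (1) from the base of open local bisections, (2) from (1), (3) by direct verification using that $\mathbf{d}$ is an open map and the decomposition $U=\bigcup_i U\cap\mathbf{d}^{-1}(V_i)$, (4) via the local bisection property of $U$, (5) by the case split on whether $\mathbf{d}(g)=\mathbf{d}(h)$, and (6) by assembling (1), (3) and (4). The only departures are cosmetic improvements: your inclusion $U\mathbf{d}(U')\supseteq U\cap U'$ in (4) is a cleaner way to get $U\mathbf{d}(F)\subseteq F$ than the paper's manipulation with $W^{-1}W$, and your explicit pullback filter $F^{\ast}=\{W : W\cap G_o\in F\}$ supplies the detail behind the direction of (6) that the paper dismisses as easy.
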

\begin{proof} (1) We use the fact that in an \'etale topological groupoid the open local bisections form a basis for the topology.
Let $F$ be a completely prime filter of open sets and let $U \in F$ be any element.
Then $U = \bigcup_{i \in I} U_{i}$, where the $U_{i}$ are open local bisections.
But $F$ is completely prime and so $U_{i} \in F$ for some $i \in I$.
Thus $U_{i} \subseteq U$ is an open local bisection and also belongs to $F$. 

(2) Denote by $F'$ the set of all open local bisections in the completely prime filter of open sets $F$.
Then $F'$ is closed under finite intersections and  $(F')^{\uparrow} = F$.
It is now easy to check that if$A$ and $B$ are completely prime filters
and $A = (A')^{\uparrow}$ and $B = (B')^{\uparrow}$ then $A = B$ if and only if $A' = B'$.

(3) Observe first that
$\mathbf{d}(F) = \{ \mathbf{d}(U) \colon U \in F \}$
is a set of open subsets of $G_{o}$ because $G$ is \'etale and so $\mathbf{d}$ is a local homeomorphism and so an open map.
Let $\mathbf{d}(U) \subseteq X$ where $X$ is an open subset of $G_{o}$.
Then $GX$ is an open subset of $G$ since $G$ is \'etale and $\mathbf{d}(GX) = X$.
But $U = U\mathbf{d}(U) \subseteq GX$ and so $GX \in F$.
It follows that $X \in \mathbf{d}(F)$.
Let $\mathbf{d}(U), \mathbf{d}(V) \in \mathbf{d}(F)$.
Then $\mathbf{d}(U \cap V) \subseteq \mathbf{d}(U) \cap \mathbf{d}(V)$.
It follows that $\mathbf{d}(U) \cap \mathbf{d}(V) \in \mathbf{d}(F)$.
Let $\bigcup_{i \in X} X_{i} \in \mathbf{d}(F)$.
Then there exists $U \in F$ such that $\mathbf{d}(U) = \bigcup_{i \in X} X_{i} \in \mathbf{d}(F)$.
But $U = \bigcup_{i \in I} UX_{i}$ and the result now follows.

(4) Let $F$ be any completely prime filter of open subsets.
By part (3), we have proved that $\mathbf{d}(F)$ is a completely prime filter of open subsets of $G_{o}$.
By part (1),  let $U \in F$ be any open local bisection.
Observe that $U = UU^{-1}U$ and that,
more generally, if $U,V,W \in F$ are open local bisections then $UV^{-1}W \in F$.
In addition, when $W$ is an open local bisection we have that $\mathbf{d}(W) = W^{-1}W$.
We prove that  if $V \in F$ then $U \mathbf{d}(V) \in F$.
Since $F$ is completely prime and $G$ is \'etale,
we can find an open local bisection $W$ such that $W \subseteq V$.
Thus $V \mathbf{d}(W) = VW^{-1}W \in F$.
But $V\mathbf{d}(W) \subseteq U\mathbf{d}(V)$ and so $U\mathbf{d}(V) \in F$.
We have shown that $U\mathbf{d}(A) \subseteq A'$.
Now let $W \in F$ be any open local bisection.
Then $U \cap W \in F$ is an open local bisection
But $U \cap W = U\mathbf{d}(U \cap W)$.
Thus $W$ contains as a subset an element of $U\mathbf{d}(A)$. 
But every element of $F$ contains as a subset an open local bisection in $F$.
It follows that $F = (U\mathbf{d}(F))^{\uparrow} \subseteq F$.

(5) If $G$ is $T_{0}$ then it is immediate that $G_{o}$ is $T_{0}$ because in an \'etale groupoid
the space of identities forms an open subspace.
Suppose now that $G_{o}$ is $T_{0}$.
We shall prove that $G$ is $T_{0}$.
Let $g,h \in G$ be distinct elements of $G$.
There are two cases.
First, suppose that $\mathbf{d}(g) \neq \mathbf{d}(h)$.
Since $G_{o}$ is $T_{0}$ we can, without loss of generality, assume that there is an open subset $X \subseteq G_{o}$
that contains $\mathbf{d}(g)$ but does not contain $\mathbf{d}(h)$.
Put $Y = \mathbf{d}^{-1}(X)$.
Then $Y$ is an open set that contains $g$ but does not contain $h$.
Second, suppose that $\mathbf{d}(g)  = \mathbf{d}(h)$.
If the set of all open sets that contains $g$ were the same as the set all open sets that contains $h$
{\em then there would be an open local bisection that contained both $g$ and $h$.}
This cannot happen because $\mathbf{d}(g) = \mathbf{d}(h)$ and $g$ and $h$ are distinct.
It follows that there must be an open set that contains one of $g$ or $h$ but not the other. 

(6) If $G$ is sober it is easy to check that $G_{o}$ is sober.
Suppose now that $G_{o}$ is sober. 
We prove that $G$ is sober.
Let $F$ be a completely prime filter of open subsets of $G$.
Then by part (3), $\mathbf{d}(F)$ is a completely prime filter of open subsets of $G_{o}$.
From the assumption that $G_{o}$ is sober, there is a unique identity $e \in G_{o}$
such that $\mathbf{d}(F)$ is precisely the set of all open subsets of $G_{o}$ that contain $e$.
Choose any $U \in F$ an open local bisection by part (1).
Then $e \in \mathbf{d}(U)$.
But $U$ is a local bisection and so there is a unique $g \in U$ such that $\mathbf{d}(g) = e$.
Let $V$ be any other open local bisection in $F$.
Then there is a unique element $h \in V$ such that $\mathbf{d}(h) = e$.
But $U \cap V$ is also an open local bisection in $F$.
Thus there is a unique element $k \in U \cap V$ such that $\mathbf{d})(k) = e$.
But $k \in U$ and $k \in V$ thus $g = k = h$.
It follows that all the open local bisections in $F$ contain $g$ and so all elements of $F$ contain $g$ by part (4).
We have therefore proved that $F \subseteq F_{g}$.
But $\mathbf{d}(F) = \mathbf{d}(F_{g})$.
Thus by part (4), we must have that $F = F_{g}$.
We have proved that every completely prime filter of open sets in $G$ is determined by an element of $G$.
To complete the proof, suppose that $F_{g} = F_{h}$.
Then $g = h$ because by part (5), the space $G$ is $T_{0}$.
\end{proof}

The following was proved as \cite[Proposition 2.12]{LL}.

\begin{lemma}\label{lem:spatial-sober}\mbox{}
\begin{enumerate}
\item For each pseudogroup $S$, the \'etale groupoid $\mathsf{G}_{CP}(S)$ is sober.
\item For each \'etale groupoid $G$, the pseudogroup $\mathsf{B}(G)$ is spatial. 
\end{enumerate}
\end{lemma}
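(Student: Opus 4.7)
The plan is to reduce both parts to standard facts about the frame of idempotents, using the two lemmas just established (Lemma~\ref{lem:spatial} and Lemma~\ref{lem:sober}). The core observation is that sobriety and spatiality both transfer between the full (pseudo)group/groupoid and its idempotent part, and for frames/topological spaces these are classical results.

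\medskip

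\noindent\textbf{Part (1): $\mathsf{G}_{CP}(S)$ is sober.} By part (6) of Lemma~\ref{lem:sober}, it suffices to prove that the identity space $\mathsf{G}_{CP}(S)_o$ is sober. The identities of $\mathsf{G}_{CP}(S)$ are precisely the idempotent completely prime filters of $S$. One checks, by exactly the argument used in Lemma~\ref{lem:idempotent-filter} (now extended to completely prime filters: arbitrary joins are preserved because in an idempotent filter $F$, an arbitrary join $\bigvee_i a_i$ lies in $F$ iff $\bigvee_i \mathbf{d}(a_i) = \mathbf{d}(\bigvee_i a_i)$ lies in $\mathsf{E}(F)$), that the map $F \mapsto \mathsf{E}(F)$ is a homeomorphism between $\mathsf{G}_{CP}(S)_o$ and the space of completely prime filters of the frame $\mathsf{E}(S)$, i.e., the space of points of $\mathsf{E}(S)$. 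But the space of points of any frame is sober; this is the classical result \cite[Lemma II.1.7]{Johnstone}. Hence $\mathsf{G}_{CP}(S)_o$ is sober, and then Lemma~\ref{lem:sober}(6) lifts this to $\mathsf{G}_{CP}(S)$.

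\medskip

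\noindent\textbf{Part (2): $\mathsf{B}(G)$ is spatial.} By Lemma~\ref{lem:spatial}, it suffices to prove that the frame $\mathsf{E}(\mathsf{B}(G))$ is spatial. Now an idempotent of $\mathsf{B}(G)$ is an open local bisection $U$ with $U^2 = U$; by the argument in Lemma~\ref{lem:tea} (which works verbatim for open, not just compact-open, local bisections), such $U$ are precisely the open subsets of $G_o$. Thus $\mathsf{E}(\mathsf{B}(G)) = \Omega(G_o)$. But for \emph{any} topological space $X$, the frame $\Omega(X)$ is spatial: if $U \not\subseteq V$ in $\Omega(X)$, pick $x \in U \setminus V$; then the completely prime filter $\mathscr{G}_x$ contains $U$ and omits $V$. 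Applying this with $X = G_o$ finishes the proof.

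\medskip

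\noindent\textbf{Main obstacle.} The only nontrivial step is the identification, in part (1), of idempotent completely prime filters of $S$ with completely prime filters of $\mathsf{E}(S)$. Lemma~\ref{lem:idempotent-filter} already gives the order-isomorphism at the level of filters and Lemma~\ref{lem:relating} handles the prime/ultrafilter case, but one must verify that the correspondence $F \leftrightarrow \mathsf{E}(F)$ actually preserves the \emph{complete} primeness condition in both directions; this is where one uses that $\mathbf{d}$ preserves arbitrary joins in a pseudogroup (since multiplication distributes over arbitrary compatible joins). One must also check that the topology on $\mathsf{G}_{CP}(S)_o$, which has as basic opens the sets of completely prime filters containing a given element $s \in S$, restricts on idempotents to exactly the topology on the space of points of $\mathsf{E}(S)$; this follows from the fact that any element of an idempotent filter lies above an idempotent in that filter. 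Once this homeomorphism is in place, both parts collapse to the two classical frame-theoretic facts invoked above.
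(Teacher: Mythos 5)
Your argument is correct, and it is essentially the proof that the surrounding text is designed to deliver: the paper itself gives no proof of this lemma, deferring to \cite[Proposition 2.12]{LL}, but Lemma~\ref{lem:spatial} and Lemma~\ref{lem:sober} are established immediately beforehand precisely so that both halves reduce to the commutative (frame/space) case, which is exactly the reduction you carry out. Two small remarks. First, the step you rightly flag as the main obstacle --- that $F \mapsto \mathsf{E}(F)$ carries idempotent completely prime filters of $S$ onto completely prime filters of the frame $\mathsf{E}(S)$ --- needs one computation you do not quite write down: given $\bigvee_{i} a_{i} \in F$, pass to $\bigvee_{i} \mathbf{d}(a_{i}) \in \mathsf{E}(F)$, extract some $\mathbf{d}(a_{j}) \in \mathsf{E}(F)$ by complete primeness of $\mathsf{E}(F)$, and then recover $a_{j} = \left(\bigvee_{i} a_{i}\right)\mathbf{d}(a_{j}) \in F$, using the pairwise compatibility of the $a_{i}$ to see that $a_{i}\mathbf{d}(a_{j}) \leq a_{j}$ for every $i$; this is the same manoeuvre as in part (1) of Lemma~\ref{lem:relating}, and without it the backward direction of your ``iff'' is not quite what is needed. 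Second, part (2) admits a one-line direct argument that bypasses Lemma~\ref{lem:spatial} altogether: if $U$ and $V$ are distinct open local bisections, say $g \in U \setminus V$, then $\mathscr{C}_{g}$ is a completely prime filter of $\mathsf{B}(G)$ belonging to $\mathscr{X}_{U}$ but not to $\mathscr{X}_{V}$, so $\varepsilon$ is injective. Your route through the identification $\mathsf{E}(\mathsf{B}(G)) \cong \Omega(G_{o})$ (which does require the standing fact that $G_{o}$ is open in an \'etale groupoid) is equally valid and has the merit of making the parallel with the classical frame-theoretic statements explicit.
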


From Theorem~\ref{them:adjunction-theorem} and what we have said above
we obtain the following \cite[Theorem 2.23]{LL} which is the basis of all of our duality theorems.

\begin{theorem}[Duality theorem between spatial pseudogroups and sober \'etale groupoids]
The category  $\mbox{\bf Pseudo}_{sp}^{op}$ is equivalent to the category $\mbox{\bf Etale}_{so}$.
\end{theorem}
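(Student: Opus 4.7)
The plan is to derive the theorem by restricting the adjunction of Theorem~\ref{them:adjunction-theorem} to the indicated subcategories and checking that the unit and counit become natural isomorphisms there. By a standard categorical fact, an adjunction whose unit and counit are isomorphisms is an equivalence, so once both conditions are verified the theorem follows.

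First I would check that the two functors actually restrict: by part~(1) of Lemma~\ref{lem:spatial-sober}, for any pseudogroup $S$ the \'etale groupoid $\mathsf{G}_{CP}(S)$ is sober, so $\mathsf{G}_{CP}$ sends $\mbox{\bf Pseudo}_{sp}^{op}$ into $\mbox{\bf Etale}_{so}$; by part~(2) of the same lemma, $\mathsf{B}$ sends $\mbox{\bf Etale}_{so}$ into $\mbox{\bf Pseudo}_{sp}^{op}$. Since the morphism classes $\mbox{\bf Pseudo}_{sp}$ and $\mbox{\bf Etale}_{so}$ are full subcategories, no additional checking is needed on arrows; the restrictions of $\mathsf{G}_{CP}$ and $\mathsf{B}$ remain adjoint.

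Next I would identify the unit and counit of the adjunction with the natural transformations $\eta$ and $\varepsilon$ introduced before Lemma~\ref{lem:spatial-sober}. The unit $\eta_G \colon G \rightarrow \mathsf{G}_{CP}(\mathsf{B}(G))$ sends $g$ to $\mathscr{C}_g$, and by definition of sobriety $\eta_G$ is a homeomorphism (hence an iso in $\mbox{\bf Etale}$) exactly when $G$ is sober; so on $\mbox{\bf Etale}_{so}$ the unit is a natural isomorphism. The counit $\varepsilon_S \colon S \rightarrow \mathsf{B}(\mathsf{G}_{CP}(S))$ sends $s$ to $\mathscr{X}_s$, and is known to be a surjective hypercallitic morphism; when $S$ is spatial it is in addition injective, so it is a bijective hypercallitic morphism. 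Its set-theoretic inverse is then automatically hypercallitic, because the operations (arbitrary compatible joins, binary meets, and the semigroup multiplication) are reflected by a bijective morphism that preserves them; hence $\varepsilon_S$ is an isomorphism in $\mbox{\bf Pseudo}_{sp}$. Naturality of $\eta$ and $\varepsilon$ is inherited from the adjunction.

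Having both unit and counit as natural isomorphisms on the restricted categories yields the dual equivalence $\mbox{\bf Pseudo}_{sp}^{op} \simeq \mbox{\bf Etale}_{so}$. The main obstacle I would expect is the routine but slightly fiddly check that the inverse of a bijective hypercallitic morphism is again hypercallitic; the preservation of arbitrary joins under the inverse requires using that $\varepsilon_S$ is a bijection between complete lattices (once the image structure is transported back to $S$) and that the condition $t = \bigvee_i t_i$ with $t_i \le \varepsilon_S(s_i)$ translates directly across the bijection. Everything else is general adjunction-to-equivalence machinery combined with the lemmas already established in the excerpt.
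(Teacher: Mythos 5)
Your proposal is correct and follows essentially the same route as the paper: the theorem is obtained by restricting the adjunction of Theorem~\ref{them:adjunction-theorem} to the full subcategories of spatial pseudogroups and sober \'etale groupoids, using Lemma~\ref{lem:spatial-sober} to see that the functors land in those subcategories, and observing that the definitions of \emph{spatial} and \emph{sober} say precisely that $\varepsilon$ and $\eta$ become isomorphisms there. The one check you flag --- that the inverse of a bijective hypercallitic morphism is again hypercallitic --- is indeed routine, since a semigroup isomorphism is an order-isomorphism for the natural partial order and hence preserves all existing joins and meets in both directions, while the properness condition is trivial for a surjection.
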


Our whole approach is predicated on the idea that suitable classes of inverse semigroups 
can be viewed as generalizations of suitable classes of lattices. 
The following table summarizes our approach:

\vspace{0.5cm}
\begin{center}
\begin{tabular}{|c||c|}\hline
{\bf Lattices} & {\bf Non-commutative lattices}  \\ \hline \hline
{\small Meet semilattices} & {\small Inverse semigroups} \\ \hline
{\small Frames} & {\small Pseudogroups} \\ \hline
{\small Distributive lattices} & {\small Distributive inverse semigroups}  \\ \hline 
{\small Generalized Boolean algebras} & {\small Boolean inverse semigroups} \\ \hline
\end{tabular}
\end{center}
\vspace{0.5cm}

The theory of coverages on inverse semigroups (as a way of constructing pseudogroups)
is touched on in \cite[Section 4]{LL}.
Weaker axioms for a coverage (which more naturally generalize the meet-semilattice case)
are discussed in \cite{Castro}.
Although we discuss Paterson's Universal groupoid in \cite[Section 5.1]{LL}
as well as Booleanizations, much better presentations of these results can be found in \cite{Lawson2020}. 
Tight completions of inverse semigroups, the subject of \cite[Section 5.2]{LL},
are discussed in greater generality in \cite{LV}.


\end{document}